\pgfplotsset{compat=newest}
\pgfplotsset{plot coordinates/math parser=false}
\providecommand{\keywords}[1]{\textbf{{Keywords }} #1}
\providecommand{\subjclass}[1]{\textbf{{Mathematics subject classification }} #1}
\newtheorem{mydef}{Definition}
\newtheorem{prop}{Proposition}
\newtheorem{mytheorem}{Theorem}
\newtheorem*{mytheorem*}{Theorem}
\newtheorem{corollary}{Corollary}%[theorem]
\newtheorem*{corollary*}{Corollary}%[theorem]
\newtheorem{lemma}{Lemma}
\newtheorem{remark}{Remark}
\newcommand{\vect}[1]{\boldsymbol{#1}}
\numberwithin{equation}{section}
\def \sup {{\rm sup}}
\def \supp {{\rm supp}}
\def \zero {{\boldsymbol{0}}}
\def \a {{\boldsymbol{a}}}
\def \b {{\boldsymbol{b}}}
\def \c {{\boldsymbol{c}}}
\def \ct {{\boldsymbol{\tilde{c}}}}
\def \rt {{\boldsymbol{\tilde{r}}}}
\def \k {{\boldsymbol{k}}}
\def \l {{\boldsymbol{l}}}
\def \e {{\boldsymbol{e}}}
\def \ep {{\boldsymbol{e'}}}
\def \q {{\boldsymbol{q}}}
\def \r {{\boldsymbol{r}}}
\def \rp {{\boldsymbol{r'}}}
\def \rpp {{\boldsymbol{r''}}}
\def \rt {{\boldsymbol{\tilde{r}}}}
\def \n {{\boldsymbol{n}}}
\def \nt {{\boldsymbol{\tilde{n}}}}
\def \m {{\boldsymbol{m}}}
\def \u {{\boldsymbol{u}}}
\def \v {{\boldsymbol{v}}}
\def \x {{\boldsymbol{x}}}
\def \xib {{\boldsymbol{\xi}}}
\def \w {{\boldsymbol{w}}}
\def \y {{\boldsymbol{y}}}
\def \z {{\boldsymbol{z}}}
\def \supp {\text{supp}} 
\def \B {\mathcal B} 
\def \S {\mathcal S} 
\def \Q {\mathcal Q} 
\def \F {\mathcal F} 
\def \T {\mathcal T} 
\def \D {\mathcal D} 
\def \N {\mathbbm N} 
\def \C {\mathbbm C} 
\algrenewcommand\algorithmicindent{1.0em}%
\title{Sparse Harmonic Transforms II:  Best $s$-Term Approximation Guarantees for Bounded Orthonormal Product Bases in Sublinear-Time}
\author{Bosu Choi\thanks{University of Texas at Austin, Oden Institute for Computational Engineering and Sciences, \texttt{choibosu@utexas.edu}.} 
\and 
Mark Iwen\thanks{Michigan State University, Department of Mathematics, and the Department of Computational Mathematics, Science and Engineering (CMSE), \texttt{markiwen@math.msu.edu}.}
\and 
Toni Volkmer\thanks{Chemnitz University of Technology, Faculty of Mathematics, \texttt{toni.volkmer@math.tu-chemnitz.de}.} 
}
\date{}
\begin{document}

\maketitle
\vspace{-12pt}
\begin{abstract}
In this paper we develop a sublinear-time compressive sensing algorithm for approximating functions of many variables which are compressible in a given Bounded Orthonormal Product Basis (BOPB).  The resulting algorithm is shown to both have an associated best $s$-term recovery guarantee in the given BOPB, and also to work well numerically for solving sparse approximation problems involving functions contained in the span of fairly general sets of as many as $\sim10^{230}$ orthonormal basis functions.  All code is made publicly available.

As part of the proof of the main recovery guarantee new variants of the well known CoSaMP algorithm are proposed which can utilize any sufficiently accurate support identification procedure satisfying a {Support Identification Property (SIP)} in order to obtain strong sparse approximation guarantees.  These new CoSaMP variants are then proven to have both runtime and recovery error behavior which are largely determined by the associated runtime and error behavior of the chosen support identification method.  The main theoretical results of the paper are then shown by developing a sublinear-time support identification algorithm for general BOPB sets which is robust to arbitrary additive errors.  Using this new support identification method to create a new CoSaMP variant then results in a new robust sublinear-time compressive sensing algorithm for BOPB-compressible functions of many variables.
\end{abstract}

\keywords{High-dimensional function approximation $\cdot$ Sublinear-time algorithms $\cdot$ Function learning $\cdot$ Sparse approximation $\cdot$ Compressive Sensing $\cdot$ Sparse Fourier transforms (SFT)}

\subjclass{65T40 $\cdot$ 68W25 }

%%%%%%%%%%%%%%
%% INTRODUCTION
%%%%%%%%%%%%%%
\section{Introduction} \label{sec:Introduction}
\setcounter{equation}{0}

In this paper we focus on rapidly computing best $s$-term approximations in the sense of compressive sensing \cite{cohen2009compressed,foucart2013mathematical} for functions of many variables $f: \D \subseteq \mathbbm{R}^D \rightarrow \mathbbm{C}$.  More specifically, we develop a numerical method that aims to very quickly approximate any given function $f$ using a near optimal $s$-sparse linear combination of $N'$ fixed basis functions $\B = \{ \b_1, \dots, \b_{N'} \}$ chosen in advance.  The developed method has two basic components:  $(i)$ a low-cardinality grid of evaluation points, $\mathcal{G} \subset \D$, and $(ii)$ a fast deterministic algorithm $\mathcal{H}: \mathbbm{C}^{|\mathcal{G}|} \rightarrow {\rm span}(\B)$ which takes $f$ evaluated on $\mathcal{G}$, $f(\mathcal{G}) \in \mathbbm{C}^{|\mathcal{G}|}$, as input, and then outputs an accurate $s$-sparse approximation to $f$ of the form $\sum_{j=1}^s a_{\ell_j} \b_{\ell_j} \in {\rm span}(\B)$.  In particular, we require that $\mathcal{H}$ can approximate all functions $f$ near-optimally based only on the evaluations of $f$ on $\mathcal{G}$ so that 
\begin{equation}
\left\| f - \mathcal{H}\left(f(\mathcal{G}) \right) \right\| ~\lesssim~ C~ \inf_{\small \z \in \mathbbm{C}^{N'},~\| \z \|_0 \leq s} \left\| f - \sum^{N'}_{j=0} z_j \b_j \right\|
\label{equ:GenericSparseApprox}
\end{equation}
holds with respect to suitable norms for all functions $f: \D \rightarrow \mathbbm{C}$ in a sufficiently general function class.

Note that we are requiring several strong properties of both $\mathcal{G}$ and $\mathcal{H}$ above.  First, we want the approximation algorithm $\mathcal{H}$ to succeed {\it for all} functions $f$ in a suitably large class when only given access to function evaluations of each $f$ on the same fixed and nonadaptive grid $\mathcal{G}$.  Second, we require that $\mathcal{H}$ is {\it fast}, which will mean in this paper that we require it to use a total number of scalar arithmetic and read/write operations that scales {\it sublinearly} with respect to the basis size $N'$ (e.g., herein we will focus on methods with runtimes that scale like $\mathcal{O}(\log^c (N'))$ for all sufficiently small sparsities $s$).  This second requirement has several other beneficial repercussions beyond computational speed.  Principally among them is the fact that the deterministic procedure $\mathcal{H}$ can use at most $o(N')$ function evaluations since its fast runtime constrains the number of function evaluations $\mathcal{H}$ can use.  This effectively constrains the size of the nonadaptive grid $\mathcal{G} \subset \D$ that it makes sense to use in the first place.  Similarly, any such $\mathcal{H}$ must also have low $o(N')$ memory requirements given that it only has time to perform $o(N')$-total scalar operations involving memory accesses.  
  
The first work on sublinear-time algorithms $\mathcal{H}$ of this kind focused almost exclusively on the one-dimensional Fourier basis where, e.g., $\B = \{ \mathbbm{e}^{2 \pi \mathbbm{i} \omega x} ~\big|~ \omega \in \left(\lceil N'/2 \rceil, \lfloor N'/2 \rfloor \right] \cap \mathbbm{Z}\}$.  The first of these \cite{Mansour:1992:RIA:646246.684842,gilbert2005improved,iwen2007empirical,hassanieh2012simple} were randomized algorithms which used grids $\mathcal{G}$ that varied from function to function and that failed with some nonzero probability for each given $f$.  All of these methods have runtimes that scale like $\mathcal{O}(s \log^c N')$ and achieve approximation errors along the lines of \eqref{equ:GenericSparseApprox} with high probability (w.h.p.) for each given $f: [0,1] \rightarrow \mathbbm{C}$.  Later on, entirely deterministic and explicit $\mathcal{O}(s^2 \log^c N')$-time methods $\mathcal{H}$ were then devised which use one fixed and nonadaptive grid $\mathcal{G}$ in order to guarantee approximation errors of the form \eqref{equ:GenericSparseApprox} for all sufficiently smooth and periodic functions $f: [0,1] \rightarrow \mathbbm{C}$ (see \cite{iwen2008deterministic,iwen2010combinatorial,bailey2012design,segal2013improved}).  These deterministic methods were then randomized in \cite{merhi2017new} to achieve highly efficient $\mathcal{O}(s \log^c N')$-time randomized discrete Fourier transform methods (generally known as ``sparse Fourier transforms") with high probability best $s$-term approximation guarantees \eqref{equ:GenericSparseApprox} along the lines of the first methods mentioned above, as well as sped up to produce entirely deterministic methods that are significantly faster than the generic $\mathcal{O}(s^2 \log^c N')$-time deterministic algorithms for periodic functions $f: [0,1] \rightarrow \mathbbm{C}$ which exhibit structured sparsity in the Fourier domain \cite{bittens2019deterministic}.  Code for many of these methods is publicly available\footnote{The code for an implementation of \cite{merhi2017new} is available at \url{https://sourceforge.net/projects/aafftannarborfa/}.  The code for an implementation of \cite{bittens2019deterministic} is available at \url{https://www.math.msu.edu/~markiwen/Code/FAST_block_sparse.zip}.}, and a nice survey article covering the standard techniques used to construct many of these first sublinear-time Fourier methods appeared in 2014 \cite{gilbert2014recent}.

As sublinear-time methods for the one-dimensional Fourier basis started to mature, similar algorithms began to be developed for other one-dimensional bases $\B$ as well, including for the cosine, Chebyshev, and Legendre polynomial bases \cite{hu2017rapidly,bittens2018sparse} (see also \cite{R12} for traditional compressive sensing methods which focus on the Legendre polynomial basis).  Recently these ideas have been extended yet further to produce sublinear-time algorithms with reconstruction guarantees for restricted classes of signals exhibiting approximate sparsity in any given one-dimensional Jacobi polynomial basis \cite{gilbert2019sparse}.  Another direction of research has focused on extending the types of sparse approximation algorithms discussed above to higher dimensional settings in order to approximate, e.g., functions $f:[0,1]^D \rightarrow \mathbbm{C}$ with respect to either  multidimensional Fourier \cite{R07,iwen2013improved,potts2016sparse,choi2016multi,kapralov2016sparse,morotti2017explicit,kammerer2017high,2019arXiv190703692C,2019arXiv190210633K} or Chebyshev \cite{potts2017multivariate} bases $\B$ of cardinality $N' = N^D$.  In these cases achieving fast algorithms $\mathcal{H}$ that run in $o(N')$-time becomes increasing important as $D$ grows.

As in the one-dimensional setting, sublinear-time methods $\mathcal{H}$ for approximating functions of $D$ variables are most well developed in the case of the multidimensional Fourier basis where, e.g., $\B = \{ \mathbbm{e}^{2 \pi \mathbbm{i} \boldsymbol{\omega} \cdot {\bf x}} ~\big|~ \boldsymbol{\omega} \in \left(\lceil N'/2 \rceil, \lfloor N'/2 \rfloor \right]^D \cap \mathbbm{Z}^D\}$.  For example, see Theorem 8 in \cite{iwen2013improved} and Theorems 10 and 12 in \cite{morotti2017explicit} for explicit and deterministic $\mathcal{O}(s^2 \log^c(N'))$-time methods that use function evaluations on a single fixed and nonadaptive grid $\mathcal{G} \subset [0,1]^D$ in order to guarantee approximation errors of the form \eqref{equ:GenericSparseApprox} for all sufficiently smooth and periodic functions $f:[0,1]^D \rightarrow \mathbbm{C}$.  When it comes to approximating functions of many variables with respect to non-Fourier bases $\B$ in $o(N')$-time, however, very little is currently known.  The first result in this direction \cite{choi2018sparse} provided sublinear-time recovery guarantees for all functions that are {\it exactly $s$-sparse}\footnote{A function is exactly $s$-sparse in $\B$ if it is a linear combination of $\leq s$ unknown elements of $\B$.} in any tensor product basis $\B$ of one-dimensional bounded orthonormal bases.  The aim of this paper is to augment this first general result with best $s$-term approximation guarantees along the lines of \eqref{equ:GenericSparseApprox} while maintaining its fast runtime and small fixed and nonadaptive grid size.  In doing so the authors aim to complement existing compressive sensing approaches for uncertainty quantification and function approximation \cite{schwab2006karhunen,chkifa2016polynomial,adcock2017infinite,2017arXiv170101671B,Adcock2017} with a new class of methods whose runtimes scale sublinearly in the basis size used for approximation.  These new methods will then hopefully allow for the extension of such techniques to, e.g., functions of hundreds or even thousands of variables in a more computationally feasible fashion.

%%%%%%%%%%%%%%%%%%%%%%%%%%%%%%%%%%%%%%%%%%%%%%%%%%%%%%%%%%%%
%%%%%%%%%%%%%%%%%%%%%%%%%%%%%%%%%%%%%%%%%%%%%%%%%%%%%%%%%%%%
%%%%%%%%%%%%%%%%%%%%%%%%%%%%%%%%%%%%%%%%%%%%%%%%%%%%%%%%%%%%
\subsection{Setup and Main Results}
\label{sec:Setup}

Let $L^2(\D,\mu)$ for $\D := \times_{j\in [D]} \D_j \subset \mathbbm{R}^D$ denote all functions $f: \D \rightarrow \mathbbm{C}$ that are square-integrable with respect to a given product of probability measures $\mu := \times_{j\in [D]} \mu_j$ over $\D$, and suppose that you are given a countable orthonormal (with respect to $\mu$) basis, 
\begin{equation}
\B' := \left\{ T_{\n}: \D \rightarrow \C ~\big|~ \n \in \mathbbm{N}^D \right\},
\label{def:B'}
\end{equation}
of $L^2(\D,\mu)$ so that
\begin{equation*}
\langle T_{\k}, T_{\l}  \rangle_{(\D,\mu)} := \int_{\D} T_{\k}(\xib) \overline{T_{\l}(\xib)} d\mu(\xib) = \delta_{\k, \l} = 
\begin{cases}
1 & \textrm{if}~\k = \l \\
0 & \textrm{if}~\k \neq \l
\end{cases}.
\end{equation*}
Furthermore, suppose that $\B'$ is a tensor product basis so that
\begin{equation}
T_{\n}(\xib):=\prod _{j\in[D]} T_{j;n_j}(\xi_{j})
\label{def:T_n}
\end{equation}
holds for all $\n \in \mathbbm{N}^D$ and $\xib \in \D$, where each set 
$\B'_j := \left\{ T_{j;n_j}: \D_j \rightarrow \C ~\big|~ n_j \in \mathbbm{N} \right\}$
 with $j \in [D] := \{ 0, \dots, D-1 \}$ is itself an orthonormal (with respect to the probability measure $\mu_j$ over $\D_j \subset \mathbbm{R}$) basis of $L^2(\D_j,\mu_j)$.  We will call any such basis $\B'$ an Orthonormal Product Basis (OPB) with respect to $\mu$.

Our objective in this paper is to approximate smooth functions $f \in L^2(\D,\mu)$ as rapidly as possible using just a few point evaluations.  Toward this end we will take the traditional approach of considering only a finite subset $\B_{N,d}$ of $\B'$, and then approximating $f$ by approximating its projection $\tilde{f}$ onto the span of $\B_{N,d}$ (consider, e.g., hyperbolic cross/sparse grid methods for approximating functions of several variables \cite{shen2010sparse,dung2016hyperbolic,bungartz_griebel_2004}).  The potential improvement that the sublinear-time methods considered herein will then potentially provide over such standard methods in some cases will come from the fact that the finite basis $\B_{N,d}$ can be chosen to be extremely large herein (e.g., experiments were performed for Section~\ref{sec:Numerics} on a standard workstation using bases of cardinality $200^{100}$).  More specifically, herein we will consider two different types of bases $\B'$, each of which will allow us to demonstrate that the finite basis $\B_{N,d} \subset \B'$ we select below for approximation purposes also promote computational efficiency.  

We will characterize $\B'$ below based on the behavior of its lowest order elements 
\begin{equation*}
\B_N := \left\{ T_{\n} ~\big|~ \| \n \|_{\infty} < N \right\} \subset \B'
\end{equation*}
which we will assume throughout this paper is a finite Bounded Orthonormal System (BOS) with respect to the probability measure $\mu$ over $\D$ with a finite BOS constant 
$$K' :=\max_{\n \in [N]^D} \|T_{\n}\|_\infty := \max_{\n \in [N]^D} \sup_{\xib \in \D} \left| T_{\n} (\xib) \right| \in [1,\infty).$$  
Note that this implies that each set $\B_{j;N} := \left\{ T_{j;n_j}: \D_j \rightarrow \C ~\big|~ n_j \in [N] \right\} \subset \B'_j$ with $j \in [D]$ is itself also a BOS with respect to the probability measure $\mu_j$ over $\D_j \subset \mathbbm{R}$ with BOS constant 
\begin{equation}
K_j :=\max_{n_j \in [N]} \|T_{j;n_j}\|_\infty \in [1,\infty).
\label{def:KjBOSC}
\end{equation}
Finally, we will further define $K^0_j$ to be
\begin{equation} 
1 \leq K^0_j := \|T_{j;0}\|_\infty  \leq K_j
\label{def:K0jBOSC}
\end{equation}
for each $j \in [D]$.  Note that $K^0_j$ is strictly smaller than $K_j$ for many BOSs of interest (e.g., the cosine and Chebyshev polynomial bases as $\B'_j$ both have $K_j^0 = 1 < K_j = \sqrt{2}$). 
From these two definitions we can also see, e.g., that $\prod _{j\in[D]}  {K}^0_j \leq K' = \prod _{j\in[D]}  {K}_j$ always holds.  Due to the boundedness of $K'$ assumed throughout the remainder of this paper we will always refer to $\B_N$ (as well as $\B'$ with slight abuse) as a Bounded Orthonormal Product Basis (BOPB) going forward.
%We are now ready to list the two types of BOPBs we will consider herein.

We will approximate any given smooth $f \in L^2(\D,\mu)$ by approximating its projection $\tilde{f}$ onto the span of the finite BOS set 
\begin{equation}
\B_{N,d} := \left \{ T_{\n}  ~\big|~ \n \in [N]^D ~{\rm and}~\| \n \|_0 \leq d \right \} \subseteq \B_N \subset \B'
\label{defBNd}
\end{equation}
for some $d \in [D+1]$, where $d$ is used to the help constrain the BOS constant.
The BOS constant $K \leq K'$ of $\B_{N,d}$ will be referred to as the {\it effective BOS constant} below.
As is usually the case in compressive sensing scenarios involving BOSs, its size will be a significant consideration with respect to sampling and computational efficiency.  In order to limit $K$'s size we will concentrate on the following two types of BOPBs going forward:
\begin{itemize}
\item \underline{\bf BOPBs of TYPE I:}  We will say a BOPB is of type I if the BOS constants $K_j$ are $1$ for all but at most $\tilde{d} \in \mathbbm{Z} \cap [0, D]$ BOS basis sets $\B_{j;N}$.  In this case we let $K_{\infty} := \max_{j\in [D]} K_j$ and note that $1 \leq K \leq K' \leq K_{\infty}^{\tilde{d}}$ so that $K$ will scale sub-exponentially in $D$ when $\tilde{d} \ll D$ independently of our choice of $d$ in \eqref{defBNd}.  We note that this type of BOPB includes several interesting examples of bases including the multidimensional Fourier basis (for which $\tilde{d} = 0$), and mixed BOPBs $\B'$ that have one-dimensional Fourier bases used for all but $\tilde{d}$ of their $\B'_j$ component bases.
\item \underline{\bf BOPBs of TYPE II:}  We will say a BOPB is of type II if $K_0 := \max_{j\in [D]} K^0_j = 1$.  This type of BOPB includes many bases where having a small number of interacting dimensions, $d$, helps to limit the effective BOS constant $K$ involved in the underlying sparse approximation problem.  Examples include the multivariate cosine, Chebyschev, and Legendre polynomial bases, as well as mixed polynomial bases where each one-dimensional component basis $\B'_j$ is, e.g., a potentially different Jacobi polynomial basis.
\end{itemize}

In either case above one can see that $K \leq K_{\infty}^d K^{D-d}_0$ will always hold.  In particular, $K = 2^{d/2}$ always holds if $\B'$ is either the multivariate cosine or Chebyshev basis in the type II case.  In the type I case we note that $1 \leq K \leq K_{\infty}^{\min(d,\tilde{d})} K^{\tilde{d}-\min(d,\tilde{d})}_0 \leq K_{\infty}^{\tilde{d}}$ will always hold so that $d$ can be set to $D$ without causing $K$ to become too large if, e.g., $\tilde{d} \ll D$.  This is certainly the case if $\B'$ is the multidimensional Fourier basis where $\tilde{d} = 0$.

Let $f \in L^2(\D,\mu)$ be smooth enough\footnote{Given that we will be recovering $f$ based on point samples we will require at least enough smoothness to guarantee that any particular point sample we might possibly utilize actually contains information about the given function's basis coefficients $\left\{ c_{\n} \right\}_{\n \in \mathbbm{N}^D}$.  Of course, the details regarding this smoothness requirement will vary with the choice of basis $\B'$.} that there exists a sequence $\left\{ c_{\n} \right\}_{\n \in \mathbbm{N}^D}$ such that
\begin{equation}
f(\xib) = \sum_{\n \in \mathbbm{N}^D} c_{\n} T_{\n}(\xib)
\label{equ:FunctoApprox}
\end{equation}
holds pointwise for all $\xib \in \D$.  Given such an $f: \D \rightarrow \mathbbm{C}$, we will denote its orthogonal projection onto the span of $\B_{N,d}$ by $\tilde{f}: \D \rightarrow \mathbbm{C}$.  Let 
$$\mathcal{I}_{N,d} := \left\{ \n \in [N]^D ~\big|~ \| \n \|_0 \leq d \leq D \right\} \subset \mathbbm{N}^D$$ 
be the set of indices corresponding to the basis elements in $\B_{N,d}$.  We then have that
\begin{equation}
\tilde{f}(\xib) := \sum_{\n \in \mathcal{I}_{N,d}} \tilde{c}_{\n} T_{\n}(\xib)
\label{equ:FinApproxf}
\end{equation}
for all $\xib \in \D$, where $\ct$ will be considered to be a vector in $\mathbbm{C}^{|\mathcal{I}_{N,d}|}$ indexed by $\mathcal{I}_{N,d}$.  Note further that the entries of $\ct$ will satisfy $\tilde{c}_{\n} = c_{\n}$ for all $\n \in \mathcal{I}_{N,d}$.

As mentioned above, we will ultimately approximate $f$ by producing a sparse approximation in $\B_{N,d}$ to $\tilde{f}$.  The best possible $s$-term approximation to $\tilde{f}$ in $\B_{N,d}$ will be denoted by $\tilde{f}_s^{\rm opt}: \D \rightarrow \mathbbm{C}$, and will be defined as follows:  Order the basis coefficients $\ct \in \mathbbm{C}^{|\mathcal{I}_{N,d}|}$ of $\tilde{f}$ by their magnitudes so that 
$$\left| \tilde{c}_{\n_1} \right| \geq \left| \tilde{c}_{\n_2} \right| \geq \left| \tilde{c}_{\n_3} \right| \geq \dots \geq  \left| \tilde{c}_{\n_{|\mathcal{I}_{N,d}|}} \right|,$$
where ties are broken lexicographically using the entries' indices in $\mathcal{I}_{N,d}$.  Then $\tilde{f}_s^{\rm opt}$ will be defined to be
$$\tilde{f}_s^{\rm opt} (\xib) := \sum_{j = 1}^s \tilde{c}_{\n_j} T_{\n_j}(\xib)$$  
for all $\xib \in \D$, and its (potentially) nonzero coefficients' indices will be denoted by 
$${\Omega^{\rm opt}_{\tilde{f},s}} := \left\{ \n_1, \dots, \n_s \right\} \subset \mathcal{I}_{N,d}.$$   

Note that $\tilde{f}_s^{\rm opt}$ will indeed have the property that
$$\left\|  \tilde{f} - \tilde{f}_s^{\rm opt} \right\|_{{L^2(\mathcal{D},\mu)}}~=~ \inf_{\small \z \in \mathbbm{C}^{|\mathcal{I}_{N,d}|},~\| \z \|_0 \leq s} \left\| \tilde{f} - \sum_{\n \in \mathcal{I}_{N,d}} z_{\n} T_{\n} \right\|_{{L^2(\mathcal{D},\mu)}}.$$
Furthermore, if we let $\ct_{\Omega^{\rm opt}_{\tilde{f},s}} \in \mathbbm{C}^{|\mathcal{I}_{N,d}|}$ denote the $\B_{N,d}$ basis coefficients of $\tilde{f}_s^{\rm opt}$ then we can see that both $\left\| \ct_{\Omega^{\rm opt}_{\tilde{f},s}} \right\|_0 \leq s$ and $\left\|  \ct - \ct_{\Omega^{\rm opt}_{\tilde{f},s}} \right\|_{2} ~=~\left\|  \tilde{f} - \tilde{f}_s^{\rm opt} \right\|_{{L^2(\mathcal{D},\mu)}}$ will hold.  As a result, norms involving the vector $\ct - \ct_{\Omega^{\rm opt}_{\tilde{f},s}} \in \mathbbm{C}^{|\mathcal{I}_{N,d}|}$ can be interpreted as best $s$-term approximation errors of $\tilde{f}$ in a natural way.

Finally, to prove our main result below we will effectively be considering the point samples we take from $f$ to instead be point samples taken from $\tilde{f}$ that are contaminated with evaluation errors of size $\left(f - \tilde{f} \right) (\xib)$ at each evaluation point $\xib \in \mathcal{G}$.  To bound all of these errors in a uniform fashion we will define 
\begin{equation}
\gamma := \left\| f - \tilde{f} \right\|_\infty ~=~ \sup_{\xib \in \D} \left|  \left(f - \tilde{f} \right) (\xib) \right|.
\label{equ:gammaDef}
\end{equation}
The following theorem is proven in Section~\ref{sec:CoSaMPguarantee}.\\

\begin{mytheorem}{(Main Result).}
Let $\eta \in (0, \infty)$ and $s,d,N \in \mathbbm{N} \setminus \{1 \}$ with $d \leq D$ and $s < |\mathcal{I}_{N,d}| / 2$.  There exists a finite set of grid points $\mathcal{G} \subset \D$, an algorithm $\mathcal{H}: \mathbbm{C}^{\left| \mathcal{G} \right|} \rightarrow \left( \mathcal{I}_{N,d} \times \mathbbm{C} \right)^s$, and an absolute universal constant $C' \in \mathbbm{R}^+$ such that the function $a: \D \rightarrow \mathbbm{C}$ defined by $a(\xib) := \sum_{(\n,a_{\n}) \in \mathcal{H}(f(\mathcal{G}))} a_\n T_\n(\xib)$ satisfies
\begin{equation}
\| f - a \|_{L^2(\mathcal{D},\mu)} \leq \left\| f - \tilde{f} \right\|_{L^2(\mathcal{D},\mu)} + C' \left( \sqrt{s} \left\| \ct - \ct_{\Omega^{\rm opt}_{\tilde{f},s}} \right\|_2 + \left\| \ct - \ct_{\Omega^{\rm opt}_{\tilde{f},s}}  \right\|_1 + \gamma \sqrt{s} \right) + \eta
\label{eqn:MRintroerrorg}
\end{equation}
for all $f = \sum_{\n \in \mathbbm{N}^D} c_{\n} T_{\n} \in L^2(\mathcal{D},\mu)$ with $\gamma := \left\| f - \tilde{f} \right\|_\infty$ $=~ \sup_{\xib \in \D} \left|  \left(f - \tilde{f} \right) (\xib) \right| ~<~ \infty$, where $\tilde{f}: \D \rightarrow \mathbbm{C}$ is the finite dimensional approximation to $f$ defined as per \eqref{equ:FinApproxf}.

	If the BOPB $\B_{N,d}$ is of type I so that the BOS constants $K_j$ are $1$ for all but at most $\tilde{d} \in \mathbbm{Z} \cap [0, D]$ BOS basis sets $\B_{j;N}$, then
	$$\left| \mathcal{G} \right| = \mathcal{O} \left( s^3 D K^{4\tilde{d}}_\infty d^4 \cdot  \log^4 \left( \frac{DN}{d} \right) \log^2 (s) \log^2 (D)  \right),$$
	and the algorithm $\mathcal{H}$ will have runtime complexity 
	$$\mathcal{O}\left( \left( s^5 + s^3 N \right) D^2 K^{4\tilde{d}}_\infty d^4 \cdot  \log^4 \left( \frac{DN}{d} \right) \log^2 (s) \log^2 (D) \log \left( \left\| \ct_{\Omega^{\rm opt}_{\tilde{f},s}} \right \|_2 / \eta\right) \right).$$ 	
	If the BOPB $\B_{N,d}$ is of type II so that $K_0 = 1$, then 
	$$\left| \mathcal{G} \right| = \mathcal{O} \left( s^3 D K^{4d}_\infty d^4 \cdot  \log^4 \left( \frac{DN}{d} \right) \log^2 (s) \log^2 (D) \right),$$
	and the algorithm $\mathcal{H}$ will have runtime complexity 
	$$\mathcal{O} \left( \left( s^5 + s^3 N \right) D^2 K^{4d}_\infty d^4 \cdot  \log^4 \left( \frac{DN}{d} \right) \log^2 (s) \log^2 (D) \log \left( \left\| \ct_{\Omega^{\rm opt}_{\tilde{f},s}} \right \|_2 / \eta\right) \right).$$
	Here we have assumed that any desired basis function $T_{\n} \in \B_{N,d}$ can be evaluated at any desired point in $\D$ in $\mathcal{O}(N D)$-time (which will be the case, e.g., for polynomial product bases of degree $\leq ND$).
	\label{thm:MRintro}
\end{mytheorem}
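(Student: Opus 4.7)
The abstract already indicates the two-step architecture I would follow: first prove a general theorem about CoSaMP variants that can use any subroutine satisfying the Support Identification Property (SIP), and then instantiate it with a dedicated BOPB support identification procedure. I would adopt this same decomposition.

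My plan would start by formulating the SIP carefully. The SIP should assert something like: given a noisy measurement of $\tilde{f}$ (or a residual thereof), the subroutine returns an index set $\Omega \subset \mathcal{I}_{N,d}$ of controlled size (say $O(s)$) such that the coefficients lying outside $\Omega$ have $\ell_2$ mass bounded by a fixed fraction of the current signal plus additive noise terms scaling with $\gamma$, with the best $s$-term tail $\|\ct - \ct_{\Omega^{\rm opt}_{\tilde{f},s}}\|_1$ and $\|\ct - \ct_{\Omega^{\rm opt}_{\tilde{f},s}}\|_2$. Once SIP is in place, I would plug this subroutine into the identification step of CoSaMP and re-do the standard CoSaMP contraction analysis, using a BOS-based RIP or restricted-isometry-type inequality on the restricted subsets to control least-squares pruning. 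The outcome would be a geometric decay of the error up to a fixed floor proportional to $\sqrt{s}\|\ct-\ct_{\Omega^{\rm opt}_{\tilde{f},s}}\|_2 + \|\ct-\ct_{\Omega^{\rm opt}_{\tilde{f},s}}\|_1 + \gamma\sqrt{s}$, which after $O(\log(\|\ct_{\Omega^{\rm opt}_{\tilde{f},s}}\|_2/\eta))$ iterations yields the $+\eta$ slack in \eqref{eqn:MRintroerrorg}. The $\|f-\tilde{f}\|_{L^2}$ summand simply reflects that we are recovering $\tilde{f}$, so a triangle inequality separates it from the rest.

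The heart of the work, and the main obstacle, is constructing the BOPB support identification routine with the SIP, in sublinear time, and robust to arbitrary additive errors of magnitude $\gamma$. I would exploit the tensor-product structure of $\B_{N,d}$: since $T_{\n}(\xib) = \prod_{j\in[D]} T_{j;n_j}(\xi_j)$, one can in principle recover $\n \in \mathcal{I}_{N,d}$ one coordinate at a time by evaluating coefficient projections along each dimension separately, using a one-dimensional orthonormality quadrature for each $\B'_j$ applied on slices of the grid. Because $\|\n\|_0 \leq d$ most coordinates are $0$, so a preliminary test (using $K^0_j$) detects whether coordinate $j$ is active, and only at most $d$ coordinates need a full $N$-bin identification. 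Robustness to additive error is obtained by replicating each one-dimensional query on $O(\log)$ random shifts or equivalent random samples and taking medians or energy estimates; this is where the $K_\infty^{4\tilde{d}}$ or $K_\infty^{4d}$ factor enters, through BOS concentration of the quadrature-based coefficient estimators (the fourth power comes from squaring a BOS Bernstein-type deviation bound and then squaring again to pass from energy estimates to SIP guarantees). Pairing across coordinates, as in the earlier sparse harmonic transform work \cite{choi2018sparse}, avoids the combinatorial blow-up of identifying all $d$ active coordinates jointly and keeps the runtime polynomial in $N,D,s,d$ rather than in $|\mathcal{I}_{N,d}|$.

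Putting the pieces together, I would show that one single fixed grid $\mathcal G$ of size matching the stated bound supports all one-dimensional quadratures simultaneously (a union bound over $O(s\cdot D\cdot N)$ candidate indices contributes the $\log^4(DN/d)\log^2(s)\log^2(D)$ factor, and the $s^3$ comes from the CoSaMP pruning to $3s$-sparse residuals combined with the SIP concentration overhead). The Type I vs Type II split then arises naturally: in Type I only $\tilde{d}$ nontrivial $K_j$ factors multiply the sample complexity regardless of $d$, while in Type II each of the (up to $d$) active coordinates contributes a factor of $K_\infty$, giving $K_\infty^{4d}$. The runtime bound follows by multiplying the per-iteration cost (dominated by evaluating $O(sD)$ candidate basis functions in $O(ND)$ time each, plus the $s^2$ cost of the pruning linear system, summed over $O(s)$ candidate columns) by the $\log(\|\ct_{\Omega^{\rm opt}_{\tilde{f},s}}\|_2/\eta)$ CoSaMP outer iterations. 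The part I expect to be most delicate is showing that the SIP survives under the composite error $(f-\tilde f)+\text{(CoSaMP residual)}$ uniformly across all iterations, since the residuals are no longer truly sparse and their tail behavior must be bounded in $\ell_1$ as well as $\ell_2$; this is what forces both $\sqrt{s}\|\cdot\|_2$ and $\|\cdot\|_1$ best-$s$-term terms to appear in \eqref{eqn:MRintroerrorg}.
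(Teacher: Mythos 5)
Your proposal follows the paper's architecture faithfully: a Support Identification Property abstraction wrapped around a modified CoSaMP contraction argument (the paper's Theorem~\ref{thm:iterInvariant}), then a dimension-incremental, tensor-structure-exploiting support identification routine satisfying that SIP (the paper's Theorem~\ref{thm:suppIdforsec3} / Algorithm~\ref{alg:suppid:impl}), combined via a triangle inequality to separate $\|f - \tilde f\|_{L^2}$ from the discrete coefficient-recovery error. That much is correct and in line with the paper.

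The one place where your plan glosses over a step that would actually break without a concrete idea is the pairing stage. You note that pairing across coordinates ``avoids the combinatorial blow-up,'' but you do not say \emph{how} the candidate set is kept from exploding as coordinates are merged: combining a size-$O(s)$ set of candidate values for coordinates $[j]$ with a size-$O(s)$ set for coordinate $j$ gives $O(s^2)$ candidates, and iterating this for $D$ coordinates yields exponential growth. The paper's resolution is the energetic-index sieve function $\F^{2s}_{\S}$ (Theorem~\ref{thm:ExistFastFuncs}, Lemma~\ref{lem:IndexExpansion}): after each Cartesian merge, the energy estimator $E^{h}_{\S;\n}$ in~\eqref{equ:EstimatorGen} ranks the merged candidates and prunes back to the $2s$ most energetic, and the analysis has to show (Theorem~\ref{thm:pairingFunc}) that this pruning never discards any entry of the true energetic set $\Omega^{\alpha,2s}_{\S}$ as long as the residual's tail noise is below a threshold $\Gamma$. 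Without this sieve-and-threshold mechanism the routine is not sublinear; it is the technical heart of the support identifier and deserves to be stated explicitly as a lemma in your plan. Relatedly, your ``preliminary activity test per coordinate, then $N$-bin identification on only the $d$ active ones'' is not quite how the paper proceeds (entry identification is run on all $D$ coordinates and returns the top-$2s$ values, which may include $0$), although this is a smaller expository imprecision. Finally, your attribution of the $s^3$ factor to ``CoSaMP pruning to $3s$-sparse residuals'' is off: in the paper it comes from the support identification sample counts $m_1 m_2$ in Lemma~\ref{lem:SamplingBounds}, where the RIP-of-order-$2$ condition with $\tilde\delta \lesssim 1/s$ forces $m_1 \sim s^2$ and the RIP-of-order-$s$ condition forces $m_2 \sim s$.
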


\begin{proof} This is a restatement of Corollary~\ref{cor:MainRes}.
\end{proof}

Looking at Theorem~\ref{thm:MRintro} we can see that it effectively subsumes the theoretical recovery results of \cite{choi2018sparse}.  Consider, for example, the case where $f = \tilde{f} = \tilde{f}_s^{\rm opt}$ (so that $f$ is exactly $s$-sparse in $\B_{N,d}$).  In this setting we will have both $\left\| f - \tilde{f} \right\|_{L^2(\mathcal{D},\mu)} = \gamma = 0$ and $\ct - \ct_{\Omega^{\rm opt}_{\tilde{f},s}} = {\bf 0}$ hold true so that \eqref{eqn:MRintroerrorg} implies that $f$ is recovered exactly (up to any chosen tolerance $\eta$).  Unlike the results in \cite{choi2018sparse}, however, Theorem~\ref{thm:MRintro} also guarantees that the method $\mathcal{H}$ will work well for functions $f$ which have $\| f - \tilde{f}_s^{\rm opt} \|_{L^2(\mathcal{D},\mu)}$ relatively small, but nonzero.

The authors would also like to emphasize the generality of Theorem~\ref{thm:MRintro}, which is unique to the best of their knowledge in the literature related to sublinear-time sparse approximation methods.  If, for example, one chooses $\B_{N,d}$ to be the multidimensional Fourier basis with $d = D$ and $\tilde{d} = 0$ then one immediately obtains a new sparse Fourier transform result for functions of many variables whose sampling and runtime requirements scale only polylogarithmically in the total basis size $|\B_{N,d}| = N^D$.  Though this new Fourier result does not compare favorably to the best deterministic multidimensional Fourier results of this kind \cite{iwen2013improved,morotti2017explicit} with respect to achievable runtimes or error guarantees, it also does not use any of the specific algebraic structure of the Fourier basis.  This allows Theorem~\ref{thm:MRintro} to be significantly more flexible than these older Fourier results in that it can apply to situations which they don't cover.  For example, it can generate entirely discrete Fourier results where $\mathcal{G} \subset \{ \frac{j}{N}~\big|~ j \in [N] \}^D$ (unlike \cite{iwen2013improved}) by using a discrete and finite multidimensional Fourier BOBP with $f = \tilde{f}$ which will work for any choice of $N$ (unlike \cite{morotti2017explicit}, which requires $N$ to be prime).

Finally, the astute reader has likely noticed that Theorem~\ref{thm:MRintro} is phrased in the form of an existence result, which may be troubling to the practical numerical analyst who actually wants to know how to compute an accurate solution.  Let us allay any anxieties that this choice of theoretical statement may have birthed -- the algorithm $\mathcal{H}$ referred to above is a modified version of the well known CoSaMP algorithm \cite{needell2009cosamp} (see Algorithm~\ref{alg:main} in Section~\ref{sec:CoSaMPguarantee}).  It has been implemented and evaluated in Section~\ref{sec:Numerics}, and the code made publicly available.\footnote{See ``SHT II: Best s-Term Approximation Guarantees for Bounded Orthonormal Product Bases in Sublinear-Time'' on Mark Iwen's code page \url{https://www.math.msu.edu/~markiwen/Code.html}.}  In short, the result is entirely explicit and constructive with respect to the algorithm $\mathcal{H}$.  The grid $\mathcal{G} \subset \D$, which ultimately responsible for the form of Theorem~\ref{thm:MRintro} as an existence result, on the other hand, is a bit more nuanced with respect to its practical construction.

As we shall see below, the grid $\mathcal{G} \subset \D$ is constructed by randomly selecting points from $\D$ according to several prescribed probability distributions that are ultimately derived from the orthogonality measure $\mu$ (see Theorems~\ref{thm:iterInvariant}~and~\ref{thm:SuppIDWorks} and their proofs for details).  It is then proven that this randomly constructed grid will allow $\mathcal{H}$ to satisfy the error guarantee \eqref{eqn:MRintroerrorg} for all functions $f$ as per \eqref{equ:FunctoApprox} with high probability while simultaneously satisfying the stated upper bounds on its cardinality.  The runtime complexity of $\mathcal{H}$ follows from the boundedness of $|\mathcal{G}|$.  Hence, the existence result is proven by randomly constructing a grid $\mathcal{G}$ which is guaranteed to satisfy the conclusions of Theorem~\ref{thm:MRintro} with high probability.  

In fact, this is completely analogous to the role of random sampling matrices in standard compressive sensing results involving the Restricted Isometry Property (RIP).  Many compressive sensing methods are guaranteed to be accurate if they are used in combination with a random sampling matrix that has the RIP, a condition which can only be achieved near-optimally with high probability.  Herein, the conclusions of Theorem~\ref{thm:MRintro} will hold for any grid $\mathcal{G}$ that can be used to form two associated random sampling matrices: one with the RIP, and another with a property known as the {\it Support Identification Property (SIP)} which will be defined in Section~\ref{subsec:SIP}.  The conclusions of Theorem~\ref{thm:MRintro} will hold whenever these two conditions are satisfied by $\mathcal{G}$, and it will be shown that a randomly constructed grid $\mathcal{G}$ will satisfy both conditions with high probability.

%%%%%%%%%%%%%%%%%%%%%%%%%%%%%%%%%%%%%%%%%%%%%%%%%%%%%%%%%%%%
%%%%%%%%%%%%%%%%%%%%%%%%%%%%%%%%%%%%%%%%%%%%%%%%%%%%%%%%%%%%
%%%%%%%%%%%%%%%%%%%%%%%%%%%%%%%%%%%%%%%%%%%%%%%%%%%%%%%%%%%%
\subsection{An Outline of the Paper and of the Proof of Theorem~\ref{thm:MRintro}}

After reviewing some relevant compressive sensing results and establishing necessary notation in Section~\ref{sec:Notation}, we will begin proving Theorem~\ref{thm:MRintro} in Section~\ref{sec:CoSaMPguarantee}.  The first step in that process will be to prove a compressive sensing recovery guarantee for a generalized version of the well known CoSaMP method \cite{needell2009cosamp}.  This new theorem, Theorem~\ref{thm:iterInvariant}, will establish a best $s$-term recovery guarantee for the CoSaMP algorithm where the support identification step is performed by any algorithm $\mathcal{A}$ and grid $\mathcal{G}$ pair which has the SIP (see Section~\ref{subsec:SIP} below for details on the SIP).  With Theorem~\ref{thm:iterInvariant} in hand we will then turn our attention to constructing a sublinear-time algorithm $\mathcal{A}$ and grid $\mathcal{G}$ pair that have the SIP, an effort whose results are summarized by Theorem~\ref{thm:suppIdforsec3} (see also Proposition~\ref{coro:SIPrevealed}).  Combining Theorems~\ref{thm:iterInvariant}~and~\ref{thm:suppIdforsec3} then quickly establishes our main result above which appears in the form of Theorem~\ref{thm:NewCoSaMP} and Corollary~\ref{cor:MainRes} in Section~\ref{sec:CoSaMPguarantee}.

The vast majority of the effort in the paper will be focussed on proving Theorem~\ref{thm:suppIdforsec3} in Section~\ref{sec:SupportID}.  That is, to demonstrate that Algorithm~\ref{alg:suppid:impl} therein can be used together with a randomly constructed grid $\mathcal{G}$ in order to effectively achieve the SIP with high probability.  This is done by Theorem~\ref{thm:SuppIDWorks} (a specialized version of Theorem~\ref{thm:GenAlgSuppIDWorks} in Section~\ref{sec:SuppIDHeavyEls}) which formalizes the random sampling strategy one must use in order to construct $\mathcal{G}$ so that the SIP is achieved with high probability, and by Theorem~\ref{thm:suppId} which translates the conclusions of Theorem~\ref{thm:SuppIDWorks} into a SIP-type statement.  Theorem~\ref{thm:GenAlgSuppIDWorks}, in turn, follows from Theorem~\ref{thm:ExistFastFuncs} which is proven in Section~\ref{sec:proofBigSuppIDThm}.  

Finally, the authors would like to note that the reader who is interested in seeing the proof of Theorem~\ref{thm:MRintro} unfold from basic compressive sensing principals in a more direct fashion (though without the benefit of waypoints explaining the relevance of each result to the final goal) might consider the following alternate reading order for the sections below:  Such readers can begin with Section~\ref{sec:proofBigSuppIDThm} after reviewing Section~\ref{sec:Notation}, followed by the first 4 paragraphs of Section~\ref{sec:SupportID}, then Section~\ref{sec:SuppIDHeavyEls}, and finally the remainder of Section~\ref{sec:SupportID} after which Theorem~\ref{thm:suppIdforsec3} will have been proven.  Reading Section~\ref{sec:SupportID} in this bottom up fashion first will then allow Section~\ref{sec:CoSaMPguarantee} to be read without having to temporarily take any of the theoretical statements therein for granted along the way.  For readers who are mostly interested in the numerical ramifications of the methods developed herein, we suggest skipping down to conduct a careful review of Algorithms~\ref{alg:main}~and~\ref{alg:suppid:impl} (together with the equations referred to therein) after reading Section~\ref{sec:Notation}, after which the careful numerical evaluation conducted in Section~\ref{sec:Numerics} should be understandable.

Before moving on to establish some additional required notation, however, we will first discuss the SIP in the next subsection.  This is crucial as the notion of the SIP will allow for easier sublinear-time methods to be developed in the future.  To emphasize this last point:  Any basis for which the SIP can be established via a sublinear-time algorithm can be combined with Theorem~\ref{thm:iterInvariant} below in order to produce a new sublinear-time compressive sensing method for that basis.  We expect that this new pathway for developing future sublinear-time algorithms will help to stimulate the further improvement and generalization of sparse Fourier transform techniques to other bases of interest going forward.

%%%%%%%%%%%%%%%%%%%%%%%%%%%%%%%%%%%%%%%%%%%%%%%%%%%%%%%%%%%%
%%%%%%%%%%%%%%%%%%%%%%%%%%%%%%%%%%%%%%%%%%%%%%%%%%%%%%%%%%%%
%%%%%%%%%%%%%%%%%%%%%%%%%%%%%%%%%%%%%%%%%%%%%%%%%%%%%%%%%%%%
\subsection{The Support Identification Property (SIP)}
\label{subsec:SIP}

As above, let $[N] := \{ 0, \dots, N-1 \}$ for all $N \in \N$ and further define $\mathcal{P}([N])$ to be the power set of any such set $[N]$.  In Section~\ref{sec:CoSaMPguarantee} we will prove that CoSaMP will still produce accurate sparse approximations as long as its support identification step employs a triple with the {\it support identification property}.\\

\begin{mydef}[The Support Identification Property (SIP)]
Let $s \in [N]$, $\beta \in (0,1)$, $\Phi \in \mathbbm{C}^{m \times N}$, $\mathcal{A}: \mathbbm{C}^m \rightarrow \mathcal{P}([N])$, and $\Gamma: \mathbbm{C}^m \rightarrow [0,\infty)$ with $\Gamma(\zero) = 0$.  The triple $(\Phi,\mathcal{A},\Gamma)$ is said to have the Support Identification Property (SIP) of order $(s,\beta )$ if 
$$\left\| \v_{\mathcal{A}(\Phi \v + \e)^c} \right\|_2 \leq \beta \| \v \|_2$$
holds for all $\e \in \mathbbm{C}^m$ and $\v \in \mathbbm{C}^N$ with $\| \v \|_0 \leq s$ that also satisfy $\| \v \|_2 > \Gamma(\e)$.
\label{def:SIP}
\end{mydef}

Note that many triples with the SIP exist.  One completely trivial example is the triple consisting of the $N \times N$ identify matrix $I$, the function $\mathcal{A}$ which always outputs $[N]$, and the zero function $\Gamma$.  Of course this example is extremely unsatisfying -- generally for compressive sensing applications we prefer that the any SIP triple $(\Phi \in \mathbbm{C}^{m \times N},\mathcal{A}: \mathbbm{C}^m \rightarrow \mathcal{P}([N]),\Gamma)$ has $m \ll N$ and an efficient computational complexity for $\mathcal{A}$ (preferably sublinear-in-$N$ herein).  Thankfully these types of SIP triples also exist -- in fact it is easy to see that any fast and error-robust compressive sensing algorithm $\mathcal{A}$ must in fact be a member of such a triple.\\

\begin{lemma}
Let $\Gamma': \mathbbm{C}^m \rightarrow \mathbbm{R}^+$ be such that $\Gamma'({\bf 0}) = 0$, and let $\mathcal{A}: \mathbbm{C}^m \rightarrow \mathbbm{C}^N$ be a compressive sensing algorithm with an associated measurement matrix $\Phi \in \mathbbm{C}^{m \times N}$ that satisfies 
$$\| \x - \mathcal{A} \left( \Phi \x + \e \right) \|_2 \leq \Gamma'(\e)$$
for all $\e \in \mathbbm{C}^m$ and $\x \in \mathbbm{C}^N$ with $\| \x \|_0 \leq s$.  Furthermore, let ${\rm supp}: \mathbbm{C}^N \rightarrow \mathcal{P}([N])$ output the indices of the nonzero entries of any given input vector.
Then, the triple $(\Phi,{\rm supp} \circ \mathcal{A},(1/ \beta') \Gamma')$ will have the SIP of order $(s,\beta')$ for all $\beta' < 1$.
\label{lem:EveryCSisSIP}
\end{lemma}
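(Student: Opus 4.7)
The plan is to apply the recovery guarantee for $\mathcal{A}$ directly to the $s$-sparse vector $\v$ itself, and then to observe that the restriction of $\v$ to the complement of the output support of $\mathcal{A}$ is automatically bounded by the full $\ell_2$-recovery error, because $\mathcal{A}(\Phi\v+\e)$ vanishes off its own support. Fixing $\beta'<1$, and an arbitrary $\e\in\mathbbm{C}^m$ and $\v\in\mathbbm{C}^N$ with $\|\v\|_0\leq s$ and $\|\v\|_2>(1/\beta')\Gamma'(\e)$, I would set
$$\hat{\v}\ :=\ \mathcal{A}(\Phi\v+\e)\qquad\text{and}\qquad S\ :=\ \mathrm{supp}(\hat{\v})\ \subseteq\ [N],$$
so that by definition of support we have $\hat{\v}_{S^c}=\zero$.

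The key chain of inequalities is then
$$\bigl\|\v_{S^c}\bigr\|_2\ =\ \bigl\|(\v-\hat{\v})_{S^c}\bigr\|_2\ \leq\ \|\v-\hat{\v}\|_2\ \leq\ \Gamma'(\e),$$
where the first equality uses $\hat{\v}_{S^c}=\zero$, the first inequality is the fact that restricting to a subset of coordinates cannot increase the $\ell_2$ norm, and the last inequality is the hypothesized recovery guarantee for $\mathcal{A}$ applied to the $s$-sparse vector $\v$ with measurement error $\e$. Rewriting the standing assumption $\|\v\|_2>(1/\beta')\Gamma'(\e)$ as $\Gamma'(\e)<\beta'\|\v\|_2$ and chaining with the above gives $\|\v_{S^c}\|_2\leq\Gamma'(\e)<\beta'\|\v\|_2$, which is precisely the SIP conclusion of Definition 1 for the triple $(\Phi,\mathrm{supp}\circ\mathcal{A},(1/\beta')\Gamma')$ at order $(s,\beta')$. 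A final one-line check confirms the normalization hypothesis on the third slot: $(1/\beta')\Gamma'(\zero)=0$ since $\Gamma'(\zero)=0$.

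There is no substantive obstacle; the proof is a matter of unpacking definitions in the right order. The only design choice worth pointing out in the write-up is why the factor $1/\beta'$ must be absorbed into the threshold function rather than appearing elsewhere: the recovery error bound $\Gamma'(\e)$ only certifies $\|\v_{S^c}\|_2\leq\Gamma'(\e)$, which is weaker than the SIP conclusion $\|\v_{S^c}\|_2\leq\beta'\|\v\|_2$ by a factor of $\beta'$, so we inflate the threshold by $1/\beta'$ to convert the additive recovery bound into the multiplicative SIP bound. This is also why the lemma is stated for every $\beta'<1$: any positive $\beta'$ yields a valid (albeit less stringent for small $\beta'$) SIP triple, with the threshold function scaling accordingly.
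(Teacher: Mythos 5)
Your proof is correct and follows essentially the same route as the paper's: both arguments reduce $\|\v_{S^c}\|_2$ to $\|\v - \mathcal{A}(\Phi\v+\e)\|_2$ by exploiting that $\mathcal{A}(\Phi\v+\e)$ vanishes on $S^c$ (you phrase this as a direct identity and a restriction inequality, while the paper adds a nonnegative squared term and sums over $S$ and $S^c$, but it is the same observation), and then both invoke the threshold $\|\v\|_2 > (1/\beta')\Gamma'(\e)$ to convert the additive bound into the multiplicative SIP bound.
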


\begin{proof}
Let $\e \in \mathbbm{C}^m$ and note that
\begin{align*}
\left\|  \x_{{\rm supp}\left( \mathcal{A} \left( \Phi \x + \e \right) \right)^c} \right\|^2_2 ~&\leq~ \left\| \x_{{\rm supp}\left( \mathcal{A} \left( \Phi \x + \e \right) \right)^c} \right\|^2_2 + \left\| \left(\x - \mathcal{A} \left( \Phi \x + \e \right) \right)_{{\rm supp}\left( \mathcal{A} \left( \Phi \x + \e \right) \right)}  \right\|^2_2\\
~&=~ \left\|  \x - \mathcal{A} \left( \Phi \x + \e \right) \right\|^2_2 ~\leq~ \left( \Gamma'(\e) \right)^2.
\end{align*}
Thus, if $\| \x \|_2 > (1/\beta') \Gamma'(\e)$ then $\left\|  \x_{{\rm supp}\left( \mathcal{A} \left( \Phi \x + \e \right) \right)^c} \right\|_2 \leq \beta' \left((1/\beta')\Gamma'(\e) \right) \leq \beta' \| \x \|_2.$
\end{proof}

Lemma~\ref{lem:EveryCSisSIP} demonstrates that many nontrivial SIP triples of the type we are interested in exist.  Of course, using a compressive sensing method in order to create a SIP triple seems slightly nonsensical given that one would generally want to create a SIP triple in order to develop a new compressive sensing method in the first place.  This immediately raises the question of whether nontrivial SIP triples exist which do not in themselves already effectively serve as a compressive sensing method.  The answer to that question is ``yes'', and the easiest example is the SIP triple which the original CoSaMP method is itself already implicitly utilizes.  Given $\x \in \mathbbm{C}^N$ and $s \in  [N]$ let $\x \big|_{s} \in \mathbbm{C}^N$ be the vector obtained from $\x$ by setting all but its $s$-largest magnitude entries to $0$.  The following lemma explicitly demonstrates the SIP triple on which the original CoSaMP algorithm \cite{needell2009cosamp} is implicitly based.\\

\begin{lemma}[The CoSaMP SIP Triple]
Let $\Phi \in \mathbbm{C}^{m \times N}$ have the RIP of order $(2s, 0.1)$ (so that its Restricted Isometry Constants (RIC)s satisfy $\delta_{s} \leq \delta_{2s} \leq 0.1$), $s \in [N]$, $\beta \in (0.2223, 1)$, and define $\mathcal{A}: \mathbbm{C}^m \rightarrow \mathcal{P}([N])$ by $\mathcal{A}(\y) := {\rm supp} \left( (\Phi^* \y)\big|_{s}~ \right)$, and $\Gamma: \mathbbm{C}^m \rightarrow [0,\infty)$ by $\Gamma(\e) := \left( \frac{2.34}{\beta - 0.2223} \right) \| \e \|_2$.  Then, the triple $\left( \Phi,\mathcal{A}, \Gamma \right)$ has the SIP of order $(s,\beta )$.
\end{lemma}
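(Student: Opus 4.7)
The plan is to unwind the definition of $\mathcal{A}$ and exploit the greedy optimality of thresholding the proxy $\u := \Phi^\ast(\Phi \v + \e)$ in order to reduce the conclusion to a standard CoSaMP-style identification bound of the shape $\|\v_{\Omega^c}\|_2 \leq (2/9)\|\v\|_2 + C\|\e\|_2$, and then observe that the hypothesis $\|\v\|_2 > \Gamma(\e)$ was engineered exactly to convert such a bound into the desired $\beta \|\v\|_2$ estimate.

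Concretely, fix $\v$ with $T := \supp(\v)$, $|T| \leq s$, and $\e \in \mathbbm{C}^m$, and set $\Omega := \mathcal{A}(\Phi \v + \e)$ and $\u := \Phi^\ast(\Phi\v + \e) = \Phi^\ast\Phi\v + \Phi^\ast \e$. Since $|\Omega| = s \geq |T|$ and $\Omega$ indexes the $s$ largest-magnitude entries of $\u$, the inequality $\|\u_\Omega\|_2^2 \geq \|\u_T\|_2^2$ holds; canceling the common indices in $\Omega \cap T$ yields the key comparison
\begin{equation*}
\|\u_E\|_2 \geq \|\u_R\|_2, \qquad R := T \setminus \Omega, \quad E := \Omega \setminus T.
\end{equation*}
Because $\supp(\v) \subseteq T$, the vector $\v_{\Omega^c}$ is supported precisely on $R$, so the entire statement reduces to bounding $\|\v_R\|_2$.

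Next I would estimate both sides of $\|\u_E\|_2 \geq \|\u_R\|_2$ using the RIP of order $2s$ with $\delta_{2s} \leq 0.1$. The left side is upper-bounded by $\|\Phi_E^\ast \Phi_T \v_T\|_2 + \|(\Phi^\ast \e)_E\|_2$; since $E \cap T = \emptyset$ and $|E| + |T| \leq 2s$, the near-orthogonality bound gives $\|\Phi_E^\ast \Phi_T \v_T\|_2 \leq \delta_{2s} \|\v\|_2$, while $\|(\Phi^\ast \e)_E\|_2 \leq \sqrt{1+\delta_s}\,\|\e\|_2$. The right side, writing $T = R \sqcup (T\cap\Omega)$, is lower-bounded by $\|\Phi_R^\ast \Phi_R \v_R\|_2 - \|\Phi_R^\ast \Phi_{T\cap\Omega} \v_{T\cap\Omega}\|_2 - \|(\Phi^\ast \e)_R\|_2 \geq (1-\delta_s)\|\v_R\|_2 - \delta_{2s}\|\v\|_2 - \sqrt{1+\delta_s}\,\|\e\|_2$. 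Collecting and using $\delta_s \leq \delta_{2s} \leq 0.1$ gives $0.9\,\|\v_R\|_2 \leq 0.2\,\|\v\|_2 + 2\sqrt{1.1}\,\|\e\|_2$, i.e.\
\begin{equation*}
\|\v_{\Omega^c}\|_2 = \|\v_R\|_2 \leq \tfrac{2}{9}\|\v\|_2 + \tfrac{2\sqrt{1.1}}{0.9}\|\e\|_2 \leq 0.2223\,\|\v\|_2 + 2.34\,\|\e\|_2,
\end{equation*}
where the final numerical estimates are precisely what motivates the constants appearing in the statement.

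Finally, the hypothesis $\|\v\|_2 > \Gamma(\e) = \frac{2.34}{\beta - 0.2223}\|\e\|_2$ rearranges to $2.34\,\|\e\|_2 < (\beta - 0.2223)\|\v\|_2$, so substituting into the displayed inequality yields $\|\v_{\Omega^c}\|_2 \leq \beta\|\v\|_2$, which is the SIP of order $(s,\beta)$. The only subtle step is the simultaneous application of the near-isometry bound to $\Phi_R^\ast\Phi_R$ and the near-orthogonality bound to the disjoint-support blocks $\Phi_R^\ast\Phi_{T\cap\Omega}$ and $\Phi_E^\ast\Phi_T$; everything else is bookkeeping with RIP constants and the triangle inequality, and the arithmetic has been tuned so that $2/9 < 0.2223$ leaves a clean margin absorbing the slack in the constant $2\sqrt{1.1}/0.9 < 2.34$.
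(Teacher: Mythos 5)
Your proof is correct, and it takes a genuinely different route from the paper's. The paper dispatches this lemma in one line by citing Lemma~4.2 of Needell and Tropp's CoSaMP paper, which (when specialized) asserts directly that $\|\v_{\mathcal{A}(\Phi\v+\e)^c}\|_2 \leq 0.2223\|\v\|_2 + 2.34\|\e\|_2$ for all $s$-sparse $\v$, and then simply rearranges the threshold condition exactly as you do in your final paragraph. You instead re-derive that identification bound from first principles: you use the greedy optimality of the top-$s$ proxy to reduce to $\|\u_E\|_2 \geq \|\u_R\|_2$ with $E = \Omega\setminus T$ and $R = T\setminus\Omega$, then bound both sides via the near-orthogonality of disjoint-support blocks ($\|\Phi_E^*\Phi_T\|_{2\to 2}\leq\delta_{2s}$), the near-isometry of $\Phi_R^*\Phi_R$, and the operator-norm bound $\|\Phi_E\|_2 \leq \sqrt{1+\delta_s}$. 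This is essentially the proof of the cited Needell--Tropp lemma unpacked for the top-$s$/RIP-$2s$ setting, and it actually yields the marginally sharper constants $2/9 \approx 0.2222$ and $2\sqrt{1.1}/0.9 \approx 2.331$, which is where the paper's rounded $0.2223$ and $2.34$ come from. The minor imprecision that $|\Omega|$ can be strictly less than $s$ (if some of the $s$ largest proxy entries are zero) does not harm the argument, since the comparison $\|\u_\Omega\|_2 \geq \|\u_T\|_2$ for all $|T|\leq s$ still holds. What your approach buys is self-containedness and transparency about where the constants originate; what the paper's approach buys is brevity by offloading the RIP bookkeeping to the referenced lemma.
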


\begin{proof}
Lemma 4.2 of \cite{needell2009cosamp} implies that $\left\|  \x_{\mathcal{A} \left( \Phi \x + \e \right)^c} \right\|_2 \leq 0.2223 \| \x \|_2 + 2.34 \| \e \|_2$ holds for all $\e \in \mathbbm{C}^m$ and $\x \in \mathbbm{C}^N$ with $\| \x \|_0 \leq s$.  Suppose, furthermore, that $\| \x \|_2 > \Gamma(\e) = \left( \frac{2.34}{\beta - 0.2223} \right) \| \e \|_2$. Then,
\begin{align*}
\left\|  \x_{\mathcal{A} \left( \Phi \x + \e \right)^c} \right\|_2 &~\leq~0.2223 \| \x \|_2 + 2.34 \| \e \|_2 ~<~ 0.2223 \| \x \|_2 + 2.34 \left( \frac{\beta - 0.2223}{2.34} \right) \| \x \|_2\\
&~= \beta \| \x \|_2 
\end{align*}
holds.  Also, $\Gamma({\bf 0}) = 0$.
\end{proof}

In Section~\ref{sec:CoSaMPguarantee} we will demonstrate that the original SIP triple implicitly used by the  CoSaMP algorithm can be replaced with any other SIP triple of similar quality without substantively changing the performance of the resulting CoSaMP variant as a compressive sensing algorithm.  Before we can do this, however, we will require some additional notation and preliminary infrastructural results.

%%%%%%%%%%%%%%
%% NOTATION
%%%%%%%%%%%%%%
\section{Notation and Preliminaries} \label{sec:Notation}

Recall that $D \in \N$ is the number of variables in the function of interest $f:  \times_{j\in [D]} \D_j \rightarrow \C$ (where $\D_j \subset \mathbbm{R}$ for all $j \in [D]$, and $\D := \times_{j\in [D]} \D_j$).  
Vectors $\n \in [N]^D$ with $\| \n \|_0 \leq d \leq D$ will always index a basis function in 
\begin{equation}
\B := \B_{N,d} = \left\{ T_{\n}: \D \rightarrow \C ~\big|~ \n \in [N]^D ~{\rm with}~\| \n \|_0 \leq d \right\},
\label{def:BOS_B}
\end{equation} 
where we have suppressed the basis subscripts for ease of discussion.
In addition, we further assume that the BOS $\B$ is a product basis so that $T_{\n}(\xib)$ satisfies \eqref{def:T_n} as above.

\subsection{Restrictions and Partial Evaluations}

The following notation will be utilized heavily during the analysis of the proposed support identification procedure.  Let $\S \subset [D]$, $\w \in \times_{j\in\S} \D_j$ with $w_j \in \D_j$, and $\n \in [N]^D$.  The function $T_{\S;\n}: \times_{j\in\S} \D_j \rightarrow \C$ is defined to be
\begin{equation}
T_{\S;\n}(\w) := \prod_{j \in \S} T_{j;n_j} (w_j).
\label{equ:RestictedBOS}
\end{equation}
Then, the set 
\begin{equation}
\B_{\S} := \left\{ T_{\S;\n} ~\big|~\n \in [N]^D ~{\rm with}~\| \n \|_0 \leq d \right\} 
\label{def:B_S}
\end{equation}
is a BOS with respect to the probability measure $\mu_{\S} := \otimes_{j \in \S} \mu_j$ over $\displaystyle \D_{\S} := \times_{j\in\S} \D_j \subset \mathbbm{R}^{|\S|}$ with BOS constant $K_{\S} \leq \min \left\{ \prod_{j \in \S} K_j, K_{\infty}^d K^{\max\{|\S|-d,0\}}_0 \right\}$.  
For any set $E$, let $\mathcal{P}(E)$ denote the power set of $E$ containing all possible subsets of $E$.
Given any vector $\v \in \C^p$ and $\T \subset [p]$ we will let $\v_{\T} \in \C^p$ have entries
\begin{equation*}
\left(v_{\T} \right)_j = 
\begin{cases}
v_j & \textrm{if}~j \in \T\\
0 & \textrm{if}~j \notin \T
\end{cases}.
\end{equation*}
For $t \in [p]$, we let $\v_t \in \C^p$ a vector restricting $\v$ to its $t$ largest-magnitude entries.
Let $\S^c := [D] \setminus \S$ for all $\S \subset [D]$.  We will then construct $f_{\S;\w}:  \D_{\S^c} \rightarrow \C$ from $f: \D \rightarrow \C$ by defining
\begin{equation*}
f_{\S;\w}(\z) := f(\xib)
\end{equation*}
where $\xib \in \D \subset \mathbbm{R}^D$ is the unique vector with $\xib_{\S} = \w$ and $\xib_{\S^c} = \z$.
In this context, we define the permutation function
$\varrho_\S\colon \D_{\S}\times\D_{\S^c}\rightarrow\D$ given by
\begin{equation}\label{def:rho}
\varrho_\S(\w, \z)=\xib \text{ such that } \xib_\S=\w \text{ and } \xib_{\S^c}=\z.
\end{equation}
This yields the alternative characterization $f_{\S;\w}(\z) = f\left(\varrho_\S(\w, \z)\right)$.

The restricted vectors of the input vector $\xib$ such as $\xib_\S$ and $\xib_{\S^c}$ have the reduced dimensions. However, the coefficient vectors such as $\ct$ and $\rt$ will maintain the full dimension even though they are restricted to some subset of indices.

If $\n, \m \in \mathcal{I}_{N,d}$ and $\S \subset [D]$ then we will define $(\n,\m)_\S \in \mathcal{I}_{N,d}$ to be the vector $\n_\S + \m_{\S^c}$.  Furthermore, for a given $\vect{v}\in \C^{|\mathcal{I}_{N,d}|}$, $\n \in \mathcal{I}_{N,d}$, and $\S \subset [D]$, we will let the vector $\vect{v}_{\S;\n} \in \C^{|\mathcal{I}_{N,d}|}$ indexed by $\vect{k} \in \mathcal{I}_{N,d}$ have entries given by
\begin{align}
\left({v}_{\S;\n}\right)_{\vect{k}}=
   \begin{cases}
       v_{\vect{k}},&\text{ if } \k_{\S} = \n_{\S} \\
       0 &\text{ otherwise}\\
   \end{cases}. \label{def:vecprefixfixed}
\end{align}
Note that $\vect{v}_{\S;\n} \in \C^{|\mathcal{I}_{N,d}|}$ will only have at most $N^{|\S^c|}$ nonzero entries corresponding to the entries of $\v \in \C^{|\mathcal{I}_{N,d}|}$, $v_{\m} \in \C$, whose indices $\m$ match those of $\n$ on $\S$ (i.e., so that $\m_{\S} = \n_{\S}$).

The following calculation will be repeated sufficiently often that it merits being referred to as a lemma.  It concerns the partial sum approximation to $f$ from \eqref{equ:FunctoApprox} in $\B$ given by \eqref{equ:FinApproxf}.
Recall that $\ct$ contains only the entries of the sequence $\c$ corresponding to the indices in $\mathcal{I}_{N,d}$.

Moreover, the next lemma also demonstrates the usage of the newly introduced notation. Its statement will be used later in the proofs of Lemmas ~\ref{lem:EntryID_ExactInnerProd} and~\ref{lem:SupportLem2}.
\begin{lemma}
Let $\S \subset [D]$, $\w \in \D_{\S} = \times_{j\in\S} \D_j$ with $w_j \in \D_j$, and $\n \in \mathcal{I}_{N,d}$.  Then 
$$\left \langle \tilde{f}_{\S;\w}, T_{\S^c;\n} \right \rangle_{\left( \D_{\S^c}, \mu_{\S^c}\right)} = \left \langle~ \ct_{\S^c;\n},  \overline{\Phi_{\S;\n;\w} } ~\right \rangle$$
where $\tilde{f}$ is as in \eqref{equ:FinApproxf}, and $\Phi_{\S;\n;\w} \in \C^{|\mathcal{I}_{N,d}|}$ is a vector indexed by $\vect{k} \in \mathcal{I}_{N,d}$ with entries
\begin{equation}
\left(\Phi_{\S;\n;\w}\right)_\k := 
\begin{cases}
T_{\S;\k}(\w) & \textrm{if}~\k_{\S^c} = \n_{\S^c}\\
0 & \textrm{otherwise}
\end{cases}.
\label{equ:RowRIPlem1}
\end{equation}
\label{lem:PartEvalinnerP}
\end{lemma}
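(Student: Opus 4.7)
The plan is a direct calculation, exploiting the product structure of $\mu$ and of the basis together with orthonormality on $\S^c$. First I would substitute the expansion \eqref{equ:FinApproxf} for $\tilde{f}$ and split each $T_\m$ according to the product structure \eqref{def:T_n} into $T_{\S;\m}(\w)\,T_{\S^c;\m}(\z)$, so that
$$\tilde{f}_{\S;\w}(\z) \;=\; \sum_{\m \in \mathcal{I}_{N,d}} \tilde{c}_\m\, T_{\S;\m}(\w)\, T_{\S^c;\m}(\z).$$
Here $T_{\S;\m}(\w)$ is just a scalar depending on $\m_\S$ and $\w$, so this expression is a (finite) linear combination of the functions $T_{\S^c;\m}$ on $\D_{\S^c}$.

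Next I would take the inner product against $T_{\S^c;\n}$ over $(\D_{\S^c},\mu_{\S^c})$ and pull the finite sum outside. The key step is then orthonormality of the system $\{T_{\S^c;\m}\}_{\m \in \mathcal{I}_{N,d}}$ with respect to $\mu_{\S^c} = \otimes_{j \in \S^c}\mu_j$: since $\mu_{\S^c}$ is a product measure and each $T_{\S^c;\m}$ factors as $\prod_{j\in\S^c} T_{j;m_j}$, Fubini plus the one-dimensional orthonormality of each $\B'_j$ gives
$$\left\langle T_{\S^c;\m},\, T_{\S^c;\n}\right\rangle_{(\D_{\S^c},\mu_{\S^c})} \;=\; \prod_{j\in\S^c}\delta_{m_j,n_j},$$
which is $1$ exactly when $\m_{\S^c}=\n_{\S^c}$ and $0$ otherwise. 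Thus the inner product collapses to
$$\left\langle \tilde{f}_{\S;\w},\, T_{\S^c;\n}\right\rangle_{(\D_{\S^c},\mu_{\S^c})} \;=\; \sum_{\substack{\m \in \mathcal{I}_{N,d}\\ \m_{\S^c}=\n_{\S^c}}} \tilde{c}_\m\, T_{\S;\m}(\w).$$

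Finally, I would match this sum to the claimed right-hand side by unwinding the two pieces of notation \eqref{def:vecprefixfixed} and \eqref{equ:RowRIPlem1}. The vector $\ct_{\S^c;\n}$ has $\k$-th entry equal to $\tilde{c}_\k$ when $\k_{\S^c}=\n_{\S^c}$ and $0$ otherwise, and $\Phi_{\S;\n;\w}$ has $\k$-th entry equal to $T_{\S;\k}(\w)$ under the same support constraint $\k_{\S^c}=\n_{\S^c}$; both vectors therefore share the same support set. Using the convention $\langle \u,\v\rangle = \sum_\k u_\k \overline{v_\k}$, the pairing with $\overline{\Phi_{\S;\n;\w}}$ strips off the outer conjugation, yielding $\sum_{\k: \k_{\S^c}=\n_{\S^c}} \tilde{c}_\k\, T_{\S;\k}(\w)$, which is precisely the sum obtained above (after relabeling $\k \leftrightarrow \m$).

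There is essentially no obstacle beyond careful bookkeeping: the only subtle point is that the orthonormality invoked lives in $L^2(\D_{\S^c},\mu_{\S^c})$ rather than the original $L^2(\D,\mu)$, so I would explicitly note that it follows from the product structure of both the measure and the basis via Fubini. Everything else is a substitution of definitions.
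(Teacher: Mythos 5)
Your proposal is correct and follows essentially the same route as the paper's proof: substitute the expansion of $\tilde{f}$, factor each $T_\k$ into its $\S$ and $\S^c$ parts, interchange the finite sum with the integral, and use orthonormality of the restricted basis on $\D_{\S^c}$ to collapse the sum to indices with $\k_{\S^c}=\n_{\S^c}$. The only difference is cosmetic — you spell out the Fubini/product-measure justification for the orthonormality step, which the paper leaves implicit.
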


\begin{proof}
Computing the inner product one quickly sees that
\begin{align*}
\left \langle \tilde{f}_{\S;\w}, T_{\S^c;\n} \right \rangle_{\left( \D_{\S^c}, \mu_{\S^c}\right)} &= \int_{\D_{\S^c}} \tilde{f}_{\S;\w}(\z) ~\overline{T_{\S^c;\n}(\z)}~d \mu_{\S^c}(\z)\\
&= \int_{\D_{\S^c}} \left( \sum_{\k \in \mathcal{I}_{N,d}} \tilde{c}_{\k} ~T_{\S;\k}(\w)~T_{\S^c;\k}(\z) \right) \overline{T_{\S^c;\n}(\z)}~d \mu_{\S^c}(\z)\\
&= \sum_{\k \in \mathcal{I}_{N,d}} \left(\tilde{c}_{\S^c;\n}\right)_{\vect{k}}~T_{\S;\k}(\w).
\end{align*}
The stated result follows.
\end{proof}

Let $m\in \mathbbm{N}$ and $\n \in \mathcal{I}_{N,d}$. For any matrix $A \in \mathbbm{C}^{m \times |\mathcal{I}_{N,d}|}$, we define $(A)_{\n}$ be the column of $A$ corresponding to the index $\n$. Also, we can choose multiple columns, e.g., for $\n_1, \n_2 \in \mathcal{I}_{N,d}$, $(A)_{\{\n_1,\n_2\}}$ refers the columns of $A$ corresponding to the indices $\n_1$ and $\n_2$.  More generally, for any $\S \subset \mathcal{I}_{N,d}$ the matrix $(A)_S = A_S \in \mathbbm{C}^{m \times |\S|}$ will consist of the columns of $A$ indexed by $\S$.

\subsection{Sampling Matrices associated to a BOS and Restricted Isometry Constants}

Given a BOS as in \eqref{def:BOS_B}, let $\{\xib_\ell\}_{\ell \in [m]} \subset \D$ be sampling points drawn independently at random according to $\mu$ with corresponding samples $\{\tilde{\y}_{\ell} :=\tilde{f}(\xib_\ell)\}_{\ell \in [m]}$ from $\tilde{f}$ in \eqref{equ:FinApproxf}.  The {\it random sampling matrix} $\Phi \in\C^{m\times |\mathcal{I}_{N,d}|}$ associated with the points $\{\xib_\ell\}_{\ell \in [m]}$ and the BOS has entries given by 
\begin{equation}
\Phi_{\ell,\n}=T_{\vect{n}}(\xib_\ell)
\label{def:A}
\end{equation}
with indices $\ell \in [m]$ and $\n \in \mathcal{I}_{N,d}$.  One can see that, e.g., $\tilde{\y} = \Phi \ct$ will hold in this case.  Furthermore, results from the compressive sensing literature guarantee that $\frac{1}{\sqrt{m}} \Phi$ will also have well-behaved {\it restricted isometry constants} as soon as $m$ is sufficiently large.\\

\begin{mydef}[See Definition 6.1 in \cite{foucart2013mathematical}]
The $s$-th restricted isometry constant $\delta_s$ of a matrix $A \in \C^{m \times N}$ is the smallest $\delta \geq 0$ such that 
\begin{equation*}
(1-\delta)\|\x\|_2^2\leq \|A\x\|_2^2 \leq (1+\delta)\|\x\|_2^2
\end{equation*}
holds for all $s$-sparse vectors $\x \in \C^N$.  The matrix $A$ is said to satisfy the restricted isometry property (RIP) of order $(s,\delta)$ if $1 > \delta \geq \delta_s \geq 0$.
\end{mydef}

\begin{mytheorem}[See Theorem 12.32 and Remark 12.33 in \cite{foucart2013mathematical}]
\label{thm:BOS_RIP}
Let $\Phi\in \C^{m\times \left|\mathcal{I}_{N,d} \right|}$ be the random sampling matrix associated to a {BOS} with constant $K\geq 1$ for $m,\left|\mathcal{I}_{N,d} \right|,s \in \mathbbm{Z}^+ \setminus \{1\}$. If, for $\delta,p \in (0,1)$, 
\begin{equation*}
m\geq a K^2 \delta^{-2}s \cdot \max \left\{ \ln^2(s) \ln \left( \left|\mathcal{I}_{N,d} \right| \right) \ln(m), \ln(p^{-1}) \right\},
\end{equation*}
then with probability at least $1-p$ the restricted isometry constant $\delta_s$ of $\frac{1}{\sqrt{m}}\Phi$ satisfies $\delta_s \leq \delta$ so that $\Phi$ has the RIP of order $(s,\delta)$. Here the constant $a \in \mathbbm{R}^+$ is universal.
\end{mytheorem}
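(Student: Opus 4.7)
The plan is to invoke the cited result directly with essentially no additional work. The statement of Theorem~\ref{thm:BOS_RIP} is a verbatim transcription of Theorem 12.32 together with Remark 12.33 of \cite{foucart2013mathematical}, translated into our notation. The random sampling matrix $\Phi$ defined in \eqref{def:A} is literally the object studied in that chapter: its rows are formed by evaluating the BOPB functions $\{T_\n\}_{\n \in \mathcal{I}_{N,d}}$ at points $\xib_\ell$ drawn independently from $\mu$, with $\mathcal{I}_{N,d}$ playing the role of the finite index set and $K$ the BOS constant. So the proof reduces to pointing out that the hypotheses of the book's theorem are satisfied, after which the conclusion can be quoted verbatim.

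If one were to prove the underlying result from scratch, the natural route is via the suprema-of-chaos framework of Rudelson--Vershynin as refined by Krahmer, Mendelson, and Rauhut. First, I would recast the restricted isometry constant as the empirical-process supremum
\[
\delta_s \;=\; \sup_{\substack{\|\x\|_2 = 1 \\ \|\x\|_0 \leq s}} \left| \frac{1}{m}\sum_{\ell \in [m]} \left| \sum_{\n \in \mathcal{I}_{N,d}} T_\n(\xib_\ell) x_\n \right|^2 - \|\x\|_2^2 \right|,
\]
whose expectation vanishes when evaluated against the true measure $\mu$ thanks to the orthonormality of the BOS. Next, I would use symmetrization plus a Dudley-type chaining argument over the set of $s$-sparse unit vectors, with entries bounded by $K$ supplying the relevant sub-Gaussian/sub-exponential control. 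Finally, a Talagrand-style deviation inequality for empirical processes yields the high-probability tail bound.

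The main technical obstacle, if one pursued a direct proof, would be extracting the sharp sample-complexity scaling $m \gtrsim K^2 \delta^{-2} s \log^2(s)\log(|\mathcal{I}_{N,d}|)\log(m)$. A crude union bound over $s$-sparse supports gives only an $s\log(|\mathcal{I}_{N,d}|/s)$-type scaling that is too pessimistic in the polylogarithmic factors. Capturing the correct exponents requires careful covering-number estimates for $s$-sparse unit vectors (via Maurey's empirical method or Gelfand-width type arguments) combined with a two-level chaining that controls both the $\gamma_2$- and $\gamma_1$-functionals of the index set. Since Theorem 12.32 and Remark 12.33 of \cite{foucart2013mathematical} already carry out this analysis, my proposal is simply to defer to those references and not reproduce the technical machinery here.
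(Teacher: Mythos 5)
Your proposal is correct and matches the paper exactly: Theorem~\ref{thm:BOS_RIP} is stated as a direct citation to Theorem 12.32 and Remark 12.33 of \cite{foucart2013mathematical}, and the paper provides no proof of its own. Your optional sketch of the underlying chaos-process argument is a reasonable account of how the cited result is established, but it is not needed for the paper's purposes.
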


In addition, one can also, e.g., bound the $\ell_2$ operator norm of the random sampling matrix $\Phi$ in the event that it has the RIP.  We have the following consequence of Theorem~\ref{thm:BOS_RIP}.\\

\begin{lemma}[See Proposition 3.5 in \cite{needell2009cosamp}]
\label{lem:OpBoundRSM}
Suppose $A \in \C^{m \times \left|\mathcal{I}_{N,d} \right| }$ has the restricted isometry property (RIP) of order $(s,\delta)$.  Then, 
$$\| A \x \|_2 \leq \sqrt{1 + \delta} \left( \frac{\| \x \|_1}{\sqrt{s}} + \| \x \|_2 \right)$$
holds for all $\x \in \C^{\left|\mathcal{I}_{N,d} \right|}$. 
\end{lemma}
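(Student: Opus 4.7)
The plan is to reduce the bound to a sum of RIP estimates on $s$-sparse blocks of $\x$, obtained by partitioning the coordinates of $\x$ into decreasing-magnitude chunks of size $s$. Concretely, let $T_0 \subset [|\mathcal{I}_{N,d}|]$ index the $s$ largest-magnitude entries of $\x$, let $T_1$ index the next $s$ largest, and so on until the support of $\x$ is exhausted. The sets $T_j$ are pairwise disjoint, each $\x_{T_j}$ is $s$-sparse (using the notation $\v_T$ from Section~\ref{sec:Notation}), and $\x = \sum_{j\geq 0}\x_{T_j}$.

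First, I would apply the triangle inequality together with the hypothesis that $A$ has the RIP of order $(s,\delta)$ to each $s$-sparse block, yielding
\begin{equation*}
\| A\x \|_2 \;\leq\; \sum_{j\geq 0}\| A\x_{T_j} \|_2 \;\leq\; \sqrt{1+\delta}\,\sum_{j\geq 0}\| \x_{T_j} \|_2.
\end{equation*}
The remaining task is to control $\sum_{j\geq 0}\| \x_{T_j} \|_2$ in terms of $\|\x\|_2$ and $\|\x\|_1/\sqrt{s}$, which reduces the proof to a purely combinatorial inequality about the chunked vector.

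Second, I would exploit the ordering built into the $T_j$. For $j \geq 1$, every coordinate of $\x_{T_j}$ has magnitude at most the smallest magnitude appearing in $\x_{T_{j-1}}$; since that smallest magnitude is in turn at most the average $\|\x_{T_{j-1}}\|_1/s$, I obtain
\begin{equation*}
\| \x_{T_j} \|_\infty \;\leq\; \frac{\| \x_{T_{j-1}} \|_1}{s}, \qquad \text{hence} \qquad \| \x_{T_j} \|_2 \;\leq\; \sqrt{s}\,\| \x_{T_j} \|_\infty \;\leq\; \frac{\| \x_{T_{j-1}} \|_1}{\sqrt{s}}.
\end{equation*}
Summing and using disjointness of the $T_j$ gives $\sum_{j\geq 1}\| \x_{T_j} \|_2 \leq \sum_{j\geq 0}\|\x_{T_j}\|_1/\sqrt{s} = \|\x\|_1/\sqrt{s}$, and trivially $\|\x_{T_0}\|_2 \leq \|\x\|_2$. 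Combining with the display from the previous paragraph yields the stated bound.

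There is no genuine obstacle here; the content is entirely in the chunked-norm inequality $\|\x_{T_j}\|_2 \leq \|\x_{T_{j-1}}\|_1/\sqrt{s}$, which is standard. The only care needed is to verify that the final chunk (which may have fewer than $s$ elements if the support of $\x$ is not a multiple of $s$) still obeys the RIP bound — and it does, since the RIP of order $(s,\delta)$ implies $\|A\v\|_2 \leq \sqrt{1+\delta}\,\|\v\|_2$ for every $t$-sparse $\v$ with $t\leq s$.
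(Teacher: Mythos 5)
The paper provides no proof of its own here and simply cites Proposition~3.5 of \cite{needell2009cosamp}; your argument reproduces exactly the standard chunking proof given in that reference. Everything checks out: the decreasing-magnitude block decomposition, the triangle-plus-RIP step, the inequality $\|\x_{T_j}\|_2 \leq \|\x_{T_{j-1}}\|_1/\sqrt{s}$ comparing consecutive chunks, and your remark that a short final block still falls within the scope of the RIP hypothesis.
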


We are now prepared to develop the new CoSaMP variants on which our new sublinear-time algorithms will be based.

%%%%%%%%%%%%%%%%%%%%%%%%%%%%%%%%%
%% CoSaMP GUARANTEE WITH GENERAL SUPPORT ID
%%%%%%%%%%%%%%%%%%%%%%%%%%%%%%%%%
\section{Robust Sublinear-Time Sparse Approximation via CoSaMP with Fast Support Identification}
\label{sec:CoSaMPguarantee}

\begin{algorithm}[h]
	\caption{CoSaMP with the new support identification}
	\label{alg:main}
	\begin{algorithmic}[1]
		\Procedure{$\mathbf{CoSaMPnewSupportID}$}{}\\
		{\textbf{Input: }}{${\y}_{\rm SID} = \Phi_{\rm SID}\x_s+\e_{\rm SID}$, $\Phi_{\rm SID}$, $\mathcal{A}$, ${\y}_{\rm CE} = \Phi_{\rm CE}\x_s+\e_{\rm CE}$, ${\Phi}_{\rm CE}$, $\kappa$, $s$, $d$, $\tilde{d}$}\\
		{\textbf{Output: }}{$s$-sparse approximation $\a$ of $\x$}
		\State ${\a}^0=\zero$ \hfill \{Initial approximation\}
		\State ${\v}_{\rm SID} \gets {\y}_{\rm SID}$
		\State $k\gets 0$
		\Repeat
		\State $k\gets k+1$
		\State $\widetilde{\Omega} \gets \mathcal{A}({\v}_{\rm SID})$ \label{suppIDcalled} \hfill \{$|\widetilde{\Omega}|\leq 2s$, New support identification step (e.g., Algorithm \ref{alg:suppid:impl})\} 
		\State $\Omega \gets \widetilde{\Omega} \cup {\rm supp}({\a}^{k-1})$ \hfill \{Merge supports\}
		\State $\Phi' \gets \frac{1}{\sqrt{m_{\rm CE}}}{\Phi}_{\rm CE} \large|_{\Omega}$
		\State $\b\large|_{\Omega} \gets ({\Phi'})^{\dagger} \frac{{\y}_{\rm CE}}{\sqrt{m_{\rm CE}}}$ \hfill \{approximated using 3 LS iterations (Richardson's or CG)\}
		\State $\a^k\gets (\b\large|_{\Omega})_s$ \hfill \{Prune to obtain next approximation\}
		\State $\v_{\rm SID}\gets {\y}_{\rm SID} - \Phi_{\rm SID} \a^{k}$ \label{sampleUpdateI} \hfill \{Update current samples I\}  
		\State $\v_{\rm CEold} \gets \v_{\rm CE}$, $\v_{\rm CE}\gets {\y}_{\rm CE} - \Phi_{\rm CE} \a^{k}$ \label{sampleUpdateII} \hfill \{Update current samples II\} 
		\Until{
			$\|\v_{\rm CE} \|_2^2 > \|\v_{\rm CEold}\|_2^2 $,
			or $k \geq \kappa$} 
		\hfill \{Halting criteria\}
		\State If $\|\v_{\rm CE} \|_2^2 > \|\v_{\rm CEold}\|_2^2 $ then $\a \gets \a^{k-1}$ else  $\a \gets \a^{k}$ 
		\EndProcedure
	\end{algorithmic}
\end{algorithm}

In this section we analyze a generalized CoSaMP variant which uses any support identification method satisfying the SIP introduced in Definition~\ref{def:SIP} above (see Algorithm~\ref{alg:main}). In Theorem~\ref{thm:iterInvariant} we provide error guarantees as well as the general sampling and runtime complexities that one can obtain for such CoSaMP variants with a particular choice of halting criteria.  Later, in Section~\ref{sec:SupportID}, we then propose a new admissible support identification method which runs in sublinear time for BOPBs with sufficiently small BOS constants (see Algorithm~\ref{alg:suppid:impl}).
This method is proven to satisfy the SIP as stated in Theorem~\ref{thm:suppIdforsec3} of this section. Finally, combining Algorithms~\ref{alg:main} and~\ref{alg:suppid:impl}, we obtain Theorem~\ref{thm:NewCoSaMP} which combines the error guarantees from Theorem~\ref{thm:iterInvariant} due to the SIP with the specific sampling and runtime complexities of the support identification algorithm presented in Section~\ref{sec:SupportID} for BOPBs.
We hasten to point out that the modularity of this proof approach makes it easier to improve upon than prior works have been.
If a better (e.g., faster) support identification method satisfying the SIP is developed for a particular basis in the future it can immediately replace the one from Section \ref{sec:SupportID} and produce an improved CoSaMP type algorithm with a better performance for that particular basis.

We assume herein 
that the function~$f$ in~\eqref{equ:FunctoApprox} can be written as
\begin{equation}
f := \tilde{f} + e'
\label{equ:fDefined}
\end{equation}
where $\tilde{f}: \D \rightarrow \C$ is as per \eqref{equ:FinApproxf} with the coefficient vector $\ct \in \C^{\mathcal{I}_{N,d}}$ in $\B$, and where $e': \D \rightarrow \C$ is bounded so that $\|e'\|_{\infty} \leq \gamma$. Now, we rewrite $f$ as
\begin{equation}
 f=\tilde{f}+e'=\tilde{f}^{\rm opt}_{s}+ 
 \underbrace{\left( \tilde{f} -  \tilde{f}^{\rm opt}_{s} +e'\right)}_{ \text{\normalsize $=:e$} }.
 \label{equ:fEquivalnce}
\end{equation}
Our goal is to approximate the best $s$-term approximation $\ct_{\Omega^{\rm opt}_{\tilde{f},s}}$ of $\ct$, which is the coefficient vector of $\tilde{f}^{\rm opt}_{s}$. Since CoSaMP from~\cite{needell2009cosamp} approximates the best $s$-term of a given vector efficiently while allowing mild noise on the samples, we modify the CoSaMP algorithm in order to make it handle our high-dimensional problem more efficiently.  Since the analysis of our CoSaMP type algorithm will be based on~\cite{needell2009cosamp},
it is helpful to introduce the connection between our notation and the notation from \cite{needell2009cosamp}.  Toward that end, going forward  
we will set $\x_s := \ct_{\Omega^{\rm opt}_{\tilde{f},s}}$, $\x := \ct$, and $\Phi := \frac{1}{\sqrt{m_{\rm CE}}}\Phi_{\rm CE}$ in the notation of \cite{needell2009cosamp}.
The samples $ \u = \Phi \x + \e $ in \cite{needell2009cosamp} can then be viewed as containing renormalized function evaluations of $f$, and accordingly, $\e$ contains renormalized function evaluations of the $e$ defined in \eqref{equ:fEquivalnce}. In particular, $\e = \frac{\e_{\rm CE}}{\sqrt{m_{\rm CE}}}$.

In the pseudocode of Algorithm~\ref{alg:main}, most of the steps are identical to the original CoSaMP except the ``New support identification step", ``Update current samples I \& II", and ``Halting criteria" lines. The inputs $\y_{\rm SID} \in \C^{m_{\rm SID}}$ and $\y_{\rm CE}\in \C^{m_{\rm CE}}$ of Algorithm~\ref{alg:main} contain function evaluations of~$f$ which will be used for support identification and coefficient estimation (i.e., via least squares), respectively.  Accordingly, $\e_{\rm SID}\in \C^{m_{\rm SID}}$ and $\e_{\rm CE}\in \C^{m_{\rm CE}}$ appearing in Theorem \ref{thm:iterInvariant} contain the corresponding function evaluations of $e$ from \eqref{equ:fEquivalnce}, and $\Phi_{\rm SID}\in \C^{m_{\rm SID} \times |\mathcal{I}_{N,d}|}$ and  $\Phi_{\rm CE} \in \C^{m_{\rm CE} \times |\mathcal{I}_{N,d}|}$ have the function evaluations of $T_{\n}$ for $\n \in \mathcal{I}_{N,d}$ at the corresponding evaluation points.  Note that $\e_{\rm SID}$ and $\e_{\rm CE}$ do not change over the iterations of Algorithm~\ref{alg:main}. We define $\ep_{\rm SID}\in \C^{m_{\rm SID}}$ and $\ep_{\rm CE}\in \C^{m_{\rm CE}}$ as the vectors whose entries are the function evaluations of $e'$ from \eqref{equ:fDefined}. Each row number ($m_{\rm SID}$ and $m_{\rm CE}$) is, therefore, the total number of function evaluations used for support identification and the coefficient estimation, respectively. 
In the $k$-th iteration, Algorithm~\ref{alg:main} starts with an $s$-sparse approximation $\a^{k-1}$ of $\x_s$ and then tries to approximate the at most $2s$-sparse residual vector $\r^{k-1}:=\x_s-\a^{k-1}$. The support identification procedure $\mathcal{A}$ in the ``New support identification" step begins approximating $\r^{k-1}$ by finding a support set $\tilde{\Omega} \subset \mathcal{I}_{N,d}$ of cardinality at most $2s$ which contains the indices of the entries where most of the energy of $\r^{k-1}$ is located.  As noted above, any support identification method satisfying the SIP can substitute the ``New support identification step" in Algorithm \ref{alg:main} in order to accomplish this task -- the algorithm developed and analyzed in Section \ref{sec:SupportID} is a specific instance.

After the support identification, in the ``Merge supports" step, a new support set $\Omega$ of cardinality at most $3s$ is then formed from the union of $\tilde{\Omega}$ with the support of the current approximation $\a^{k-1}$.  At this stage $\Omega$ should contain the overwhelming majority of the important (i.e., energetic) index vectors for $\r^{k-1}$.  
As a result, restricting the columns of the sampling matrix $\Phi_{\rm CE}$ to those in $\Omega$ (or constructing them on the fly in a low memory setting) in order to solve for 
$\b_{\Omega} := {\rm argmin}_{\u \in \C^{|\Omega|} } \frac{1}{\sqrt{m_{\rm CE}}} \left\| \left({\Phi}_{\rm CE} \right)_{\Omega} \u-\y_{\rm CE} \right\|_2$
should yield accurate estimates for the true coefficients of $\x_s = \ct_{\Omega^{\rm opt}_{\tilde{f},s}}$ indexed by the elements of $\Omega$, $\left(\ct_{\Omega^{\rm opt}_{\tilde{f},s}}\right)_{\Omega}$.\footnote{In practice, it suffices to  approximate the least-squares solution $\b_{\Omega}$ by an iterative least-squares approach such as Richardson's iteration or conjugate gradient \cite{doi:10.1137/1.9781611971484,Dahlquist:2008:NMS:1383510} since computing the exact least squares solution can be expensive when $s$ is large. The argument of \citep{needell2009cosamp} shows that it is enough  to take three iterations for Richardson's iteration or conjugate gradient if the initial condition is set to $\a^{k-1}$, and if $\Phi_{\rm CE}$ has an RIP constant $\delta_{2s}<0.025$. In fact, both of these methods have similar runtime performance.}  
The vector $\left( \b_{\Omega} \right)_s$ then becomes the next approximation~$\a^k$ of~$\x_s = \ct_{\Omega^{\rm opt}_{\tilde{f},s}}$.
Theorem \ref{thm:iterInvariant} provides the error guarantees for $\left\| \ct_{\Omega^{\rm opt}_{\tilde{f},s}} - \a \right\|_2$, as well as the runtime complexity of Algorithm \ref{alg:main} in terms of the provided support identification algorithm's runtime.\\

\begin{mytheorem}
	Let $\bar{\Gamma}\geq 0$, $\beta \in(0,0.2228]$, $\kappa \in \N$ and $\delta \in (0,0.025]$ be fixed, and let $K$ be the BOS constant of \eqref{defBNd}. Suppose that $\x_s:=\ct_{\Omega^{\rm opt}_{\tilde{f},s}}$ is $s$-sparse with $\y_{\rm SID}=\Phi_{\rm SID}\x_s+\e_{\rm SID}$ and $\y_{\rm CE}=\Phi_{\rm CE}\x_s+\e_{\rm CE}$ where the triple $\big(\Phi_{\rm SID} \in \mathbbm{C}^{m_{\rm SID} \times |\mathcal{I}_{N,d}|},\mathcal{A}: \mathbbm{C}^{m_{\rm SID}} \rightarrow \mathcal{P}([|\mathcal{I}_{N,d}|]),\Gamma: \mathbbm{C}^{m_{\rm SID}} \rightarrow [0,\infty) \big)$ has the SIP of order $(2s,\beta )$ and $\Gamma(\e_{\rm SID}) \leq \bar{\Gamma}$, and $\frac{1}{\sqrt{m_{\rm CE}}} \Phi_{\rm CE} \in \mathbbm{C}^{m_{\rm CE} \times |\mathcal{I}_{N,d}|}$ has RIP constant $\delta_{2s} \leq \delta$ and $m_{\rm CE} = \mathcal{O}(s K^2 \log^4 |\mathcal{I}_{N,d}|)$. 
	Suppose that $\r^k :=\x_s - \a^k$ is a $2s$-sparse vector such that $\v_{\rm SID} = \Phi_{\rm SID} (\x_s - \a^k) + \e_{\rm SID} = \Phi_{\rm SID} \r^k + \e_{\rm SID}$. 
	Furthermore, suppose that the support identification procedure $\mathcal{A}$'s output always has cardinality at most $2s$, that it runs in $\mathcal{O}(\mathcal{L}_\mathcal{A})$-time, and that it uses $m_{\rm SID} = \mathcal{O}\left(\mathcal{L}'_\mathcal{A}\right)$ function evaluations.  Then, for all $k \geq 0$, the signal approximation $\a^k$ in Algorithm~\ref{alg:main} is $s$-sparse and satisfies
	\begin{equation*}		
	\left\| \x_s - \a^{k+1} \right\|_2 \leq 0.5 \left\|	\x_s - \a^k \right\|_2 + \frac{2.124}{\sqrt{m_{\rm CE}}} \| \e_{\rm CE} \|_2,
	\end{equation*}
	as long as $\left\| \r^k \right\|_2 = \left\| \x_s - \a^k \right\|_2 > \bar{\Gamma}$.
	In particular, if $\min_{j \in [k]} \left\| \r^j \right\|_2 > \bar{\Gamma}$ then
	\begin{equation}	
	\left\| \x_s - \a^k \right\|_2 \leq  2^{-k} \| \x_s \|_2 +  \frac{4.248}{\sqrt{m_{\rm CE}}} \| \e_{\rm CE} \|_2.
	\label{eqn:errbd}
	\end{equation}
	As a consequence, CoSaMP with any such support identification method $\mathcal{A}$ will produce an $s$-sparse approximation $\a$ that satisfies
	\begin{equation}
	\| \x_s - \a\|_2 \leq \max  \left\{   1.03   \bar{\Gamma} + 2.03 \frac{\| \e_{\rm CE} \|_2}{{\sqrt{m_{\rm CE}}}} , ~2^{-\kappa}\|\x_s\|_2 + 4.3 \frac{\|\e_{\rm CE} \|_2}{\sqrt{m_{\rm CE}}}, ~8.625 \frac{\|\e_{\rm CE} \|_2}{\sqrt{m_{\rm CE}}} \right\}.
	\label{equ:sparseErrBd}
	\end{equation}
The sampling complexity of Algorithm~\ref{alg:main} will be $m_{\rm SID} + m_{\rm CE}= \mathcal{O}\left(\mathcal{L}'_\mathcal{A}+s K^2 \log^4 |\mathcal{I}_{N,d}| \right)$.
	The runtime complexity of Algorithm~\ref{alg:main} will be $\mathcal{O} \big( \left (\mathcal{L}_\mathcal{A} + s^2 K^2 \mathcal{L}_{\Phi} \log^4 |\mathcal{I}_{N,d}| + s  \mathcal{L}_{\Phi} m_{\rm SID} \right)  \cdot \kappa \big)$, where $\mathcal{O}\left(\mathcal{L}_{\Phi} \right)$ is the runtime complexity of computing any desired matrix entry $\left( \Phi_{\rm CE} \right)_{j,\ell}$, or $\left( \Phi_{\rm SID} \right)_{j,\ell}$, for any valid choice of $j,\ell$.  
	\label{thm:iterInvariant}
\end{mytheorem}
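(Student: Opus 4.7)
The plan is to follow the structure of the original CoSaMP analysis from \cite{needell2009cosamp}, using the SIP as a drop-in replacement for the specific identification step they analyzed. The crucial observation is that Needell--Tropp's identification lemma (their Lemma 4.2) is invoked only to produce an inequality of the form $\|\r^k_{\tilde{\Omega}^c}\|_2 \leq \beta\|\r^k\|_2 + (\text{noise})$, and this is exactly what Definition~\ref{def:SIP} delivers once $\|\r^k\|_2$ exceeds $\Gamma(\e_{\rm SID})$. Since $\r^k = \x_s - \a^k$ is $(2s)$-sparse (both endpoints being $s$-sparse) and $\v_{\rm SID} = \Phi_{\rm SID}\r^k + \e_{\rm SID}$, the assumed SIP of order $(2s,\beta)$ immediately yields $\|\r^k_{\tilde{\Omega}^c}\|_2 \leq \beta\|\r^k\|_2$ whenever $\|\r^k\|_2 > \Gamma(\e_{\rm SID})$, which is implied by $\|\r^k\|_2 > \bar{\Gamma}$.

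With this identification bound in hand, the rest of the per-iteration analysis follows \cite{needell2009cosamp} almost verbatim. Merging $\tilde{\Omega}$ with $\supp(\a^k)$ gives $\Omega$ with $|\Omega| \leq 3s$, and since $\a^k$ vanishes off $\Omega$ we have $\x_s|_{\Omega^c} = \r^k|_{\Omega^c}$ with $\|\r^k|_{\Omega^c}\|_2 \leq \|\r^k|_{\tilde{\Omega}^c}\|_2 \leq \beta\|\r^k\|_2$. The approximate least-squares solution on $\Omega$ then inherits an estimation bound of the form $\|\x_s - \b|_\Omega\|_2 \lesssim \|\x_s|_{\Omega^c}\|_2 + \|\e_{\rm CE}\|_2/\sqrt{m_{\rm CE}}$ from the RIP of order $3s$ (which the hypothesis $\delta_{2s}\leq 0.025$ extends to, after absorbing a constant factor into $m_{\rm CE}=\mathcal{O}(sK^2\log^4|\mathcal{I}_{N,d}|)$ via Theorem~\ref{thm:BOS_RIP}), and the pruning step adds at most a factor of two (their Lemma 4.4). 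Choosing $\beta \leq 0.2228$ and $\delta \leq 0.025$ collapses the resulting coefficient on $\|\r^k\|_2$ down to $1/2$, yielding the one-step contraction $\|\r^{k+1}\|_2 \leq 0.5\|\r^k\|_2 + 2.124\|\e_{\rm CE}\|_2/\sqrt{m_{\rm CE}}$. Telescoping and summing the geometric noise series then gives \eqref{eqn:errbd}.

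Deriving the terminal bound \eqref{equ:sparseErrBd} requires a case analysis over the three exit routes of Algorithm~\ref{alg:main}, and I expect this to be the most delicate step. If the iteration cap $k=\kappa$ triggers while the contraction is still active, \eqref{eqn:errbd} at $k=\kappa$ gives the second term in the maximum. If the contraction stalls because $\|\r^k\|_2 \leq \bar{\Gamma}$ for some $k$, applying the contraction once more and substituting the threshold recovers the first term. The remaining case, triggered when $\|\v_{\rm CE}\|_2^2 > \|\v_{\rm CEold}\|_2^2$ and the algorithm outputs $\a^{k-1}$, is the trickiest: writing $\v_{\rm CE} = \Phi_{\rm CE}\r^k + \e_{\rm CE}$ and $\v_{\rm CEold} = \Phi_{\rm CE}\r^{k-1} + \e_{\rm CE}$, the RIP bounds $\|\v_{\rm CEold}\|_2/\sqrt{m_{\rm CE}}$ from below by $\sqrt{1-\delta}\|\r^{k-1}\|_2 - \|\e_{\rm CE}\|_2/\sqrt{m_{\rm CE}}$ and $\|\v_{\rm CE}\|_2/\sqrt{m_{\rm CE}}$ from above by $\sqrt{1+\delta}\|\r^k\|_2 + \|\e_{\rm CE}\|_2/\sqrt{m_{\rm CE}}$; combining these with the one-step contraction $\|\r^k\|_2 \leq 0.5\|\r^{k-1}\|_2 + 2.124\|\e_{\rm CE}\|_2/\sqrt{m_{\rm CE}}$ and the halting inequality forces $\|\r^{k-1}\|_2 \leq 8.625\|\e_{\rm CE}\|_2/\sqrt{m_{\rm CE}}$ after tracking constants at $\delta=0.025$, delivering the third term.

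The sampling and runtime accounting is then immediate. The sample complexity is $m_{\rm SID}+m_{\rm CE} = \mathcal{O}(\mathcal{L}'_\mathcal{A} + sK^2\log^4|\mathcal{I}_{N,d}|)$ by Theorem~\ref{thm:BOS_RIP} for the coefficient estimation matrix and by hypothesis for the identifier. Per iteration the cost decomposes into (i) one call to the support identifier at cost $\mathcal{O}(\mathcal{L}_\mathcal{A})$; (ii) three sweeps of Richardson or CG against the $|\Omega| \leq 3s$ active columns of $\Phi_{\rm CE}$, each contributing $\mathcal{O}(s \cdot m_{\rm CE} \cdot \mathcal{L}_\Phi) = \mathcal{O}(s^2 K^2 \mathcal{L}_\Phi \log^4|\mathcal{I}_{N,d}|)$; and (iii) updating $\v_{\rm SID}$ via the $s$-sparse product $\Phi_{\rm SID}\a^k$ at cost $\mathcal{O}(s\mathcal{L}_\Phi m_{\rm SID})$. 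Multiplying by the at most $\kappa$ iterations gives the stated runtime.
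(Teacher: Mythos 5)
Your per-iteration contraction argument matches the paper's exactly: the SIP delivers the support-identification bound $\left\| \r^k_{\tilde{\Omega}^c} \right\|_2 \leq \beta \| \r^k \|_2$ whenever $\| \r^k \|_2 > \bar{\Gamma}$, and the chain through Lemma~4.3, Corollary~5.3, and Lemma~4.5 of \cite{needell2009cosamp} (you cite Lemma~4.4; the pruning bound is in fact Lemma~4.5) collapses to the stated $0.5$-contraction once $\beta \leq 0.2228$ and $\delta \leq 0.025$ are substituted. The third case of the terminal bound and the resource accounting are also correct and match the paper's treatment.

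There is, however, a genuine gap in how you derive the first term of the maximum in \eqref{equ:sparseErrBd}. You propose to handle the case ``$\| \r^k \|_2 \leq \bar{\Gamma}$ for some $k$'' by \emph{applying the contraction once more and substituting the threshold}. But the contraction inequality is only available while $\| \r^k \|_2 > \bar{\Gamma}$: the SIP gives no identification guarantee below the threshold, so once the residual drops below $\bar{\Gamma}$ the subsequent iterates $\a^{k+1}, \dots, \a^\kappa$ can drift arbitrarily far from $\x_s$, and nothing in your argument prevents this. The paper closes this gap by exploiting the halting criterion directly: if the algorithm runs to $k=\kappa$ without triggering $\| \v_{\rm CE} \|_2^2 > \| \v_{\rm CEold} \|_2^2$, then $\| \v_{\rm CE} \|_2$ is non-increasing across iterations, so $\| \y_{\rm CE} - \Phi_{\rm CE} \a^\kappa \|_2 \leq \| \y_{\rm CE} - \Phi_{\rm CE} \a^k \|_2$ for the particular $k$ with $\| \r^k \|_2 \leq \bar{\Gamma}$. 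Applying the RIP upper bound $\sqrt{1+\delta} \| \r^k \|_2$ on the right and the lower bound $\sqrt{1-\delta} \| \r^\kappa \|_2$ on the left (each with a $\| \e_{\rm CE} \|_2 / \sqrt{m_{\rm CE}}$ correction) and combining gives
\begin{equation*}
\left\| \x_s - \a^\kappa \right\|_2 \leq \frac{\sqrt{1+\delta}}{\sqrt{1-\delta}}\, \bar{\Gamma} + \frac{2}{\sqrt{1-\delta}}\,\frac{\| \e_{\rm CE} \|_2}{\sqrt{m_{\rm CE}}} \leq 1.0254\, \bar{\Gamma} + 2.0255\, \frac{\| \e_{\rm CE} \|_2}{\sqrt{m_{\rm CE}}},
\end{equation*}
which is where the first term of the maximum actually comes from. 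You should replace your ``one more contraction step'' with this monotonicity-plus-RIP argument; as written the proposal cannot control $\| \r^\kappa \|_2$ once the residual has fallen beneath the SIP threshold.
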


\begin{proof}
	When $k < \kappa$ and $\| \r^k \|_2 > \bar{\Gamma} \geq \Gamma(\e_{\rm SID})$, we obtain
	\begin{align*}
	\left\| \x_s - \a^{k+1} \right\|_2 &\leq 2 \| \x_s - \b \|_2 \tag{Lemma 4.5 in \cite{needell2009cosamp}, $\a^{k+1} = (\b\large|_{\Omega})_s$, $\b = \b\large|_{\Omega}$}\\
	&\leq 2.224 \left\| (\x_s)_{{\Omega}^c} \right\|_2 + 0.0044 \| \r^k \|_2 + \frac{2.124}{\sqrt{m_{\rm CE}}} \| \e_{\rm CE} \|_2 \hspace{.07in} \tag{Corollary 5.3 in \cite{needell2009cosamp}}\\
	&\leq 2.224 \left\| \r^k_{\tilde{\Omega}^c} \right\|_2 + 0.0044 \| \r^k \|_2 + \frac{2.124}{{\sqrt{m_{\rm CE}}}} \| \e_{\rm CE} \|_2  \tag{Lemma 4.3 in \cite{needell2009cosamp}}\\
	&\leq 2.224 \cdot  \beta \| \r^k \|_2   + 0.0044 \| \r^k \|_2 + \frac{2.124}{{\sqrt{m_{\rm CE}}}} \| \e_{\rm CE} \|_2 \tag{The SIP assumption}\\
	&\leq 0.5 \| \r^k \|_2 + \frac{2.124}{\sqrt{m_{\rm CE}}} \| \e_{\rm CE} \|_2 \\
	&= 0.5 \left\| \x_s - \a^{k} \right\|_2 + \frac{2.124}{\sqrt{m_{\rm CE}}} \left\| \e_{\rm CE} \right\|_2.
	\end{align*}
	In order to obtain the bound in (\ref{eqn:errbd}) we may now simply solve the recursion for the final error after noting that
	\begin{equation*}
	(1+0.5 +0.25+ \cdots) \cdot \frac{2.124}{\sqrt{m_{\rm CE}}} \| \e_{\rm CE} \|_2 = \frac{4.248}{\sqrt{m_{\rm CE}}} \| \e_{\rm CE} \|_2.
	\end{equation*}
	If the last $k \geq \kappa$ in Algorithm \ref{alg:main}, and $\|\r^k\|_2 > \bar{\Gamma}\geq \Gamma \left( {\e_{\rm SID} } \right)$ for all $k < \kappa$, then 
	\begin{equation*}
	\|\x_s -\a\|_2 \leq 2^{-\kappa}\|\x_s \|_2+ \frac{4.248}{\sqrt{m_{\rm CE}}} \|\e_{\rm CE} \|_2.
	\end{equation*}
	On the other hand, if the last $k \geq \kappa$ in Algorithm \ref{alg:main}, $\left\| \v_{\rm CE}\right\|_2 \leq \left\| \v_{\rm CEold}\right\|_2$ for all $k < \kappa$, and $\|\r^k \|_2 \leq \bar{\Gamma}$ for some $k < \kappa$, then
	\begin{align*}
	\frac{1}{\sqrt{m_{\rm CE}}}\left\|\y_{\rm CE}- \Phi_{\rm CE}\a^{\kappa}\right\|_2 
	& \leq \frac{1}{\sqrt{m_{\rm CE}}}\left\|\y_{\rm CE}- \Phi_{\rm CE}\a^{k} \right\|_2 \\
	& \leq \left\| \frac{1}{\sqrt{m_{\rm CE}}} \Phi_{\rm CE}(\x_s -\a^{k})+ \frac{ \e_{\rm CE}}{\sqrt{m_{\rm CE}}} \right\|_2 \\
	& \leq \left\| \frac{1}{\sqrt{m_{\rm CE}}} \Phi_{\rm CE}(\x_s -\a^{k}) \right\|_2 + \frac{\| \e_{\rm CE} \|_2}{\sqrt{m_{\rm CE}}}  \\
	& \leq \sqrt{1+\delta}\left\|\x_s -\a^{k} \right\|_2 + \frac{\| \e_{\rm CE} \|_2}{\sqrt{m_{\rm CE}}}  \\
    &  \leq \bar{\Gamma} \sqrt{1+\delta} + \frac{\| \e_{\rm CE} \|_2}{\sqrt{m_{\rm CE}}},
	\end{align*}
	and
	\begin{align*}
	\frac{1}{\sqrt{m_{\rm CE}}} \left\|\y_{\rm CE}- \Phi_{\rm CE}\a^{\kappa}\right\|_2 
	& \geq \frac{1}{\sqrt{m_{\rm CE}}} \left( \left\| \Phi_{\rm CE}(\x_s -\a^{\kappa})\right\|_2 - \| \e_{\rm CE} \|_2 \right) \\
	& \geq \sqrt{1-\delta} \left\| \x_s -\a^{\kappa} \right\|_2 -  \frac{\| \e_{\rm CE} \|_2}{\sqrt{m_{\rm CE}}}. 
	\end{align*}
	By combining the upper and lower bounds, we obtain
	\begin{align*}
	\| \x_s - \a \|_2 \leq \left\| \x_s - \a^{\kappa} \right\|_2 &\leq \frac{ \sqrt{1+\delta}}{\sqrt{1-\delta}}   \bar{\Gamma}
	+\frac{2\| \e_{\rm CE} \|_2}{\sqrt{m_{\rm CE}}  \sqrt{1-\delta}} \\
	&\leq 1.0254  \bar{\Gamma} + \frac{2.0255}{\sqrt{m_{\rm CE}}} \| \e_{\rm CE} \|_2.
	\end{align*}
	
	Now assume that the first condition $\|\v_{\rm CE} \|_2^2 > \|\v_{\rm CEold}\|_2^2 $ of the halting criteria in line 16 of Algorithm \ref{alg:main} holds. There are two possible cases : (i)  $\|\r^{k-1} \|_2 \leq \bar{\Gamma} $ and (ii)  $\|\r^{k-1} \|_2 > \bar{\Gamma} $. The case (i) implies that $\| \x_s - \a \|_2 \leq \bar{\Gamma}$. For the case (ii), note first that $\|\x_s-\a^k\|_2 \leq 0.5 \|\x_s-\a^{k-1}\|_2 + \frac{2.124}{\sqrt{m_{\rm CE}}} \| \e_{\rm CE}\|_2$.
	Also, from the halting criterion,
	\begin{align*}
	\frac{1}{\sqrt{m_{\rm CE}}} \left\| \Phi_{\rm CE} \left(\x_s -\a^{k} \right) + \e_{\rm CE} \right\|_2 &\geq \frac{1}{\sqrt{m_{\rm CE}}} \left\| \Phi_{\rm CE}  \left(\x_s -\a^{k-1} \right) + \e_{\rm CE} \right\|_2, \\
	\left\| \frac{1}{\sqrt{m_{\rm CE}}}  \Phi_{\rm CE} \left(\x_s -\a^{k} \right) \right\|_2 +  \frac{\left\| \e_{\rm CE} \right\|_2}{\sqrt{m_{\rm CE}}} &\geq \left\| \frac{1}{\sqrt{m_{\rm CE}}}  \Phi_{\rm CE} \left(\x_s -\a^{k-1}\right) \right\|_2 - \frac{\| \e_{\rm CE} \|_2}{\sqrt{m_{\rm CE}}}, \\
	\sqrt{1+\delta} \left\| \x_s -\a^{k} \right\|_2 +  \frac{\| \e_{\rm CE} \|_2}{\sqrt{m_{\rm CE}}} &\geq \sqrt{1-\delta} \left\| \x_s -\a^{k-1} \right\|_2 - \frac{\| \e_{\rm CE} \|_2}{\sqrt{m_{\rm CE}}}, \\
	\left\| \x_s -\a^{k} \right\|_2  &\geq \sqrt{\frac{1-\delta}{1+\delta}} \left\| \x_s -\a^{k-1} \right\|_2-\frac{2}{\sqrt{1+\delta}} \frac{\|\e_{\rm CE} \|_2}{\sqrt{m_{\rm CE}}} .
	\end{align*}
	By combining the upper and lower bounds of $\left\| \x_s -\a^{k} \right\|_2$, we obtain
	\begin{align}
	\left\| \x_s -\a \right\|_2 = \left\| \x_s -\a^{k-1} \right\|_2 &\leq \frac{ \left(2.124+\frac{2}{\sqrt{1+\delta}}\right)\|\e_{\rm CE} \|_2}{\sqrt{m_{\rm CE}} \left(\sqrt{\frac{1-\delta}{1+\delta}} -0.5 \right)} \nonumber\\
	&\leq  8.625 \frac{\|\e_{\rm CE} \|_2}{\sqrt{m_{\rm CE}}} .
	\label{equ:BoundR}
	\end{align}
	
	 The support identification algorithm $\mathcal{A}$ is assumed to have $\mathcal{O} \left(\mathcal{L}_\mathcal{A}\right)$ runtime complexity in line 9.  A conjugate gradient least square solver can approximate line 12 with $\mathcal{O} \left(s^2 K^2 \log^4 |\mathcal{I}_{N,d}|\right)$ runtime complexity per iteration (see, e.g., Chapter 7 of \cite{doi:10.1137/1.9781611971484}, and Section 3 of \cite{hu2017rapidly}). Furthermore, a constant number of iterations (e.g. three in \cite{needell2009cosamp}) suffices.  Lines 11, 14, and 15 require the generation of an $m_{\rm CE} \times \mathcal{O}(s)$ or $m_{\rm SID} \times \mathcal{O}(s)$ submatrix of either $\Phi_{\rm CE}$ or $\Phi_{\rm SID}$, respectively.  This will take $\mathcal{O}(s  \mathcal{L}_{\Phi} m_{\rm CE}+ s \mathcal{L}_{\Phi} m_{\rm SID})$ -time.
Finally, the iteration number of the entire CoSaMP loop is bounded by $\kappa$, so that the overall runtime complexity is $\mathcal{O} \left( \left (\mathcal{L}_\mathcal{A} + s^2 K^2 \mathcal{L}_{\Phi} \log^4 |\mathcal{I}_{N,d}| + s \mathcal{L}_{\Phi} m_{\rm SID} \right) \cdot \kappa \right)$. With respect to the sampling complexity, the support identification requires $m_{\rm SID} = \mathcal{O} \left(\mathcal{L}'_\mathcal{A} \right)$ function evaluations and the conjugate gradient method requires $m_{\rm CE} = \mathcal{O} \left(s K^2 \log^4 |\mathcal{I}_{N,d}| \right)$ function evaluations \cite{doi:10.1137/1.9781611971484,hu2017rapidly}, and thus the overall sampling complexity is $m_{\rm SID} + m_{\rm CE} = \mathcal{O}\left(\mathcal{L}'_\mathcal{A} + s K^2 \log^4 |\mathcal{I}_{N,d}| \right)$.
\end{proof}

Results concerning randomized constructions of RIP matrices $\frac{1}{\sqrt{m_{\rm CE}}} \Phi_{\rm CE} \in \mathbbm{C}^{m_{\rm CE} \times |\mathcal{I}_{N,d}|}$ for BOBPs with $\delta_{2s} \leq \delta$ and $m_{\rm CE} = \mathcal{O}(s K^2 \log^4 |\mathcal{I}_{N,d}|)$ are well known (see, e.g., Theorem~\ref{thm:BOS_RIP} and Chapter 12 of \cite{foucart2013mathematical}).  Our next result gives a qualitatively similar construction of a triple $\left(\Phi_{\rm SID},\mathcal{A},\bar{\Gamma} \right)$ with what is essentially the SIP for BOPBs (see Proposition~\ref{coro:SIPrevealed} for an explicit SIP statement regarding this triple).  More specifically, Theorem \ref{thm:suppIdforsec3} constructs a support identification procedure with the properties required by Theorem \ref{thm:iterInvariant}, and also bounds its computational and sampling requirements.   We remind the reader that the error vector $\e_{\rm SID} \in \mathbbm{C}^{m_{\rm SID}}$ appearing in both Theorems \ref{thm:iterInvariant} and \ref{thm:suppIdforsec3} does not change from iteration to iteration in the analysis of Algorithm~\ref{alg:main}.\\

\begin{mytheorem}{(Sublinear-Time Support Identification for BOPBs).}
There exists
an algorithm $\mathcal{A}: \mathbbm{C}^{m_{\rm SID}} \rightarrow \mathcal{P} \left(\mathcal{I}_{N,d} \right)$ that always outputs a set of at most $2s$ index vectors $\in \mathcal{I}_{N,d}$, and a sampling strategy for randomly selecting a set of $m_{\rm SID}$ grid points $\{ \xib_\ell \}_{\ell \in [m_{\rm SID}]} \subset \D$, such that the random sampling matrix $\Phi_{\rm SID} \in \C^{m_{\rm SID} \times \left| \mathcal{I}_{N,d} \right|}$ associated with $\{ \xib_\ell \}_{\ell \in [m_{\rm SID}]}$ as per \eqref{def:A}  will have the following property with probability $\geq 0.99$:

\begin{quote}
$\mathcal{A}\left( \Phi_{\rm SID} \r^k + \e_{\rm SID} \right)$ $=$\footnote{Note that $\mathcal{A}\left( \Phi_{\rm SID} \r^k + \e_{\rm SID} \right) = \mathcal{A}\left( \Phi_{\rm SID} \left( \x_s - \a^k \right) + \e_{\rm SID} \right) = \mathcal{A}\left( \Phi_{\rm SID} \left( \ct_{\Omega^{\rm opt}_{\tilde{f},s}} - \a^k \right) + \e_{\rm SID} \right) = \mathcal{A}\left( \Phi_{\rm SID} \left( \ct - \a^k \right) +  \ep_{\rm SID} \right)$ where $\ep_{\rm SID} :=  \e_{\rm SID}  - \Phi_{\rm SID} \left(\ct - \ct_{\Omega^{\rm opt}_{\tilde{f},s}} \right)$.} $\mathcal{A}\left( \Phi_{\rm SID} \left( \ct - \a^k \right) +  \ep_{\rm SID} \right)$
will output a set $\tilde{\Omega} \subset \mathcal{I}_{N,d}$ such that
\begin{equation}
\left\| \r^k_{\tilde{\Omega}^c} \right\|_2 ~\leq ~0.2086 \left\|\r^k \right\|_2 + 2.4172 \left\| \ct-\ct_{\Omega^{\rm opt}_{\tilde{f},s}}  \right\|_2  ~\leq ~0.2203 \left\|\r^k \right\|_2
\label{equ:EngBoundSIP}
\end{equation}
holds for any $\r^k = \x_s - \a^k = \ct_{\Omega^{\rm opt}_{\tilde{f},s}}- \a^k$ satisfying $\left\| \r^k \right\|_2 > \bar{\Gamma}$, where
\begin{equation}
\bar{\Gamma}:=\left( 25 \sqrt{23s} +1 \right) \left\| \ct - \ct_{\Omega^{\rm opt}_{\tilde{f},s}} \right\|_2 + 18 \sqrt{23} \left\| \ct - \ct_{\Omega^{\rm opt}_{\tilde{f},s}}  \right\|_1 + 22 \gamma \sqrt{23s}.
\label{Def:GammaBarsec3}
\end{equation}
\end{quote}

In order to achieve this property with probability $\geq 0.99$ it suffices that 
$$m_{\rm SID} = \mathcal{O}\left(\mathcal{L}'_\mathcal{A} \right)= \mathcal{O} \left( D K^{4\tilde{d}}_\infty s^3 d^4 \cdot  \log^4 \left( \frac{DN}{d} \right) \log^2 (s) \log^2 (D) \right)$$
if the BOS constants $K_j$ are $1$ for all but at most $\tilde{d} \in \mathbbm{Z} \cap [0, D]$ BOS basis sets $\B_j$, and that
$$m_{\rm SID} = \mathcal{O}\left(\mathcal{L}'_\mathcal{A} \right)= \mathcal{O} \left( D K^{4d}_\infty s^3 d^4 \cdot  \log^4 \left( \frac{DN}{d} \right) \log^2 (s) \log^2 (D) \right)$$
if $K_0 = 1$.
In the first case the runtime complexity of $\mathcal{A}$ will be 
$$\mathcal{O}\left(\mathcal{L}_\mathcal{A} \right) = \mathcal{O}\left( \left( s^5 + s^3 N \right) D K^{4\tilde{d}}_\infty d^4 \cdot  \log^4 \left( \frac{DN}{d} \right) \log^2 (s) \log^2 (D) \right),$$ 
and in the second case its runtime complexity will be
$$\mathcal{O}\left(\mathcal{L}_\mathcal{A}\right) = \mathcal{O} \left(  \left( s^5 + s^3 N \right) D K^{4d}_\infty d^4 \cdot  \log^4 \left( \frac{DN}{d} \right) \log^2 (s) \log^2 (D) \right)$$
when $K_0 = 1$.
	\label{thm:suppIdforsec3}
\end{mytheorem}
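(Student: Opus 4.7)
The plan is to construct the algorithm $\mathcal{A}$ so that, given noisy samples of the form $\Phi_{\rm SID}\r^k+\ep_{\rm SID}$, it grows a candidate support one coordinate at a time and returns the $\leq 2s$ full indices that survive all $D$ levels. Starting from the empty prefix, at level $j\in[D]$ every surviving partial index is extended by appending each value $n_j\in[N]$; a score is assigned to each extension, the $\mathcal{O}(s)$ best extensions are retained, and the rest are pruned. The score for extending a partial index $\n_{[j]}$ by $n_j$ is, by Lemma~\ref{lem:PartEvalinnerP}, exactly (an estimate of) a partial inner product $\langle \tilde f_{\S;\w},T_{\S^c;\n}\rangle$ against the relevant restricted basis function, which in turn is a linear combination of the coefficients whose index agrees with $\n$ on $\S^c$. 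These partial inner products are estimated by Monte Carlo averages of the available samples, drawn from the appropriate product measures $\mu_\S\otimes\mu_{\S^c}$.

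The first key step is a concentration argument. A Bernstein/Hoeffding inequality applied to a BOS (in the spirit of Theorem~\ref{thm:BOS_RIP}) shows that if the number of evaluation points underlying each inner-product estimate scales like $K_\S^2$ times a polylogarithmic factor, then that estimate is $\eta$-accurate with failure probability exponentially small in the sample count. Here $K_\S$ is controlled by $K_\infty^{\tilde d}$ in the type~I case and by $K_\infty^d K_0^{|\S|-d}\leq K_\infty^d$ in the type~II case, which is precisely where the two separate complexities in the statement come from. A union bound over the $\mathcal{O}(sDN)$ inner products that the algorithm ever queries — at most $\mathcal{O}(s)$ retained branches, $D$ levels, and $N$ candidate extensions per level — yields a single good event of probability $\geq 0.99$ on which every score ever computed is $\eta$-accurate. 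The samples can be shared across all inner products that use the same partition $\S$, so the total sample count is bounded by the stated $m_{\rm SID}=\mathcal{O}(DK_\infty^{4\tilde d}s^3 d^4\,\textrm{polylog})$ (respectively with $K_\infty^{4d}$), the extra square accounting for the passage from $\eta$-accurate inner products to reliable identification of coefficients of size $\gtrsim \eta$.

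On this good event the argument becomes deterministic, and this is where I would appeal to Theorem~\ref{thm:ExistFastFuncs} of Section~\ref{sec:proofBigSuppIDThm} together with variants of Lemmas~\ref{lem:EntryID_ExactInnerProd} and~\ref{lem:SupportLem2}: any $\n\in\Omega^{\rm opt}_{\tilde f,s}$ whose residual entry $|r^k_\n|$ comfortably exceeds the combined tail-and-noise level is retained at every level and therefore appears in $\tilde\Omega$. The additive noise at each score is driven by three independent contributions — the genuine samples-of-$e'$ noise ($\gamma\sqrt{s}$ term), the truncation error $\ct-\ct_{\Omega^{\rm opt}_{\tilde f,s}}$ appearing through both $\ell_1$ and $\ell_2$ norms (via Lemma~\ref{lem:OpBoundRSM}), and a small BOS-Bernstein deviation. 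Choosing the retention threshold to dominate all three yields exactly the definition of $\bar\Gamma$ in~\eqref{Def:GammaBarsec3}. Any entry of $\r^k$ missed by $\tilde\Omega$ then has magnitude below this threshold, and summing at most $s$ such squared entries gives the $\ell_2$ bound~\eqref{equ:EngBoundSIP} with the stated constants $0.2086$ and $2.4172$ once the $\|\r^k\|_2>\bar\Gamma$ hypothesis is used to absorb the constant term into a fractional multiple of $\|\r^k\|_2$.

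The runtime bookkeeping then follows by multiplying per-level cost (inner-product evaluation, $\mathcal{O}(s^2)$ or $\mathcal{O}(N)$ candidate sorting, and basis evaluations at the stored sample points) by $D$ levels and $\mathcal{O}(s)$ retained branches, producing the $(s^5+s^3N)D\,\textrm{polylog}$ bound. The main obstacle I anticipate is not the sample/runtime accounting, which is essentially a mechanical consequence of the per-level Bernstein bound combined with a union bound, but the tight constants in~\eqref{equ:EngBoundSIP}: one has to propagate $\eta$-accurate scores through the $D$ successive coordinate decisions without allowing the implicit SIP constant to inflate past $0.2203$, and this requires calibrating the threshold inside $\bar\Gamma$ against the specific $2.224$ and $0.0044$ factors already baked into Theorem~\ref{thm:iterInvariant} via the standard CoSaMP inequalities of~\cite{needell2009cosamp}.
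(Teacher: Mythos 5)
The overall architecture you sketch --- iteratively growing partial index vectors, scoring each candidate extension against samples, and pruning to $\mathcal{O}(s)$ survivors at each of $\mathcal{O}(D)$ levels --- is exactly the shape of the algorithm the paper actually uses (Algorithm~\ref{alg:suppid:impl}, where this is the entry identification plus pairing loop, with the set-valued sieve functions $\F^{2s}_{\S}$ doing the pruning). Your bookkeeping for the two complexities via $K_{\S},K_{\S^c}\leq K_{\infty}^{\tilde d}$ or $K_{\infty}^{d}$ also matches Remark~\ref{rem:SampBound}.

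There is, however, a genuine gap in your probabilistic argument. You propose Bernstein/Hoeffding concentration on each individual inner-product estimate followed by a union bound over the $\mathcal{O}(sDN)$ scores the algorithm evaluates. That argument proves a guarantee for a \emph{fixed} residual function $\tilde h$. But in CoSaMP the residual $\r^{k}=\ct_{\Omega^{\rm opt}_{\tilde f,s}}-\a^{k}$ at iteration $k$ depends on the entire algorithm history, hence on the sampling grid itself, and the number of possible $2s$-sparse residuals is combinatorially large. A union over queries does not buy you uniformity over residuals, and a direct union over all $2s$-sparse supports is hopeless. The paper resolves this precisely via RIP: Lemma~\ref{lem:SamplingBounds} invokes Theorem~\ref{thm:BOS_RIP} to give the matrices $\Phi_{\S;\zero}$ and $\Phi_{\S^c;\zero}$ in \eqref{equ:EI_RIPmat2} and \eqref{equ:EI_RIPmat} the RIP with failure probability $\sim p/(2D)$ \emph{each}, and the only union bound is over the $2D-1$ distinct partitions $\S$. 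Conditional on these $\mathcal{O}(D)$ events, the estimator error bounds in Lemmas~\ref{lem:EntryID_ExactInnerProd}, \ref{lem:SupportLem2}, \ref{lem:EstErrorforEstimator}, \ref{lem:EstErrorforEstimatorNoise} and Theorem~\ref{thm:pairingFunc} hold \emph{deterministically for every} $s'$-sparse residual vector, which is exactly the uniformity CoSaMP needs. Your proposed Bernstein-per-query route does not deliver that without additional machinery, so the probabilistic skeleton of your proof is the wrong one.

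A secondary but substantive point: the score attached to a candidate $\n$ is not a single partial inner product --- it is the energy estimator $E^{u}_{\S;\n}$ in \eqref{equ:EstimatorGen}, an average over $m_2$ random draws $\z_k\sim\mu_{\S^c}$ of \emph{squared} Monte Carlo inner-product estimates, and it is designed (via Lemma~\ref{lem:EntryID_ExactInnerProd}) to approximate the partial energy $\left\|\rt_{\S;\n}\right\|_2$. Scoring by a single inner product $\left\langle\tilde h_{\S^c;\z_k},T_{\S;\n}\right\rangle$ at one $\z_k$ can be defeated by cancellation even when $\left\|\rt_{\S;\n}\right\|_2$ is large, so the double averaging is essential and is also what makes the RIP of \emph{both} $\Phi_{\S;\zero}$ (for the inner Monte Carlo, Lemma~\ref{lem:SupportLem2}) and $\Phi_{\S^c;\zero}$ (for the outer average, Lemma~\ref{lem:EntryID_ExactInnerProd}) necessary. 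Your proposal should be reworked around those two RIP conditions and the $m_1,m_2$ split they induce, rather than around per-query concentration.
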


\begin{proof}
See Section~\ref{sec:SupportID}.  This is a slight restatement of Theorem~\ref{thm:suppId}.  The algorithm $\mathcal{A}$ is Algorithm~\ref{alg:suppid:impl}, and the $m_{\rm SID}$ points $\{ \xib_\ell \}_{\ell \in [m_{\rm SID}]} \subset \D$ should be randomly selected as per the first paragraph of Theorem~\ref{thm:SuppIDWorks}.  The runtime and sampling complexities then also follow from Theorem~\ref{thm:SuppIDWorks}.  

The first inequality in \eqref{equ:EngBoundSIP} follows directly from Theorem~\ref{thm:suppId}.  In order to see that the second inequality 
$$0.2086 \left\| \r^k \right\|_2 + 2.4172 \left\| \ct-\ct_{\Omega^{\rm opt}_{\tilde{f},s}}  \right\|_2 ~\leq ~0.2203 \left \|\r^k \right\|_2$$
	holds whenever $\left\| \r^k \right\|_2 > \bar{\Gamma}$, we note that 
	\begin{align*}
	\left\| \r^k \right\|_2 >&~\bar{\Gamma} = \left( 25 \sqrt{23s} +1 \right) \left\| \ct - \ct_{\Omega^{\rm opt}_{\tilde{f},s}} \right\|_2 + 18 \sqrt{23} \left\| \ct - \ct_{\Omega^{\rm opt}_{\tilde{f},s}}  \right\|_1 + 22 \gamma \sqrt{23s}\\
	&\geq  \left( 25 \sqrt{23s} +1 \right) \left\| \ct - \ct_{\Omega^{\rm opt}_{\tilde{f},s}} \right\|_2 + 18 \sqrt{23} \left\| \ct - \ct_{\Omega^{\rm opt}_{\tilde{f},s}}  \right\|_2\\
	&\geq \left( 43 \sqrt{23} +1 \right) \left\| \ct - \ct_{\Omega^{\rm opt}_{\tilde{f},s}}  \right\|_2.
	\end{align*}
	Thus, one can see that 
	$$0.2086 \left\|\r^k \right\|_2 + 2.4172 \left\| \ct-\ct_{\Omega^{\rm opt}_{\tilde{f},s}}  \right\|_2 < \left( 0.2086 + \frac{2.4172}{43 \sqrt{23} +1} \right) \left\| \r^k \right\|_2$$
	which yields the desired effective SIP constant $\beta = 0.2203 \in(0,0.2228]$.
\end{proof}

The following proposition is a variant of Theorem~\ref{thm:suppIdforsec3} that more formally establishes exactly the type of SIP triple 
$$\left(\Phi_{\rm SID} \in \mathbbm{C}^{m_{\rm SID} \times |\mathcal{I}_{N,d}|},\mathcal{A}: \mathbbm{C}^{m_{\rm SID}} \rightarrow \mathcal{P}([|\mathcal{I}_{N,d}|]),\Gamma: \mathbbm{C}^{m_{\rm SID}} \rightarrow [0,\infty) \right)$$ defined in Section~\ref{sec:Introduction}.  Its main contribution is to explicitly define a function $\Gamma: \mathbbm{C}^{m_{\rm SID}} \rightarrow [0,\infty)$ as per Definition~\ref{def:SIP} which produces a SIP triple when combined with the algorithm $\mathcal{A}$ and matrix $\Phi_{\rm SID}$ from Theorem~\ref{thm:suppIdforsec3}.  We hasten to point out, however, that only a valid upper bound $\bar{\Gamma}$ of $\Gamma(\e_{\rm SID})$ as per \eqref{Def:GammaBarsec3} is actually necessary in order to apply Theorem~\ref{thm:iterInvariant} -- one doesn't actually have to know the exact form of the best achievable function $\Gamma$.  Nonetheless, for completeness we provide a function $\Gamma$ in the next proposition which allows us to formally satisfy Definition~\ref{def:SIP} as stated in Section~\ref{sec:Introduction}.\\

\begin{prop} \sloppypar
        Let $\mathcal{A}: \mathbbm{C}^{m_{\rm SID}} \rightarrow \mathcal{P} \left(\mathcal{I}_{N,d} \right)$ and $\Phi_{\rm SID} \in \C^{m_{\rm SID} \times \left| \mathcal{I}_{N,d} \right|}$ be the algorithm and random sampling matrix referred to by Theorem \ref{thm:suppIdforsec3}, where $\displaystyle \bigcup_{j \in [2D-1]} \left \{ \varrho_{\S_j}\left( \w^j_\ell,\z_k^j \right) \right \}_{\ell \in [m_1], k \in [m_2]}$ denotes the $m_{\rm SID} = m_1 m_2 (2D-1)$ random sampling points\footnote{See the input of Algorithm~\ref{alg:suppid:impl} for a description of the sampling points and note that the $2D-1$ blocks have been reindexed for ease of discussion, and that the index sets $\S_j$ must therefore correspond to either $\{ j \}$ or $[j+1]$ accordingly.  For a description of how to generate the component points $\w^j_\ell,\z_k^j$ we refer the reader to Theorem~\ref{thm:SuppIDWorks}.} used to create $\Phi_{\rm SID}$.  In addition, define
$$\Gamma(\e_{\rm SID}) := C \frac{\sqrt{s}}{\sqrt{m_1 m_2}} \max_{j \in [2D-1]} \left\| \e_{\rm SID}^j \right\|_2, $$
	where $C \in \mathbbm{R}^+$ is an absolute constant $\leq 29$ fixed below\footnote{See \eqref{equ:threshold} in Theorem \ref{thm:pairingFunc} for a definition of $\Gamma$ with explicit constants, where we further point out that $\alpha$ is fixed to be $\sqrt{23}$ in Theorem~\ref{thm:SuppIDWorks}.  When looking at Theorem \ref{thm:pairingFunc} one should keep in mind that the matrix $\mathcal{E}^h_{\S} \in \C^{m_1 \times m_2}$ therein is nothing other than a matricized version of $\e_{\rm SID}^j$ with $\S = \S_j$ for any desired choice of $j \in [2D - 1]$.}, and where $\e_{\rm SID}^j \in \mathbbm{C}^{m_1 m_2}$ corresponds to the portion of $\e_{\rm SID} \in \mathbbm{C}^{m_{\rm SID}}$ formed by evaluating $e$ in \eqref{equ:fEquivalnce} at the evaluation points $\left \{ \varrho_{\S_j}\left( \w^j_\ell,\z_k^j \right) \right \}_{\ell \in [m_1], k \in [m_2]}$ for each $j \in [2D-1]$.  Then, with probability $\geq 0.99$ 
the triple $\left(\Phi_{\rm SID}, \mathcal{A}, \Gamma \right)$ formed using the random evaluation points $\displaystyle \bigcup_{j \in [2D-1]} \left \{ \varrho_{\S_j}\left( \w^j_\ell,\z_k^j \right) \right \}_{\ell \in [m_1], k \in [m_2]}$ will have both of the following properties:
	\begin{enumerate}
	\item[(i)] $\left(\Phi_{\rm SID}, \mathcal{A}, \Gamma \right)$ will have the SIP of order $(2s,\beta = 0.2203)$, and
	\item[(ii)] $\bar{\Gamma} := \bar{\Gamma} \left(\ct - \ct_{\Omega^{\rm opt}_{\tilde{f},s}}, \gamma \right)$ in \eqref{Def:GammaBarsec3} will satisfy $\bar{\Gamma} \geq \Gamma(\e_{\rm SID})$ for all inputs $ \Phi_{\rm SID} \r^k + \e_{\rm SID} = \Phi_{\rm SID} \left( \ct - \a^k \right) +  \ep_{\rm SID}$ with $\| \ep_{\rm SID} \|_\infty \leq \gamma$.
	\end{enumerate}
\label{coro:SIPrevealed}	
\end{prop}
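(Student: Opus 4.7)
The plan is to derive both claims by unpacking Theorem~\ref{thm:suppIdforsec3} together with the underlying pairing result (Theorem~\ref{thm:pairingFunc}) cited in the footnote defining $\Gamma$. That theorem gives, for each block index $j \in [2D-1]$, a threshold of the form $C\sqrt{s}/\sqrt{m_1 m_2} \cdot \|\e_{\rm SID}^j\|_2$ below which heavy entries of an arbitrary $2s$-sparse input can be missed. Taking the maximum over $j$ produces exactly the stated $\Gamma(\e_{\rm SID})$, and the pairing algorithm's numerical constants have already been tuned in Theorem~\ref{thm:suppIdforsec3} to yield the effective SIP constant $\beta = 0.2203$.

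For property (i), I would fix arbitrary $\e \in \mathbbm{C}^{m_{\rm SID}}$ and $\v \in \mathbbm{C}^{|\mathcal{I}_{N,d}|}$ with $\|\v\|_0 \leq 2s$ and $\|\v\|_2 > \Gamma(\e)$, then invoke Theorem~\ref{thm:pairingFunc} in its general form (as opposed to the specialization to $\r^k = \x_s - \a^k$ used inside Theorem~\ref{thm:suppIdforsec3}). Its conclusion immediately yields $\|\v_{\mathcal{A}(\Phi_{\rm SID}\v + \e)^c}\|_2 \leq \beta\|\v\|_2$. Since $\Gamma(\zero) = 0$ by construction, the triple $(\Phi_{\rm SID}, \mathcal{A}, \Gamma)$ satisfies Definition~\ref{def:SIP} at order $(2s, 0.2203)$. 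This part is essentially bookkeeping once one confirms that Theorem~\ref{thm:pairingFunc} truly handles arbitrary $2s$-sparse inputs and not merely residuals of the form $\ct_{\Omega^{\rm opt}_{\tilde{f},s}} - \a^k$.

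For property (ii), I would use the decomposition $\e_{\rm SID} = \ep_{\rm SID} + \Phi_{\rm SID}\bigl(\ct - \ct_{\Omega^{\rm opt}_{\tilde{f},s}}\bigr)$ exhibited in the footnote attached to Theorem~\ref{thm:suppIdforsec3}. Restricting to block $j$ and applying the triangle inequality yields
\[
\|\e_{\rm SID}^j\|_2 \;\leq\; \|\ep_{\rm SID}^j\|_2 \;+\; \bigl\|\Phi_{\rm SID}^j\bigl(\ct - \ct_{\Omega^{\rm opt}_{\tilde{f},s}}\bigr)\bigr\|_2.
\]
The first term is bounded by $\gamma\sqrt{m_1 m_2}$ since $\|\ep_{\rm SID}\|_\infty \leq \gamma$. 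For the second, the blockwise matrix $\Phi_{\rm SID}^j/\sqrt{m_1 m_2}$ (attached to the restricted BOS $\B_{\S_j}$ from~\eqref{def:B_S}) enjoys the RIP on the event provided by Theorem~\ref{thm:SuppIDWorks}, so Lemma~\ref{lem:OpBoundRSM} supplies a bound proportional to $\sqrt{m_1 m_2}\bigl(\|\ct - \ct_{\Omega^{\rm opt}_{\tilde{f},s}}\|_1/\sqrt{s} + \|\ct - \ct_{\Omega^{\rm opt}_{\tilde{f},s}}\|_2\bigr)$. Multiplying through by $C\sqrt{s}/\sqrt{m_1 m_2}$ and taking a max over $j$ produces three contributions matching, respectively, the $\gamma\sqrt{23s}$, the $\|\ct - \ct_{\Omega^{\rm opt}_{\tilde{f},s}}\|_1$, and the $\sqrt{s}\,\|\ct - \ct_{\Omega^{\rm opt}_{\tilde{f},s}}\|_2$ terms in $\bar{\Gamma}$ from \eqref{Def:GammaBarsec3}.

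The principal obstacle is constant matching: verifying that the absolute constant $C$ appearing in Theorem~\ref{thm:pairingFunc} really is at most $29$, and that the RIP distortion $\delta$ guaranteed by the random sampling strategy of Theorem~\ref{thm:SuppIDWorks} is small enough so that $C\sqrt{1+\delta}$ is dominated by the numerical coefficients $25\sqrt{23}$, $18\sqrt{23}$, and $22\sqrt{23}$ appearing in $\bar{\Gamma}$. A secondary bookkeeping point is that the claimed $0.99$ success probability must simultaneously cover the SIP conclusion in (i) and the blockwise RIP invoked in (ii); this is automatic because both conditions are produced by the same random grid $\mathcal{G}$ and the relevant union bound is already absorbed into the conclusions of Theorem~\ref{thm:SuppIDWorks}.
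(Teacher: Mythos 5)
Your proposal is correct and follows essentially the same route as the paper, differing only in packaging.  For part~(ii) you apply the operator bound from Lemma~\ref{lem:OpBoundRSM} directly to $\ct - \ct_{\Omega^{\rm opt}_{\tilde{f},s}}$, whereas the paper routes through Lemma~\ref{lem:errorBound} (stated in terms of $\rt - \rt_{\Omega^{\rm opt}_{\tilde{h},2s}}$) followed by Lemma~\ref{lem:DetermineGamma} to pass from $\rt$-tails to $\ct$-tails; these are the same estimate viewed from two sides, and your direct version is actually a little tighter since it skips the intermediate comparison.  One nuance you gloss over is that "the blockwise matrix $\Phi_{\rm SID}^j/\sqrt{m_1m_2}$ enjoys the RIP" is not available off the shelf from Theorem~\ref{thm:SuppIDWorks}: what the random sampling gives you is the RIP of the per-dimension factors $\Phi_{\S_j;\zero}/\sqrt{m_1}$ and $\Phi_{\S_j^c;\zero}/\sqrt{m_2}$, and the RIP of the blockwise matrix is a consequence of the Kronecker-product argument carried out inside the proof of Lemma~\ref{lem:errorBound} (via Proposition~6.6 of \cite{foucart2013mathematical} and Lemma~2 of \cite{duarte2012kronecker}).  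For part~(i), your plan of fixing arbitrary $2s$-sparse $\v$ and arbitrary $\e$ and invoking Theorem~\ref{thm:pairingFunc} is exactly right and is precisely what the paper means by "tracing through Theorem~\ref{thm:suppIdforsec3}'s proof beginning with the proof of Theorem~\ref{thm:ExistFastFuncs} where one need not apply Lemma~\ref{lem:errorBound}," but "immediately yields" compresses too much: you still need to chain the $2D-1$ sieve-containments via Lemma~\ref{lem:IndexExpansion} (or equivalently invoke Theorem~\ref{thm:GenAlgSuppIDWorks}) and then finish with Lemma~\ref{lem:BigCoefsSuffice} to convert the set inclusion $\Omega^{\alpha,2s}_{[D]} \subseteq \tilde{\Omega}$ into the energy bound.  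Doing so in fact gives $\beta = 0.2086$, which is slightly stronger than the $0.2203$ stated (the $0.2203$ in Theorem~\ref{thm:suppIdforsec3} absorbs the discrepancy between $\r^k$ and $\rt_{\Omega^{\rm opt}_{\tilde{h},2s}}$, a discrepancy that simply does not exist under the SIP interpretation where $\v$ is declared to be the exact sparse vector); either way the claim holds.  Your flag about confirming that Theorem~\ref{thm:pairingFunc} accepts arbitrary $2s$-sparse $\tilde{h}^{\rm opt}_{2s}$ with arbitrary $e_h$ is the key soundness check, and it checks out since the statement imposes no relationship between $e_h$ and $\tilde h$; the constant-matching you raise also works out as you anticipate, with ample slack.
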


\begin{proof}
The fact that $\bar{\Gamma}$ in \eqref{Def:GammaBarsec3} satisfies $\bar{\Gamma} \geq \Gamma(\e_{\rm SID})$ is ultimately a consequence of Lemmas~\ref{lem:DetermineGamma} and~\ref{lem:errorBound}.  The SIP holding for $(\Phi_{\rm SID}, \mathcal{A}, \Gamma)$ follows from the fact that Theorem~\ref{thm:suppIdforsec3} still holds if the condition $\left\| \r^k \right\|_2 > \bar{\Gamma}$ is replaced by the condition $\left\| \r^k \right\|_2 > \Gamma(\e_{\rm SID})$.  This can be seen by tracing through Theorem~\ref{thm:suppIdforsec3}'s proof beginning with the proof of Theorem~\ref{thm:ExistFastFuncs} where one need not apply Lemma~\ref{lem:errorBound}, and from which an alternate version of Theorem~\ref{thm:GenAlgSuppIDWorks} with \eqref{eqn:SeiveFtnWorkCondition} involving $\Gamma(\e_{\rm SID})$ instead of its current right-hand side trivially follows.  With such an alternate form of Theorem~\ref{thm:GenAlgSuppIDWorks} in hand one can then immediately recover a similar variant of Theorem~\ref{thm:SuppIDWorks} involving $\Gamma(\e_{\rm SID})$ which, in turn, can then provide an alternate (though less easily stated and interpretable) version of Theorem~\ref{thm:suppIdforsec3} involving the condition $\left\| \r^k \right\|_2 > \Gamma(\e_{\rm SID})$.
\end{proof}

Finally, in Theorem~\ref{thm:NewCoSaMP}, it is shown that Algorithm ~\ref{alg:suppid:impl} can be utilized as the support identification algorithm $\mathcal{A}$ in a SIP triple $\left(\Phi_{\rm SID},\mathcal{A},\Gamma: \mathbbm{C}^{m_{\rm SID}} \rightarrow [0,\infty) \right)$ for use in Theorem \ref{thm:iterInvariant}.  The sublinear runtime and sampling complexities of Algorithm~\ref{alg:suppid:impl} listed in Theorem~\ref{thm:suppIdforsec3} then result in a new sublinear-time and memory efficient compressive sensing approach for BOPB-compressible functions $f: \mathcal{D} \rightarrow \mathbbm{C}$.
%\todo{runtime $D -> D^2$ due to the sampling update}
We would like to remind the reader before stating this main result that $\x_s = \ct_{\Omega^{\rm opt}_{\tilde{f},s}}$ is $s$-sparse with $\y_{\rm SID}=\Phi_{\rm SID}\x_s+\e_{\rm SID}$ and $\y_{\rm CE}=\Phi_{\rm CE}\x_s+\e_{\rm CE}$.  Furthermore, the triple $(\Phi_{\rm SID}, \mathcal{A}, \Gamma)$ constructed from the support identification procedure in Algorithm~\ref{alg:suppid:impl} with $\alpha:=\sqrt{23}$ satisfies the SIP of order $(2s, \beta)$ with $\beta \in [0.2203, 0.2228]$ with high probability (see Theorem~\ref{thm:suppIdforsec3} and Proposition~\ref{coro:SIPrevealed}), and the matrix $\frac{1}{\sqrt{m_{\rm CE}}} \Phi_{\rm CE}$ has a RIP constant $\delta_{2s} \leq \delta$ for $\delta \in (0,0.025]$ with high probability (see Theorem~\ref{thm:BOS_RIP}).  Finally, $\mathcal{A}$ always outputs a set of cardinality at most $2s$ as noted in Algorithm~\ref{alg:main}.\\

\begin{mytheorem}{(Sublinear-Time Compressive Sensing for BOPB-compressible Signals).}
	Let $N, d \in \N \setminus \{1\}$, $s < |\mathcal{I}_{N,d}|/2$,  $\delta \in (0,0.025]$, $\eta \in (0, \infty)$, $\kappa = \left \lceil \log_2 \left( \left\| \ct_{\Omega^{\rm opt}_{\tilde{f},s}} \right \|_2 / \eta\right) \right \rceil$, $K$ the BOS constant of \eqref{defBNd}, $\y_{\rm SID}=\Phi_{\rm SID}\ct_{\Omega^{\rm opt}_{\tilde{f},s}}+\e_{\rm SID}$, and $\y_{\rm CE}=\Phi_{\rm CE}\ct_{\Omega^{\rm opt}_{\tilde{f},s}}+\e_{\rm CE}$ where we assume that both $\ep_{\rm SID} :=  \e_{\rm SID}  - \Phi_{\rm SID} \left(\ct - \ct_{\Omega^{\rm opt}_{\tilde{f},s}} \right)$ and $\ep_{\rm CE} := \e_{\rm CE}  - \Phi_{\rm CE} \left(\ct - \ct_{\Omega^{\rm opt}_{\tilde{f},s}} \right)$ have $\| \ep_{\rm SID} \|_\infty \leq \gamma$ and $\| \ep_{\rm CE} \|_\infty \leq \gamma$, respectively.
	Suppose further that the triple $(\Phi_{\rm SID}, \mathcal{A}, \Gamma)$ with $\bar{\Gamma} \in \mathbbm{R}^+$ such that $\bar{\Gamma} \geq \Gamma(\e_{\rm SID})$ satisfies the SIP of order $(2s, \beta)$ with $\beta \in [0.2203, 0.2228]$ as per Theorem~\ref{thm:suppIdforsec3}, and that $\frac{1}{\sqrt{m_{\rm CE}}} \Phi_{\rm CE} \in \mathbbm{C}^{m_{\rm CE} \times |\mathcal{I}_{N,d}|}$ has a RIP constant $\delta_{2s}\leq \delta$ and $m_{\rm CE} = \mathcal{O}(s K^2 \log^4 |\mathcal{I}_{N,d}|) = \mathcal{O}\left(s K^2 \, d^4 \cdot \log^4 \left( \frac{DN}{d} \right) \right)$ (see \eqref{equ:IdNCardBound}).  Then, 
	for each $k \geq 0$ the signal approximation $\a^k$ in Algorithm~\ref{alg:main} is $s$-sparse and satisfies
	\begin{equation}		
	\left\| \ct_{\Omega^{\rm opt}_{\tilde{f},s}} - \a^{k+1}\right\|_2 \leq 0.5 \left\|	\ct_{\Omega^{\rm opt}_{\tilde{f},s}} - \a^k \right\|_2 + \frac{2.124}{\sqrt{m_{\rm CE}}} \| \e_{\rm CE} \|_2,
	\label{equ:sparseIterInvariant}
	\end{equation}
	as long as  $$\left\| \ct_{\Omega^{\rm opt}_{\tilde{f},s}} - \a^{k} \right\|_2 > \bar{\Gamma}:=\left( 25 \sqrt{23s} +1 \right) \left\| \ct - \ct_{\Omega^{\rm opt}_{\tilde{f},s}} \right\|_2 + 18 \sqrt{23} \left\| \ct - \ct_{\Omega^{\rm opt}_{\tilde{f},s}}  \right\|_1 + 22 \gamma \sqrt{23s}.$$
	As a consequence, 
Algorithm~\ref{alg:main} produces an $s$-sparse approximation $\a$ that satisfies
	\begin{align}
	\left\| \ct_{\Omega^{\rm opt}_{\tilde{f},s}} - \a \right\|_2 &\leq \max  \left\{   1.03   \bar{\Gamma} + 2.03 \frac{\| \e_{\rm CE} \|_2}{{\sqrt{m_{\rm CE}}}} , ~2^{-\kappa} \left\|\ct_{\Omega^{\rm opt}_{\tilde{f},s}} \right\|_2 + 5 \frac{\|\e_{\rm CE} \|_2}{\sqrt{m_{\rm CE}}}, ~9 \frac{\|\e_{\rm CE} \|_2}{\sqrt{m_{\rm CE}}} \right\} \label{equ:mainErrorGuarantee} \\
	&\leq C \left( \sqrt{s} \left\| \ct - \ct_{\Omega^{\rm opt}_{\tilde{f},s}} \right\|_2 + \left\| \ct - \ct_{\Omega^{\rm opt}_{\tilde{f},s}}  \right\|_1 + \gamma \sqrt{s} \right) + \eta,
	 \label{equ:mainErrorGuaranteeFINAL}
	\end{align}	
	where $C \in \mathbbm{R}^+$ is an absolute universal constant.
	
	In order to achieve \eqref{equ:mainErrorGuaranteeFINAL} for all such possible inputs $\y_{\rm SID}$ and $\y_{\rm CE}$ with probability $\geq 0.99$ it suffices that 
	$$m:=m_{\rm SID}+m_{\rm CE} = \mathcal{O} \left( D K^{4\tilde{d}}_\infty s^3 d^4 \cdot  \log^4 \left( \frac{DN}{d} \right) \log^2 (s) \log^2 (D)  \right)$$
	if the BOS constants $K_j$ are $1$ for all but at most $\tilde{d} \in \mathbbm{Z} \cap [0, D]$ BOS basis sets $\B_j$ (BOPB of type I), and that
	$$m = \mathcal{O} \left( D K^{4d}_\infty s^3 d^4 \cdot  \log^4 \left( \frac{DN}{d} \right) \log^2 (s) \log^2 (D) \right)$$
	if $K_0 = 1$ (BOPB of type II). 
	
	In the BOPB of type I, the runtime complexity of the entire algorithm will be 
	$$\mathcal{O}\left( \left( s^5 + s^3 N \right) D^2 K^{4\tilde{d}}_\infty d^4 \cdot  \log^4 \left( \frac{DN}{d} \right) \log^2 (s) \log^2 (D) \log \left( \left\| \ct_{\Omega^{\rm opt}_{\tilde{f},s}} \right \|_2 / \eta\right) \right),$$ 
	and in the BOPB of type II, the runtime complexity will be
	$$\mathcal{O} \left( \left( s^5 + s^3 N \right) D^2 K^{4d}_\infty d^4 \cdot  \log^4 \left( \frac{DN}{d} \right) \log^2 (s) \log^2 (D) \log \left( \left\| \ct_{\Omega^{\rm opt}_{\tilde{f},s}} \right \|_2 / \eta\right) \right).$$
	  Here we have assumed that the runtime complexity of computing any desired matrix entry $\left( \Phi_{\rm CE} \right)_{j,\ell}$, or $\left( \Phi_{\rm SID} \right)_{j,\ell}$, for any valid choice of $j,\ell$ is $\mathcal{O}\left(\mathcal{L}_{\Phi} \right) = \mathcal{O}(N D)$-time.
	\label{thm:NewCoSaMP}
\end{mytheorem}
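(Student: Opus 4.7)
The plan is to prove Theorem~\ref{thm:NewCoSaMP} by directly invoking the modular framework already established.  By hypothesis (or by appealing to Theorem~\ref{thm:suppIdforsec3} together with Proposition~\ref{coro:SIPrevealed}), the triple $(\Phi_{\rm SID},\mathcal{A},\Gamma)$ constructed from Algorithm~\ref{alg:suppid:impl} satisfies the SIP of order $(2s,\beta)$ with $\beta\leq 0.2228$ and has $\Gamma(\e_{\rm SID})\leq \bar{\Gamma}$; similarly, $\frac{1}{\sqrt{m_{\rm CE}}}\Phi_{\rm CE}$ has $\delta_{2s}\leq 0.025$ by the BOS RIP result Theorem~\ref{thm:BOS_RIP}.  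Also, $\mathcal{A}$ always returns at most $2s$ indices by construction.  Hence all hypotheses of Theorem~\ref{thm:iterInvariant} are satisfied, and that theorem applied to $\x_s=\ct_{\Omega^{\rm opt}_{\tilde{f},s}}$ immediately yields the per-iteration contraction \eqref{equ:sparseIterInvariant} and the final error bound \eqref{equ:sparseErrBd}.

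To convert \eqref{equ:sparseErrBd} into \eqref{equ:mainErrorGuarantee}, I would plug in $\kappa = \lceil \log_2(\|\ct_{\Omega^{\rm opt}_{\tilde{f},s}}\|_2/\eta)\rceil$, which gives $2^{-\kappa}\|\ct_{\Omega^{\rm opt}_{\tilde{f},s}}\|_2 \leq \eta$, and slightly loosen the constants $4.248\to 5$ and $8.625\to 9$ to absorb this additive $\eta$.  The first term in \eqref{equ:sparseErrBd} is reproduced verbatim.  To then pass from \eqref{equ:mainErrorGuarantee} to \eqref{equ:mainErrorGuaranteeFINAL}, the key task is bounding the two quantities appearing inside the max.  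For $\bar{\Gamma}$ one reads directly from \eqref{Def:GammaBarsec3} that $\bar{\Gamma}=\mathcal{O}(\sqrt{s}\|\ct-\ct_{\Omega^{\rm opt}_{\tilde{f},s}}\|_2 + \|\ct-\ct_{\Omega^{\rm opt}_{\tilde{f},s}}\|_1 + \gamma\sqrt{s})$.  For $\|\e_{\rm CE}\|_2/\sqrt{m_{\rm CE}}$ one writes $\e_{\rm CE} = \Phi_{\rm CE}(\ct-\ct_{\Omega^{\rm opt}_{\tilde{f},s}})+\ep_{\rm CE}$, so that $\|\ep_{\rm CE}\|_2/\sqrt{m_{\rm CE}}\leq \gamma$ and the remaining piece is handled by Lemma~\ref{lem:OpBoundRSM} applied to $\frac{1}{\sqrt{m_{\rm CE}}}\Phi_{\rm CE}$ (which satisfies RIP of order $(2s,\delta)$), yielding
\begin{equation*}
\frac{\|\e_{\rm CE}\|_2}{\sqrt{m_{\rm CE}}} \leq \sqrt{1+\delta}\left(\frac{\|\ct-\ct_{\Omega^{\rm opt}_{\tilde{f},s}}\|_1}{\sqrt{2s}}+\|\ct-\ct_{\Omega^{\rm opt}_{\tilde{f},s}}\|_2\right) + \gamma.
\end{equation*}
Combining these bounds with $2^{-\kappa}\|\ct_{\Omega^{\rm opt}_{\tilde{f},s}}\|_2 \leq \eta$ in the max of \eqref{equ:mainErrorGuarantee} and collecting terms produces \eqref{equ:mainErrorGuaranteeFINAL} for an absolute universal constant $C$.

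Finally, the sampling and runtime complexities are obtained by summing the contributions identified in Theorem~\ref{thm:iterInvariant}.  For sampling, $m_{\rm CE}=\mathcal{O}(sK^2\log^4|\mathcal{I}_{N,d}|)$ is dominated by $m_{\rm SID}$ from Theorem~\ref{thm:suppIdforsec3} once one observes $\log|\mathcal{I}_{N,d}|=\mathcal{O}(d\log(DN/d))$ via \eqref{equ:IdNCardBound} and bounds $K\leq K_\infty^{\tilde d}$ (type I) or $K\leq K_\infty^{d}$ (type II).  For runtime, the bound $(\mathcal{L}_\mathcal{A}+s^2K^2\mathcal{L}_\Phi\log^4|\mathcal{I}_{N,d}|+s\mathcal{L}_\Phi m_{\rm SID})\kappa$ from Theorem~\ref{thm:iterInvariant} combined with $\mathcal{L}_\Phi=\mathcal{O}(ND)$ and the $\mathcal{L}_\mathcal{A}$, $m_{\rm SID}$ bounds from Theorem~\ref{thm:suppIdforsec3} yields the two stated runtime expressions; here the extra factor of $D$ (giving $D^2$ overall) comes precisely from the $s\mathcal{L}_\Phi m_{\rm SID}\kappa$ term, and this term is absorbed into the $(s^5+s^3N)D^2$ factor by elementary comparison ($s^4ND^2 \leq s\cdot s^3ND^2$).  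The main bookkeeping obstacle is verifying that $s^4ND^2$ and $s^2K^2 ND d^4\log^4|\mathcal{I}_{N,d}|$ are both dominated by $(s^5+s^3N)D^2K^{4\tilde d}_\infty d^4\log^4(DN/d)\log^2 s\log^2 D$ (and its type~II analogue), which is a routine case check. The final statement then follows by collecting constants and noting that the $0.99$ success probability is preserved since it is guaranteed simultaneously for both the SIP and RIP constructions.
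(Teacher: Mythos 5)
Your proposal is correct and follows essentially the same route as the paper's own proof: invoke Theorem~\ref{thm:iterInvariant} with the SIP triple from Theorem~\ref{thm:suppIdforsec3} (and Proposition~\ref{coro:SIPrevealed}) to get the contraction and the three-way max bound, then pass to \eqref{equ:mainErrorGuaranteeFINAL} by substituting $\kappa$ and using Lemma~\ref{lem:OpBoundRSM} together with $\|\ep_{\rm CE}\|_\infty \leq \gamma$, and finally assemble the complexity bounds from Theorems~\ref{thm:iterInvariant} and~\ref{thm:suppIdforsec3} using \eqref{equ:IdNCardBound} and $K\leq K^{\tilde d}_\infty$ (or $K^d_\infty$). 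One small inaccuracy: the loosening of the numerical constants in \eqref{equ:mainErrorGuarantee} is not ``to absorb the additive $\eta$'' — the $\eta$-substitution $2^{-\kappa}\|\ct_{\Omega^{\rm opt}_{\tilde f,s}}\|_2\leq\eta$ only enters when deriving \eqref{equ:mainErrorGuaranteeFINAL} — but this has no effect on the validity of the argument.
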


\begin{proof}
The result follows by combining Theorems \ref{thm:iterInvariant} and \ref{thm:suppIdforsec3} which immediately yields \eqref{equ:mainErrorGuarantee}, as well as the stated runtime and sampling complexities.  Note that Theorem~\ref{thm:suppIdforsec3} assumes that we are sampling from a BOPB-sparse function with arbitrary additive noise $e': \D \rightarrow \mathbbm{C}$ that has $\| e' \|_\infty \leq \gamma$, which leads to the restriction on $\| \ep_{\rm SID} \|_\infty$ and $\| \ep_{\rm CE} \|_\infty$.  To obtain \eqref{equ:mainErrorGuaranteeFINAL} one can simply substitute our choice of $\kappa$ into \eqref{equ:mainErrorGuarantee} and use 
Lemma~\ref{lem:OpBoundRSM} to see that
\begin{align*}
\frac{\|\e_{\rm CE} \|_2}{\sqrt{m_{\rm CE}}} &=~ \left\| \frac{\ep_{\rm CE}}{\sqrt{m_{\rm CE}}} + \frac{1}{\sqrt{m_{\rm CE}}}\Phi_{\rm CE} \left(\ct - \ct_{\Omega^{\rm opt}_{\tilde{f},s}} \right) \right \|_2\\
&\leq~ \gamma ~+~ \sqrt{1 + \delta} \left( \frac{ \left\| \ct - \ct_{\Omega^{\rm opt}_{\tilde{f},s}}  \right\|_1}{\sqrt{s}} + \left\| \ct - \ct_{\Omega^{\rm opt}_{\tilde{f},s}}  \right\|_2 \right). 
\end{align*}
Finally, we note that the runtime and sampling complexity bounds have been simplified by collecting and removing dominated terms along with the fact that $K\leq K_{\infty}^{\tilde{d}}$ (BOPB of type I) or $K\leq K_{\infty}^d$ (BOPB of type II) as discussed in Section \ref{sec:Setup}.
\end{proof}

With Theorem~\ref{thm:NewCoSaMP} in hand we may now prove our main result concerning function approximation in a Hilbert space $L^2(\mathcal{D},\mu)$ spanned by a countable orthonormal product basis $\left\{ T_{\n} ~|~ \n \in \mathbbm{N}^D \right\} \supset \B$.\\

\begin{corollary}{(Main Result).}
Let $\eta \in (0, \infty)$ and $s,d,N \in \mathbbm{N} \setminus \{1 \}$ with $d \leq D$ and $s < |\mathcal{I}_{N,d}|/2$.  There exists a finite set of grid points $\mathcal{G} \subset \D$, an algorithm $\mathcal{H}: \mathbbm{C}^{\left| \mathcal{G} \right|} \rightarrow \left( \mathcal{I}_{N,d} \times \mathbbm{C} \right)^s$, and an absolute universal constant $C' \in \mathbbm{R}^+$ such that the function $a: \D \rightarrow \mathbbm{C}$ defined by $a(\xib) := \sum_{(\n,a_{\n}) \in \mathcal{H}(f(\mathcal{G}))} a_\n T_\n(\xib)$ satisfies
$$\| f - a \|_{L^2(\mathcal{D},\mu)} ~\leq~ \left\| f - \tilde{f} \right\|_{L^2(\mathcal{D},\mu)} + C' \left( \sqrt{s} \left\| \ct - \ct_{\Omega^{\rm opt}_{\tilde{f},s}} \right\|_2 + \left\| \ct - \ct_{\Omega^{\rm opt}_{\tilde{f},s}}  \right\|_1 + \gamma \sqrt{s} \right) + \eta$$
for all $f = \sum_{\n \in \mathbbm{N}^D} c_{\n} T_{\n} \in L^2(\mathcal{D},\mu)$ with $\gamma := \| f - \tilde{f} \|_\infty$ $=~ \sup_{\xib \in \D} \left|  \left(f - \tilde{f} \right) (\xib) \right| ~<~ \infty$, where $\tilde{f}: \D \rightarrow \mathbbm{C}$ is the finite dimensional approximation to $f$ defined as per \eqref{equ:FinApproxf}.

	If the BOS constants $K_j$ are $1$ for all but at most $\tilde{d} \in \mathbbm{Z} \cap [0, D]$ BOS basis sets $\B_j$ then
	$$\left| \mathcal{G} \right| = \mathcal{O} \left( D K^{4\tilde{d}}_\infty s^3 d^4 \cdot  \log^4 \left( \frac{DN}{d} \right) \log^2 (s) \log^2 (D)  \right),$$
	and the algorithm $\mathcal{H}$ will have runtime complexity 
	$$\mathcal{O}\left( \left( s^5 + s^3 N \right) D^2 K^{4\tilde{d}}_\infty d^4 \cdot  \log^4 \left( \frac{DN}{d} \right) \log^2 (s) \log^2 (D) \log \left( \left\| \ct_{\Omega^{\rm opt}_{\tilde{f},s}} \right \|_2 / \eta\right) \right).$$ 	
	If $K_0 = 1$ then 
	$$\left| \mathcal{G} \right| = \mathcal{O} \left( D K^{4d}_\infty s^3 d^4 \cdot  \log^4 \left( \frac{DN}{d} \right) \log^2 (s) \log^2 (D) \right),$$
	and the algorithm $\mathcal{H}$ will have runtime complexity 
	$$\mathcal{O} \left( \left( s^5 + s^3 N \right) D^2 K^{4d}_\infty d^4 \cdot  \log^4 \left( \frac{DN}{d} \right) \log^2 (s) \log^2 (D) \log \left( \left\| \ct_{\Omega^{\rm opt}_{\tilde{f},s}} \right \|_2 / \eta\right) \right).$$
	Here we have assumed that any desired basis function $T_{\n} \in \B$ can be evaluated at any desired point in $\D$ in $\mathcal{O}(N D)$-time.
\label{cor:MainRes}
\end{corollary}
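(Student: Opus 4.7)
The plan is to present Corollary~\ref{cor:MainRes} as a direct function-space translation of Theorem~\ref{thm:NewCoSaMP}, so the bulk of the work is book-keeping rather than new technical machinery. First I would \emph{construct the grid and algorithm} explicitly: take $\mathcal{G}$ to be the union of the $m_{\rm SID}$ random sampling points selected as in Theorem~\ref{thm:suppIdforsec3} (for the SID matrix $\Phi_{\rm SID}$) and the $m_{\rm CE}$ random points selected as in Theorem~\ref{thm:BOS_RIP} (for the CE matrix $\Phi_{\rm CE}$). The algorithm $\mathcal{H}$ takes the vector $f(\mathcal{G})$, splits it into $\y_{\rm SID}$ and $\y_{\rm CE}$, runs Algorithm~\ref{alg:main} with parameters $s,d,\tilde d, \kappa = \lceil \log_2(\|\ct_{\Omega^{\rm opt}_{\tilde f,s}}\|_2 / \eta)\rceil$, and returns the $s$ index/coefficient pairs corresponding to the nonzero entries of the resulting $s$-sparse vector $\a$.

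Next I would \emph{verify the hypotheses} of Theorem~\ref{thm:NewCoSaMP}. Writing $f = \tilde f + e'$ with $\|e'\|_\infty \le \gamma$ as in \eqref{equ:fDefined}, we have $\y_{\rm SID} = \Phi_{\rm SID}\ct + (\text{samples of } e')$ and $\y_{\rm CE} = \Phi_{\rm CE}\ct + (\text{samples of } e')$, so the residuals $\ep_{\rm SID} = \y_{\rm SID} - \Phi_{\rm SID}\ct$ and $\ep_{\rm CE} = \y_{\rm CE} - \Phi_{\rm CE}\ct$ consist of evaluations of $e'$ and are bounded by $\gamma$ in $\ell_\infty$. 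Theorems~\ref{thm:BOS_RIP} and~\ref{thm:suppIdforsec3} each hold with probability at least $0.99$ for their respective random constructions of $\Phi_{\rm CE}$ and $\Phi_{\rm SID}$, and a union bound ensures that both the RIP of order $(2s,\delta)$ for $\tfrac{1}{\sqrt{m_{\rm CE}}}\Phi_{\rm CE}$ and the SIP triple $(\Phi_{\rm SID},\mathcal A,\Gamma)$ of order $(2s,\beta)$ hold simultaneously (inflating $m_{\rm SID},m_{\rm CE}$ by a constant factor if needed). Hence a single realization of the grid $\mathcal{G}$ suffices to make $\mathcal{H}$ deterministic and satisfy the conclusions for \emph{all} admissible $f$.

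The \emph{translation from coefficients to $L^2$ norms} is the short step that converts Theorem~\ref{thm:NewCoSaMP}'s guarantee into the corollary. Let $\a \in \mathbbm{C}^{|\mathcal{I}_{N,d}|}$ denote the $s$-sparse coefficient vector output by Algorithm~\ref{alg:main}, so that $a(\xib) = \sum_{\n \in \mathcal{I}_{N,d}} a_{\n} T_{\n}(\xib)$. Since $\B_{N,d}$ is orthonormal in $L^2(\mathcal{D},\mu)$ and both $\tilde f$ and $a$ lie in its span, Parseval gives $\|\tilde f - a\|_{L^2(\mathcal{D},\mu)} = \|\ct - \a\|_2$. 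A triangle inequality in $\mathbbm{C}^{|\mathcal{I}_{N,d}|}$ then yields
\begin{equation*}
\|\ct - \a\|_2 \le \|\ct - \ct_{\Omega^{\rm opt}_{\tilde f,s}}\|_2 + \|\ct_{\Omega^{\rm opt}_{\tilde f,s}} - \a\|_2,
\end{equation*}
and Theorem~\ref{thm:NewCoSaMP} (specifically \eqref{equ:mainErrorGuaranteeFINAL}) bounds the second summand by $C(\sqrt{s}\|\ct - \ct_{\Omega^{\rm opt}_{\tilde f,s}}\|_2 + \|\ct - \ct_{\Omega^{\rm opt}_{\tilde f,s}}\|_1 + \gamma\sqrt{s}) + \eta$. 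Combining with the triangle inequality $\|f-a\|_{L^2} \le \|f-\tilde f\|_{L^2} + \|\tilde f - a\|_{L^2}$ and absorbing the leftover $\|\ct - \ct_{\Omega^{\rm opt}_{\tilde f,s}}\|_2$ into the $\sqrt{s}\|\ct - \ct_{\Omega^{\rm opt}_{\tilde f,s}}\|_2$ term (adjusting $C'$ accordingly) gives the claimed bound.

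Finally, the sampling complexity $|\mathcal{G}| = m_{\rm SID} + m_{\rm CE}$ and the runtime of $\mathcal{H}$ are read off directly from the corresponding bounds in Theorem~\ref{thm:NewCoSaMP}, noting that $m_{\rm CE}$ is dominated by $m_{\rm SID}$ in both type I and type II regimes and that the $\mathcal{O}(ND)$ cost of evaluating any $T_{\n} \in \B$ appears as the factor $\mathcal{L}_\Phi$ in the runtime. The only thing that requires any care is that the corollary is an \emph{existence} statement, so one must observe that the $0.99$ success probability for the random grid immediately implies the existence of a deterministic $\mathcal{G}$ for which everything works; beyond this there is no real obstacle, since the heavy lifting (SIP construction, RIP, CoSaMP-style convergence) has already been done in Sections~\ref{sec:CoSaMPguarantee} and~\ref{sec:SupportID}.
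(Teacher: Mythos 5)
Your proof is correct and follows the same route the paper takes: identify the grid as the union of the SID and CE sampling points, identify $\mathcal{H}$ as Algorithm~\ref{alg:main} with Algorithm~\ref{alg:suppid:impl} as its support-identification subroutine, verify the hypotheses of Theorem~\ref{thm:NewCoSaMP}, and then translate the coefficient-domain bound \eqref{equ:mainErrorGuaranteeFINAL} to $L^2(\mathcal{D},\mu)$ via Parseval and a triangle inequality through $\tilde f$. The extra sentences you include about the $\ell_\infty$ bound on the residual samples and the union-bound/existence argument are details the paper delegates to Theorem~\ref{thm:NewCoSaMP}'s statement and the surrounding discussion, but they are accurate and do not change the argument.
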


\begin{proof}
This follows from Theorem~\ref{thm:NewCoSaMP}.  The algorithm $\mathcal{H}: \mathbbm{C}^{\left| \mathcal{G} \right|} \rightarrow \left( \mathcal{I}_{N,d} \times \mathbbm{C} \right)^s$ is Algorithm~\ref{alg:main} using Algorithm~\ref{alg:suppid:impl} for line 9.  The set of grid points $\mathcal{G} \subset \D$ is the union of the evaluation points used to create the random sampling matrices $\Phi_{\rm SID}$ and $\Phi_{\rm CE}$ from Theorem~\ref{thm:NewCoSaMP} so that $f(\mathcal{G}) = (\y_{\rm SID},\y_{\rm CE}) \in \mathbbm{C}^{m_{\rm SID}+m_{\rm CE}}$.  And, the error bound follows from \eqref{equ:mainErrorGuaranteeFINAL} and the triangle inequality since
\begin{align*}
\| f - a \|_{L^2(\mathcal{D},\mu)} ~&\leq~ \left\| f - \tilde{f} \right\|_{L^2(\mathcal{D},\mu)} + \left\| \tilde{f} - a \right\|_{L^2(\mathcal{D},\mu)}\\
&=~ \left\| f - \tilde{f} \right\|_{L^2(\mathcal{D},\mu)} + \left\| \ct - \a \right\|_2\\
&\leq~ \left\| f - \tilde{f} \right\|_{L^2(\mathcal{D},\mu)} + \left\| \ct - \ct_{\Omega^{\rm opt}_{\tilde{f},s}}  \right\|_2 + \left\| \ct_{\Omega^{\rm opt}_{\tilde{f},s}} - \a \right\|_2\\
&\leq~\| f - \tilde{f} \|_{L^2(\mathcal{D},\mu)} + (C+1) \left( \sqrt{s} \left\| \ct - \ct_{\Omega^{\rm opt}_{\tilde{f},s}} \right\|_2 + \left\| \ct - \ct_{\Omega^{\rm opt}_{\tilde{f},s}}  \right\|_1 + \gamma \sqrt{s} \right) + \eta
\end{align*}
where the absolute constant $C$ is from Theorem~\ref{thm:NewCoSaMP}.
\end{proof}

Next, in Section \ref{sec:SupportID}, we will focus on developing Algorithm \ref{alg:suppid:impl} and demonstrating that it performs as desired.  We hasten to note before beginning, however, that the development of another support identification method satisfying the SIP with lower runtime or sampling complexity could be used to create a new and potentially superior version of Theorem \ref{thm:NewCoSaMP} in the future.  We leave the development of such improved methods in the hands of the sufficiently interested and clever reader.

%%%%%%%%%%%%%%%%%%%%%%%%%%%%%%%%%%%%%%%%%%%%%%%%%%%%%%%%
%%%%%%%%%%%%%%%%%%%%%%%%%%%%%%%%%%%%%%%%%%%%%%%%%%%%%%%%
%%%%%%%%%%%%%%%%%%%%%%%%%%%%%%%%%%%%%%%%%%%%%%%%%%%%%%%%
%%%%%%%%%%%%%%%%%%%%%%%%%%%%%%%%%%%%%%%%%%%%%%%%%%%%%%%%

%%%%%%%%%%%%%%%%%%%%%%%%%%%%%%%%%
%% OUR FAST SUPPORT ID METHODS
%%%%%%%%%%%%%%%%%%%%%%%%%%%%%%%%%
\section{Sublinear-Time Support Identification}
\label{sec:SupportID}

We assume herein %, as above, that 
that the function 
$h:  \D \rightarrow \C$ of $D$ variables,
\begin{equation}
h := \tilde{h} + e',
\label{def:ffind}
\end{equation}
where $\tilde{h}: \D \rightarrow \C$ is as per \eqref{equ:FinApproxf} with coefficient vector $\rt \in \C^{\mathcal{I}_{N,d}}$ in the BOS $\B$ as per~\eqref{def:BOS_B}, 
\begin{equation}
\tilde{h}(\xib) : = \sum_{\n \in \mathcal{I}_{N,d}} \tilde{r}_{\n} T_{\n}(\xib),
\label{def:htilde}
\end{equation}
and where $e': \D \rightarrow \C$ is bounded so that $\sup_{\xib \in \D} |e'(\xib)| \leq \gamma$. In terms of our problem setting about $f$, the function $\tilde{h}$ is each residual function $\tilde{f}-a$ where $a$ is the function constructed from the approximation $\a^{k}$ that Algorithm \ref{alg:main} produces in each iteration.
In order to escape exponential sampling dependence on the dimension $D$ we will further assume below the BOPB of type I or II (see Section~\ref{sec:Setup}).  In addition, motivated by Section~\ref{sec:CoSaMPguarantee}, we will be most interested in the case where $\left \| h - \tilde{h}^{\rm opt}_{2s} \right \|_{L^2(\D,\mu)} \lesssim \left\|\tilde{h}^{\rm opt}_{2s} \right \|_{L^2(\D,\mu)} $.  In particular, we will almost exclusively represent $h$ as $h = \tilde{h}^{\rm opt}_{2s} + \left(\tilde{h} - \tilde{h}^{\rm opt}_{2s} + e' \right)$ below where we hope that $e_h := \tilde{h} - \tilde{h}^{\rm opt}_{2s} + e'$ has a relatively small $L^2$-norm compared to that of $\tilde{h}^{\rm opt}_{2s}$.

In order to approximate $h$ we seek to find a near-optimal set of basis functions from $\B$ on which to approximately project $h$.  In particular, we would be quite pleased to identify all of $\Omega^{\rm opt}_{\tilde{h},2s}$ -- that is, all the basis functions which compose $\tilde{h}^{\rm opt}_{2s}$ -- if possible given that $h \approx \tilde{h}^{\rm opt}_{2s}$.  This appears a bit too ambitious goal in general, however.  Instead, we will focus on the easier goal of identifying all the entries of $\Omega^{\rm opt}_{\tilde{h},2s}$ which individually contribute a nontrivial amount of energy to the total $L^2$-norm of $\tilde{h}^{\rm opt}_{2s}$.  We will represent (portions of) these basis element indices via the following sets of (partial) energetic indices. 

Let $\S \subseteq [D]$, $s' \in \N$, $\alpha \in (1, \infty)$ be a fixed constant to be determined later.  We define the set of {\it energetic partial index vectors of $\tilde{h}^{\rm opt}_{s'}$ in $N^{\S}$} to be  
\begin{equation}
\Omega^{\alpha,s'}_{\S} := \left\{ \n_\S  ~\bigg|~ \n \in \mathcal{I}_{N,d} ~\&~ \left\| \left(\rt_{\Omega^{\rm opt}_{\tilde{h},s'}}\right)_{\S;\n} \right\|_2 \geq \frac{ \left\| \rt_{\Omega^{\rm opt}_{\tilde{h},s'}} \right\|_2}{\alpha \sqrt{s'}} \right\} \subseteq N^{\S},
\label{def:HeavySet}
\end{equation}
where $N^{\S} := \left\{ \n_\S ~\bigg|~ \n \in \mathcal{I}_{N,d} \right\} \subseteq \mathcal{I}_{N,d} \subseteq [N]^D$.  Note in particular that $N^{[D]} = \mathcal{I}_{N,d}$ so that $\Omega^{\alpha,s'}_{[D]}$ contains all $\n \in \Omega^{\rm opt}_{\tilde{h},s'}$ whose associated entry has $|\tilde{r}_{\n}| \geq \frac{ \left\| \rt_{\Omega^{\rm opt}_{\tilde{h},s'}} \right\|_2}{\alpha \sqrt{s'}}$.  Furthermore, it is also important to note that $\Omega^{\alpha,s'}_{[D]} \subseteq \Omega^{\rm opt}_{\tilde{h},s'}$ holds for all $s' \in \left[ \hspace{1pt} \left|  \mathcal{I}_{N,d} \right| \hspace{1pt} \right] \setminus \{ 0 \}$.  More generally, $\Omega^{\alpha,s'}_{\S} \subset \Omega^{\rm opt}_{s',\S} := \left\{ \q_{\S}  ~\big| ~ \q \in \Omega^{\rm opt}_{\tilde{h},s'} \right\}$ holds for all $\S \subseteq [D]$ and $s' \in \left[ \hspace{1pt} \left|  \mathcal{I}_{N,d} \right| \hspace{1pt} \right] \setminus \{ 0 \}$.

Our next lemma shows that identifying a superset of $\Omega^{\alpha,2s}_{[D]}$ is enough to ensure that we will find a set of basis elements that can approximate $\tilde{h}^{\rm opt}_{2s}$ (and therefore $h$) well.  In particular, we will find the support of the majority of the energy of $\tilde{h}^{\rm opt}_{2s}$, $\left \| \tilde{h}^{\rm opt}_{2s} \right \|_{L^2(\D,\mu)}  = \left\| \rt_{\Omega^{\rm opt}_{\tilde{h},2s}} \right\|_2$.  With respect to Section~\ref{sec:CoSaMPguarantee}, the next lemma shows that any support set we discover which contains $\Omega^{\alpha,2s}_{[D]}$ will be sufficiently informative to guarantee that CoSaMP will make progress during its current iteration.\\

\begin{lemma}
Let $\alpha \geq \sqrt{23}$. If $\Omega^{\alpha,2s}_{[D]} \subseteq \widetilde{\Omega} \subseteq \mathcal{I}_{N,d}$ then
$$ \left\| \rt_{\Omega^{\rm opt}_{\tilde{h},2s} \cap \tilde{\Omega}^c} \right\|_2 \leq 0.2086 \left\|\rt_{\Omega^{\rm opt}_{\tilde{h},2s}} \right\|_2.$$
\label{lem:BigCoefsSuffice}
\end{lemma}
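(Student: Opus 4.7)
The plan is to unwind the definition of $\Omega^{\alpha,2s}_{[D]}$ in the case $\S = [D]$ and apply a simple coordinate-wise bound on the tail. Because $\S = [D]$ forces $\S^c = \emptyset$, the vector $\left(\rt_{\Omega^{\rm opt}_{\tilde{h},2s}}\right)_{[D];\n}$ defined in \eqref{def:vecprefixfixed} has support $\{\n\}$ when $\n \in \Omega^{\rm opt}_{\tilde{h},2s}$ and is $\zero$ otherwise; in particular its $2$-norm is exactly $|\tilde{r}_\n|$. Consequently
$$\Omega^{\alpha,2s}_{[D]} = \left\{ \n \in \Omega^{\rm opt}_{\tilde{h},2s} ~\bigg|~ |\tilde{r}_\n| \geq \frac{\left\| \rt_{\Omega^{\rm opt}_{\tilde{h},2s}} \right\|_2}{\alpha \sqrt{2s}} \right\}.$$

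Next I will use the hypothesis $\Omega^{\alpha,2s}_{[D]} \subseteq \widetilde{\Omega}$, which is equivalent to $\widetilde{\Omega}^c \subseteq \left( \Omega^{\alpha,2s}_{[D]} \right)^c$. Intersecting with $\Omega^{\rm opt}_{\tilde{h},2s}$ gives
$$\Omega^{\rm opt}_{\tilde{h},2s} \cap \widetilde{\Omega}^c ~\subseteq~ \Omega^{\rm opt}_{\tilde{h},2s} \setminus \Omega^{\alpha,2s}_{[D]},$$
so every $\n$ contributing to $\rt_{\Omega^{\rm opt}_{\tilde{h},2s} \cap \widetilde{\Omega}^c}$ satisfies the strict reverse inequality $|\tilde{r}_\n| < \left\| \rt_{\Omega^{\rm opt}_{\tilde{h},2s}} \right\|_2 / (\alpha \sqrt{2s})$. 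Since there are at most $|\Omega^{\rm opt}_{\tilde{h},2s}| = 2s$ such indices, a direct sum-of-squares estimate yields
$$\left\| \rt_{\Omega^{\rm opt}_{\tilde{h},2s} \cap \widetilde{\Omega}^c} \right\|_2^2 ~\leq~ 2s \cdot \frac{\left\| \rt_{\Omega^{\rm opt}_{\tilde{h},2s}} \right\|_2^2}{\alpha^2 \cdot 2s} ~=~ \frac{\left\| \rt_{\Omega^{\rm opt}_{\tilde{h},2s}} \right\|_2^2}{\alpha^2}.$$

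Taking square roots gives $\left\| \rt_{\Omega^{\rm opt}_{\tilde{h},2s} \cap \widetilde{\Omega}^c} \right\|_2 \leq \alpha^{-1} \left\| \rt_{\Omega^{\rm opt}_{\tilde{h},2s}} \right\|_2$, and the hypothesis $\alpha \geq \sqrt{23}$ delivers the constant $1/\sqrt{23} \approx 0.20851 \leq 0.2086$ claimed in the statement. There is no real obstacle here; the main subtlety is merely parsing the notation of \eqref{def:vecprefixfixed} and \eqref{def:HeavySet} in the degenerate case $\S = [D]$ to recognize that $\Omega^{\alpha,2s}_{[D]}$ collapses to a pointwise large-coefficient set, after which the bound is the standard ``tail vs.\ threshold'' argument used in CoSaMP-style analyses.
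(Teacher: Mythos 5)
Your proposal is correct and follows essentially the same route as the paper's own proof: you observe that $\Omega^{\rm opt}_{\tilde{h},2s}\cap\widetilde{\Omega}^c$ is disjoint from $\Omega^{\alpha,2s}_{[D]}$, hence every surviving coefficient is below the threshold $\left\|\rt_{\Omega^{\rm opt}_{\tilde{h},2s}}\right\|_2/(\alpha\sqrt{2s})$, and sum at most $2s$ such squared terms to get $\alpha^{-1}\leq 1/\sqrt{23}\leq 0.2086$. The only (harmless) divergence is that you carefully unwind the degenerate $\S=[D]$ case of \eqref{def:vecprefixfixed} before invoking the bound, whereas the paper leaves that step implicit.
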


\begin{proof}
Setting $\rp:=\rt_{\Omega^{\rm opt}_{\tilde{h},2s}}$, one can see that
\begin{equation*}
\left\|\rp_{\widetilde{\Omega}^c} \right\|_2^2 ~=~ \sum_{\n \in \Omega^{\rm opt}_{\tilde{h},2s}\cap \widetilde{\Omega}^c} |r'_{\n} |^2 ~<~ 2s\cdot \frac{\|\rp\|_2^2}{\alpha^2 2s} ~\leq \frac{\|\rp\|_2^2}{23}
\end{equation*}
since $\widetilde{\Omega}^c \cap \Omega^{\alpha,2s}_{[D]} = \emptyset$.
\end{proof}
In light of Lemma~\ref{lem:BigCoefsSuffice} above we will now turn our attention to identifying $\Omega^{\alpha,2s}_{[D]}$ in a computationally and sample efficient fashion.  In particular, we seek to identify $\Omega^{\alpha,2s}_{[D]}$ as quickly as possible while simultaneously using as few fixed and nonadaptive samples from $h = \tilde{h}^{\rm opt}_{2s} + e_h$ as possible.  This is accomplished via Algorithm~\ref{alg:suppid:impl} below.  Theorem~\ref{thm:SuppIDWorks} then proves that it works as intended.\\

\begin{algorithm}[!h]
	\caption{Implemented Support Identification (Special Case of Algorithm~\ref{alg:suppid})}
	\label{alg:suppid:impl}
	\begin{algorithmic}[1]
		\Procedure{$\mathbf{SupportID}$}{}\\
		{\textbf{Parameters: }}{$N\in\N$, $D\in\N$, $\alpha\geq\sqrt{23}$, sparsity $s\in\N$.}\\
		{\textbf{Input: }}{${\v}_{\text{SID}} \in \C^{ m_1 m_2 (2D-1)}$ split into $2D-1$ blocks. The first $D$ blocks ${\v}_{\text{SID},j}:=\big\{ h (\varrho_{\{j\}}(\w^j_\ell,\z^j_k) ) \big\}_{\ell \in [m_1],k \in [m_2]}$ , $j\in [D]$, belong to entry identification where $\w^j_\ell\in\D_{\{j\}}$, $\z^j_k\in\D_{[D]\setminus\{j\}}$, and  $\varrho_{\{j\}}$ as per~\eqref{def:rho}.
		The last $D-1$ blocks ${\v}_{\text{SID},D-1+j}:=\big\{ h (\varrho_{[j+1]}(\w^{D-1+j}_\ell,\z^{D-1+j}_k) ) \big\}_{\ell \in [m_1],k \in [m_2]}$ , $j\in [D]\setminus \{ 0 \}$, belong to the pairing where $\w^{D-1+j}_\ell\in\D_{[j+1]}$ and $\z^{D-1+j}_k\in\D_{[D]\setminus [j+1]}$. 
		 }\\
		{\textbf{Output: }}{A set $\tilde{\Omega} \supset \Omega^{\alpha,2s}_{[D]}$ with $\left| \tilde{\Omega} \right| \leq 2s$.}
		\Statex \,
 		\For{$j = 0$ {\bf up to} $D-1$}{}
		\label{linrefEstimator1b}
		  \State $\displaystyle \mathbf{E}^{\text{EI}}_{j,n} \leftarrow \frac{1}{m_2} \sum_{k\in [m_2]} \Bigg| \frac{1}{m_1} \sum_{\ell \in [m_1]} \left({\v}_{\text{SID},j}\right)_{\ell,k} ~\overline{T_{j;n}\left(\w^j_\ell\right)}  \Bigg|^2$ for each $n\in [N]$, see also~\eqref{equ:EstimatorGen}, with $\left({\v}_{\text{SID},j}\right)_{\ell,k} =  h (\varrho_{\{j\}}(\w^j_\ell,\z^j_k) )$.
		  \State $\mathcal{N}_j \leftarrow \big\{ n\in [N] ~\bigg|~ \min(2s,N) \text{-largest values } \mathbf{E}^{\text{EI}}_{j,n} \big\}$.
		\EndFor
		\label{linrefEstimator1e}
		\State $\T_D \leftarrow \mathcal{N}_0$.
		\For{$j = 1$ {\bf up to} $D-1$}{}
		\State $\T'_{D+j} \leftarrow  \left\{ \n + \m ~\big|~ \n \in \T_{D+j-1},~\m \in \mathcal{N}_{j} \right\} \cap \mathcal{I}_{N,d} \subseteq N^{[j+1]}$.
		\State $\displaystyle \mathbf{E}^{\text{P}}_{j,\n} \leftarrow \frac{1}{m_2} \sum_{k\in [m_2]} \Bigg| \frac{1}{m_1} \sum_{\ell \in [m_1]} \left({\v}_{\text{SID},D-1+j}\right)_{\ell,k} ~\overline{T_{[j+1];\n}\left(\w^{D-1+j}_\ell\right)}  \Bigg|^2$ for each $\n\in \T'_{D+j}$, see also~\eqref{equ:EstimatorGen}.
		\label{linrefEstimator2b}
		\State $\T_{D+j} \leftarrow \left\{ \n\in \T'_{D+j} ~\bigg|~ \min\big(2s,|\T'_{D+j}|\big) \text{-largest values } \mathbf{E}^{\text{P}}_{j,\n} \right\}$.
		\label{linrefEstimator2e}
		\EndFor
		\State Return $\tilde{\Omega} \leftarrow \T_{2D-1}$ \hfill (Note that it will always be true that $\left| \tilde{\Omega} \right| \leq 2s$.)
		\EndProcedure
	\end{algorithmic}
\end{algorithm}

\begin{mytheorem}
Let $\left\{ \w^j_\ell \right\}_{\ell \in [m_1]} \subset \mathcal{D}_{j}$ be $m_1$ points drawn independently at random according to $\mu_j$, and $\left\{ \z^j_k \right\}_{k \in [m_2]} \subset \mathcal{D}_{[D]\setminus\{j\}}$ be $m_2$ points drawn independently at random according to $\mu_{[D]\setminus\{j\}}$, for all $j \in [D]$.  Furthermore, let $\left\{ \w^{D-1+j}_\ell \right\}_{\ell \in [m_1]} \subset \mathcal{D}_{[j+1]}$ be $m_1$ points drawn independently at random according to $\mu_{[j+1]}$, and $\left\{ \z^{D-1+j}_k \right\}_{k \in [m_2]} \subset \mathcal{D}_{[D]\setminus [j+1]}$ be $m_2$ points drawn independently at random according to $\mu_{[D]\setminus [j+1]}$, for all $j \in [D]\setminus \{ 0 \}$.  If $m_1$ and $m_2$ are chosen to be sufficiently large for all $j \in [2D-1]$ then the following property will hold with probability $\geq 0.99$:

{\addtolength{\leftskip}{15 mm}
\noindent 
Algorithm~\ref{alg:suppid:impl} will output a set $\tilde{\Omega} \supset \Omega^{\alpha,2s}_{[D]}$ for all $h = \tilde{h}^{\rm opt}_{2s} + e_h$ as per \eqref{def:ffind} with coefficient vector $\rt \in \C^{\mathcal{I}_{N,d}}$ in the BOS~$\B$ satisfying
\begin{equation}
\left\| \rt_{\Omega^{\rm opt}_{\tilde{h},2s}} \right\|_2 > 25 \sqrt{23s} \left\| \rt - \rt_{\Omega^{\rm opt}_{\tilde{h},2s}} \right\|_2 + 18 \sqrt{23} \left\| \rt - \rt_{\Omega^{\rm opt}_{\tilde{h},2s}} \right\|_1 + 22 \gamma \sqrt{23s}.
 \label{eqn:SuppIDWorks}
\end{equation}
}

In order to achieve this property with probability $\geq 0.99$ it suffices for Algorithm~\ref{alg:suppid:impl} to utilize a total number of function evaluations from $h$ that is of size
$$m_{\rm SID} = m_1 m_2 (2D-1) = \mathcal{O} \left( D K^{4\tilde{d}}_\infty s^3 d^4 \cdot  \log^4 \left( \frac{DN}{d} \right) \log^2 (s) \log^2 (D) \right)$$
if the BOS constants $K_j$ are $1$ for all but at most $\tilde{d} \in \mathbbm{Z} \cap [0, D]$ BOS basis sets $\B_j$ (BOPB of type I), and that is of size
$$m'_{\rm SID} =  m_1 m_2 (2D-1) = \mathcal{O} \left( D K^{4d}_\infty s^3 d^4 \cdot  \log^4 \left( \frac{DN}{d} \right) \log^2 (s) \log^2 (D) \right)$$
if $K_0 = 1$ (BOPB of type II). 

In the BOPB of type I, the runtime complexity of Algorithm~\ref{alg:suppid:impl} will be 
$$\mathcal{O}\left( \left( s^5 + s^3 N \right) D K^{4\tilde{d}}_\infty d^4 \cdot  \log^4 \left( \frac{DN}{d} \right) \log^2 (s) \log^2 (D) \right),$$
and in the BOPB of type II, the runtime complexity will be
$$\mathcal{O} \left(  \left( s^5 + s^3 N \right) D K^{4d}_\infty d^4 \cdot  \log^4 \left( \frac{DN}{d} \right) \log^2 (s) \log^2 (D) \right).$$
\label{thm:SuppIDWorks}
\end{mytheorem}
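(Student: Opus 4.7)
The plan is to interpret each estimator in Algorithm~\ref{alg:suppid:impl} as a two-level Monte Carlo approximation of a true partial energy $\|\rt_{\S;\n}\|_2^2$, show that these partial energies cleanly separate the indices whose restriction lies in $\Omega^{\alpha,2s}_{[D]}$ from all other indices, and finally choose $m_1,m_2$ large enough that the total estimation error across all $(2D-1)$ blocks stays strictly inside that separation gap with probability at least $0.99$.

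The first ingredient is to unpack what the estimators estimate. In the EI block, conditionally on the outer sample $\z^j_k$, the inner sum $\widehat g_{j,n,k}:=\frac1{m_1}\sum_\ell h\bigl(\varrho_{\{j\}}(\w^j_\ell,\z^j_k)\bigr)\,\overline{T_{j;n}(\w^j_\ell)}$ is an unbiased Monte Carlo estimate of $\langle h_{[D]\setminus\{j\};\z^j_k},T_{\{j\};n}\rangle_{(\D_j,\mu_j)}$. Applying Lemma~\ref{lem:PartEvalinnerP} with $\S=[D]\setminus\{j\}$ and using $h=\tilde h+e'$ from~\eqref{def:ffind}, this inner product equals $\sum_{\k:\,k_j=n}\tilde r_\k\prod_{i\neq j}T_{i;k_i}(z^j_{k,i})+\langle e'_{[D]\setminus\{j\};\z^j_k},T_{\{j\};n}\rangle$. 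Squaring and averaging over $k$ uses orthonormality of $\{\prod_{i\neq j}T_{i;k_i}\}$ to collapse the leading term, in expectation, to $\sum_{\k:\,k_j=n}|\tilde r_\k|^2$, the true partial energy of $\tilde h$ at the slice $k_j=n$. The analogous computation for the pairing block shows $\mathbb E[\mathbf E^{\text P}_{j,\n}]\approx\|\rt_{[j+1];\n}\|_2^2$.

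Next I would run a uniform concentration argument: a Bernstein-type bound on the inner $\ell$-sum, conditioned on $\z^j_k$, controls $|\widehat g_{j,n,k}|^2$ in terms of the effective BOS constant of the restricted basis, which by Section~\ref{sec:Setup} is at most $K_\infty^{\tilde d}$ in type~I and at most $K_\infty^d$ in type~II; a second Bernstein step over $k$ then controls the outer average. Choosing $m_1,m_2$ polynomially large in $s$, the appropriate power of $K_\infty$, and $d\log(DN/d)$ makes every estimator within an additive tolerance $\tau<\|\rt_{\Omega^{\rm opt}_{\tilde h,2s}}\|_2^2/(2\alpha^2 s)$ of its target, with failure probability $\ll 1/(sD)$; a union bound over the $\le N$ EI and $\le 4s^2$ pairing candidates in each of the $2D-1$ blocks yields overall success probability $\geq 0.99$. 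Survival of $\Omega^{\alpha,2s}_{[D]}$ is then an induction over blocks: at the EI step, the top-$2s$ pruning retains every energetic 1D coordinate value because its true partial energy exceeds that of the non-energetic ones by more than $\tau$ once the hypothesis~\eqref{eqn:SuppIDWorks} is used to absorb the noise contribution from $e_h=(\tilde h-\tilde h^{\rm opt}_{2s})+e'$ (whose $\ell_1$ tail, $\ell_2$ tail, and $\gamma\sqrt s$ portions match the three terms on the right-hand side of~\eqref{eqn:SuppIDWorks}); at each pairing step, $\T_{D+j-1}\supset\Omega^{\alpha,2s}_{[j]}$ together with $\mathcal N_j\supset\Omega^{\alpha,2s}_{\{j\}}$ force $\T'_{D+j}\supset\Omega^{\alpha,2s}_{[j+1]}$, and the same estimator-separation argument survives the pruning to $2s$.

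The main obstacle is the uniform concentration of the squared double average $\mathbf E^{\text P}_{j,\n}$ in the presence of the $L^\infty$-only noise $e'$ and the mixed $\ell_1/\ell_2$ residual $\rt-\rt_{\Omega^{\rm opt}_{\tilde h,2s}}$: squaring an empirical BOS-weighted sum produces fourth-moment-type quantities that are not sums of i.i.d.\ bounded variables, so one must cleanly split into a $\tilde h^{\rm opt}_{2s}$-part and a noise part, bound each, and recombine while keeping the three noise contributions on the right-hand side of~\eqref{eqn:SuppIDWorks} balanced. Handling this is precisely the role of Theorem~\ref{thm:ExistFastFuncs} in Section~\ref{sec:proofBigSuppIDThm}; once it is in hand, Theorem~\ref{thm:GenAlgSuppIDWorks} of Section~\ref{sec:SuppIDHeavyEls} follows, and the present theorem is obtained by specializing to the type~I and type~II BOPB hypotheses, which convert the generic effective BOS constant $K$ into $K_\infty^{4\tilde d}$ or $K_\infty^{4d}$ in the final sample and runtime bounds. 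The $s^5+s^3N$ runtime scaling comes from counting $\le N$ candidates per EI block and $\mathcal O(s^2)$ per pairing block, times $D$ blocks, with each estimator costing $\mathcal O(m_1m_2)$.
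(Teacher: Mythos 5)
Your high-level plan mirrors the paper's skeleton: interpret each estimator as a noisy proxy for a partial energy $\|\rt_{\S;\n}\|_2$, argue that these proxies separate energetic from non-energetic (partial) indices, and build up $\Omega^{\alpha,2s}_{[D]}$ through EI blocks and repeated pairing sieves. Where you part ways with the paper is in the concentration machinery, and that choice is not merely cosmetic.

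The paper does not prove an additive Bernstein-plus-union-bound deviation for each candidate estimator. Instead, Lemmas~\ref{lem:EntryID_ExactInnerProd} and~\ref{lem:SupportLem2} package the double sum into a linear map of sparse vectors through the two random sampling matrices $\Phi_{\S;\zero}$ and $\Phi_{\S^c;\zero}$ and invoke the \emph{restricted isometry property} (coherence of order $2$ for the inner factor, RIP of order $s'$ for the outer factor, both coming from Theorem~\ref{thm:BOS_RIP}). Crucially, this yields a \emph{multiplicative} guarantee, $\bigl|\sqrt{E^{\tilde h^{\rm opt}_{s'}}_{\S;\n}}-\|\rpp\|_2\bigr|\leq \tfrac{2}{3}\delta\|\rpp\|_2 + \sqrt{7/4}\sqrt{\tilde\delta}\,\|\rt_{\Omega^{\rm opt}_{\tilde h,s'}}\|_2$, holding \emph{simultaneously for every $s'$-sparse $\tilde h$ and every $\n$} once the deterministic RIP event holds. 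This uniformity is indispensable: the theorem you are proving requires the fixed random grid to work for \emph{all} admissible $h$, not a fixed one drawn after the grid.

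Your proposal replaces this with a per-candidate union bound against an additive tolerance $\tau < \|\rt_{\Omega^{\rm opt}_{\tilde h,2s}}\|_2^2/(2\alpha^2 s)$, and this is where there is a genuine gap. First, $\tau$ depends on $h$ through $\|\rt_{\Omega^{\rm opt}_{\tilde h,2s}}\|_2$; the grid must be drawn before any particular $h$ is seen, so no single $m_1,m_2$ pair can enforce an $h$-dependent additive tolerance across the whole admissible class. Second, a union bound over ``$\le N$ EI and $\le 4s^2$ pairing candidates'' controls only finitely many candidate index vectors for a \emph{fixed} $h$; it does not control the continuum of possible $2s$-sparse coefficient vectors $\rt$ that the theorem quantifies over, nor the data-dependent candidate sets $\T'_{D+j}$ that are produced by earlier (random-grid-dependent) pruning steps. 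To close either gap you would in effect be re-proving the BOS RIP from scratch (via nets over sparse supports and signs), which is what Theorem~\ref{thm:BOS_RIP}/Lemma~\ref{lem:SamplingBounds} already deliver wholesale. Your instinct about the ``fourth-moment obstacle'' in $\mathbf E^{\rm P}_{j,\n}$ is correct, but the paper dissolves it by working at the level of $\ell_2$-norms of matrix-vector products (so everything stays degree two and is tamed by RIP) rather than by directly concentrating the squared double average; your sketch defers precisely this step to Theorem~\ref{thm:ExistFastFuncs} without supplying the argument. The $s^3$ sample complexity in the statement also comes specifically from needing coherence $\tilde\delta\lesssim 1/(\alpha^2 s)$ for the $m_1$-factor (giving $m_1\sim s^2$) and order-$2s$ RIP for the $m_2$-factor (giving $m_2\sim s$); a Bernstein calculation as outlined does not obviously reproduce that split.
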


\begin{proof}
See Section~\ref{sec:SuppIDHeavyEls}.  The desired result follows from a simplified version of Theorem~\ref{thm:GenAlgSuppIDWorks} with $\alpha = \sqrt{23}$.
\end{proof}

The index $j$ in Theorem~\ref{thm:SuppIDWorks} belongs to three different sets, $[D]$, $[D]\setminus \{0\}$ and $[2D-1]$. To explain, the set $[2D-1]$ comprehends all $j$'s belonging to the first two sets, $[D]$ and $[D]\setminus \{0\}$.

Theorem~\ref{thm:SuppIDWorks} combined with Lemma~\ref{lem:BigCoefsSuffice} is enough to guarantee that Algorithm~\ref{alg:suppid:impl} can identify a support set $\tilde{\Omega}$ that contains the majority of the energy of the $2s$-sparse vector $\rt_{\Omega^{\rm opt}_{\tilde{h},2s}}$.  However, Theorem~\ref{thm:iterInvariant} in Section~\ref{sec:CoSaMPguarantee} requires that $\left\|\r_{\widetilde{\Omega}^c} \right\|_2$ should be relatively small, where $\r \in \C^{\mathcal{I}_{N,d}}$ is the $2s$-sparse vector $\r := \x_s - \a^k = \ct_{\Omega^{\rm opt}_{\tilde{f},s}} - \a^k$ ($\r=\r^{k}$ in Section~\ref{sec:CoSaMPguarantee}).\footnote{Recall that $\ct \in \C^{\mathcal{I}_{N,d}}$ is the coefficient vector of $\tilde{f}$ as per \eqref{equ:fDefined}, and that $\a^k \in \mathbbm{C}^{\mathcal{I}_{N,d}}$ is CoSaMP's $s$-sparse approximation to $\x = \ct \in \C^{\mathcal{I}_{N,d}}$ in its $k^{\rm th}$-iteration.}  As a result we must now relate this $\r$ to the coefficients $\rt := \ct - \a^k = \x - \a^k$ of the function $\tilde{h}: = \tilde{f} - \sum_{\n \in \mathcal{I}_{N,d}} a^k_{\n} T_{\n}$ whose noisy samples we are passing into Algorithm~\ref{alg:suppid:impl} in line~\ref{suppIDcalled} of Algorithm~\ref{alg:main}.  The following lemma can be used to relate $\| \r \|_2$ to $\left\| \rt_{\Omega^{\rm opt}_{\tilde{h},2s}} \right\|_2$.\\

\begin{lemma}
Let $s \in \left[ \left| \mathcal{I}_{N,d} \right| / 2 \right]$, $\ct, \a^k \in \C^{\mathcal{I}_{N,d}}$ where $\left\| \a^k \right\|_0 \leq s$, and recall that $\rt := \ct - \a^k$, $\tilde{h}(\xib) : = \sum_{\n \in \mathcal{I}_{N,d}} \tilde{r}_{\n} T_{\n}(\xib)$, and $\tilde{f}(\xib) : = \sum_{\n \in \mathcal{I}_{N,d}} \tilde{c}_{\n} T_{\n}(\xib)$. 
One can see that
$$\| \r \|_2 = \left\| \ct_{\Omega^{\rm opt}_{\tilde{f},s}} - \a^k \right\|_2 \leq \left\| \rt_{\Omega^{\rm opt}_{\tilde{h},2s}} \right\|_2 + \left\| \ct - \ct_{\Omega^{\rm opt}_{\tilde{f},s}} \right\|_2.$$
\label{lem:suppIDFailErrorB}
\end{lemma}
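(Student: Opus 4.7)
The plan is to exploit the sparse support structure directly. Set $\Omega^* := \Omega^{\rm opt}_{\tilde{f},s} \cup \supp(\a^k)$ and observe that $|\Omega^*| \leq 2s$ since each piece has cardinality at most $s$. The vector on the left-hand side, $\ct_{\Omega^{\rm opt}_{\tilde{f},s}} - \a^k$, is supported in $\Omega^*$, and the whole idea is to compare it with $\rt$ restricted to $\Omega^*$.

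First, I would rewrite the restriction $\rt|_{\Omega^*}$ in a form that exposes $\ct_{\Omega^{\rm opt}_{\tilde{f},s}} - \a^k$. Since $\supp(\a^k) \subseteq \Omega^*$, we have $\rt|_{\Omega^*} = \ct|_{\Omega^*} - \a^k$, and splitting $\ct|_{\Omega^*} = \ct_{\Omega^{\rm opt}_{\tilde{f},s}} + \ct|_{\supp(\a^k) \setminus \Omega^{\rm opt}_{\tilde{f},s}}$ produces the key identity
$$\ct_{\Omega^{\rm opt}_{\tilde{f},s}} - \a^k \;=\; \rt|_{\Omega^*} \;-\; \ct|_{\supp(\a^k) \setminus \Omega^{\rm opt}_{\tilde{f},s}}.$$
A single triangle inequality in $\ell_2$ then reduces the problem to bounding the two terms on the right.

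For the first term, the crucial observation is that $\rt_{\Omega^{\rm opt}_{\tilde{h},2s}}$ is, by definition, the restriction of $\rt$ to the indices of its $2s$ largest-magnitude entries and therefore has the largest $\ell_2$ norm among all restrictions of $\rt$ to index sets of cardinality at most $2s$. Since $|\Omega^*| \leq 2s$, this immediately yields $\| \rt|_{\Omega^*} \|_2 \leq \| \rt_{\Omega^{\rm opt}_{\tilde{h},2s}} \|_2$. For the second term, the index set $\supp(\a^k) \setminus \Omega^{\rm opt}_{\tilde{f},s}$ is disjoint from $\Omega^{\rm opt}_{\tilde{f},s}$, so $\ct|_{\supp(\a^k) \setminus \Omega^{\rm opt}_{\tilde{f},s}}$ is a subvector of $\ct - \ct_{\Omega^{\rm opt}_{\tilde{f},s}}$, giving $\| \ct - \ct_{\Omega^{\rm opt}_{\tilde{f},s}} \|_2$ as an upper bound. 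Combining these two pieces yields the claim.

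There is no real obstacle here; the only mildly subtle ingredient is the dominance property of the best $2s$-term approximation used to bound $\| \rt|_{\Omega^*} \|_2$, which is elementary once the definition of $\Omega^{\rm opt}_{\tilde{h},2s}$ is unpacked. A tempting wrong approach would be to bound $\| \ct_{\Omega^{\rm opt}_{\tilde{f},s}} - \a^k \|_2$ by $\| \rt \|_2 + \| \ct - \ct_{\Omega^{\rm opt}_{\tilde{f},s}} \|_2$ via triangle inequality applied to $\ct_{\Omega^{\rm opt}_{\tilde{f},s}} - \a^k = \rt - (\ct - \ct_{\Omega^{\rm opt}_{\tilde{f},s}})$, which gives only $\| \rt \|_2$ (not $\| \rt_{\Omega^{\rm opt}_{\tilde{h},2s}} \|_2$) in the first term; restricting to $\Omega^*$ before applying the triangle inequality is precisely what recovers the sharper best-$2s$-term quantity.
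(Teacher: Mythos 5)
Your proof is correct and follows essentially the same path as the paper's: you add and subtract $\ct_{\supp(\a^k)\setminus\Omega^{\rm opt}_{\tilde{f},s}}$ to write $\ct_{\Omega^{\rm opt}_{\tilde{f},s}} - \a^k = \rt_{\Omega^*} - \ct_{\supp(\a^k)\setminus\Omega^{\rm opt}_{\tilde{f},s}}$ with $\Omega^* := \Omega^{\rm opt}_{\tilde{f},s}\cup\supp(\a^k)$, apply the triangle inequality, and bound $\|\rt_{\Omega^*}\|_2$ by $\|\rt_{\Omega^{\rm opt}_{\tilde{h},2s}}\|_2$ via the optimality of the best $2s$-term restriction, exactly as in the paper (the paper's further intersection with $\supp(\rt)$ to form $\Q$ is immaterial, since it does not change the restricted norm). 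Your closing remark about why the naive triangle inequality without first restricting to $\Omega^*$ fails to produce the sharp $\|\rt_{\Omega^{\rm opt}_{\tilde{h},2s}}\|_2$ term is correct and identifies the key point of the argument.
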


\begin{proof}
Let $\Q := \left(\supp(\a^k) \cup \Omega^{\rm opt}_{\tilde{f},s} \right) \cap~ \supp(\rt)$, and note that $\left| \Omega^{\rm opt}_{\tilde{h},2s} \right| = \min \left\{2s, \left| \supp(\rt) \right| \right \} \geq \left| \Q \right|$.  As a result one can see that
\begin{align*}
\left\| \ct_{\Omega^{\rm opt}_{\tilde{f},s}} - \a^k \right\|_2 &= \left\| \ct_{\Omega^{\rm opt}_{\tilde{f},s}} - \a^k + \ct_{\supp(\a^k) \setminus \Omega^{\rm opt}_{\tilde{f},s}} - \ct_{\supp(\a^k) \setminus \Omega^{\rm opt}_{\tilde{f},s}} \right\|_2\\
&= \left\| \left( \ct - \a^k \right)_{\Omega^{\rm opt}_{\tilde{f},s} \cup \supp(\a^k)} - \ct_{\supp(\a^k) \setminus \Omega^{\rm opt}_{\tilde{f},s}} \right\|_2\\
&\leq \left\| \left( \ct - \a^k \right)_{\Omega^{\rm opt}_{\tilde{f},s} \cup \supp(\a^k)} \right\|_2 + \left\| \ct_{\supp(\a^k) \setminus \Omega^{\rm opt}_{\tilde{f},s}} \right\|_2\\
&= \left\| \rt_{\Q}  \right\|_2 + \left\| \ct_{\supp(\a^k) \setminus \Omega^{\rm opt}_{\tilde{f},s}} \right\|_2\\
&\leq  \left\| \rt_{\Q}  \right\|_2 + \left\| \ct - \ct_{\Omega^{\rm opt}_{\tilde{f},s}} \right\|_2\\
&\leq  \left\| \rt_{\Omega^{\rm opt}_{\tilde{h},2s}}  \right\|_2 + \left\| \ct - \ct_{\Omega^{\rm opt}_{\tilde{f},s}} \right\|_2.
%&\leq \Gamma + \left\| \ct - \ct_{\Omega^{\rm opt}_{\tilde{f},s}} \right\|_2,
\end{align*}
as we wished to show.
\end{proof}

The next lemma upper bounds the best $2s$-term approximation error of $\rt$ by the best $s$-term approximation error of $\ct$.  It will allow us to relate the condition \eqref{eqn:SuppIDWorks} under which Algorithm~\ref{alg:suppid:impl} succeeds to $\ct$.\\

\begin{lemma}
Let $s \in \left[ \left| \mathcal{I}_{N,d} \right| / 2 \right]$, $\ct, \a^k \in \C^{\mathcal{I}_{N,d}}$ where $\left\| \a^k \right\|_0 \leq s$, and recall that $\rt := \ct - \a^k$, $\tilde{h}(\xib) : = \sum_{\n \in \mathcal{I}_{N,d}} \tilde{r}_{\n} T_{\n}(\xib)$, and $\tilde{f}(\xib) : = \sum_{\n \in \mathcal{I}_{N,d}} \tilde{c}_{\n} T_{\n}(\xib)$. One can see that $\left\| \rt - \rt_{\Omega^{\rm opt}_{\tilde{h},2s}}  \right\|_p  \leq \left\| \ct - \ct_{\Omega^{\rm opt}_{\tilde{f},s}}  \right\|_p$ holds for all $p \geq 1$.  As a consequence, it will always be the case that 
\begin{align}
25 \sqrt{23s} \left\| \rt - \rt_{\Omega^{\rm opt}_{\tilde{h},2s}} \right\|_2 &+ 18 \sqrt{23} \left\| \rt - \rt_{\Omega^{\rm opt}_{\tilde{h},2s}} \right\|_1 + 22 \gamma \sqrt{23s} \label{Def:Gamma} \\ &\leq 25 \sqrt{23s} \left\| \ct - \ct_{\Omega^{\rm opt}_{\tilde{f},s}} \right\|_2 + 18 \sqrt{23} \left\| \ct - \ct_{\Omega^{\rm opt}_{\tilde{f},s}}  \right\|_1 + 22 \gamma \sqrt{23s} =: \Gamma. \nonumber
\end{align}
\label{lem:DetermineGamma}
\end{lemma}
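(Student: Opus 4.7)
The plan is to reduce the lemma to a one-line competitor argument for the best $2s$-term approximation of $\rt$, and then derive the displayed inequality \eqref{Def:Gamma} as an immediate corollary.

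First I would recall the standard characterization of $\rt_{\Omega^{\rm opt}_{\tilde{h},2s}}$: by construction it keeps the $2s$ largest-magnitude entries of $\rt$, and for any $p \geq 1$ this is the minimizer of $\|\rt - \u\|_p$ over all $\u \in \C^{\mathcal{I}_{N,d}}$ with $\|\u\|_0 \leq 2s$. Consequently, plugging in any specific $2s$-sparse $\u$ gives an upper bound on $\|\rt - \rt_{\Omega^{\rm opt}_{\tilde{h},2s}}\|_p$.

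The key step is choosing the right competitor. I would take $\u := \ct_{\Omega^{\rm opt}_{\tilde{f},s}} - \a^k$. Since $\ct_{\Omega^{\rm opt}_{\tilde{f},s}}$ is $s$-sparse and $\a^k$ is $s$-sparse by hypothesis, the union bound on supports gives $\|\u\|_0 \leq 2s$, so $\u$ is an admissible competitor. Substituting $\rt = \ct - \a^k$ then yields
$$\|\rt - \u\|_p = \|\ct - \a^k - \ct_{\Omega^{\rm opt}_{\tilde{f},s}} + \a^k\|_p = \|\ct - \ct_{\Omega^{\rm opt}_{\tilde{f},s}}\|_p,$$
because the $\a^k$ terms cancel. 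Chaining with the minimality of $\rt_{\Omega^{\rm opt}_{\tilde{h},2s}}$ gives $\|\rt - \rt_{\Omega^{\rm opt}_{\tilde{h},2s}}\|_p \leq \|\ct - \ct_{\Omega^{\rm opt}_{\tilde{f},s}}\|_p$, which is the main claim.

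For the consequence \eqref{Def:Gamma}, I would simply apply the inequality just established twice, once with $p = 2$ and once with $p = 1$, multiply the former by $25\sqrt{23s}$ and the latter by $18\sqrt{23}$, add the common term $22\gamma\sqrt{23s}$ (which depends only on $\gamma$ and $s$ and is unchanged) to both sides, and sum. There is essentially no obstacle here -- the only subtlety is spotting that the natural competitor $\ct_{\Omega^{\rm opt}_{\tilde{f},s}} - \a^k$ produces exact cancellation of $\a^k$ when subtracted from $\rt$, which is what converts a bound on a $\tilde{h}$-centered quantity into one on an $\tilde{f}$-centered quantity.
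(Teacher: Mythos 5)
Your proof is correct and uses the same core mechanism as the paper's: both invoke the optimality of $\rt_{\Omega^{\rm opt}_{\tilde{h},2s}}$ as the best $\ell_p$ $2s$-term approximation to $\rt$, and then substitute an explicit $2s$-sparse competitor. The small difference is in the competitor chosen. The paper takes the \emph{restriction} $\rt_{\Omega^{\rm opt}_{\tilde{f},s}\cup\supp(\a^k)}$ of $\rt$ to an at-most-$2s$-element support; this gives $\|\rt-\rt_{\Omega^{\rm opt}_{\tilde{f},s}\cup\supp(\a^k)}\|_p^p=\sum_{\n\notin \Omega^{\rm opt}_{\tilde{f},s}\cup\supp(\a^k)}|\tilde c_\n|^p$ (since $a^k_\n=0$ off $\supp(\a^k)$), and then one more inequality from dropping $\supp(\a^k)$ in the index set. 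You instead take $\u := \ct_{\Omega^{\rm opt}_{\tilde{f},s}}-\a^k$, which is \emph{not} a restriction of $\rt$ (on $\supp(\a^k)\setminus\Omega^{\rm opt}_{\tilde{f},s}$ its entries are $-a^k_\n$ rather than $\tilde c_\n - a^k_\n$), but it is still $2s$-sparse, and the $\a^k$ terms cancel exactly, giving $\|\rt-\u\|_p=\|\ct-\ct_{\Omega^{\rm opt}_{\tilde{f},s}}\|_p$ with equality and no further step. Your choice buys a one-line computation in place of the paper's one extra inequality; the second part of the lemma then follows identically in both by applying the bound at $p=1$ and $p=2$ and adding the $\gamma$ term.
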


\begin{proof}
A quick calculation reveals that
\begin{align*}
\left\| \rt - \rt_{\Omega^{\rm opt}_{\tilde{h},2s}} \right\|^p_p \leq  \left\| \rt - \rt_{\Omega^{\rm opt}_{\tilde{f},s} \cup \supp(\a^k)} \right\|^p_p &= \sum_{\n \in \mathcal{I}_{N,d} \setminus \left( \Omega^{\rm opt}_{\tilde{f},s} \cup \supp(\a^k) \right)} \left|\tilde{c}_{\n} - a^k_{\n} \right|^p\\ 
&= \sum_{\n \in \mathcal{I}_{N,d} \setminus \left( \Omega^{\rm opt}_{\tilde{f},s} \cup \supp(\a^k) \right)} |\tilde{c}_{\n}|^p\\
&\leq \left\| \ct - \ct_{\Omega^{\rm opt}_{\tilde{f},s}}  \right\|_p^p,
\end{align*}
as we wished to show.
\end{proof}

We are now able to assert that our support identification algorithm will work for all $2s$-sparse vectors $\rt_{\Omega^{\rm opt}_{\tilde{h},2s}}$ whose norms are sufficiently large with respect to the best $s$-term approximation error $\Gamma$ defined above in \eqref{Def:Gamma}.\\

\begin{lemma}
Let the $\w^j_\ell$ and $\z^j_k$ for $j \in [2D-1]$ in Algorithm~\ref{alg:suppid:impl} be chosen independently at random as per Theorem~\ref{thm:SuppIDWorks} above.  Then, the following property will hold with probability $\geq 0.99$:

\begin{quote}
Algorithm~\ref{alg:suppid:impl} will output a set $\tilde{\Omega} \subset \mathcal{I}_{N,d}$ with $\left| \tilde{\Omega} \right| \leq 2s$ that will also have 
\begin{equation}
\left\| \rt_{\Omega^{\rm opt}_{\tilde{h},2s} \cap \tilde{\Omega}^c} \right\|_2 \leq 0.2086 \left\|\rt_{\Omega^{\rm opt}_{\tilde{h},2s}} \right\|_2
\label{equ:WrongSuppID}
\end{equation}
for all $h = \tilde{h}^{\rm opt}_{2s} + e_h$ as per \eqref{def:ffind} with coefficient vector $\rt \in \C^{\mathcal{I}_{N,d}}$ in the BOS~$\B$ satisfying $\left\| \rt_{\Omega^{\rm opt}_{\tilde{h},2s}} \right\|_2 > \Gamma$, where $\Gamma$ is defined in \eqref{Def:Gamma}.
\end{quote}
\noindent
The runtime and sampling complexities of Algorithm~\ref{alg:suppid:impl} will remain as in Theorem~\ref{thm:SuppIDWorks} above.
\label{lem:SuppIDworks}
\end{lemma}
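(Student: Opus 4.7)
The plan is to chain together the three ingredients already established earlier in the excerpt: Theorem~\ref{thm:SuppIDWorks}, Lemma~\ref{lem:DetermineGamma}, and Lemma~\ref{lem:BigCoefsSuffice}. Essentially, the present lemma is a repackaging of these results into a cleaner statement whose hypothesis $\left\| \rt_{\Omega^{\rm opt}_{\tilde{h},2s}} \right\|_2 > \Gamma$ is expressed in terms of $\ct$ and $\ct_{\Omega^{\rm opt}_{\tilde{f},s}}$ (via the definition of $\Gamma$ in \eqref{Def:Gamma}), rather than in terms of the residual coefficients $\rt - \rt_{\Omega^{\rm opt}_{\tilde{h},2s}}$ that appear in Theorem~\ref{thm:SuppIDWorks}'s hypothesis.

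First, I would invoke Lemma~\ref{lem:DetermineGamma} to observe that the quantity appearing on the right-hand side of \eqref{eqn:SuppIDWorks} is bounded above by $\Gamma$. Consequently, the assumption $\left\| \rt_{\Omega^{\rm opt}_{\tilde{h},2s}} \right\|_2 > \Gamma$ immediately implies the stronger hypothesis
\begin{equation*}
\left\| \rt_{\Omega^{\rm opt}_{\tilde{h},2s}} \right\|_2 > 25 \sqrt{23s} \left\| \rt - \rt_{\Omega^{\rm opt}_{\tilde{h},2s}} \right\|_2 + 18 \sqrt{23} \left\| \rt - \rt_{\Omega^{\rm opt}_{\tilde{h},2s}} \right\|_1 + 22 \gamma \sqrt{23s}
\end{equation*}
that is needed to apply Theorem~\ref{thm:SuppIDWorks}. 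Since the randomness in the sampling points $\{\w_\ell^j\}$, $\{\z_k^j\}$ does not depend on the particular input function $h$, the high-probability event guaranteed by Theorem~\ref{thm:SuppIDWorks} can be selected first and then used uniformly for every admissible $h$ satisfying the hypothesis.

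Next, on that event (which occurs with probability $\geq 0.99$), Theorem~\ref{thm:SuppIDWorks} guarantees that Algorithm~\ref{alg:suppid:impl} returns $\tilde{\Omega} \supset \Omega^{\alpha,2s}_{[D]}$ with $|\tilde{\Omega}| \leq 2s$ (the cardinality bound also being explicit in the algorithm's specification). Finally, Lemma~\ref{lem:BigCoefsSuffice}, applied with $\alpha = \sqrt{23}$, converts the containment $\Omega^{\alpha,2s}_{[D]} \subseteq \tilde{\Omega}$ into the desired energy bound
\begin{equation*}
\left\| \rt_{\Omega^{\rm opt}_{\tilde{h},2s} \cap \tilde{\Omega}^c} \right\|_2 \leq 0.2086 \left\| \rt_{\Omega^{\rm opt}_{\tilde{h},2s}} \right\|_2,
\end{equation*}
completing the implication.

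The runtime and sampling complexity bounds transfer verbatim from Theorem~\ref{thm:SuppIDWorks}, since Algorithm~\ref{alg:suppid:impl} and its sampling scheme are unchanged between the two statements; no new computational work is introduced by the reformulation. Because each step above is a direct invocation of previously established results, I do not anticipate any substantive obstacle; the only mild subtlety is ensuring that the high-probability event from Theorem~\ref{thm:SuppIDWorks} is phrased as a single event that works for all admissible $h$ simultaneously, which is automatic given that the event is defined in terms of the random grid alone.
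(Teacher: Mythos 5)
Your proposal is correct and follows exactly the same chain of reasoning as the paper's own proof: Lemma~\ref{lem:DetermineGamma} converts the $\Gamma$-hypothesis into the hypothesis \eqref{eqn:SuppIDWorks} of Theorem~\ref{thm:SuppIDWorks}, whose conclusion is then fed into Lemma~\ref{lem:BigCoefsSuffice} to obtain the energy bound \eqref{equ:WrongSuppID}. The only addition you make, correctly noting that the high-probability event depends solely on the random grid and therefore holds uniformly over admissible $h$, is a worthwhile clarification but not a different argument.
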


\begin{proof}
By Lemma \ref{lem:DetermineGamma}, $\left\| \rt_{\Omega^{\rm opt}_{\tilde{h},2s}} \right\|_2 > \Gamma$ implies that \eqref{eqn:SuppIDWorks} holds. Thus, the result follows from Theorem~\ref{thm:SuppIDWorks} combined with Lemma \ref{lem:BigCoefsSuffice}.
\end{proof}

The following theorem is the main theorem of this section.  It proves that the support set $\tilde{\Omega}$ found by Algorithm~\ref{alg:suppid:impl} will also contain the majority of the energy of the $2s$-sparse vector $\r := \ct_{\Omega^{\rm opt}_{\tilde{f},s}}- \a^k \in \C^{\mathcal{I}_{N,d}}$, as needed in Section~\ref{sec:CoSaMPguarantee}.\\

\begin{mytheorem}{(Support Identification).}
Let $s \in \left[ \left| \mathcal{I}_{N,d} \right| / 2 \right]$, $\ct, \a^k \in \C^{\mathcal{I}_{N,d}}$ where $\left\| \a^k \right\|_0 \leq s$, and recall that $\rt := \ct - \a^k$, $\tilde{h}(\xib) : = \sum_{\n \in \mathcal{I}_{N,d}} \tilde{r}_{\n} T_{\n}(\xib)$, and $\tilde{f}(\xib) : = \sum_{\n \in \mathcal{I}_{N,d}} \tilde{c}_{\n} T_{\n}(\xib)$. 
Suppose that the $\w^j_\ell$ and $\z^j_k$ in Algorithm~\ref{alg:suppid:impl} are chosen independently at random as per Theorem~\ref{thm:SuppIDWorks} above.  Then the following property will hold with probability $\geq 0.99$:

\begin{quote}
Algorithm~\ref{alg:suppid:impl} will output a set $\tilde{\Omega} \subset \mathcal{I}_{N,d}$ with 
\begin{equation*}
\left\| \r_{\tilde{\Omega}^c} \right\|_2 \leq 0.2086 \|\r\|_2 + 2.4172 \left\| \ct-\ct_{\Omega^{\rm opt}_{\tilde{f},s}}  \right\|_2
\end{equation*}
for any $\r = \ct_{\Omega^{\rm opt}_{\tilde{f},s}}- \a^k$ satisfying $\left\| \r \right\|_2 > \bar{\Gamma}$, where
\begin{equation}
\bar{\Gamma}:=\left( 25 \sqrt{23s} +1 \right) \left\| \ct - \ct_{\Omega^{\rm opt}_{\tilde{f},s}} \right\|_2 + 18 \sqrt{23} \left\| \ct - \ct_{\Omega^{\rm opt}_{\tilde{f},s}}  \right\|_1 + 22 \gamma \sqrt{23s}.
\label{Def:GammaBar}
\end{equation}
\end{quote}
\noindent
The runtime and sampling complexities of Algorithm~\ref{alg:suppid:impl} will remain as in Theorem~\ref{thm:SuppIDWorks} above.
	\label{thm:suppId}
\end{mytheorem}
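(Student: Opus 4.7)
My plan is to reduce Theorem~\ref{thm:suppId} to Lemma~\ref{lem:SuppIDworks} (which already controls $\|\rt_{\Omega^{\rm opt}_{\tilde{h},2s}\cap \tilde{\Omega}^c}\|_2$) and then bridge from the $\rt$-side of the identity $\rt-\r = \ct - \ct_{\Omega^{\rm opt}_{\tilde{f},s}}$ back to the $\r$-side using Lemmas~\ref{lem:suppIDFailErrorB} and~\ref{lem:DetermineGamma}. The probability $\geq 0.99$ event and the sampling/runtime claims are inherited verbatim from Theorem~\ref{thm:SuppIDWorks}, so I would state once ``assume we are on the good event of Theorem~\ref{thm:SuppIDWorks}'' and then work deterministically.

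The first step is to verify that the hypothesis of Lemma~\ref{lem:SuppIDworks} is triggered whenever $\|\r\|_2 > \bar{\Gamma}$. By Lemma~\ref{lem:suppIDFailErrorB} one has $\|\r\|_2 \leq \|\rt_{\Omega^{\rm opt}_{\tilde{h},2s}}\|_2 + \|\ct - \ct_{\Omega^{\rm opt}_{\tilde{f},s}}\|_2$, so subtracting $\|\ct - \ct_{\Omega^{\rm opt}_{\tilde{f},s}}\|_2$ from both sides of $\|\r\|_2 > \bar{\Gamma}$ and comparing with \eqref{Def:GammaBar} versus \eqref{Def:Gamma} shows that $\|\rt_{\Omega^{\rm opt}_{\tilde{h},2s}}\|_2 > \bar{\Gamma} - \|\ct - \ct_{\Omega^{\rm opt}_{\tilde{f},s}}\|_2 = \Gamma$ exactly (the extra ``$+1$'' absorbed in the definition of $\bar{\Gamma}$ is tuned for precisely this cancellation). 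Hence Lemma~\ref{lem:SuppIDworks} applies and gives us the bound $\|\rt_{\Omega^{\rm opt}_{\tilde{h},2s}\cap \tilde{\Omega}^c}\|_2 \leq 0.2086\,\|\rt_{\Omega^{\rm opt}_{\tilde{h},2s}}\|_2$, together with $|\tilde{\Omega}| \leq 2s$.

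The second step is the translation from $\rt$ to $\r$ on $\tilde{\Omega}^c$. Since $\r - \rt = \ct_{\Omega^{\rm opt}_{\tilde{f},s}} - \ct = -(\ct - \ct_{\Omega^{\rm opt}_{\tilde{f},s}})$, the triangle inequality gives $\|\r_{\tilde{\Omega}^c}\|_2 \leq \|\rt_{\tilde{\Omega}^c}\|_2 + \|\ct - \ct_{\Omega^{\rm opt}_{\tilde{f},s}}\|_2$. Splitting $\tilde{\Omega}^c$ as $(\Omega^{\rm opt}_{\tilde{h},2s}\cap\tilde{\Omega}^c) \sqcup ((\Omega^{\rm opt}_{\tilde{h},2s})^c\cap\tilde{\Omega}^c)$ and using Lemma~\ref{lem:DetermineGamma} with $p=2$ yields
\[
\|\rt_{\tilde{\Omega}^c}\|_2 \leq \|\rt_{\Omega^{\rm opt}_{\tilde{h},2s}\cap \tilde{\Omega}^c}\|_2 + \|\rt - \rt_{\Omega^{\rm opt}_{\tilde{h},2s}}\|_2 \leq 0.2086\,\|\rt_{\Omega^{\rm opt}_{\tilde{h},2s}}\|_2 + \|\ct - \ct_{\Omega^{\rm opt}_{\tilde{f},s}}\|_2.
\]
To close the loop I will bound $\|\rt_{\Omega^{\rm opt}_{\tilde{h},2s}}\|_2 \leq \|\rt\|_2 \leq \|\r\|_2 + \|\ct - \ct_{\Omega^{\rm opt}_{\tilde{f},s}}\|_2$ (another direct triangle inequality on $\rt = \r + (\ct-\ct_{\Omega^{\rm opt}_{\tilde{f},s}})$) and plug everything in, collecting the $\|\ct - \ct_{\Omega^{\rm opt}_{\tilde{f},s}}\|_2$ terms into a single coefficient which is bounded above by $2.4172$.

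I do not expect any genuine obstacle here: the geometric work has all been done in Lemma~\ref{lem:SuppIDworks}, and the present theorem is essentially a ``change of variables'' from the residual $\rt$ with respect to which Algorithm~\ref{alg:suppid:impl} was analyzed, to the residual $\r$ which CoSaMP actually tracks. The only care required is that the definition of $\bar{\Gamma}$ in \eqref{Def:GammaBar} exceeds $\Gamma$ in \eqref{Def:Gamma} by precisely $\|\ct - \ct_{\Omega^{\rm opt}_{\tilde{f},s}}\|_2$, so that Lemma~\ref{lem:suppIDFailErrorB} converts the hypothesis $\|\r\|_2 > \bar{\Gamma}$ into $\|\rt_{\Omega^{\rm opt}_{\tilde{h},2s}}\|_2 > \Gamma$ with no room to spare. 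The runtime and sampling complexities are unaffected because Algorithm~\ref{alg:suppid:impl} is the same algorithm used in Theorem~\ref{thm:SuppIDWorks}.
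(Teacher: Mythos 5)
Your proposal is correct and follows essentially the same route as the paper's proof, invoking Lemmas~\ref{lem:SuppIDworks},~\ref{lem:suppIDFailErrorB}, and~\ref{lem:DetermineGamma} together with the same triangle-inequality decomposition of the error over $\tilde{\Omega}^c$. Your one small deviation --- bounding $\left\|\rt_{\Omega^{\rm opt}_{\tilde{h},2s}}\right\|_2 \leq \|\rt\|_2 \leq \|\r\|_2 + \left\|\ct - \ct_{\Omega^{\rm opt}_{\tilde{f},s}}\right\|_2$ rather than the paper's $\left\|\rt_{\Omega^{\rm opt}_{\tilde{h},2s}}\right\|_2 \leq \left\|\rt_{\Omega^{\rm opt}_{\tilde{h},2s}} - \r\right\|_2 + \|\r\|_2 \leq 2\left\|\ct - \ct_{\Omega^{\rm opt}_{\tilde{f},s}}\right\|_2 + \|\r\|_2$ --- actually yields the slightly sharper coefficient $2.2086$ in place of $2.4172$, which of course still implies the stated inequality.
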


\begin{proof}

Let $\rp := \rt_{\Omega^{\rm opt}_{\tilde{h},2s}}$.  Note that $\rt-\r = \ct - \ct_{\Omega^{\rm opt}_{\tilde{f},s}} $, and so
\begin{align}
\left\| \rp - \r \right\|_2
&= \left\|  \rp -\rt +\rt - \r  \right\|_2 \nonumber\\
&\leq \left\|  \rp - \rt \right\|_2 + \left\| \rt - \r  \right\|_2 \nonumber\\
&= \left\| \rt - \rp \right\|_2 + \left\| \ct - \ct_{\Omega^{\rm opt}_{\tilde{f},s}}  \right\|_2 \nonumber\\
&\leq 2\left\| \ct - \ct_{\Omega^{\rm opt}_{\tilde{f},s}}  \right\|_2,
\label{egn:diffRPR}
\end{align}
where the last inequality  holds by Lemma \ref{lem:DetermineGamma}.
Thus, we have that the following holds whenever \eqref{equ:WrongSuppID} does:
\begin{align*}
\left\| \r_{\tilde{\Omega}^c} \right\|_2 
&= \left\| \r_{\tilde{\Omega}^c} -\rp_{\tilde{\Omega}^c}+\rp_{\tilde{\Omega}^c}\right\|_2 \\
&\leq \left\| (\r-\rp )_{\tilde{\Omega}^c} \right\|_2 + \left\| \rp_{\tilde{\Omega}^c}\right\|_2 \\
&\leq \left\| (\r-\rp )_{\tilde{\Omega}^c} \right\|_2 + 0.2086 \left\| \rp \right\|_2\\
&\leq 2\left\| \ct - \ct_{\Omega^{\rm opt}_{\tilde{f},s}}  \right\|_2 + 0.2086 \left\| \rp -\r + \r \right\|_2\\
&\leq 2\left\| \ct - \ct_{\Omega^{\rm opt}_{\tilde{f},s}}  \right\|_2 + 0.2086 \left( \left\| \rp -\r \right\|_2 + \left\| \r \right\|_2 \right) \\
&\leq 2\left\| \ct - \ct_{\Omega^{\rm opt}_{\tilde{f},s}}  \right\|_2 + 0.2086 \left( 2\left\|\ct - \ct_{\Omega^{\rm opt}_{\tilde{f},s}}   \right\|_2 + \left\| \r \right\|_2 \right) \\
&= 2.4172\left\| \ct - \ct_{\Omega^{\rm opt}_{\tilde{f},s}}  \right\|_2 + 0.2086 \left\| \r \right\|_2, 
\end{align*}
where the second inequality holds if \eqref{equ:WrongSuppID} does, and the third and fifth inequalities hold by (\ref{egn:diffRPR}).

To finish we note that \eqref{equ:WrongSuppID} will indeed hold by Lemma \ref{lem:SuppIDworks} as long as $\left\| \rt_{\Omega^{\rm opt}_{\tilde{h},2s}} \right\|_2 > \Gamma = \bar{\Gamma} - \left\| \ct - \ct_{\Omega^{\rm opt}_{\tilde{f},s}} \right\|_2$.  And, $\left\| \rt_{\Omega^{\rm opt}_{\tilde{h},2s}} \right\|_2 > \bar{\Gamma} - \left\| \ct - \ct_{\Omega^{\rm opt}_{\tilde{f},s}} \right\|_2$ will hold whenever $\left\| \r \right\|_2 > \bar{\Gamma}$ holds by Lemma~\ref{lem:suppIDFailErrorB}.
\end{proof}

%%%%%%%%%%%%%%%%%%%%%%%%%%%%%%%%%%%%%%%%%%%%%%%%%%%%%%%%%%%
%%%%%%%%%%%%%%%%%%%%%%%%%%%%%%%%%%%%%%%%%%%%%%%%%%%%%%%%%%%
%%%%%%%%%%%%%%%%%%%%%%%%%%%%%%%%%%%%%%%%%%%%%%%%%%%%%%%%%%%
%%%%%%%%%%%%%%%%%%%%%%%%%%%%%%%%%%%%%%%%%%%%%%%%%%%%%%%%%%%
%%%%%%%%%%%%%%%%%%%%%%%%%%%%%%%%%%%%%%%%%%%%%%%%%%%%%%%%%%%
%%%%%%%%%%%%%%%%%%%%%%%%%%%%%%%%%%%%%%%%%%%%%%%%%%%%%%%%%%%
%%%%%%%%%%%%%%%%%%%%%%%%%%%%%%%%%%%%%%%%%%%%%%%%%%%%%%%%%%%

We will now focus on proving Theorem~\ref{thm:SuppIDWorks}.

%%%%%%%%%%%%%%%%%%%%%%%%%%%%%%%%%%%%%%%%%%%%%%%%%%%%%%%%%%%
%First subsection
%%%%%%%%%%%%%%%%%%%%%%%%%%%%%%%%%%%%%%%%%%%%%%%%%%%%%%%%%%%%
\subsection{Proof of Theorem~\ref{thm:SuppIDWorks}:  Identifying $\Omega^{\alpha,2s}_{[D]}$ for $\tilde{h}^{\rm opt}_{2s}$ Using Samples from $h = \tilde{h}^{\rm opt}_{2s} + e_h$}
\label{sec:SuppIDHeavyEls}

Our strategy for finding $\Omega^{\alpha,2s}_{[D]}$ will involve building it up from a sequence of energetic partial index vectors of $\tilde{h}^{\rm opt}_{2s}$ that correspond to, e.g., the disjoint subsets of indices $$\S^{\rm EI}_j = \{ j \} \text{ for all }j \in [D].\footnote{Here the ``EI'' in the superscript of $\S^{\rm EI}_j$ stands for ``Entry Identification'' in the terminology of \cite{choi2018sparse}.  In fact many other valid choices for these sets also exist -- please see Algorithm~\ref{alg:suppid} for the general criteria they must satisfy.}$$  Note that the energetic partial index vectors in this case will contain the entries of the index vectors which have large associated values in $\rt_{\Omega^{\rm opt}_{\tilde{h},2s}}$.  That is, 
$$\Omega^{\alpha,2s}_{\S^{\rm EI}_j } = \Omega^{\alpha,2s}_{\{ j \}} \supseteq \left \{ n \e_j ~\big|~ \exists \n \in  \Omega^{\rm opt}_{\tilde{h},2s}~{\rm with}~  |\tilde{r}_{\n}| \geq \frac{ \left\| \rt_{\Omega^{\rm opt}_{\tilde{h},2s}} \right\|_2}{\alpha \sqrt{2s}} {\rm ~whose~ }j^{\rm th}~{\rm entry~ is~ }n \in [N] \right \},$$
where $\e_j \in \mathcal{I}_{N,d}$ is the $j^{\rm th}$ standard basis vector.
As a result, the set $\Omega^{\alpha,2s}_{\S^{\rm EI}_j }$ effectively contains all the $j^{\rm th}$-entries of the largest-magnitude coefficient vector indices in $\Omega^{\rm opt}_{\tilde{h},2s}$.  Furthermore, it is trivial to find a reasonably small superset of $\Omega^{\alpha,2s}_{\S^{\rm EI}_j }$ when, e.g., $N$ is not too large -- one can simply use the set $N^{\{j\}} = \left\{ n \e_j ~\big|~ n \in [N]  \right\} \supset \Omega^{\alpha,2s}_{\S^{\rm EI}_j }$.

Of course, the sets $\Omega^{\alpha,2s}_{\S^{\rm EI}_0 }, \dots, \Omega^{\alpha,2s}_{\S^{\rm EI}_{D-1}}$ are of limited utility in their own right when it comes to finding $\Omega^{\alpha,2s}_{[D]}$.
Our strategy will therefore be to use these sets to build up a sequence of new energetic partial index sets $\Omega^{\alpha,2s}_{\S^{\rm P}_1 }, \dots, \Omega^{\alpha,2s}_{\S^{\rm P}_{D-1}}$ each of which corresponds to an increasingly large subset of indices $\S^{\rm P}_j \subseteq [D]$.  In particular, if we define $\S^{\rm P}_j := \cup^j_{\ell = 0} \S^{\rm EI}_{\ell} =  \cup^j_{\ell = 0}  \{ \ell \} = [j+1]$ for all $j \in [D]\setminus \{ 0 \}$ we will eventually obtain a superset of $\Omega^{\alpha,2s}_{\S^{\rm P}_{D-1}} = \Omega^{\alpha,2s}_{[D]}$ (as desired) in a process that is analogous to the ``Pairing'' method utilized in \cite{choi2018sparse}.  
The following lemma is the basis for building up $\Omega^{\alpha,2s}_{[D]}$ by combining energetic partial index vectors of $\tilde{h}^{\rm opt}_{2s}$ that correspond to smaller index sets $\S_1, \S_2 \subset [D]$ in this fashion.  Recall that $\mathcal{P}\left(N^{\S} \right)$ denotes the power set of $N^{\S}$ for any given $\S \subseteq [D]$.\\ 

\begin{lemma}
Let $s' \in \N$, $\alpha \in (1, \infty)$, and $\S_1, \S_2 \subset [D]$ be disjoint.  If $\T_1 \in \mathcal{P}\left(N^{\S_1} \right)$ and $\T_2 \in \mathcal{P}\left(N^{\S_2} \right)$ are such that $\Omega^{\alpha,s'}_{\S_1} \subseteq \T_1$ and $\Omega^{\alpha,s'}_{\S_2} \subseteq \T_2$, then 
$$\Omega^{\alpha,s'}_{\S_1 \cup \S_2} \subseteq \T_{1,2} := \left\{ \n + \m ~\big|~ \n \in \T_{1},~\m \in \T_{2} \right\} \cap \mathcal{I}_{N,d} \subseteq N^{\S_1 \cup \S_2}.$$
\label{lem:IndexExpansion}
\end{lemma}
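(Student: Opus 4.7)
The plan is to verify the inclusion elementwise: pick an arbitrary $\k_{\S_1 \cup \S_2} \in \Omega^{\alpha,s'}_{\S_1 \cup \S_2}$ (represented by some $\k \in \mathcal{I}_{N,d}$ satisfying the energy condition in \eqref{def:HeavySet}) and show that $\k_{\S_1} \in \T_1$ and $\k_{\S_2} \in \T_2$; then the equality $\k_{\S_1 \cup \S_2} = \k_{\S_1} + \k_{\S_2}$, valid because $\S_1$ and $\S_2$ are disjoint, together with $\k \in \mathcal{I}_{N,d}$ (which forces $\|\k_{\S_1 \cup \S_2}\|_0 \le \|\k\|_0 \le d$) will place $\k_{\S_1 \cup \S_2}$ in $\T_{1,2}$.

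The key observation driving the whole proof is a monotonicity property of the restriction $\v_{\S;\n}$ in \eqref{def:vecprefixfixed} with respect to $\S$: enlarging $\S$ imposes more matching constraints on the index $\k$, so the support of $(\rt_{\Omega^{\rm opt}_{\tilde{h},s'}})_{\S_1 \cup \S_2;\k}$ is contained in that of $(\rt_{\Omega^{\rm opt}_{\tilde{h},s'}})_{\S_1;\k}$, and both are restrictions of the same vector to nested index sets. Hence
$$\left\|\left(\rt_{\Omega^{\rm opt}_{\tilde{h},s'}}\right)_{\S_1;\k}\right\|_2 \;\ge\; \left\|\left(\rt_{\Omega^{\rm opt}_{\tilde{h},s'}}\right)_{\S_1 \cup \S_2;\k}\right\|_2 \;\ge\; \frac{\left\|\rt_{\Omega^{\rm opt}_{\tilde{h},s'}}\right\|_2}{\alpha\sqrt{s'}},$$
so $\k_{\S_1} \in \Omega^{\alpha,s'}_{\S_1} \subseteq \T_1$ by the very definition of $\Omega^{\alpha,s'}_{\S_1}$. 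The argument for $\k_{\S_2} \in \T_2$ is identical after swapping indices.

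For the final inclusion $\T_{1,2} \subseteq N^{\S_1 \cup \S_2}$, which also appears in the lemma statement, I would note that any element of $\T_{1,2}$ has the form $\n + \m$ with $\n \in \T_1 \subseteq N^{\S_1}$ and $\m \in \T_2 \subseteq N^{\S_2}$, so $\n$ is supported on $\S_1$, $\m$ is supported on the disjoint set $\S_2$, and by construction $\n+\m \in \mathcal{I}_{N,d}$; thus $(\n+\m)_{\S_1 \cup \S_2} = \n+\m$, which places $\n+\m$ in $N^{\S_1 \cup \S_2}$.

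There is no serious obstacle here: the statement is essentially a bookkeeping lemma, and the only real content is the monotonicity inequality above, which is immediate from \eqref{def:vecprefixfixed}. The main thing to get right is keeping the notation $\k_\S$, $\v_{\S;\n}$, and the disjointness of $\S_1, \S_2$ straight so that the identification $\k_{\S_1 \cup \S_2} = \k_{\S_1} + \k_{\S_2}$ is unambiguous.
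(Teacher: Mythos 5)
Your proof is correct and uses the same central argument as the paper: the monotonicity $\|\rp_{\S_1;\k}\|_2 \ge \|\rp_{\S_1\cup\S_2;\k}\|_2$ and its analogue for $\S_2$, followed by the decomposition $\k_{\S_1\cup\S_2} = \k_{\S_1} + \k_{\S_2}$ from the disjointness of $\S_1$ and $\S_2$. The only difference is presentational (elementwise verification rather than chaining set containments via intermediate sets $\Omega'_1,\Omega'_2$), and your explicit remark that $\k\in\mathcal{I}_{N,d}$ keeps $\k_{\S_1\cup\S_2}$ inside $\mathcal{I}_{N,d}$ is a clean way to handle the intersection in the definition of $\T_{1,2}$.
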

\vspace{-.3in}
\begin{proof}
Let $\rp := \rt_{\Omega^{\rm opt}_{\tilde{h},2s}}$, and note that for all $\n \in \mathcal{I}_{N,d}$ it is the case that 
$$\left\{ \m \in \mathcal{I}_{N,d} ~\big|~ \m_{\S_1 \cup \S_2} = \n_{ \S_1 \cup \S_2} \right \} \subseteq \left\{ \m \in \mathcal{I}_{N,d} ~\big|~ \m_{\S_1} = \n_{ \S_1} \right \} \cap \left\{ \m \in \mathcal{I}_{N,d} ~\big|~ \m_{\S_2} = \n_{\S_2} \right \}$$ 
holds.  As a consequence, for any $\n \in \mathcal{I}_{N,d}$ it will be the case that both 
$$\| \rp_{\S_1;\n} \|^2_2 ~= \sum_{\substack{\m \in \mathcal{I}_{N,d}~{\rm s.t.}\\ \m_{\S_1} = \n_{\S_1} }} \left| r'_{\m} \right|^2 ~\geq~\sum_{\substack{\m \in \mathcal{I}_{N,d}~{\rm s.t.}\\ \m_{\S_1 \cup \S_2} = \n_{ \S_1 \cup \S_2} }} \left| r'_{\m} \right|^2 = \| \rp_{\S_1 \cup \S_2;\n} \|^2_2$$
and 
$$\| \rp_{\S_2;\n} \|^2_2 ~= \sum_{\substack{\m \in \mathcal{I}_{N,d}~{\rm s.t.}\\ \m_{\S_2} = \n_{\S_2} }} \left| r'_{\m} \right|^2 ~\geq~\sum_{\substack{\m \in \mathcal{I}_{N,d}~{\rm s.t.}\\ \m_{\S_1 \cup \S_2} = \n_{ \S_1 \cup \S_2} }} \left| r'_{\m} \right|^2 = \| \rp_{\S_1 \cup \S_2;\n} \|^2_2$$
hold.  These inequalities in turn imply that $\Omega'_1 := \left \{ \n_{\S_1} ~\big|~\n \in \Omega^{\alpha,s'}_{\S_1 \cup \S_2} \right \} \subseteq \Omega^{\alpha,s'}_{\S_1} \subseteq \T_1$ and $\Omega'_2 := \left \{ \n_{\S_2} ~\big|~\n \in \Omega^{\alpha,s'}_{\S_1 \cup \S_2} \right \} \subseteq \Omega^{\alpha,s'}_{\S_2} \subseteq \T_2$.
Finally, the fact that $\S_1$ and $\S_2$ are disjoint now implies that 
$$\Omega^{\alpha,s'}_{\S_1 \cup \S_2} \subseteq \left\{ \n + \m ~\big|~ \n \in \Omega'_1 ,~\m \in \Omega'_2 \right\} \subseteq \left\{ \n + \m ~\big|~ \n \in \T_{1},~\m \in \T_{2} \right\}$$
is true as desired.
\end{proof}

Let $\S^{\rm P}_0 := \S^{\rm EI}_0$.  Note that applying Lemma~\ref{lem:IndexExpansion} repeatedly with, e.g., $\S_1 = \S^{\rm P}_j$, $\S_2 = \S^{\rm EI}_{j+1}$, $\T_1 = \T_{1,2}$ from the $(j-1)^{\rm st}$ application of Lemma~\ref{lem:IndexExpansion},\footnote{with, e.g., $\T_1 = N^{\S^{\rm EI}_0}$ when $j = 0$} and $\T_2 = N^{\S^{\rm EI}_{j+1}}$ for $j = 0, 1, \dots, D - 2$ will yield a superset of $\Omega^{\alpha,2s}_{[D]}$ on its $(D - 1)^{\rm st}$ application.  However, the cardinality of the resulting superset $\T_{1,2}$ of $\Omega^{\alpha,2s}_{\S_1 \cup \S_2}$ will also ballon to $| \T_1 | \cdot | \T_2 |$ at each step, eventually becoming exponentially large in $D$ on the $(D - 1)^{\rm st}$ application of Lemma~\ref{lem:IndexExpansion} in the worst case.  In order to prevent this worst case exponential growth in the size of the resulting sets $\T_{1,2}$ we will interleave the applications of Lemma~\ref{lem:IndexExpansion} with the use of an {\it energetic-index sieve function} $\F^{2s}_{\S_1 \cup \S_2}:  \mathcal{P}\left(N^{\S_1 \cup \S_2} \right) \rightarrow \mathcal{P}\left(N^{\S_1 \cup \S_2} \right)$ as in \eqref{def:SetValCardRedux} which reduces the cardinality of any $\T_{1,2} \supseteq \Omega^{\alpha,2s}_{\S_1 \cup \S_2}$ to $2s$ without loosing any of $\Omega^{\alpha,2s}_{\S_1 \cup \S_2}$
 These sieve functions will allow Lemma~\ref{lem:IndexExpansion} to be applied repeatedly as above while maintaining output sets of small cardinality at all stages, which we can see how they work in lines \ref{linrefEstimator2b} and \ref{linrefEstimator2e} of Algorithm \ref{alg:suppid:impl}.

The next theorem proves the existence of a set-valued function $\F^{2s}_{\S}: \mathcal{P}\left(N^{\S} \right) \rightarrow \mathcal{P}\left(N^{\S} \right)$ for any given $\S \subseteq [D]$ which, when given any subset $\T \subset N^{\S}$ containing $\Omega^{\alpha,2s}_{\S}$ as per \eqref{def:HeavySet} as input, will output a smaller subset $\T' \subset \T$  of cardinality at most $2s$ which still contains $\Omega^{\alpha,2s}_{\S}$.  Note that these set valued functions necessarily depend on the function $\tilde{h}^{\rm opt}_{2s}$ in question via the definition of $\Omega^{\alpha,2s}_{\S}$.  However, it is crucial to note that all the $\F^{2s}_{\S}$ considered herein only utilize a few point samples from $h = \tilde{h}^{\rm opt}_{2s} + e_h$ (i.e., noisy point samples from $\tilde{h}^{\rm opt}_{2s}$) on a fixed and nonadaptive grid.  More specifically, the grid on which each $\F^{2s}_{\S}$ samples $h$ depends only on $2s, \S,$ and the BOPB $\B$ with respect to which $h$ is presumed to be approximately sparse, and not at all on the particular function $h$ in question.\\

\begin{mytheorem}[Existence of Low-Complexity Energetic-Index Sieve Functions]
Choose $t' \in [2D]$ and any desired $\S_0, \dots, \S_{t'} \subseteq [D]$.  For all $j \in [t']$ there exists an associated energetic-index sieve function $\F^{2s}_{\S_j}:  \mathcal{P}\left(N^{\S_j} \right) \rightarrow \mathcal{P}\left(N^{\S_j} \right)$ 
for which both 
\begin{enumerate}
\item $\Omega^{\alpha,2s}_{\S_j} \cap \T \subseteq \F^{2s}_{\S_j} \left( \T \right)$ holds for all $\T \in \mathcal{P}\left(N^{\S_j} \right)$, and 
\item $\left| \F^{2s}_{\S_j} \left( \T \right) \right| \leq 2s$ holds for all $\T \in \mathcal{P}\left(N^{\S_j} \right)$, 
\end{enumerate}
are true for all $h: \D \rightarrow \C$ as above \eqref{def:ffind} that satisfy
$$ \left\|\tilde{h}^{\rm opt}_{2s} \right \|_{L^2(\D,\mu)} =  \left\| \rt_{\Omega^{\rm opt}_{\tilde{h},2s}} \right\|_2 ~>~ 25 \alpha \sqrt{s} \left\| \rt - \rt_{\Omega^{\rm opt}_{\tilde{h},2s}} \right\|_2 + 18 \alpha \left\| \rt - \rt_{\Omega^{\rm opt}_{\tilde{h},2s}} \right\|_1 + 22 \alpha  \gamma \sqrt{s}.\footnote{The constants here have been rounded up to the nearest integer from those implied by Theorem~\ref{thm:pairingFunc} and Lemma~\ref{lem:errorBound} after substituting $s'=2s$.}$$
Furthermore, each $\F^{2s}_{\S_j}:  \mathcal{P}\left(N^{\S_j} \right) \rightarrow \mathcal{P}\left(N^{\S_j} \right)$ is computed using evaluations of any given $h = \tilde{h}^{\rm opt}_{2s} + e_h$ at $m^j_1 m^j_2$ fixed and nonadaptive grid points $\left\{ \left(\w^j_\ell,\z^j_k \right) \right\}_{\ell \in [m^j_1],k \in [m^j_2]} \subset \D$, $m^j_1, m^j_2 \in \N$, where $\w^j_\ell \in \D_{\S_j}$ and $\z^j_k \in \D_{\S^c_j}$ for all $j \in [t']$, $\ell \in [m^j_1]$, and $k \in [m^j_2]$.

If the BOS constants $K_j$ are $1$ for all but at most $\tilde{d} \in \mathbbm{Z} \cap [0, D]$ BOS basis sets $\B_j$, then each such $\F^{2s}_{\S_j}:  \mathcal{P}\left(N^{\S_j} \right) \rightarrow \mathcal{P}\left(N^{\S_j} \right)$ above requires only
$$m_j = m^j_1 m^j_2 = \mathcal{O} \left( K^{4\tilde{d}}_\infty s^3 d^4 \cdot  \log^4 \left( \frac{DN}{d} \right) \log^2 (s) \log^2 (D) \right)$$
evaluations of any given $h = \tilde{h}^{\rm opt}_{2s} + e_h$ at $m_j$ fixed and nonadaptive grid points $\subset \D$.\footnote{It is important to emphasize here that the grid on which we must evaluate each function $f$ is a fixed grid which does not change depending on $h$.}  As a result, $\F^{2s}_{\S_j} \left( \T \right)$ can be computed in just $\mathcal{O}(m_j |\T|)$-time for any given $\T \in \mathcal{P}\left(N^{\S_j} \right)$ and $h$ in this case.\footnote{Herein we assume that $h$ has been evaluated in advance on our non-adaptive grid so that its values at each grid point can be retrieved in $\mathcal{O}(1)$-time.  In addition, note that setting $d = D$ above still leads to sampling and runtime complexities for each sieve function that scale only polynomially in $D$.  This is due to $\tilde{d}$ being independent of $d$.}
If, on the other hand, $K_0 = 1$ then each such $\F^{2s}_{\S_j}:  \mathcal{P}\left(N^{\S_j} \right) \rightarrow \mathcal{P}\left(N^{\S_j} \right)$ requires only
$$m'_j = m^j_1 m^j_2  = \mathcal{O} \left( K^{4d}_\infty s^3 d^4 \cdot  \log^4 \left( \frac{DN}{d} \right) \log^2 (s) \log^2 (D) \right)$$
evaluations of any given $h$ at $m'_j$ fixed and nonadaptive grid points in $\D$.  As a result, $\F^{2s}_{\S_j} \left( \T \right)$ can be computed in just $\mathcal{O}(m'_j |\T|)$-time for any given $\T \in \mathcal{P}\left(N^{\S_j} \right)$ and $h$ in this case.
\label{thm:ExistFastFuncs}
\end{mytheorem}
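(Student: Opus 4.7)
The plan is to construct $\F^{2s}_{\S_j}$ as an explicit selection rule built from a two-stage Monte Carlo estimator of the partial-index energies $\left\|\rt_{\S_j;\n}\right\|_2^2$, generalizing the estimators $\mathbf{E}^{\rm EI}_{j,n}$ and $\mathbf{E}^{\rm P}_{j,\n}$ from lines~\ref{linrefEstimator1b} and~\ref{linrefEstimator2b} of Algorithm~\ref{alg:suppid:impl}. Concretely, for each $\S_j$ I would draw $\w^{j}_\ell \in \D_{\S_j}$ i.i.d.\ from $\mu_{\S_j}$ and $\z^{j}_k \in \D_{\S_j^c}$ i.i.d.\ from $\mu_{\S_j^c}$, cache the $m^j_1 m^j_2$ evaluations $h\!\left(\varrho_{\S_j}(\w^{j}_\ell,\z^{j}_k)\right)$, and for each $\n\in\T$ set
\[
\mathbf{E}_\n := \frac{1}{m^j_2}\sum_{k\in[m^j_2]}\Bigg|\,\frac{1}{m^j_1}\sum_{\ell\in[m^j_1]} h\!\left(\varrho_{\S_j}(\w^{j}_\ell,\z^{j}_k)\right)\overline{T_{\S_j;\n}(\w^{j}_\ell)}\Bigg|^2.
\]
Then $\F^{2s}_{\S_j}(\T)$ returns the $\min(2s,|\T|)$ elements of $\T$ with largest $\mathbf{E}_\n$. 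The cardinality bound and the $\mathcal{O}(m_j|\T|)$ runtime are immediate consequences of this definition once the samples are cached.

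To see why $\mathbf{E}_\n$ is the right object, I would combine two observations. First, for each frozen $\z^{j}_k$ the inner average is an unbiased estimator of $\int_{\D_{\S_j}} h(\varrho_{\S_j}(\w,\z^{j}_k))\overline{T_{\S_j;\n}(\w)}\,d\mu_{\S_j}(\w)$, which by the tensor structure of $\B$ equals $\sum_{\m:\m_{\S_j}=\n_{\S_j}} \tilde r_{\m} T_{\S_j^c;\m}(\z^{j}_k)$ plus an $e_h$-contribution. Second, by orthonormality of $\{T_{\S_j^c;\m}\}$ in the $\S_j^c$-variable, Parseval yields
\[
\int_{\D_{\S_j^c}}\Bigl|\textstyle\sum_{\m:\m_{\S_j}=\n_{\S_j}} \tilde r_{\m} T_{\S_j^c;\m}(\z)\Bigr|^2 d\mu_{\S_j^c}(\z) = \left\|\rt_{\S_j;\n}\right\|_2^2,
\]
which is the computation behind Lemma~\ref{lem:PartEvalinnerP}. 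Hence in expectation $\mathbf{E}_\n$ targets exactly the quantity whose largeness defines membership in $\Omega^{\alpha,2s}_{\S_j}$, with deviations coming from (i) inner and outer Monte Carlo error and (ii) the propagation of the residual $e_h = \tilde h - \tilde h^{\rm opt}_{2s} + e'$.

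The main technical step, which I would encapsulate in Theorem~\ref{thm:pairingFunc} (proved in Section~\ref{sec:proofBigSuppIDThm}) and Lemma~\ref{lem:errorBound}, is a uniform concentration argument over all $\n \in N^{\S_j}$ together with a careful propagation of $e_h$. The inner sum is a BOS linear estimator against $T_{\S_j;\n}$, so I would apply BOS Bernstein/Chernoff bounds with the BOS constant $K_{\S_j}\le \min\{\prod_{j\in\S_j} K_j,\,K_\infty^{d}K_0^{\max(|\S_j|-d,0)}\}$; the outer average then converts an $L^2$-in-$\z$ control into an empirical bound by a second concentration step, and squaring inflates the variance in a way that introduces the $s^3 d^4$ scaling. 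In the type~I regime $K_{\S_j}\le K_\infty^{\tilde d}$ and in the type~II regime $K_{\S_j}\le K_\infty^{d}$, producing the two sample-complexity bounds; the union bound over $\n\in N^{\S_j}\subseteq[N]^D$ contributes the $\log^4(DN/d)\log^2(D)$ factors. Using Lemma~\ref{lem:errorBound} to bound the $e_h$-contribution by $C\alpha(\sqrt{s}\|\rt-\rt_{\Omega^{\rm opt}_{\tilde h,2s}}\|_2 + \|\rt-\rt_{\Omega^{\rm opt}_{\tilde h,2s}}\|_1 + \gamma\sqrt{s})$ is precisely what turns the stated signal-to-noise hypothesis on $\|\tilde h^{\rm opt}_{2s}\|_{L^2(\D,\mu)}$ into the strict separation $\mathbf{E}_\n > \mathbf{E}_{\n'}$ whenever $\n_{\S_j}\in\Omega^{\alpha,2s}_{\S_j}\cap\T$ and $\n'_{\S_j}\notin\Omega^{\alpha,2s}_{\S_j}$.

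Once that separation is in place, $\Omega^{\alpha,2s}_{\S_j}\cap\T \subseteq \F^{2s}_{\S_j}(\T)$ follows because $|\Omega^{\alpha,2s}_{\S_j}|\le 2s$ by the definition in~\eqref{def:HeavySet} (more partial indices would imply $\|\rt_{\Omega^{\rm opt}_{\tilde h,2s}}\|_2^2$ exceeds itself), so retaining the top $\min(2s,|\T|)$ estimator values cannot omit any genuinely energetic partial index. The hardest part will be step three: producing a uniform tail bound for $\mathbf{E}_\n$ that is tight enough in $K_{\S_j}$, $s$, $d$, $\log N$, $\log D$ to match the quoted rates, while keeping the $e_h$-error cleanly separable into the $\|\cdot\|_2$, $\|\cdot\|_1$ and $\gamma$ pieces claimed in the SNR hypothesis. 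I expect this is exactly the content carried out by Theorem~\ref{thm:pairingFunc} in Section~\ref{sec:proofBigSuppIDThm}.
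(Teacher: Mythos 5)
Your estimator $\mathbf{E}_\n$, your selection rule (keep the $\min(2s,|\T|)$ largest), the cardinality bound $|\Omega^{\alpha,2s}_{\S_j}|\leq 2s$, the role of Lemma~\ref{lem:PartEvalinnerP}, the $\mathcal{O}(m_j|\T|)$ runtime, and the use of Lemma~\ref{lem:errorBound} to convert the SNR hypothesis into a separation between energetic and non-energetic partial indices all match the paper. The hardest part is indeed Theorem~\ref{thm:pairingFunc}, and your contradiction logic for why the top-$\min(2s,|\T|)$ selection captures $\Omega^{\alpha,2s}_{\S_j}\cap\T$ is the correct idea.

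However, your proposed mechanism for the concentration step is genuinely different from the paper's and has a gap. You propose pointwise Bernstein/Chernoff bounds for each $\n$ followed by a union bound over $\n\in N^{\S_j}$. The paper instead never runs an $\n$-by-$\n$ probabilistic argument: it establishes, once, that the random sampling matrices $\frac{1}{\sqrt{m_1}}\Phi_{\S;\zero}$ and $\frac{1}{\sqrt{m_2}}\Phi_{\S^c;\zero}$ in~\eqref{equ:EI_RIPmat2} and~\eqref{equ:EI_RIPmat} have the RIP (of order $2$ with $\tilde\delta\lesssim 1/s'$, and of order $s'$ with $\delta\leq 3/4$, respectively) via Theorem~\ref{thm:BOS_RIP} and Lemma~\ref{lem:SamplingBounds}, and then Lemmas~\ref{lem:EntryID_ExactInnerProd}--\ref{lem:EstErrorforEstimatorNoise} derive \emph{deterministic} error bounds for $\sqrt{E^{h}_{\S;\n}}$ that hold simultaneously for every $\n$ and every $s'$-sparse $\rt$ once the RIP event has occurred. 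The RIP-of-order-2 is precisely a uniform coherence bound $|\nu_{\S}(\q,\n)|\leq\tilde\delta$ that Lemma~\ref{lem:SupportLem2} exploits; the RIP-of-order-$s'$ is what Lemma~\ref{lem:EntryID_ExactInnerProd} exploits. This is what gives the ``for all $h$'' uniformity in the theorem statement for free: a pointwise Bernstein argument over $\n$ gives you concentration for a fixed $h$, but you then still need to argue uniformly over all $h$ with the stated SNR, and your proposal does not address that. The RIP route buys exactly this extra uniformity at no extra cost.

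Two smaller misattributions follow from the same place. The $\log^2(D)$ factor is \emph{not} from a union bound over $\n$; it arises in Remark~\ref{rem:SampBound} from setting the RIP failure probability to $c/(2D)$ so that a union bound over the $2D-1$ different index sets $\S$ (not over $\n$) survives. The $\log^4(DN/d)$ is the product of the two $\ln(|\mathcal{I}_{N,d}|)\cdot\ln(m)$ factors $\big(\approx d^2\log^2(DN/d)\text{ each}\big)$ in Theorem~\ref{thm:BOS_RIP} applied once for $m_1$ and once for $m_2$, and the $s^3$ comes from $\tilde\delta^{-2}\sim (s')^2$ in the $m_1$ bound times $s'$ in the $m_2$ bound, not from ``squaring inflating the variance.''
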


\begin{proof}
See Section~\ref{sec:proofBigSuppIDThm} below.  The proof follows by applying Theorem~\ref{thm:pairingFunc} with $e_h = \tilde{h} - \tilde{h}^{\rm opt}_{2s} + (h - \tilde{h}) = h - \tilde{h}^{\rm opt}_{2s}$ for each set $\S_0, \dots, \S_{t'}$.  After recalling that $\displaystyle \sup_{\xib \in \D} \left| \left(h - \tilde{h} \right)(\xib) \right| \leq \gamma$ we can see that Lemma~\ref{lem:errorBound} will also apply in each case.  Finally, the runtime and sampling complexity bounds follow from Lemma~\ref{lem:SamplingBounds} and Remark~\ref{rem:SampBound}.  
\end{proof}

\begin{remark}
It is important to note that Section~\ref{sec:proofBigSuppIDThm} proves more than mere existence of the collection of low-complexity energetic-index sieve functions promised in Theorem~\ref{thm:ExistFastFuncs}. In fact it proves their existence by proving that one can generate such a collection with high probability $\geq$, e.g., $0.99$ by letting $\{ \w^j_\ell \}_{\ell \in [m^j_1]} \subset \D_{\S_j}$ be $m^j_1$ sampling points drawn independently at random according to $\mu_{\S_j}$, and by letting $\{ \z^j_k \}_{k \in [m^j_2]} \subset \D_{\S^c_j}$ be $m^j_2$ sampling points drawn independently at random according to $\mu_{\S^c_j}$, for all $j \in [t' ]$.  This is done by showing that randomly selecting the nonadaptive grid points in this fashion ultimately guarantees that their related random sampling matrices in \eqref{equ:EI_RIPmat} and \eqref{equ:EI_RIPmat2} have well behaved restricted isometry constants.  See Remark~\ref{rem:SampBound} for additional details and related discussion.
\end{remark}

With Lemma~\ref{lem:IndexExpansion} and Theorem~\ref{thm:ExistFastFuncs} in hand one can now see that Algorithm~\ref{alg:suppid} will be guaranteed to return a superset $\tilde{\Omega}$ of $\Omega^{\alpha,2s}_{[D]}$ whose cardinality is at most $2s$.  \\

\begin{algorithm}[ht]
\caption{Support Identification}
\label{alg:suppid}
\begin{algorithmic}[1]
\Procedure{$\mathbf{Generalized~SupportID}$}{}\\
{\textbf{Parameters: }}{$s\in\mathbbm{N}$, $t \in [D] \setminus \{0\}$, ~A Partition of $[D]$ into $\S^{\rm EI}_0, \dots, \S^{\rm EI}_{t} \subset [D]$, ~and the Associated Pairing Index Sets $\S^{\rm P}_j := \cup^j_{\ell = 0} \S^{\rm EI}_{\ell} \subseteq [D]$ for all $j \in [t+1] \setminus \{0\}$.}\\
{\textbf{Input: }}{${\v}_{\text{SID}} \in \C^{\sum^{2t}_{j=0} m^j_1 m^j_2}$ split into $2t+1$ blocks $\left\{ h \left( \varrho_{\S^{\rm EI}_j} \left( \w^j_\ell,\z^j_k \right) \right) \right\}_{\ell \in [m^j_1],k \in [m^j_2]}^{j \in [t+1]} \; \bigcup \; \left\{ h \left( \varrho_{\S^{\rm P}_{j-t}} \left(\w^j_\ell,\z^j_k \right) \right) \right\}_{\ell \in [m^j_1],k \in [m^j_2]}^{j\in [2t+1] \setminus [t+1]}$ indexed by $j$ w/ $\w^j_\ell \in \D_{\S^{\rm EI}_j}~\forall~j \in [t+1] ~\&~ \ell \in [m^j_1]$,  $\w^{j}_\ell \in \D_{\S^{\rm P}_{j - t}} ~\forall~ j \in [2t+1] \setminus[t+1] ~\&~\ell \in \left[m^{j}_1 \right]$, $\z^j_k \in \D_{\left(\S^{\rm EI}_j \right)^c}~\forall~j \in [t+1] ~\&~ k \in [m^j_2]$, \& $\z^{j}_k \in \D_{\left(\S^{\rm P}_{j - t}\right)^c} ~\forall~ j \in [2t+1] \setminus[t+1] ~\&~k \in \left[m^{j}_2 \right]$. }\\
{\textbf{Output: }}{A set $\tilde{\Omega} \supset \Omega^{\alpha,2s}_{[D]}$}
\State Compute $\mathcal{N}_j \leftarrow \F^{2s}_{\S^{\rm EI}_j}\left( N^{\S^{\rm EI}_j} \right)$ using $\left\{ h \left( \varrho_{\S^{\rm EI}_j} \left( \w^j_\ell,\z^j_k \right) \right) \right\}_{\ell \in [m^j_1],k \in [m^j_2]}$ for each $j \in [t+1]$
\State $\T_{t} \leftarrow \mathcal{N}_0$
\For{$j = t+1$ {\bf up to} $2t$}{}
\State $\T'_j \leftarrow  \left\{ \n + \m ~\big|~ \n \in \T_{j-1},~\m \in \mathcal{N}_{j-t} \right\} \cap \mathcal{I}_{N,d} \subseteq N^{\S^{\rm P}_{j-t}}$ 
\State $\T_j \leftarrow \F^{2s}_{\S^{\rm P}_{j-t}} \left( \T'_j \right)$ using $\left\{ h \left( \varrho_{\S^{\rm P}_{j-t}} \left(\w^j_\ell,\z^j_k \right) \right) \right\}_{\ell \in [m^j_1],k \in [m^j_2]}$ 
\EndFor
\State Return $\tilde{\Omega} \leftarrow \T_{2t}$
\EndProcedure
\end{algorithmic}
\end{algorithm}

\begin{mytheorem}
Let $\S^{\rm EI}_0, \dots, \S^{\rm EI}_{t} \subset [D]$ form a partition of $[D]$ for $t \in [D] \setminus \{0\}$ and set $\S^{\rm P}_j := \cup^j_{\ell = 0} \S^{\rm EI}_{\ell} \subseteq [D]$ for all $j \in [t+1] \setminus \{0\}$ as per Algorithm~\ref{alg:suppid}.  Let $\F^{2s}_{\S^{\rm EI}_j}:  \mathcal{P}\left(N^{\S^{\rm EI}_j} \right) \rightarrow \mathcal{P}\left(N^{\S^{\rm EI}_j} \right)$ and $\F^{2s}_{\S^{\rm P}_{j}}:  \mathcal{P}\left(N^{\S^{\rm P}_{j}} \right) \rightarrow \mathcal{P}\left(N^{\S^{\rm P}_{j}} \right)$ be their associated energetic-index sieve functions.  When executed using these energetic-index sieve functions Algorithm~\ref{alg:suppid} will output a set $\tilde{\Omega}$ with $\left| \tilde{\Omega} \right| \leq 2s$ that will also have $\Omega^{\alpha,2s}_{[D]} \subset \tilde{\Omega}$ provided that $h = \tilde{h}^{\rm opt}_{2s} + e_h$ has
\begin{equation}
\left\|\tilde{h}^{\rm opt}_{2s} \right \|_{L^2(\D,\mu)} =  \left\| \rt_{\Omega^{\rm opt}_{\tilde{h},2s}} \right\|_2 ~>~ 25 \alpha \sqrt{s} \left\| \rt - \rt_{\Omega^{\rm opt}_{\tilde{h},2s}} \right\|_2 + 18 \alpha \left\| \rt - \rt_{\Omega^{\rm opt}_{\tilde{h},2s}} \right\|_1 + 22 \alpha  \gamma \sqrt{s}.
 \label{eqn:SeiveFtnWorkCondition}
\end{equation}

The total number of function evaluations required \footnote{In the bounds below $t$ may be upper bounded by $D$.} by Algorithm~\ref{alg:suppid} is 
$$m_{\rm SID} = \mathcal{O} \left( t K^{4\tilde{d}}_\infty s^3 d^4 \cdot  \log^4 \left( \frac{DN}{d} \right) \log^2 (s) \log^2 (D) \right)$$
if the BOS constants $K_j$ are $1$ for all but at most $\tilde{d} \in \mathbbm{Z} \cap [0, D]$ BOS basis sets $\B_j$, and is
$$m'_{\rm SID} = \mathcal{O} \left( t K^{4d}_\infty s^3 d^4 \cdot  \log^4 \left( \frac{DN}{d} \right) \log^2 (s) \log^2 (D) \right)$$
if $K_0 = 1$.

The runtime complexity of Algorithm~\ref{alg:suppid} will be 
$$\mathcal{O}\left( \left( s^5 + s^3 \max_{j \in [t+1]} \left | N^{\S^{\rm EI}_j} \right | \right) t K^{4\tilde{d}}_\infty d^4 \cdot  \log^4 \left( \frac{DN}{d} \right) \log^2 (s) \log^2 (D) \right)$$
if the BOS constants $K_j$ are $1$ for all but at most $\tilde{d} \in \mathbbm{Z} \cap [0, D]$ BOS basis sets $\B_j$, and 
$$\mathcal{O} \left(  \left( s^5 + s^3 \max_{j \in [t+1]} \left | N^{\S^{\rm EI}_j} \right | \right) t K^{4d}_\infty d^4 \cdot  \log^4 \left( \frac{DN}{d} \right) \log^2 (s) \log^2 (D) \right)$$
if $K_0 = 1$.
\label{thm:GenAlgSuppIDWorks}
\end{mytheorem}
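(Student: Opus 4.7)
The plan is to establish correctness by a simple induction on the loop index $j$ in Algorithm~\ref{alg:suppid}, using Lemma~\ref{lem:IndexExpansion} to prove the energetic-index containment is preserved when we enlarge the active coordinate set, and Theorem~\ref{thm:ExistFastFuncs} to prove the cardinality of the candidate set is reduced back down to $2s$ after each enlargement. The energy hypothesis \eqref{eqn:SeiveFtnWorkCondition} is precisely the one under which Theorem~\ref{thm:ExistFastFuncs} provides valid sieve functions for \emph{every} set $\S^{\rm EI}_0,\dots,\S^{\rm EI}_{t},\S^{\rm P}_1,\dots,\S^{\rm P}_{t}$ simultaneously, so all $2t+1$ invocations of sieves in the algorithm inherit their required properties from the same single hypothesis.

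For correctness, first I would verify the base case $\T_t = \mathcal{N}_0 = \F^{2s}_{\S^{\rm EI}_0}(N^{\S^{\rm EI}_0})$: since $\Omega^{\alpha,2s}_{\S^{\rm EI}_0} \subseteq N^{\S^{\rm EI}_0}$ trivially, Theorem~\ref{thm:ExistFastFuncs} gives $\Omega^{\alpha,2s}_{\S^{\rm P}_0} = \Omega^{\alpha,2s}_{\S^{\rm EI}_0} \subseteq \T_t$ and $|\T_t| \leq 2s$. Similarly each $\mathcal{N}_j$ from the EI stage satisfies $\Omega^{\alpha,2s}_{\S^{\rm EI}_j} \subseteq \mathcal{N}_j$ and $|\mathcal{N}_j| \leq 2s$. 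For the inductive step from $\T_{j-1}$ to $\T_j$ in the pairing loop, I would apply Lemma~\ref{lem:IndexExpansion} with $\S_1 = \S^{\rm P}_{j-1-t}$, $\S_2 = \S^{\rm EI}_{j-t}$ (which are disjoint by the partition hypothesis), $\T_1 = \T_{j-1}$, and $\T_2 = \mathcal{N}_{j-t}$ to conclude $\Omega^{\alpha,2s}_{\S^{\rm P}_{j-t}} \subseteq \T'_j$, and then applying the sieve $\F^{2s}_{\S^{\rm P}_{j-t}}$ preserves the containment while capping the cardinality at $2s$. Running the induction through $j=2t$ yields $\Omega^{\alpha,2s}_{\S^{\rm P}_t} \subseteq \T_{2t}$; since $\S^{\rm P}_t = \bigcup_{\ell=0}^{t}\S^{\rm EI}_\ell = [D]$ by the partition assumption, this gives $\Omega^{\alpha,2s}_{[D]} \subseteq \tilde{\Omega}$ as required.

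For the sampling complexity, Algorithm~\ref{alg:suppid} performs exactly $t+1$ EI-stage sieve calls and $t$ pairing-stage sieve calls, each drawing samples from the per-sieve budget of Theorem~\ref{thm:ExistFastFuncs}. Multiplying that per-sieve sample complexity by $2t+1 = \mathcal{O}(t)$ yields the stated bounds $m_{\rm SID}$ (type I, factor $K_\infty^{4\tilde{d}}$) and $m'_{\rm SID}$ (type II, factor $K_\infty^{4d}$). For the runtime complexity, Theorem~\ref{thm:ExistFastFuncs} says $\F^{2s}_{\S}(\T)$ runs in $\mathcal{O}(m_j|\T|)$ time. In the EI stage the input cardinality is $|\T| = |N^{\S^{\rm EI}_j}| \leq \max_{j}|N^{\S^{\rm EI}_j}|$, contributing $\mathcal{O}(t\, m_j\, \max_j |N^{\S^{\rm EI}_j}|)$. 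In the pairing stage we have $|\T'_j| \leq |\T_{j-1}|\cdot|\mathcal{N}_{j-t}| \leq (2s)^2 = 4s^2$, so the pairing-stage sieves together contribute $\mathcal{O}(t\, m_j\, s^2)$. Since $m_j = \Theta(s^3 \cdot \text{(log-factors)})$ per Theorem~\ref{thm:ExistFastFuncs}, summing these two pieces gives the stated $\mathcal{O}((s^5 + s^3 \max_j |N^{\S^{\rm EI}_j}|)\cdot t\cdot K_\infty^{4\tilde d}\cdot d^4 \log^4(DN/d)\log^2 s\log^2 D)$ runtime, and analogously for the type II case. The set-construction cost of $\T'_j$ itself (at most $4s^2$ index additions and a containment check in $\mathcal{I}_{N,d}$) is absorbed into the pairing-stage sieve cost.

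The main obstacle is essentially bookkeeping: tracking the disjointness of $\S^{\rm P}_{j-1-t}$ and $\S^{\rm EI}_{j-t}$ (guaranteed by the partition assumption), and ensuring the energy hypothesis \eqref{eqn:SeiveFtnWorkCondition} is the correct uniform condition that simultaneously licenses all $2t+1$ sieve functions via Theorem~\ref{thm:ExistFastFuncs}. Everything else is a direct consequence of the two results already in hand, so no new probabilistic or analytic work is required at this level -- the heavy lifting was deferred to Theorem~\ref{thm:ExistFastFuncs}, whose proof is independently handled in Section~\ref{sec:proofBigSuppIDThm}.
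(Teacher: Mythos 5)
Your proof is correct and takes essentially the same route as the paper: the paper's own proof is a one-liner ("follows directly from Lemma~\ref{lem:IndexExpansion} and Theorem~\ref{thm:ExistFastFuncs}"), and your argument is exactly that one-liner unpacked into an explicit induction over the algorithm's loop index, together with the obvious accounting of $2t+1$ sieve invocations for the sampling and runtime bounds. The only implicit detail worth naming is the convention $\S^{\rm P}_0 := \S^{\rm EI}_0$, which the paper states just after Lemma~\ref{lem:IndexExpansion} and which makes your inductive base and step consistent; you use it correctly.
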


\begin{proof}
The proof follows directly from Lemma~\ref{lem:IndexExpansion} and Theorem~\ref{thm:ExistFastFuncs}.
\end{proof}

\begin{remark}
Note that Algorithm~\ref{alg:suppid:impl} is a special case of Algorithm~\ref{alg:suppid} with $t = D-1$, $\S^{\rm EI}_j := \{j\}$ for all $j \in [D]$, $\S^{\rm P}_j := \cup^j_{\ell = 0} \S^{\rm EI}_{\ell} = \cup^j_{\ell = 0} \{ \ell \} \subseteq [D]$ for all $j \in [D] \setminus \{0\}$, and where the sieve functions $\F^{2s}_{\S^{\rm EI}_j}$, $\F^{2s}_{\S^{\rm P}_{j-t}}$ have been written down explicitly using \eqref{equ:EstimatorGen}, \eqref{def:SizeOrder}, and \eqref{def:SetValCardRedux}.  Therein the $\F^{2s}_{\S^{\rm EI}_j}\left( N^{\S^{\rm EI}_j} \right)$ are computed for all $j \in [t+1]$ by lines \ref{linrefEstimator1b} -- \ref{linrefEstimator1e} of Algorithm~\ref{alg:suppid:impl}, and each $\F^{2s}_{\S^{\rm P}_{j-t}} \left( \T'_j \right)$ in Algorithm~\ref{alg:suppid} is computed by lines \ref{linrefEstimator2b} -- \ref{linrefEstimator2e} of Algorithm~\ref{alg:suppid:impl}.
\end{remark}

Though dedicated to proving Theorem~\ref{thm:ExistFastFuncs}, this next subsection will be initially focussed on learning $\Omega^{\alpha,s'}_{\S}$ for arbitrary BOPB-sparse functions with $h = \tilde{h} = \tilde{h}^{\rm opt}_{\tilde{h},s'}$ for which $\rp = \rt_{\Omega^{\rm opt}_{\tilde{h},s'}}$.  It will then be generalized to cover more general functions $h$ of the type discussed above \eqref{def:ffind} toward its end as an extension of the noisy sparse case.  A proof of Theorem~\ref{thm:ExistFastFuncs} may then be obtained by setting $s'=2s$.

%%%%%%%%%%%%%%%%%%%%%%%%%%%%%%%%%%%%%%%%%%%%%%%%%%%%%%%%%%%
%Second subsection
%%%%%%%%%%%%%%%%%%%%%%%%%%%%%%%%%%%%%%%%%%%%%%%%%%%%%%%%%%%%
\subsection{Proof of Theorem~\ref{thm:ExistFastFuncs}: Generalized Entry Identification \& Pairing}
\label{sec:proofBigSuppIDThm}

In the vast majority of this subsection we will be considering an arbitrary function $\tilde{h}\colon \D \rightarrow \C$ of $D$ variables as per \eqref{def:htilde} whose coefficient vector $\rt \in \C^{\mathcal{I}_{N,d}}$ is only nonzero for entries indexed by index vectors $\q \in \mathcal{I}_{N,d}$.  In particular, we will be focussing almost exclusively on the development of efficient strategies for learning about the support of the coefficient vector $\rt$ of such $\tilde{h}$ in the special case where $\rt$ is $s'$-sparse so that $\tilde{h} = \tilde{h}^{\rm opt}_{s'}$ and $\rt = \rt_{\Omega^{\rm opt}_{\tilde{h},s'}}$.
Our first lemma does this by telling us how to estimate the $\ell^2$-norm of any $\rpp = \left(\rt_{\Omega^{\rm opt}_{\tilde{h},s'}}\right)_{\S;\n} = \rt_{\left\{ \q \in \Omega^{\rm opt}_{\tilde{h},s'} ~\big| ~ \q_{\S} = \n_{\S} \right\}} \in \C^{\mathcal{I}_{N,d}}$ in that case 
(i.e., how to estimate of the energy of all the coefficients of $\tilde{h}^{\rm opt}_{s'}$ whose index vectors $\q \in \mathcal{I}_{N,d}$ match another fixed index vector $\n \in \mathcal{I}_{N,d}$ in all index positions $\S \subset [D]$) by using just a few inner products with ``simpler'' functions of only $|\S| < D$ variables.  The idea is that these inner products will be easy to approximate numerically for $|\S|$ small.  As a result, one can hope to learn about the index vectors of the nonzero entries of any such $\rpp$ by approximately computing just a few inner products involving functions of just a few variables in order to, e.g., discover values of $\n$ for which $\| \rpp \|_2$ is large.\\

\begin{lemma}
Let $\delta \in (0, 3/4]$, $\S \subset [D]$, and $\{\z_{k}\}_{k \in [m_2]} \subset \D_{\S^c}$ be $m_2$ sampling points drawn independently at random according to $\mu_{\S^c}$ in order to form a zero-padded random sampling matrix $\Phi_{\S^c;\zero} \in\C^{m_2\times |\mathcal{I}_{N,d}|}$ for the BOS $\B_{\S^c}$ as in \eqref{def:B_S}  with entries
\begin{equation}
\left( \Phi_{\S^c;\zero} \right)_{k,\q} :=  \begin{cases}
       T_{\S^c;\q}(\z_k) & \textrm{if}~\q_{\S} = \zero ~\&~ \q \in \mathcal{I}_{N,d}\\%\textrm{if}~\q_{\S^c} = \zero_{\S^c} ~\&~ \q \in \mathcal{I}_{N,d}\\
       0 &\text{ otherwise}\\
   \end{cases}
\label{equ:EI_RIPmat}
\end{equation}
indexed by $k \in [m_2]$ and $\q \in \mathcal{I}_{N,d}$.   Suppose the nonzero columns of $\frac{1}{\sqrt{m_2}}\Phi_{\S^c;\zero}$ have the restricted isometry property (RIP) of order $(s,\delta)$.  Then, for all $\n \in \mathcal{I}_{N,d} \subseteq [N]^D$, vectors of additive evaluation errors $\e^h \in \C^{m_2}$, and functions $\tilde{h}$ as per \eqref{def:htilde} one will have 
\begin{equation}
\left| \sqrt{\sum_{k\in [m_2]} \frac{1}{m_2} \left| \left \langle (\tilde{h}_{s'}^{\rm opt})_{\S^c;\z_k}, T_{\S;\n} \right \rangle_{\left( \D_{\S}, \mu_{\S}\right)} + e^h_k \right|^2 } - \| \rpp \|_2 \right|
\leq
\frac{2}{3}
\delta \| \rpp \|_2 +\frac{\| \e^h \|_2}{\sqrt{m_2}}
\label{equ:EI_error_guar}
\end{equation}
where $\rpp := \rt_{\left\{ \q \in \Omega^{\rm opt}_{\tilde{h},s'} ~\big| ~ \q_{\S} = \n_{\S} \right\}}$.
\label{lem:EntryID_ExactInnerProd}
\end{lemma}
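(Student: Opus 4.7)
The plan is to express the quantity inside the square root as the squared $\ell^2$-norm of a matrix-vector product plus noise, and then use the triangle inequality together with the RIP hypothesis to compare it to $\|\rpp\|_2$.

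First, I would expand the inner product using the tensor-product structure $T_\k(\xib) = T_{\S;\k}(\xib_\S)T_{\S^c;\k}(\xib_{\S^c})$ and the orthonormality of $\B_\S$ with respect to $\mu_\S$. Since $\int_{\D_\S} T_{\S;\k}(\w)\overline{T_{\S;\n}(\w)}\,d\mu_\S(\w) = \prod_{j \in \S}\delta_{k_j,n_j} = \delta_{\k_\S,\n_\S}$, an argument analogous to Lemma~\ref{lem:PartEvalinnerP} gives
$$\left\langle (\tilde{h}_{s'}^{\rm opt})_{\S^c;\z_k}, T_{\S;\n}\right\rangle_{(\D_\S,\mu_\S)} = \sum_{\k \in \Omega^{\rm opt}_{\tilde{h},s'}:\; \k_\S = \n_\S} \tilde{r}_\k\, T_{\S^c;\k}(\z_k).$$
Because $T_{\S^c;\k}(\z_k)$ depends only on $\k_{\S^c}$, each term can be rewritten using the column of $\Phi_{\S^c;\zero}$ indexed by the unique $\q \in \mathcal{I}_{N,d}$ with $\q_\S = \zero$ and $\q_{\S^c} = \k_{\S^c}$. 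Defining $\u' \in \C^{|\mathcal{I}_{N,d}|}$ supported on $\{\q : \q_\S = \zero\}$ with $u'_\q := \tilde{r}_{(\n,\q)_\S}$ whenever $(\n,\q)_\S \in \Omega^{\rm opt}_{\tilde{h},s'}$ (and zero otherwise), this produces the identity
$$\left\langle (\tilde{h}_{s'}^{\rm opt})_{\S^c;\z_k}, T_{\S;\n}\right\rangle_{(\D_\S,\mu_\S)} + e^h_k = \left(\Phi_{\S^c;\zero}\u'\right)_k + e^h_k.$$
Crucially, $\|\u'\|_2 = \|\rpp\|_2$ and $\|\u'\|_0 \leq |\Omega^{\rm opt}_{\tilde{h},s'}| \leq s$, and the support of $\u'$ is contained in the index set of the nonzero columns of $\Phi_{\S^c;\zero}$.

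Next, I would rewrite the left-hand side of \eqref{equ:EI_error_guar} as $\bigl\|\tfrac{1}{\sqrt{m_2}}\Phi_{\S^c;\zero}\u' + \tfrac{1}{\sqrt{m_2}}\e^h\bigr\|_2$ and apply the reverse triangle inequality to peel off the noise term:
$$\left|\,\left\|\tfrac{1}{\sqrt{m_2}}\Phi_{\S^c;\zero}\u' + \tfrac{1}{\sqrt{m_2}}\e^h\right\|_2 - \left\|\tfrac{1}{\sqrt{m_2}}\Phi_{\S^c;\zero}\u'\right\|_2\,\right| \leq \frac{\|\e^h\|_2}{\sqrt{m_2}}.$$
Since $\u'$ is $s$-sparse and supported in the nonzero columns, the RIP hypothesis yields $\sqrt{1-\delta}\,\|\u'\|_2 \leq \bigl\|\tfrac{1}{\sqrt{m_2}}\Phi_{\S^c;\zero}\u'\bigr\|_2 \leq \sqrt{1+\delta}\,\|\u'\|_2$, so using $\|\u'\|_2 = \|\rpp\|_2$,
$$\left|\,\bigl\|\tfrac{1}{\sqrt{m_2}}\Phi_{\S^c;\zero}\u'\bigr\|_2 - \|\rpp\|_2\,\right| \leq \max\!\bigl(1-\sqrt{1-\delta},\; \sqrt{1+\delta}-1\bigr)\|\rpp\|_2.$$
Combining both displayed inequalities reduces the lemma to the scalar claim
$$\max\!\bigl(1-\sqrt{1-\delta},\; \sqrt{1+\delta}-1\bigr) \;\leq\; \tfrac{2}{3}\delta \quad\text{for all}\quad \delta \in (0,3/4].$$

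The only step that takes a moment is verifying this scalar bound, but it is routine. The upper branch $\sqrt{1+\delta}-1 \leq \delta/2 \leq \tfrac{2}{3}\delta$ follows from concavity of the square root. For the lower branch, I would set $g(\delta) := \tfrac{2}{3}\delta + \sqrt{1-\delta} - 1$ and check $g(0) = 0$, $g(3/4) = \tfrac{1}{2} + \tfrac{1}{2} - 1 = 0$, and $g''(\delta) = -\tfrac{1}{4}(1-\delta)^{-3/2} < 0$ on $[0,3/4)$; concavity between the two zeros forces $g(\delta) \geq 0$ there, i.e., $1 - \sqrt{1-\delta} \leq \tfrac{2}{3}\delta$. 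There is no real obstacle to this proof — it is essentially a one-line RIP argument wrapped around a careful bookkeeping of how the partial-evaluation inner product collapses onto the sparse column-support of $\Phi_{\S^c;\zero}$; the only subtlety is correctly identifying the encoding $\u'$ so that $\Phi_{\S^c;\zero}\u'$ reproduces the inner-product vector exactly.
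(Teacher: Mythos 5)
Your proposal is correct and follows essentially the same route as the paper's proof: the paper introduces the permuted matrix $\Phi_{\S^c;\n}$ and argues it inherits the RIP of $\Phi_{\S^c;\zero}$ because its nonzero columns are a subset thereof, whereas you equivalently re-encode the coefficients into the $s'$-sparse vector $\u'$ supported on the nonzero-column index set of $\Phi_{\S^c;\zero}$ itself — these are the same bookkeeping expressed two ways. Your scalar verification that $1-\sqrt{1-\delta}\leq\tfrac{2}{3}\delta$ on $(0,3/4]$ (via concavity of $g$ and the vanishing endpoints) is a fine substitute for the paper's rationalization argument $-\delta/(\sqrt{1-\delta}+1)\geq-\tfrac{2}{3}\delta$.
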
  

\begin{proof}
Consider the zero-padded random sampling matrix $\Phi_{\S^c;\n} \in\C^{m_2\times |\mathcal{I}_{N,d}|}$ for the BOS $\B_{\S^c}$ as in \eqref{def:B_S} with entries
\begin{equation}
\left( \Phi_{\S^c;\n} \right)_{k,\q} :=  \begin{cases}
       T_{\S^c;\q}(\z_k) & \textrm{if}~\q_{\S} = \n_{\S} ~\&~ \q \in \mathcal{I}_{N,d}\\
       0 &\text{ otherwise}\\
   \end{cases}
\label{equ:EI_RIPmatn}
\end{equation}
indexed by $k \in [m_2]$ and $\q \in \mathcal{I}_{N,d}$.  Note that $\q_{\S} = \n_{\S} ~\&~ \q \in \mathcal{I}_{N,d} \implies (\q,\zero)_{\S^c} \in \mathcal{I}_{N,d}$ for all $d \in [D+1] \setminus \{0\}, \S^c \subset [D],$ and $\n,\q \in \mathcal{I}_{N,d}$. As a result, the nonzero columns of $\Phi_{\S^c;\zero}$ will contain the nonzero columns of $\Phi_{\S^c;\n}$ as a subset.\footnote{Note that the nonzero columns of $\Phi_{\S^c;\n}$ will be indexed by different $\q$ in $\Phi_{\S^c;\zero}$.  However, this reindexing will ultimately just represent a permutation of the nonzero columns of $\Phi_{\S^c;\n}$ as a submatrix of $\Phi_{\S^c;\zero}$.  And, permuting the columns of a matrix does not change its restricted isometry constants.}  This further implies that the matrix consisting of the nonzero columns of $\frac{1}{\sqrt{m_2}}\Phi_{\S^c;\n}$ will also have the restricted isometry property (RIP) of order $(s,\delta)$.

Applying Lemma~\ref{lem:PartEvalinnerP} together with the definition of $\tilde{h}_{s'}^{\rm opt}$ we now have that
$$\sum_{k \in [m_2]} \frac{1}{m_2} \left| \left \langle (\tilde{h}_{s'}^{\rm opt})_{\S^c;\z_k}, T_{\S;\n} \right \rangle_{\left( \D_{\S}, \mu_{\S}\right)} + e^h_k \right|^2 =\sum_{k \in [m_2]}  \left| \frac{1}{\sqrt{m_2}} \left \langle~ \left(\rt_{\Omega^{\rm opt}_{\tilde{h},s'}}\right)_{\S;\n},  \overline{\Phi_{\S^c;\n;\z_k}} ~\right \rangle +   \frac{e^h_k}{\sqrt{m_2}} \right|^2 .$$
Noting now that each vector $\Phi_{\S^c;\n;\z_k}$ as per \eqref{equ:RowRIPlem1} can be replaced by an equivalent row of $\Phi_{\S^c;\n}$ in \eqref{equ:EI_RIPmatn} we can further see that
\begin{align*}
\sqrt{\sum_{k \in [m_2]} \frac{1}{m_2} \left| \left \langle (\tilde{h}_{s'}^{\rm opt})_{\S^c;\z_k}, T_{\S;\n} \right \rangle_{\left( \D_{\S}, \mu_{\S}\right)} + e^h_k \right|^2 } &= \sqrt{\sum_{k \in [m_2]}  \left| \frac{1}{\sqrt{m_2}} \left(\Phi_{\S^c;\n} (\rt_{\Omega^{\rm opt}_{\tilde{h},s'}})_{\S;\n} \right)_k +  \frac{e^h_k}{\sqrt{m_2}} \right|^2 }\\
&= \left \| \frac{1}{\sqrt{m_2}} \Phi_{\S^c;\n} (\rt_{\Omega^{\rm opt}_{\tilde{h},s'}})_{\S;\n} + \frac{\e^h}{\sqrt{m_2}} \right\|_2\\
&= \left \| \frac{1}{\sqrt{m_2}} \Phi_{\S^c;\n} \, \rpp + \frac{\e^h}{\sqrt{m_2}} \right\|_2.
\end{align*}
Using that $\Phi_{\S^c;\n}$ has the restricted isometry property (RIP) of order $(s,\delta)$ together with the (reverse) triangle inequality on this last line we now can see that
\begin{align*}
\sqrt{1 - \delta} \| \rpp \|_2 - \frac{\| \e^h \|_2}{\sqrt{m_2}} &\leq \sqrt{\sum_{k \in [m_2]} \frac{1}{m_2} \left| \left \langle (\tilde{h}_{s'}^{\rm opt})_{\S^c;\z_k}, T_{\S;\n} \right \rangle_{\left( \D_{\S}, \mu_{\S}\right)} + e^h_k \right|^2 } \\ &\leq \sqrt{1 + \delta} \| \rpp \|_2 +\frac{\| \e^h \|_2}{\sqrt{m_2}}.
\end{align*}
After subtracting $\| \rpp \|_2$ from the quantities in the inequality above, we finally use the bounds
$(\sqrt{1 + \delta} - 1) \frac{\sqrt{1 + \delta} + 1}{\sqrt{1 + \delta} + 1} = \frac{\delta}{\sqrt{1 + \delta} + 1} < \frac{\delta}{2}$ and $(\sqrt{1 - \delta} - 1) \frac{\sqrt{1 - \delta} + 1}{\sqrt{1 - \delta} + 1} = \frac{-\delta}{\sqrt{1 - \delta} + 1} \geq -\frac{2}{3} \delta$ to finish the proof.
%a Taylor series remainder bound on $\sqrt{1 \pm \delta} - 1$ finishes the proof.
\end{proof}

Lemma~\ref{lem:EntryID_ExactInnerProd} yields an alternate entry identification technique to that provided in Section 4.1 of \cite{choi2018sparse}.  In particular, if $\S^c = [D] \setminus \{ j \}$ for some $j \in [D]$ the inner products $\left \langle (\tilde{h}_{s'}^{\rm opt})_{\S^c;\z_k}, T_{\S;\n} \right \rangle_{\left( \D_{\S}, \mu_{\S}\right)}$ are just one-dimensional integrals that can be computed to high accuracy for any desired $\n \in \mathcal{I}_{N,d}$ using only $\mathcal{O}(N)$ function evaluations of $(\tilde{h}_{s'}^{\rm opt})_{\S^c;\z_k}: \D_{\S} \rightarrow \C$ via, e.g., a quadrature rule whenever the the basis functions in the $j^{\rm th}$-dimension, $\B_j = \left \{ T_{\S;\n} ~|~ \n \in \mathcal{I}_{N,d} \right \}$, are polynomials of degree at most $N$.  If $\B_j$ is either the Fourier or Chebyshev basis and $N$ is very large then these one-dimensional integrals can also be computed for all $\n \in \mathcal{I}_{N,d}$ in sublinear-in-$N$ time since $(\tilde{h}_{s'}^{\rm opt})_{\S^c;\z_k}$ will be $\B_j$-sparse (see, e.g., \cite{gilbert2014recent,gilbert2005improved,iwen2007empirical,iwen2008deterministic,iwen2010combinatorial,bailey2012design,hassanieh2012simple,iwen2013improved,segal2013improved,merhi2017new,hu2017rapidly,bittens2019deterministic}).  

When $|\S^c| \ll D-1$ the situation becomes more difficult.  However, to efficiently evaluate the higher-dimensional inner products $\left \langle (\tilde{h}_{s'}^{\rm opt})_{\S^c;\z_k}, T_{\S;\n} \right \rangle_{\left( \D_{\S}, \mu_{\S}\right)}$ that arise in these settings one can instead utilize non-adaptive random sampling techniques motivated by compressive sensing theory.  The following lemma does this by quantifying how well the estimator $$\frac{1}{m_1} \sum_{\ell \in [m_1]} (\tilde{h}_{s'}^{\rm opt})_{\S^c;\z_k} (\w_\ell) ~\overline{T_{\S;\n}(\w_\ell)}$$ based on the $m_1$ randomly chosen grid points $\left\{ \w_\ell \right\}_{\ell \in [m_1]} \subset \D_{\S}$ approximates all such $\left \langle (\tilde{h}_{s'}^{\rm opt})_{\S^c;\z_k}, T_{\S;\n} \right \rangle_{\left( \D_{\S}, \mu_{\S}\right)}$.\\

\begin{lemma}
Let $\tilde{\delta} \in (0,1)$, $\S \subset [D]$, and $\{\w_{\ell}\}_{\ell \in [m_1]} \subset \D_{\S}$ be $m_1$ sampling points drawn independently at random according to $\mu_{\S}$ in order to form a zero-padded random sampling matrix $\Phi_{\S;\zero} \in\C^{m_1 \times |\mathcal{I}_{N,d}|}$ for the BOS $\B_{\S}$  in \eqref{def:B_S} with entries
\begin{equation}
\left( \Phi_{\S;\zero} \right)_{\ell,\q} :=  \begin{cases}
%       T_{\S^c;\q}(\w_\ell) & \textrm{if}~\q = (\q,\zero)_{\S^c} ~\&~ \q \in \mathcal{I}_{N,d}\\
       T_{\S;\q}(\w_\ell) & \textrm{if}~\q_{\S^c} = \zero ~\&~ \q \in \mathcal{I}_{N,d}\\
       0 &\text{ otherwise}\\
   \end{cases}
\label{equ:EI_RIPmat2}
\end{equation}
indexed by $\ell \in [m_1]$ and $\q \in \mathcal{I}_{N,d}$.   Suppose the nonzero columns of $\frac{1}{\sqrt{m_1}}\Phi_{\S;\zero}$ have the restricted isometry property (RIP) of order $(2,\tilde{\delta})$, and let  
$$e^h_k := \frac{1}{m_1} \sum_{\ell \in [m_1]} (\tilde{h}_{s'}^{\rm opt})_{\S^c;\z_k} (\w_\ell) ~\overline{T_{\S;\n}(\w_\ell)} - \left \langle (\tilde{h}_{s'}^{\rm opt})_{\S^c;\z_k}, T_{\S;\n} \right \rangle_{\left( \D_{\S}, \mu_{\S}\right)}$$
for any desired $\n \in \mathcal{I}_{N,d} \subseteq [N]^D$, function $\tilde{h}$ as per \eqref{def:htilde}, and point $\z_k \in \D_{\S^c}$.
Then,
\begin{equation}
\left| e^h_k \right| \leq \left\| \rt_{\Omega^{\rm opt}_{\tilde{h},s'}} \right\|_2 \tilde{\delta} \left \| \left(\Phi_{\S^c;\zero;\z_k}\right)_{\Omega^{\rm opt}_{\tilde{h},s'}} \right \|_2
\end{equation}
where $\Phi_{\S^c;\zero;\z_k}$ is defined as in \eqref{equ:RowRIPlem1}.
\label{lem:SupportLem2}
\end{lemma}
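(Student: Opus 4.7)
The strategy is to expand $e^h_k$ using the $s'$-sparse BOPB representation of $\tilde{h}^{\rm opt}_{s'}$, reduce the task to controlling a single row of the empirical Gram matrix of the nonzero columns of $\tfrac{1}{\sqrt{m_1}}\Phi_{\S;\zero}$, and then invoke the hypothesized RIP of order $(2,\tilde\delta)$ together with Cauchy--Schwarz.  First I would substitute the sparse expansion $(\tilde{h}^{\rm opt}_{s'})_{\S^c;\z_k}(\w_\ell) = \sum_{\m \in \Omega^{\rm opt}_{\tilde{h},s'}} \tilde{r}_\m\, T_{\S;\m}(\w_\ell)\, T_{\S^c;\m}(\z_k)$ into the empirical average, and combine it with the exact inner product
\begin{equation*}
\left\langle (\tilde{h}^{\rm opt}_{s'})_{\S^c;\z_k},\, T_{\S;\n} \right\rangle_{(\D_\S,\mu_\S)} \;=\; \sum_{\substack{\m \in \Omega^{\rm opt}_{\tilde{h},s'} \\ \m_\S = \n_\S}} \tilde{r}_\m\, T_{\S^c;\m}(\z_k),
\end{equation*}
which follows either from the product structure of $T_\m$ plus orthonormality of $\{T_{\S;\q}\}$ over $(\D_\S,\mu_\S)$, or equivalently from Lemma~\ref{lem:PartEvalinnerP} with $\S$ and $\S^c$ swapped. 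This produces the clean identity
\begin{equation*}
e^h_k \;=\; \sum_{\m \in \Omega^{\rm opt}_{\tilde{h},s'}} \tilde{r}_\m\, T_{\S^c;\m}(\z_k) \left[ \frac{1}{m_1}\sum_{\ell \in [m_1]} T_{\S;\m}(\w_\ell)\, \overline{T_{\S;\n}(\w_\ell)} \;-\; \delta_{\m_\S,\,\n_\S} \right].
\end{equation*}

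Next, I would observe that the columns of $\Phi_{\S;\zero}$ correspond (via zero-padding on $\S^c$, which preserves membership in $\mathcal{I}_{N,d}$ since $\|\cdot\|_0$ cannot increase) to partial indices in $N^\S$, so the bracketed quantity is an entry of $\tfrac{1}{m_1}\Phi_{\S;\zero}^*\Phi_{\S;\zero} - I$ indexed by $(\n_\S,\m_\S)$ under that correspondence. The assumed RIP of order $(2,\tilde\delta)$ on the nonzero columns of $\tfrac{1}{\sqrt{m_1}}\Phi_{\S;\zero}$ then forces this entry to have modulus at most $\tilde\delta$ \emph{uniformly} in $\m$: the case $\m_\S = \n_\S$ is the diagonal statement $\bigl| \tfrac{1}{m_1}\sum_\ell |T_{\S;\n}(\w_\ell)|^2 - 1 \bigr| \leq \tilde\delta$, while the case $\m_\S \neq \n_\S$ is the standard two-column incoherence bound derived from RIP by polarization --- testing the isometry inequality on the unit vectors $\tfrac{1}{\sqrt{2}}(e_{\m_\S} \pm e_{\n_\S})$ and $\tfrac{1}{\sqrt{2}}(e_{\m_\S} \pm i\, e_{\n_\S})$ recovers the real and imaginary parts of the off-diagonal Gram entry.

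Finally, pulling the uniform factor $\tilde\delta$ outside the sum, applying the triangle inequality, and then using Cauchy--Schwarz on $\sum_\m |\tilde{r}_\m|\, |T_{\S^c;\m}(\z_k)|$ delivers
\begin{equation*}
|e^h_k| \;\leq\; \tilde\delta\, \|\rt_{\Omega^{\rm opt}_{\tilde{h},s'}}\|_2 \sqrt{\sum_{\m \in \Omega^{\rm opt}_{\tilde{h},s'}} |T_{\S^c;\m}(\z_k)|^2},
\end{equation*}
and the remaining square-root factor is exactly $\|(\Phi_{\S^c;\zero;\z_k})_{\Omega^{\rm opt}_{\tilde{h},s'}}\|_2$ read directly off from \eqref{equ:RowRIPlem1}. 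The only delicate point is the polarization step that upgrades RIP of order $2$ to the uniform off-diagonal incoherence bound $\tilde\delta$; everything else is routine bookkeeping on top of the preceding lemmas.
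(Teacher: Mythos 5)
Your proposal follows essentially the paper's route: expand $e^h_k$ as a weighted sum over $\Omega^{\rm opt}_{\tilde{h},s'}$ whose weights are, up to the diagonal correction $\delta_{\m_\S,\n_\S}$, entries of the empirical Gram matrix $\frac{1}{m_1}\Phi_{\S;\zero}^*\Phi_{\S;\zero}$ restricted to the nonzero columns; bound every one of these bracket terms uniformly by the RIP of order $2$; then apply Cauchy--Schwarz and recognize the residual factor as $\left\|(\Phi_{\S^c;\zero;\z_k})_{\Omega^{\rm opt}_{\tilde{h},s'}}\right\|_2$ from \eqref{equ:RowRIPlem1}. This is exactly the decomposition via the quantities the paper calls $\nu_\S(\q,\n)$, and the paper invokes ``standard results concerning coherence'' in precisely the place your polarization step appears.

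The one place I would tighten is that step. Testing RIP on $\tfrac{1}{\sqrt{2}}(e_{\m_\S}\pm e_{\n_\S})$ and $\tfrac{1}{\sqrt{2}}(e_{\m_\S}\pm i\,e_{\n_\S})$ bounds $\left|\mathrm{Re}\langle A_{\m_\S},A_{\n_\S}\rangle\right|$ and $\left|\mathrm{Im}\langle A_{\m_\S},A_{\n_\S}\rangle\right|$ each by $\tilde\delta$, which only yields $\left|\langle A_{\m_\S},A_{\n_\S}\rangle\right|\le\sqrt{2}\,\tilde\delta$, a factor $\sqrt{2}$ weaker than the lemma requires. To recover the sharp constant, either rotate the phase in the test vector, taking $\tfrac{1}{\sqrt{2}}(e_{\m_\S}+e^{i\theta}e_{\n_\S})$ with $\theta$ aligned to the Gram entry, or use the operator-norm characterization of RIP directly: the $2\times 2$ Hermitian matrix $A_S^* A_S - I$ (for $S=\{\m_\S,\n_\S\}$) has operator norm $\le\tilde\delta$, and its eigenvalue separation forces both diagonal entries and the off-diagonal modulus to lie below $\tilde\delta$ simultaneously. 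This is the content of the coherence result the paper cites from Chapter~6 of Foucart--Rauhut. With that fix, your plan is the paper's proof.
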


\begin{proof}
We begin by noting that
\begin{align}
\frac{1}{m_1} \sum_{\ell \in [m_1]} (\tilde{h}_{s'}^{\rm opt})_{\S^c;\z_k} (\w_\ell) ~\overline{T_{\S;\n}(\w_\ell)} ~=~ \sum_{\q \in \Omega^{\rm opt}_{\tilde{h},s'}} \tilde{r}_\q \, T_{\S^c;\q}(\z_k) ~\nu_{\S}\left( \q,\n \right)
\label{equ:ProofNewLem7_1}
\end{align}
where
\begin{align*}
\nu_{\S}\left( \q,\n \right) &:= \sum_{\ell \in [m_1]} \frac{1}{\sqrt{m_1}}T_{\S;\q}(\w_\ell) ~\frac{1}{\sqrt{m_1}}\overline{T_{\S;\n}(\w_\ell)}\\  
&= \left \langle \frac{1}{\sqrt{m_1}} \left( \Phi_{\S;\zero} \right)_{(\q,\zero)_{\S}}, \frac{1}{\sqrt{m_1}} \left( \Phi_{\S;\zero} \right)_{(\n,\zero)_{\S}} \right \rangle.
\end{align*}
Appealing to standard results concerning coherence in, e.g., Chapter 6 of \cite{foucart2013mathematical} one can see that
$ \left| \nu_{\S}\left( \q,\n \right) - 1 \right | \leq \tilde{\delta}$ holds if $\n_{\S} = \q_{\S}$, and that $ \left| \nu_{\S}\left( \q,\n \right) \right | \leq \tilde{\delta}$ holds if $\n_{\S} \neq \q_{\S}$.  

Let $\Omega' := \left\{ \q \in \Omega^{\rm opt}_{\tilde{h},s'} ~|~ \q_{\S} = \n_{\S} \right \} \subset \Omega^{\rm opt}_{\tilde{h},s'}$, and $\Omega'' := \Omega^{\rm opt}_{\tilde{h},s'} \setminus \Omega'$.  
Using \eqref{equ:ProofNewLem7_1} one has that 
\begin{align*}
\Bigg| \frac{1}{m_1} \sum_{\ell \in [m_1]} &(\tilde{h}_{s'}^{\rm opt})_{\S^c;\z_k} (\w_\ell) ~\overline{T_{\S;\n}(\w_\ell)} - \sum_{\q \in \Omega'} \tilde{r}_\q T_{\S^c;\q}(\z_k) \Bigg| \\ &= \left| \sum_{\q \in \Omega'} \tilde{r}_\q T_{\S^c;\q}(\z_k) \left( \nu_{\S}\left( \q,\n \right) - 1 \right) + \sum_{\q \in \Omega''} \tilde{r}_\q T_{\S^c;\q}(\z_k) \nu_{\S}\left( \q,\n \right) \right|\\ 
&\leq \left\| \rt_{\Omega^{\rm opt}_{\tilde{h},s'}} \right\|_2 \sqrt{ \sum_{\q \in \Omega'} \left | T_{\S^c;\q}(\z_k) \left( \nu_{\S}\left( \q,\n \right) - 1 \right) \right |^2 + \sum_{\q \in \Omega''} \left | T_{\S^c;\q}(\z_k) ~\nu_{\S}\left( \q,\n \right) \right |^2} 
\end{align*}
where the last inequality follows from Cauchy-Schwarz.  Continuing from this the last line we can further see that
\begin{align*}
\Bigg| \frac{1}{m_1} \sum_{\ell \in [m_1]} &(\tilde{h}_{s'}^{\rm opt})_{\S^c;\z_k} (\w_\ell) ~\overline{T_{\S;\n}(\w_\ell)} - \sum_{\q \in \Omega'} \tilde{r}_\q T_{\S^c;\q}(\z_k) \Bigg| \\ 
&\leq \left\| \rt_{\Omega^{\rm opt}_{\tilde{h},s'}} \right\|_2 \tilde{\delta} \sqrt{ \sum_{\q \in \Omega'} \left | T_{\S^c;\q}(\z_k) \right |^2 + \sum_{\q \in \Omega''} \left | T_{\S^c;\q}(\z_k) \right |^2} \\
&= \left\| \rt_{\Omega^{\rm opt}_{\tilde{h},s'}} \right\|_2 \tilde{\delta} \left \| \left(\Phi_{\S^c;\zero;\z_k}\right)_{\Omega^{\rm opt}_{\tilde{h},s'}} \right \|_2.
\end{align*}

To finish we note that
\begin{equation*}
\sum_{\q \in \Omega'} \tilde{r}_\q T_{\S^c;\q}(\z_k) ~= ~\left \langle ~ \left(\rt_{\Omega^{\rm opt}_{\tilde{h},s'}}\right)_{\S;\n}, \overline{\Phi_{\S^c;\n;\z_k}} ~\right \rangle ~=~ \left \langle \left(\tilde{h}_{s'}^{\rm opt}\right)_{\S^c;\z_k}, T_{\S;\n} \right \rangle_{\left( \D_{\S}, \mu_{\S}\right)}
\end{equation*}
by Lemma~\ref{lem:PartEvalinnerP}.  The desired result follows.
\end{proof}

Choose any $\n \in \mathcal{I}_{N,d}$ and $\S \subset [D]$ you like.  Using Lemma~\ref{lem:SupportLem2} to approximate the inner product appearing in Lemma~\ref{lem:EntryID_ExactInnerProd}'s \eqref{equ:EI_error_guar} then yields the following estimator for accurately approximating 
the $\ell^2$-norm of $\rpp := \rt_{\left\{ \q \in \Omega^{\rm opt}_{\tilde{h},s'} ~\big| ~ \q_{\S} = \n_{\S} \right\}} \in \C^{\mathcal{I}_{N,d}}$ for the coefficient vector $\rt$ of any function $\tilde{h}$ as in \eqref{def:htilde}.  The estimator is defined for any function $u: \D \rightarrow \C$, $\S \subset [D]$, and $\n \in \mathcal{I}_{N,d}$ to be
\begin{equation}
E^u_{\S; \n} := \frac{1}{m_2} \sum_{k\in [m_2]} \left| \frac{1}{m_1} \sum_{\ell \in [m_1]} u_{\S^c;\z_k} (\w_\ell) ~\overline{T_{\S;\n}(\w_\ell)}  \right|^2
\label{equ:EstimatorGen}
\end{equation}
for fixed nodes $\{ \w_\ell \}_{\ell \in [m_1]} \subset \D_{\S}$, and $\{ \z_k \}_{k \in [m_2]} \subset \D_{\S^c}$.  Note that \eqref{equ:EstimatorGen} is essentially identical to the pairing energy estimator defined in Section 4.2 of \cite{choi2018sparse}.  The following lemma provides an error guarantee for this estimator that matches the quality of those in \cite{choi2018sparse} despite having a simpler proof (see Lemma 7 in \cite{choi2018sparse}).\\

\begin{lemma}
Let $\S \subset [D]$, $\delta \in (0, 3/4]$, $\tilde{\delta} \in (0,1/s']$, and $m_1, m_2 \in \N$.  Furthermore, suppose that $\{ \w_\ell \}_{\ell \in [m_1]} \subset \D_{\S}$, and $\{ \z_k \}_{k \in [m_2]} \subset \D_{\S^c}$ satisfy the RIP assumptions concerning \eqref{equ:EI_RIPmat2} and \eqref{equ:EI_RIPmat} in Lemmas~\ref{lem:SupportLem2} and~\ref{lem:EntryID_ExactInnerProd}, respectively.
Then, for all $\n \in \mathcal{I}_{N,d} \subseteq [N]^D$ and functions $\tilde{h}$ as per \eqref{def:htilde} one will have 
$$\left| \sqrt{E^{\tilde{h}_{s'}^{\rm opt}}_{\S; \n}} - \| \rpp \|_2 \right|
\leq
\frac{2}{3}
\delta \| \rpp \|_2 + 
\sqrt{\frac{7}{4}} \sqrt{\tilde{\delta}} \cdot \left\| \rt_{\Omega^{\rm opt}_{\tilde{h},s'}} \right\|_2, $$
where $\rpp := \rt_{\left\{ \q \in \Omega^{\rm opt}_{\tilde{h},s'} ~\big| ~ \q_{\S} = \n_{\S} \right\}}$.
\label{lem:EstErrorforEstimator}
\end{lemma}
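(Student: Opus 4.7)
The plan is to treat the estimator $E^{\tilde{h}^{\rm opt}_{s'}}_{\S;\n}$ as a noisy version of the quantity appearing inside the square root in Lemma~\ref{lem:EntryID_ExactInnerProd}'s \eqref{equ:EI_error_guar} and reduce the analysis to the two preceding lemmas. Setting
$$e^h_k \; := \; \tfrac{1}{m_1} \sum_{\ell \in [m_1]} (\tilde{h}^{\rm opt}_{s'})_{\S^c;\z_k}(\w_\ell) \, \overline{T_{\S;\n}(\w_\ell)} \; - \; \big\langle (\tilde{h}^{\rm opt}_{s'})_{\S^c;\z_k}, T_{\S;\n} \big\rangle_{(\D_\S,\mu_\S)}$$
matches the inner sum in the definition of $E^{\tilde{h}^{\rm opt}_{s'}}_{\S;\n}$ with the expression $\langle\cdot,\cdot\rangle + e^h_k$ appearing in \eqref{equ:EI_error_guar}, so Lemma~\ref{lem:EntryID_ExactInnerProd} immediately yields
$$\left| \sqrt{E^{\tilde{h}^{\rm opt}_{s'}}_{\S;\n}} - \|\rpp\|_2 \right| \; \leq \; \tfrac{2}{3}\delta\,\|\rpp\|_2 + \|\e^h\|_2/\sqrt{m_2}.$$
It therefore suffices to bound $\|\e^h\|_2/\sqrt{m_2}$ by $\sqrt{7/4}\,\sqrt{\tilde{\delta}}\,\|\rt_{\Omega^{\rm opt}_{\tilde{h},s'}}\|_2$.

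To accomplish this I would apply Lemma~\ref{lem:SupportLem2} to each coordinate $e^h_k$, square, average over $k \in [m_2]$, and swap the order of summation to obtain
$$\tfrac{1}{m_2}\|\e^h\|_2^2 \; \leq \; \tilde{\delta}^2 \, \|\rt_{\Omega^{\rm opt}_{\tilde{h},s'}}\|_2^2 \sum_{\q \in \Omega^{\rm opt}_{\tilde{h},s'}} \tfrac{1}{m_2}\sum_{k \in [m_2]} |T_{\S^c;\q}(\z_k)|^2.$$
The key point is that $T_{\S^c;\q}$ depends only on $\q_{\S^c}$, so replacing $\q$ by its ``zero-padded representative'' $\q' := (\zero_\S,\q_{\S^c})$ gives an index that names a genuinely nonzero column of $\Phi_{\S^c;\zero}$ whose $\ell^2$-norm squared is exactly $\sum_k |T_{\S^c;\q}(\z_k)|^2$. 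The RIP hypothesis of order $(s,\delta)$ on the nonzero columns of $\tfrac{1}{\sqrt{m_2}}\Phi_{\S^c;\zero}$ (in particular RIP of order $1$) then forces each such squared column norm to lie in $[1-\delta,1+\delta] \subseteq [1/4,7/4]$, so the inner sum is at most $1+\delta \leq 7/4$ and the full double sum is at most $s'(1+\delta) \leq 7s'/4$.

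Combining these bounds gives $\|\e^h\|_2/\sqrt{m_2} \leq \sqrt{7/4}\, \tilde{\delta}\sqrt{s'}\, \|\rt_{\Omega^{\rm opt}_{\tilde{h},s'}}\|_2$, after which the hypothesis $\tilde{\delta} \leq 1/s'$ lets me write $\tilde{\delta}\sqrt{s'} = \sqrt{\tilde{\delta}}\cdot\sqrt{\tilde{\delta} s'} \leq \sqrt{\tilde{\delta}}$, yielding the stated inequality. I do not expect any substantive obstacle here; the argument is essentially a mechanical composition of Lemmas~\ref{lem:EntryID_ExactInnerProd} and~\ref{lem:SupportLem2}, and the only mildly delicate point is the identification between $T_{\S^c;\q}$ for an arbitrary $\q \in \Omega^{\rm opt}_{\tilde{h},s'}$ and the genuinely nonzero column of $\Phi_{\S^c;\zero}$ indexed by its zero-padded representative, which is what allows the RIP control to be invoked uniformly over the support.
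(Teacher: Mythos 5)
Your proof is correct and follows essentially the same route as the paper: invoke Lemma~\ref{lem:EntryID_ExactInnerProd} first, then bound $\|\e^h\|_2$ via Lemma~\ref{lem:SupportLem2} together with the order-one RIP of $\tfrac{1}{\sqrt{m_2}}\Phi_{\S^c;\zero}$ applied column by column, and finish with $\tilde{\delta}s'\le 1$. The paper packages the same column-norm bound as a Frobenius-norm estimate on the submatrix indexed by $\Omega^{\rm opt}_{\tilde{h},s'}$, which is exactly your swapped double sum; your explicit remark about zero-padded column representatives only makes precise an identification the paper uses silently.
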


\begin{proof}
Applying Lemma~\ref{lem:EntryID_ExactInnerProd} we can immediately see that
\begin{equation}
\left| \sqrt{E^{\tilde{h}_{s'}^{\rm opt}}_{\S; \n}} - \| \rpp \|_2 \right| \leq
\frac{2}{3}
\delta
\| \rpp \|_2 +\frac{\| \e^h \|_2}{\sqrt{m_2}}
\label{equ:FinalNewlem7b}
\end{equation}
where $\e^h \in \C^{m_2}$ has its entries given by 
$$e^h_k := \frac{1}{m_1} \sum_{\ell \in [m_1]} (\tilde{h}_{s'}^{\rm opt})_{\S^c;\z_k} (\w_\ell) ~\overline{T_{\S;\n}(\w_\ell)} - \left \langle (\tilde{h}_{s'}^{\rm opt})_{\S^c;\z_k}, T_{\S;\n} \right \rangle_{\left( \D_{\S}, \mu_{\S}\right)}.$$
Thus, it suffices to bound $\| \e^h \|_2$ in order to obtain our final result.

Applying Lemma~\ref{lem:SupportLem2} we can see that
\begin{align*}
\| \e^h \|_2^2 &\leq \sum_{k\in [m_2]} \left\| \rt_{\Omega^{\rm opt}_{\tilde{h},s'}} \right\|_2^2 \tilde{\delta}^2 \left \| \left(\Phi_{\S^c;\zero;\z_k}\right)_{\Omega^{\rm opt}_{\tilde{h},s'}} \right \|_2^2\\
&= \left\| \rt_{\Omega^{\rm opt}_{\tilde{h},s'}} \right\|_2^2 \tilde{\delta}^2 \left \| \left(\Phi_{\S^c;\zero}\right)_{\Omega^{\rm opt}_{\tilde{h},s'}} \right \|_{\rm F}^2,
\end{align*}
where we have used that $\left(\Phi_{\S^c;\zero;\z_k}\right)_{\Omega^{\rm opt}_{\tilde{h},s'}}$ are the rows of the submatrix $\left( \Phi_{\S^c;\zero} \right)_{\Omega^{\rm opt}_{\tilde{h},s'}} \in \C^{m_2 \times s'}$ of $\Phi_{\S^c;\zero}$ in \eqref{equ:EI_RIPmat}.  Using the RIP property of the nonzero columns of $\frac{1}{\sqrt{m_2}} \Phi_{\S^c;\zero}$ we can now finish bounding $\| \e^h \|_2^2$ by noting that

\begin{align*}
\| \e^h \|_2^2 ~\leq~ \left\| \rt_{\Omega^{\rm opt}_{\tilde{h},s'}} \right\|_2^2 \tilde{\delta}^2 \cdot s'\cdot m_2 (1 + \delta) ~\leq~ 
\frac{7}{4}
 m_2 \left\| \rt_{\Omega^{\rm opt}_{\tilde{h},s'}} \right\|_2^2 \tilde{\delta} 
\end{align*}
where we have used that $\delta \in (0, 3/4]$ and that $\tilde{\delta} \in (0,1/s']$.  Substituting this last bound into \eqref{equ:FinalNewlem7b} now finishes the proof.
\end{proof}

Though useful, Lemma~\ref{lem:EstErrorforEstimator} presupposes that one has access to noiseless samples from $\tilde{h}_{s'}^{\rm opt}$.  This will rarely be the case in practice.  The next lemma bounds the error of the estimator \eqref{equ:EstimatorGen} in the setting where one instead has noisy samples from $\tilde{h}_{s'}^{\rm opt}$.  Such noisy samples will be represented with the help of an arbitrary additive noise/error function, $e_h: \D \rightarrow \C$, below.\\

\begin{lemma}
Let $\S \subset [D]$, $\delta \in (0, 3/4]$, $\tilde{\delta} \in (0,1/s']$, and $m_1, m_2 \in \N$.  Furthermore, suppose that $\{ \w_\ell \}_{\ell \in [m_1]} \subset \D_{\S}$, and $\{ \z_k \}_{k \in [m_2]} \subset \D_{\S^c}$ satisfy the RIP assumptions concerning \eqref{equ:EI_RIPmat2} and \eqref{equ:EI_RIPmat} in Lemmas~\ref{lem:SupportLem2} and~\ref{lem:EntryID_ExactInnerProd}, respectively.
Then, for all $\n \in \mathcal{I}_{N,d} \subseteq [N]^D$, $\tilde{h}$ as per \eqref{def:htilde}, and additive error functions $e_h: \D \rightarrow \C$ one will have
$$\left| \sqrt{E^{\tilde{h}_{s'}^{\rm opt}+e_h}_{\S; \n}} - \| \rpp \|_2 \right| \leq 
\frac{2}{3}
\delta \| \rpp \|_2 + 
\sqrt{\frac{7}{4} \tilde{\delta}} \cdot \left\| \rt_{\Omega^{\rm opt}_{\tilde{h},s'}} \right\|_2 + \frac{
\sqrt{2}\| \mathcal{E}^h_{\S} \|_{\rm F}}{\sqrt{m_1m_2}}, $$
where $\rpp := \rt_{\left\{ \q \in \Omega^{\rm opt}_{\tilde{h},s'} ~\big| ~ \q_{\S} = \n_{\S} \right\}}$ and $\mathcal{E}^h_{\S} \in \C^{m_1 \times m_2}$ has entries $(\mathcal{E}^h_{\S})_{\ell, k}=e_h(\varrho_{\mathcal{S}}(\w_\ell, \z_k))$ with the permutation function
$\varrho_\S\colon \D_{\S}\times\D_{\S^c}\rightarrow\D$ defined in 
\eqref{def:rho}.
\label{lem:EstErrorforEstimatorNoise}
\end{lemma}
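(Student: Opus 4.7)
The plan is to reduce this noisy statement to the noiseless Lemma~\ref{lem:EstErrorforEstimator} by splitting the estimator into the ``clean'' piece driven by $\tilde{h}_{s'}^{\rm opt}$ and the ``noise'' piece driven by $e_h$, and then bounding the noise piece directly using the RIP hypothesis on the columns of $\tfrac{1}{\sqrt{m_1}}\Phi_{\S;\zero}$.

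Concretely, for each $k \in [m_2]$ I would introduce the two scalars
$$a_k := \frac{1}{m_1}\sum_{\ell \in [m_1]} (\tilde{h}_{s'}^{\rm opt})_{\S^c;\z_k}(\w_\ell)\,\overline{T_{\S;\n}(\w_\ell)}, \qquad b_k := \frac{1}{m_1}\sum_{\ell \in [m_1]} (e_h)_{\S^c;\z_k}(\w_\ell)\,\overline{T_{\S;\n}(\w_\ell)},$$
so that $\sqrt{E^{\tilde{h}_{s'}^{\rm opt}+e_h}_{\S;\n}} = \|\a+\b\|_2/\sqrt{m_2}$ and $\sqrt{E^{\tilde{h}_{s'}^{\rm opt}}_{\S;\n}} = \|\a\|_2/\sqrt{m_2}$. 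The reverse triangle inequality in $\mathbbm{C}^{m_2}$ then gives
$$\left|\sqrt{E^{\tilde{h}_{s'}^{\rm opt}+e_h}_{\S;\n}} - \sqrt{E^{\tilde{h}_{s'}^{\rm opt}}_{\S;\n}}\right| \;\leq\; \frac{\|\b\|_2}{\sqrt{m_2}},$$
and combining this with Lemma~\ref{lem:EstErrorforEstimator} via one more triangle inequality reduces everything to bounding $\|\b\|_2/\sqrt{m_2}$ by $\sqrt{2}\,\|\mathcal{E}^h_{\S}\|_{\rm F}/\sqrt{m_1 m_2}$.

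The bound on $\|\b\|_2$ is the only real computation. For each $k$, an application of the Cauchy–Schwarz inequality in $\ell^2([m_1])$ produces
$$|b_k|^2 \;\leq\; \frac{1}{m_1^2}\bigg(\sum_{\ell\in[m_1]}\bigl|e_h\bigl(\varrho_\S(\w_\ell,\z_k)\bigr)\bigr|^2\bigg)\bigg(\sum_{\ell\in[m_1]}\bigl|T_{\S;\n}(\w_\ell)\bigr|^2\bigg).$$
The first parenthesized factor is exactly $\|(\mathcal{E}^h_{\S})_{:,k}\|_2^2$ by the definition of $\mathcal{E}^h_{\S}$, while the second factor equals $m_1$ times the squared Euclidean norm of the column of $\tfrac{1}{\sqrt{m_1}}\Phi_{\S;\zero}$ indexed by the (nonzero) index $(\n,\zero)_\S$; the assumption that the nonzero columns of $\tfrac{1}{\sqrt{m_1}}\Phi_{\S;\zero}$ have the RIP of order $(2,\tilde{\delta})$ (in particular, of order $(1,\tilde{\delta})$) bounds that squared norm by $1+\tilde{\delta}\leq 2$. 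Summing over $k$ yields
$$\|\b\|_2^2 \;=\; \sum_{k\in[m_2]}|b_k|^2 \;\leq\; \frac{1+\tilde{\delta}}{m_1}\,\|\mathcal{E}^h_{\S}\|_{\rm F}^2 \;\leq\; \frac{2}{m_1}\|\mathcal{E}^h_{\S}\|_{\rm F}^2,$$
and dividing by $\sqrt{m_2}$ gives precisely the noise term $\sqrt{2}\|\mathcal{E}^h_{\S}\|_{\rm F}/\sqrt{m_1 m_2}$ in the statement.

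There is no real obstacle here; the step to watch is identifying the second Cauchy–Schwarz factor as a column norm of the (zero-padded) sampling matrix so that the hypothesized RIP of order $(2,\tilde\delta)$ immediately controls it by the universal constant $\sqrt{2}$, rather than by something that depends on $\n$ or on the BOS constant $K$. Everything else is just the triangle inequality plus a direct invocation of Lemma~\ref{lem:EstErrorforEstimator}, and the final additive form of the bound falls out cleanly because the two contributions to $\sqrt{E^{\tilde{h}_{s'}^{\rm opt}+e_h}_{\S;\n}} - \|\rpp\|_2$ are controlled independently.
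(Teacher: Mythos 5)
Your proposal is correct and follows essentially the same route as the paper's own proof: decompose the estimator via the reverse triangle inequality in $\C^{m_2}$ into a clean piece handled by Lemma~\ref{lem:EstErrorforEstimator} and a noise piece $\|\b\|_2/\sqrt{m_2} = \sqrt{E^{e_h}_{\S;\n}}$, and then bound each $|b_k|$ via Cauchy--Schwarz against the column $\tfrac{1}{\sqrt{m_1}}(\Phi_{\S;\zero})_{(\n,\zero)_\S}$, whose norm is at most $\sqrt{1+\tilde\delta}\leq\sqrt{2}$ by the RIP hypothesis. The paper phrases this as a single inner product $v_k$ rather than the explicit sum, but the computation and constants are identical.
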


\begin{proof}
Note that 
\begin{equation*}
\sqrt{E^{\tilde{h}_{s'}^{\rm opt}}_{\S; \n}}-\sqrt{E^{e_h}_{\S; \n}} \leq \sqrt{E^{\tilde{h}_{s'}^{\rm opt}+e_h}_{\S; \n}} \leq \sqrt{E^{\tilde{h}_{s'}^{\rm opt}}_{\S; \n}} + \sqrt{E^{e_h}_{\S; \n}}
\end{equation*}
by the (reverse) triangle inequality.  As a result one can immediately see that 
\begin{align*}
\left| \sqrt{E^{\tilde{h}_{s'}^{\rm opt}+e_h}_{\S; \n}} - \| \rpp \|_2 \right| &\leq \left| \sqrt{E^{\tilde{h}_{s'}^{\rm opt}+e_h}_{\S; \n}} -  \sqrt{E^{\tilde{h}_{s'}^{\rm opt}}_{\S; \n}}\right| + \left| \sqrt{E^{\tilde{h}_{s'}^{\rm opt}}_{\S; \n}} - \| \rpp \|_2 \right|\\
&\leq  \sqrt{E^{e_h}_{\S; \n}}  + 
\frac{2}{3}
\delta \| \rpp \|_2 +  \sqrt{ \frac{7}{4} \tilde{\delta}} \cdot \left\| \rt_{\Omega^{\rm opt}_{\tilde{h},s'}} \right\|_2
\end{align*}
where the bound on the second term above follows from Lemma~\ref{lem:EstErrorforEstimator}. It remains to show that $\sqrt{E^{e_h}_{\S; \n}} \leq \frac{\sqrt{2} \| \mathcal{E}^h_{\S} \|_{\rm F}}{\sqrt{m_1m_2}}$.  

Define $\v \in \C^{m_2}$ by $v_k:=\frac{1}{\sqrt{m_1}} \left\langle (\mathcal{E}^h_{\S})_k,  \frac{1}{\sqrt{m_1}} \left( \Phi_{\S;\zero} \right)_{(\n,\zero)_{\S}} \right\rangle$ where $\Phi_{\S;\zero}$ is defined in \eqref{equ:EI_RIPmat2}, and note that  $|v_k| \leq  \frac{1}{\sqrt{m_1}} \left\| (\mathcal{E}^h_{\S})_k \right\|_2 \left\|  \frac{1}{\sqrt{m_1}} \left( \Phi_{\S;\zero} \right)_{(\n,\zero)_{\S}} \right\|_2$. Furthermore, $$\left\|  \frac{1}{\sqrt{m_1}} \left( \Phi_{\S;\zero} \right)_{(\n,\zero)_{\S}} \right\|_2 = \left\|  \frac{1}{\sqrt{m_1}} \left( \Phi_{\S;\zero} \right)_{\left\{(\n,\zero)_{\S}, (\nt,\zero)_{\S} \right\}} \begin{bmatrix} 1\\0 \end{bmatrix} \right\|_2 \leq \sqrt{1+\widetilde{\delta}} 
\leq \sqrt{1+\frac{1}{s'}}
\leq \sqrt{2}$$ for any $\nt \neq \n$. Thus, $|v_k| \leq \sqrt{ \frac{2}{m_1} } \left\| (\mathcal{E}^h_{\S})_k \right\|_2$. As a result, $\sqrt{E^{e_h}_{\S; \n}} =\frac{1}{\sqrt{m_2}} \| \v \|_2 \leq \frac{\sqrt{2} \| \mathcal{E}^h_{\S} \|_{\rm F}}{\sqrt{m_1m_2}}$.
\end{proof}

For any given $\S \subseteq [D]$ we denote the power set of $N^{\S}$ by $\mathcal{P}\left(N^{\S} \right)$.  In the final theorem of this subsection we will prove that the energy estimator in \eqref{equ:EstimatorGen} can be used for an arbitrary $s'$-sparse function $\tilde{h}^{\rm opt}_{s'}$ to define a new set-valued function $\F^{s'}_{\S}: \mathcal{P}\left(N^{\S} \right) \rightarrow \mathcal{P}\left(N^{\S} \right)$ for each $\S$ which, when given any subset $\T \subset N^{\S}$ containing the heavy set $\Omega^{\alpha,{s'}}_{\S}$as per \eqref{def:HeavySet} as input, will output a smaller subset $\T' \subset \T$  which still contains $\Omega^{\alpha,{s'}}_{\S}$.  These set-valued functions were also called ``energetic-index sieve function'' in Section~\ref{sec:SuppIDHeavyEls} and will then be used to iteratively build up subsets $\T$ of controlled cardinality for larger and larger sets of indices $\S$ until we eventually have a set of full index vectors $\T'' \subset \mathcal{I}_{N,d}$ which contains all of $\Omega^{\alpha,{s'}}_{[D]}$.  This set of full index vectors $\T''$ will then be able to be used as an accurate estimate of $\Omega^{\rm opt}_{\tilde{h},s'}$, the support of $\tilde{h}^{\rm opt}_{s'}$.

Before we can state our final theorem we must define the set-valued functions $\F^{s'}_{\S}: \mathcal{P}\left(N^{\S} \right) \rightarrow \mathcal{P}\left(N^{\S} \right)$ in question. For a given $\S \subseteq [D]$, $\T \subseteq N^{\S}$, $\tilde{h}$ as per \eqref{def:htilde}, and additive error function $e_h: \D \rightarrow \C$, let an ordering of the elements of $\T$, $\n_1, \n_2, \cdots, \n_{|\T|} \in \T$, be defined by 
\begin{equation}
E^{\tilde{h}_{s'}^{\rm opt}+e_h}_{\S; \n_1} ~\geq~ E^{\tilde{h}_{s'}^{\rm opt}+e_h}_{\S; \n_2} ~\geq~ E^{\tilde{h}_{s'}^{\rm opt}+e_h}_{\S; \n_3} ~\geq~ \dots ~\geq~ E^{\tilde{h}_{s'}^{\rm opt}+e_h}_{\S; \n_{|\T|}}
\label{def:SizeOrder}
\end{equation}
with ties broken lexicographically.
We define $\F^{s'}_{\S}$ based on this ordering by 
\begin{equation}
\F^{s'}_{\S} (\T) := \left \{ \n_1, \n_2, \cdots, \n_{\min({s'},|\T|)} \right\} \subseteq \T.  
\label{def:SetValCardRedux}
\end{equation}
The following theorem proves that $\Omega^{\alpha,{s'}}_{\S} \cap \T \subseteq \F^{s'}_{\S} (\T)$ provided that the additive error~$e_h$ is sufficiently mild.\\

\begin{mytheorem}[{Entry Identification and Pairing}]
Let $\S \subseteq [D]$ with $|\S| > 0$, $\delta \in (0, 1/2]$, $\tilde{\delta} \in \left(0,\frac{1}{256\alpha^2s'} \right]$, and $m_1, m_2 \in \N$.  Furthermore, suppose that $\{ \w_\ell \}_{\ell \in [m_1]} \subset \D_{\S}$, and $\{ \z_k \}_{k \in [m_2]} \subset \D_{\S^c}$ satisfy the RIP assumptions concerning \eqref{equ:EI_RIPmat2} and \eqref{equ:EI_RIPmat} in Lemmas~\ref{lem:SupportLem2} and~\ref{lem:EntryID_ExactInnerProd}, respectively.  Then, $\Omega^{\alpha,s'}_{\S} \cap \T \subseteq \F^{s'}_{\S} (\T)$ for all $s'$-sparse $\tilde{h} = \tilde{h}^{\rm opt}_{s'}$, $\T \subseteq N^{\S}$, and additive error functions $e_h: \D \rightarrow \C$ provided that 
\begin{equation}
\| \rt \|_2 = \| \rt_{\Omega^{\rm opt}_{\tilde{h},s'}} \|_2 >  \frac{6 \alpha \sqrt{{s'}}}{\sqrt{m_1m_2}} \cdot \| \mathcal{E}^h_{\S} \|_{\rm F} \label{equ:threshold}
\end{equation}
holds, where 
$\mathcal{E}^h_{\S} \in \C^{m_1 \times m_2}$ has entries $(\mathcal{E}^h_{\S})_{\ell, k}=e_h(\varrho_\S (\w_\ell, \z_k))$ with the permutation function
$\varrho_\S\colon \D_{\S}\times\D_{\S^c}\rightarrow\D$ 
defined in \eqref{def:rho}.
\label{thm:pairingFunc}
\end{mytheorem}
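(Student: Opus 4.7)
The plan is to apply Lemma~\ref{lem:EstErrorforEstimatorNoise} to the estimator $\sqrt{E^{\tilde{h}^{\rm opt}_{s'}+e_h}_{\S;\n}}$ with the chosen tolerances $\delta \in (0,1/2]$ and $\tilde{\delta} \leq 1/(256\alpha^2 s')$, and then feed the threshold hypothesis \eqref{equ:threshold} in to tame the additive noise term. Substituting these values yields the bound
$$\left|\sqrt{E^{\tilde{h}^{\rm opt}_{s'}+e_h}_{\S;\n}} - \|\rpp\|_2\right| \;\leq\; \tfrac{1}{3}\|\rpp\|_2 \;+\; \left(\tfrac{\sqrt{7}}{32} + \tfrac{\sqrt{2}}{6}\right) \tfrac{\|\rt\|_2}{\alpha\sqrt{s'}},$$
where $\rpp := \rt_{\{\q \in \Omega^{\rm opt}_{\tilde{h},s'}\,:\,\q_\S = \n_\S\}}$, and where the final constant $C := \tfrac{\sqrt{7}}{32} + \tfrac{\sqrt{2}}{6} < 0.319$. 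The first step is therefore just substitution together with the observation $\tfrac{\sqrt{2}\|\mathcal{E}^h_\S\|_{\rm F}}{\sqrt{m_1 m_2}} < \tfrac{\sqrt{2}}{6}\cdot \tfrac{\|\rt\|_2}{\alpha\sqrt{s'}}$ coming from \eqref{equ:threshold}.

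Next I would extract a sharp two-sided dichotomy from this bound. For $\n_\S \in \Omega^{\alpha,s'}_\S$ (\emph{heavy}), the defining inequality $\|\rpp\|_2 \geq \|\rt\|_2/(\alpha\sqrt{s'})$ lets one absorb the additive error term into $\|\rpp\|_2$ and conclude
$$\sqrt{E^{\tilde{h}^{\rm opt}_{s'}+e_h}_{\S;\n}} \;\geq\; \left(\tfrac{2}{3} - C\right)\|\rpp\|_2 \;>\; 0.348 \cdot \tfrac{\|\rt\|_2}{\alpha\sqrt{s'}}.$$
On the other hand, for any $\n$ with $\rpp = \mathbf{0}$ (meaning no index of $\Omega^{\rm opt}_{\tilde{h},s'}$ agrees with $\n$ on $\S$) the same bound collapses to
$$\sqrt{E^{\tilde{h}^{\rm opt}_{s'}+e_h}_{\S;\n}} \;\leq\; C \cdot \tfrac{\|\rt\|_2}{\alpha\sqrt{s'}} \;<\; 0.319 \cdot \tfrac{\|\rt\|_2}{\alpha\sqrt{s'}}.$$
Thus every heavy index strictly dominates every ``zero'' index in the estimator value.

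Finally, I would close the argument with a short pigeonhole on \eqref{def:SetValCardRedux}. If $|\T| \leq s'$ then $\F^{s'}_\S(\T)=\T$ and there is nothing to prove. Otherwise, since $\tilde{h} = \tilde{h}^{\rm opt}_{s'}$ is $s'$-sparse, at most $s'$ of the coordinate vectors $\n_\S \in N^\S$ can have $\rpp \neq \mathbf{0}$, so $\T$ contains at least $|\T|-s' \geq 1$ zero indices. Suppose, toward a contradiction, that some heavy $h \in \Omega^{\alpha,s'}_\S \cap \T$ were \emph{not} in the top-$s'$ output $\F^{s'}_\S(\T)$. Then $h$ lies in the bottom $|\T|-s'$ of $\T$; since every zero index has estimator strictly below $h$'s, every zero index also lies in the bottom, giving at least $(|\T|-s')+1$ elements in a set of size exactly $|\T|-s'$. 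This contradiction forces $\Omega^{\alpha,s'}_\S \cap \T \subseteq \F^{s'}_\S(\T)$.

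The main obstacle is the constant-chasing in the first two steps: the proof works only because the particular choices $\delta \leq 1/2$, $\tilde{\delta} \leq 1/(256\alpha^2 s')$, and the numerical factor $6$ in \eqref{equ:threshold} conspire to make the ``heavy'' lower bound $\tfrac{2}{3}-C$ strictly exceed the ``zero'' upper bound $C$, producing a genuine (if modest) separation. Once that separation is secured, the combinatorial step is purely bookkeeping against the $s'$-sparsity of $\tilde h^{\rm opt}_{s'}$.
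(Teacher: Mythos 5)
Your proof is correct and uses essentially the same approach as the paper: both apply Lemma~\ref{lem:EstErrorforEstimatorNoise} with the same parameter substitutions ($\delta\leq\tfrac12$, $\tilde\delta\leq\tfrac{1}{256\alpha^2 s'}$, and the factor $6$ from~\eqref{equ:threshold}), and both conclude with the same pigeonhole founded on there being at most $s'$ prefixes $\n_\S$ with $\rpp\ne\zero$. The only cosmetic difference is that you make the heavy-versus-zero separation explicit and then count, whereas the paper picks a single competitor $\k\in\F^{s'}_\S(\T)$ with $\k_\S\notin\Omega^{\rm opt}_{s',\S}$ and chains its upper bound against the lower bound at $\m$ into the arithmetic contradiction $3\alpha\sqrt{s'}\,\epsilon\ge\|\rt\|_2>3\alpha\sqrt{s'}\,\epsilon$; in both cases the decisive numerical fact is $\tfrac{\sqrt7}{32}+\tfrac{\sqrt2}{6}<\tfrac13$.
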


\begin{proof}
We will focus on the case where $|\T| > s'$ since the result holds trivially when $|\T| \leq s'$.  Suppose for the sake of contradiction that $\m \in \Omega^{\alpha,s'}_{\S} \cap \T$, but that $\m \notin \F^{s'}_{\S} (\T)$.  It must then be the case that $E^{\tilde{h}_{s'}^{\rm opt}+e_h}_{\S; \k} ~\geq~ E^{\tilde{h}_{s'}^{\rm opt}+e_h}_{\S; \m}$ for some $\k \in \T$ with $\k_{\S} \notin \Omega^{\rm opt}_{s',\S} := \left\{ \q_{\S}  ~\big| ~ \q \in \Omega^{\rm opt}_{\tilde{h},s'} \right\}$ since $\Omega^{\alpha,s'}_{\S} \subset \Omega^{\rm opt}_{s',\S}$ and $| \F^{s'}_{\S} (\T) | = s' \geq |\Omega^{\rm opt}_{s',\S}|$.  Thus, $\| \rt_{\S;\k} \|_2 = 0$.  As a result, Lemma~\ref{lem:EstErrorforEstimatorNoise} implies that
\begin{equation}\label{equ:estE_upper_bound_epsilon}
\sqrt{E^{\tilde{h}_{s'}^{\rm opt}+e_h}_{\S; \k}} ~\leq~ \sqrt{ \frac{7}{4} \tilde{\delta}} \cdot \left\| \rt \right\|_2 + \frac{\sqrt{2} \| \mathcal{E}^h_{\S} \|_{\rm F}}{\sqrt{m_1m_2}} ~\leq~ \frac{\sqrt{7}}{32} \frac{\left\| \rt \right\|_2}{\alpha \sqrt{s'}} + \frac{\sqrt{2} \| \mathcal{E}^h_{\S} \|_{\rm F}}{\sqrt{m_1m_2}} =: \epsilon.
\end{equation}
On the other hand, Lemma~\ref{lem:EstErrorforEstimatorNoise} also implies that $\sqrt{E^{\tilde{h}_{s'}^{\rm opt}+e_h}_{\S; \m}} \geq \| \rt_{\S;\m} \|_2 (1 - \frac{2}{3}\delta) - \epsilon$.
Combining this with~\eqref{equ:estE_upper_bound_epsilon}, we have $\frac{2 \epsilon}{1 - \frac{2}{3}\delta} ~\geq~ \| \rt_{\S;\m} \|_2$.
Since $\delta\leq\frac{1}{2}$, it must also be the case that
$$3 \epsilon ~\geq~ \frac{2 \epsilon}{1 - \frac{2}{3}\delta} ~\geq~ \| \rt_{\S;\m} \|_2 ~\geq~ \frac{\| \rt \|_2}{\alpha \sqrt{s'}}.$$
However, it is impossible that $4 \alpha \sqrt{s'} \epsilon \geq \| \rt \|_2$ since by assumption
\begin{align*}
3 \alpha \sqrt{s'} \epsilon ~=~
3 \alpha \sqrt{s'} \left( \frac{\sqrt{7}}{32} \frac{\left\| \rt \right\|_2}{\alpha \sqrt{s'}} + \frac{ \sqrt{2}\| \mathcal{E}^h_{\S} \|_{\rm F}}{\sqrt{m_1m_2}} \right)
&< 3 \alpha \sqrt{s'} \left( \frac{\sqrt{7}}{32} \frac{\left\| \rt \right\|_2}{\alpha \sqrt{s'}} +  \frac{ \sqrt{2}\left\| \rt \right\|_2}{6 \alpha \sqrt{s'}} \right) < \left\| \rt \right\|_2.
\end{align*}
Hence, $\m \in \Omega^{\alpha,s'}_{\S} \cap \T \implies \m \in \F^{s'}_{\S} (\T)$.
\end{proof}

Theorem~\ref{thm:pairingFunc} forms the basis of our support identification strategy.  As such, it behooves us to investigate its associate resource demands and error performance more closely.  We do this in the next subsection.

%%%%%%%%%%%%%%%%%%%%%%%%%%%%%%%%%%%%%%%%%%%%%%%%%%%%%%%%%%%
%Second subsection's first subsubsection
%%%%%%%%%%%%%%%%%%%%%%%%%%%%%%%%%%%%%%%%%%%%%%%%%%%%%%%%%%%%
\subsubsection{Associated Runtime, Sampling, and Error Bounds}
\label{sec:runtimeerrorsamp}

The following lemmas provide evaluation complexity, sampling, and error bounds for the set valued functions $\F^{s'}_{\S}: \mathcal{P}\left(N^{\S} \right) \rightarrow \mathcal{P}\left(N^{\S} \right)$ defined in \eqref{def:SizeOrder} -- \eqref{def:SetValCardRedux}.  We will begin by providing more meaningful error bounds for the case where the function $\tilde{h}$ in question is not exactly BOPB-sparse.\\

\begin{lemma}
Let $\S \subseteq [D]$ with $|\S| > 0$, $\delta \in (0, 1/2]$, $\tilde{\delta} \in \left(0,\frac{1}{256\alpha^2s'} \right]$, $\gamma \in \mathbbm{R}^+$, and $m_1, m_2 \in \N$.  Furthermore, suppose that $\{ \w_\ell \}_{\ell \in [m_1]} \subset \D_{\S}$, and $\{ \z_k \}_{k \in [m_2]} \subset \D_{\S^c}$ satisfy the RIP assumptions concerning \eqref{equ:EI_RIPmat2} and \eqref{equ:EI_RIPmat} in Lemmas~\ref{lem:SupportLem2} and~\ref{lem:EntryID_ExactInnerProd}, respectively.  Finally, suppose also that $e_h = \tilde{h} - \tilde{h}^{\rm opt}_{s'} + e'$ for an arbitrary function $e': \D \rightarrow \C$ with $\displaystyle \sup_{\xib \in \D} |e'(\xib)| \leq \gamma$.  Then, the additive sampling error $\mathcal{E}^h_{\S} \in \C^{m_1 \times m_2}$ satisfies
$$\frac{\| \mathcal{E}^h_{\S} \|_{\rm F}}{\sqrt{m_1 m_2}} ~\leq~ 
\sqrt{\frac{771}{512}} \left\| \rt - \rt_{\Omega^{\rm opt}_{\tilde{h},s'}} \right\|_2 + 
\sqrt{\frac{771}{512\,s'}} \left\| \rt - \rt_{\Omega^{\rm opt}_{\tilde{h},s'}} \right\|_1 + \gamma$$
where $\mathcal{E}^h_{\S}$ has entries $(\mathcal{E}^h_{\S})_{\ell, k}=e_h\left(\varrho_\S(\w_\ell, \z_k)\right)$ as in Theorem~\ref{thm:pairingFunc}.
\label{lem:errorBound}
\end{lemma}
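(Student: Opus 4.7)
The plan is to split $e_h = g + e'$, where $g := \tilde h - \tilde h^{\rm opt}_{s'}$ has coefficient vector $r := \rt - \rt_{\Omega^{\rm opt}_{\tilde h,s'}}$ in $\B$, and bound each contribution to $\|\mathcal{E}^h_\S\|_F$ separately via the triangle inequality $\|\mathcal{E}^h_\S\|_F \leq \|\mathcal{E}^g_\S\|_F + \|\mathcal{E}^{e'}_\S\|_F$. The $e'$ piece is trivial: the pointwise bound $|e'|\leq\gamma$ yields $\|\mathcal{E}^{e'}_\S\|_F^2 \leq m_1 m_2 \gamma^2$, supplying the $+\gamma$ summand in the claim.

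To handle the main term $\|\mathcal{E}^g_\S\|_F$, I will exploit the product structure $T_\n(\varrho_\S(\w_\ell,\z_k)) = T_{\S;\n}(\w_\ell)\,T_{\S^c;\n}(\z_k)$. For each fixed $\ell$, the $\ell$-th row of $\mathcal{E}^g_\S$ equals $\tilde B\,\tilde r^\ell$, where $\tilde B$ is the nonzero-column submatrix of $\Phi_{\S^c;\zero}$ (one column per distinct value $\vect b$ of $\n_{\S^c}$ appearing in $\mathcal{I}_{N,d}$) and $\tilde r^\ell_{\vect b} := \sum_{\n:\,\n_{\S^c}=\vect b} r_\n\,T_{\S;\n}(\w_\ell)$. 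Since $\tfrac{1}{\sqrt{m_2}}\tilde B$ has RIP of order $(s',\delta)$, Lemma~\ref{lem:OpBoundRSM} gives $\|\tilde B\,\tilde r^\ell\|_2 \leq \sqrt{m_2(1+\delta)}\bigl(\|\tilde r^\ell\|_1/\sqrt{s'} + \|\tilde r^\ell\|_2\bigr)$. Squaring, summing over $\ell$, and applying Minkowski in the $\ell^2$ norm over $\ell$ reduces the problem to bounding the two inner sums $\sum_\ell \|\tilde r^\ell\|_2^2$ and $\sum_\ell \|\tilde r^\ell\|_1^2$.

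To control each inner sum I will swap the order of summation and use the RIP-based coherence estimates for $\tilde A$, the nonzero-column submatrix of $\Phi_{\S;\zero}$. Writing $\tilde u^{\vect b}_{\vect a} := r_{(\vect a,\vect b)}$ (with zeros for $(\vect a,\vect b)\notin\mathcal{I}_{N,d}$), one has $\sum_\ell \|\tilde r^\ell\|_2^2 = \sum_{\vect b} \|\tilde A\,\tilde u^{\vect b}\|_2^2$. Expanding each $\|\tilde A\,\tilde u^{\vect b}\|_2^2$ as a double sum in $\vect a,\vect a'$ and inserting the bounds $|(\tilde A^*\tilde A/m_1)_{\vect a,\vect a}-1|\leq\tilde\delta$ and $|(\tilde A^*\tilde A/m_1)_{\vect a,\vect a'}|\leq\tilde\delta$ for $\vect a\neq\vect a'$ (from the $(2,\tilde\delta)$-RIP of $\tfrac{1}{\sqrt{m_1}}\tilde A$) yields $\sum_\ell \|\tilde r^\ell\|_2^2 \leq m_1\bigl(\|r\|_2^2 + \tilde\delta\,\|r\|_1^2\bigr)$ after combining with $\sum_{\vect b}\|\tilde u^{\vect b}\|_2^2=\|r\|_2^2$ and $\sum_{\vect b}\|\tilde u^{\vect b}\|_1^2\leq\|r\|_1^2$. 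For the $\ell^1$ counterpart, the Cauchy-type inequality $(\sum_i a_ib_i)^2\leq(\sum_i a_i)(\sum_i a_ib_i^2)$ together with the column-norm bound $\sum_\ell|T_{\S;\n}(\w_\ell)|^2\leq m_1(1+\tilde\delta)$ delivers $\sum_\ell \|\tilde r^\ell\|_1^2 \leq m_1(1+\tilde\delta)\|r\|_1^2$. Substituting both estimates back and recognizing the factor $\tilde\delta\|r\|_1^2 = \tilde\delta s' \cdot \|r\|_1^2/s'$ shows $\|\mathcal{E}^g_\S\|_F/\sqrt{m_1m_2}$ is controlled by a quantity of the form $\sqrt{(1+\delta)(1+\tilde\delta s')}\bigl(\|r\|_2+\|r\|_1/\sqrt{s'}\bigr)$.

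The main obstacle is the constant tracking required to produce exactly $\sqrt{771/512}=\sqrt{(3/2)(257/256)}$: this reflects the worst case $(1+\delta)(1+\tilde\delta s')$ under the hypotheses $\delta\leq 1/2$ and $\tilde\delta s'\leq 1/(256\alpha^2)\leq 1/256$ (using $\alpha\geq 1$). Arriving at this precise prefactor requires using Minkowski's inequality (rather than the lossy $(a+b)^2\leq 2(a^2+b^2)$) and feeding $\tilde\delta$ through the column-norm $m_1(1+\tilde\delta)$ inflated by the sparsity factor $s'$ rather than through a direct $(2,\tilde\delta)$-RIP bound on each $\tilde u^{\vect b}$. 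Once the common prefactor $\sqrt{771/512}$ has been extracted from both the $\|r\|_2$ and $\|r\|_1/\sqrt{s'}$ terms, adding the $\gamma$ contribution from $\|\mathcal{E}^{e'}_\S\|_F$ completes the proof.
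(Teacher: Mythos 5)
Your split $e_h = g + e'$ with $g := \tilde h - \tilde h^{\rm opt}_{s'}$ and the $+\gamma$ contribution from $e'$ agree exactly with the paper, and your observation that $\mathcal{E}^g_\S = \tilde A\,U\,\tilde B^T$ (so that the $\ell$-th row of $\mathcal{E}^g_\S$ is $\tilde B\,\tilde r^\ell$) is correct. From there, however, the paper takes a different — and importantly, tighter — route. It writes $\|\mathcal{E}^g_\S\|_F = \|\Phi\,r\|_2$ where $\tfrac{1}{\sqrt{m_1 m_2}}\Phi$ is a \emph{column submatrix of the Kronecker product} $\bigl(\tfrac{1}{\sqrt{m_1}}\tilde A\bigr)\otimes\bigl(\tfrac{1}{\sqrt{m_2}}\tilde B\bigr)$, upgrades the $(2,\tilde\delta)$-RIP of $\tfrac{1}{\sqrt{m_1}}\tilde A$ to $(s',\tilde\delta s')$-RIP (Proposition~6.6 of \cite{foucart2013mathematical}), then applies Lemma~2 of \cite{duarte2012kronecker} to conclude $\tfrac{1}{\sqrt{m_1m_2}}\Phi$ has RIP of order $(s',\,(1+\tilde\delta s')(1+\delta)-1)$. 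A \emph{single} application of Lemma~\ref{lem:OpBoundRSM} to this Kronecker matrix then yields $\sqrt{(1+\tilde\delta s')(1+\delta)}\,(\|r\|_2 + \|r\|_1/\sqrt{s'})$, with the constant $\sqrt{771/512}$ appearing cleanly because $(1+1/256)(3/2) = 771/512$.

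Your row-by-row argument does not reach that constant, and this is a genuine gap, not just a presentation issue. After applying Lemma~\ref{lem:OpBoundRSM} per row and Minkowski over $\ell$, your two coherence-based inner bounds are
$\sum_\ell\|\tilde r^\ell\|_1^2 \leq m_1(1+\tilde\delta)\|r\|_1^2$ and
$\sum_\ell\|\tilde r^\ell\|_2^2 \leq m_1\bigl(\|r\|_2^2 + \tilde\delta\|r\|_1^2\bigr)$.
Both are correct, but to put the second into the required form you must split $\sqrt{\|r\|_2^2 + \tilde\delta\|r\|_1^2} \leq \|r\|_2 + \sqrt{\tilde\delta}\,\|r\|_1 = \|r\|_2 + \sqrt{\tilde\delta s'}\cdot\|r\|_1/\sqrt{s'}$, which produces a \emph{second} $\|r\|_1/\sqrt{s'}$ contribution on top of the one already generated by the row-wise Lemma~\ref{lem:OpBoundRSM}. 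The resulting coefficient on $\|r\|_1/\sqrt{s'}$ is $\sqrt{1+\delta}\bigl(\sqrt{1+\tilde\delta}+\sqrt{\tilde\delta s'}\bigr)$, and under $\delta\leq 1/2$, $\tilde\delta s'\leq 1/256$ this is $\geq \sqrt{3/2}\cdot\bigl(1+\tfrac{1}{16}\bigr) \approx 1.30$, which strictly exceeds $\sqrt{771/512}\approx 1.227$. The identity you cite, $\sqrt{771/512}=\sqrt{(3/2)(257/256)}$, is the right target, but the row-by-row decomposition cannot avoid paying the triangle-inequality price of two separate $\|r\|_1/\sqrt{s'}$ terms, whereas the Kronecker RIP approach absorbs the interaction between $\tilde A$ and $\tilde B$ into a single RIP constant and hence a single application of Lemma~\ref{lem:OpBoundRSM}. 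To match the paper's constant you would need to invoke the Kronecker RIP result (or reprove a special case of it); the approach you sketch proves a version of the lemma with a strictly larger constant.
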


\begin{proof}
Note that
\begin{align}
\frac{\| \mathcal{E}^h_{\S} \|_{\rm F}}{\sqrt{m_1 m_2}} &\leq \gamma + \frac{1}{\sqrt{m_1 m_2}} \sqrt{\sum_{\ell,k} \left| \left( \tilde{h} - \tilde{h}^{\rm opt}_{s'} \right) \left(\varrho_{\mathcal{S}}( \w_\ell, \z_k)\right) \right|^2} \nonumber \\
&= \gamma  + \frac{1}{\sqrt{m_1 m_2}} \sqrt{\sum_{\ell,k} \left| \left\langle \rt - \rt_{\Omega^{\rm opt}_{\tilde{h},s'}}, {\Phi}^*_{(\ell,k)}\right\rangle \right|^2} \nonumber\\
&=\left\| \frac{1}{\sqrt{m_1 m_2}} {\Phi} \left( \rt - \rt_{\Omega^{\rm opt}_{\tilde{h},s'}} \right) \right\|_2 + \gamma \label{equ:energybound},
\end{align}
where ${\Phi} \in \C^{m_1 m_2 \times \left| \mathcal{I}_{N,d} \right|}$ has entries given by ${\Phi}_{(\ell,k),\n} = T_{\n}(\varrho_{\mathcal{S}}(\w_\ell, \z_k))$.  Note also that $\frac{1}{\sqrt{m_1 m_2}} {\Phi}$ consists of a subset of the columns of the Kronecker product $\left( \frac{1}{\sqrt{m_1}} {\Phi}_{\S;\zero} \right) \otimes \left( \frac{1}{\sqrt{m_2}} {\Phi}_{\S^c;\zero} \right)$
where ${\Phi}_{\S;\zero} \in \C^{m_1 \times \left| \mathcal{I}_{N,d} \right|}$ is defined in \eqref{equ:EI_RIPmat2}, and ${\Phi}_{\S^c;\zero} \in \C^{m_2 \times \left|  \mathcal{I}_{N,d} \right|}$ is defined in \eqref{equ:EI_RIPmat}.  Furthermore, Proposition 6.6 of \cite{foucart2013mathematical} implies that the nonzero columns of $\frac{1}{\sqrt{m_1}} {\Phi}_{\S;\zero}$ also has the RIP of order $(s',\frac{1}{256\alpha^2})$ since it has the RIP of order $(2,\frac{1}{256\alpha^2s'})$.  Hence, $\frac{1}{\sqrt{m_1 m_2}} {\Phi}$ has the RIP of order $\left(s',\left(1+\frac{1}{256\alpha^2}\right)\left(1+\frac{1}{2}\right)-1\right)$ by Lemma 2 of \cite{duarte2012kronecker}, and consequently of order $\left(s',\frac{259}{512}\right)$ for $\alpha\geq 1$.
Returning to \eqref{equ:energybound}, we can now use 
Lemma~\ref{lem:OpBoundRSM} to see that 
$$\frac{\| \mathcal{E}^h_{\S} \|_{\rm F}}{\sqrt{m_1 m_2}} ~\leq~ \sqrt{\frac{771}{512}} \left\| \rt - \rt_{\Omega^{\rm opt}_{\tilde{h},s'}} \right\|_2 + \sqrt{\frac{771}{512s'}} \left\| \rt - \rt_{\Omega^{\rm opt}_{\tilde{h},s'}} \right\|_1 + \gamma$$
as desired.
\end{proof}

The next lemma tells us how many evaluation points we need to randomly generate in Lemmas~\ref{lem:SupportLem2} and~\ref{lem:EntryID_ExactInnerProd} before we can be sure to have the RIP properties required by both Theorem~\ref{thm:pairingFunc} and Lemma~\ref{lem:errorBound} above hold with high probability.\\

\begin{lemma}
Let $\S \subseteq [D]$, $\delta \in (0, 1/2]$, and $\tilde{\delta} = \left(0,\frac{1}{256\alpha^2s'}\right]$.
Furthermore, suppose that $m_1,m_2,s',N,D \in \mathbbm{Z}^+ \setminus \{1\}$, $d \in \mathbbm{Z} \cap [1, D]$, and $p \in (0,1)$ satisfy
$$m_1 \geq a_1 \alpha^4 K_{\S}^2  (s')^2 \cdot \max \left\{ d \ln \left( \frac{DN}{d} \right) \ln(m_1), \ln \left(p^{-1} \right) \right\},$$
and
$$m_2 \geq a_2 K_{\S^c}^2 \delta^{-2} s' \cdot \max \left\{ d \ln^2(s') \ln \left( \frac{DN}{d} \right) \ln(m_2), \ln \left(p^{-1} \right) \right\},$$
where $a_1,a_2 \in \mathbbm{R}^+$ are universal constants.
Then, the samples $\{ \w_\ell \}_{\ell \in [m_1]} \subset \D_{\S}$ and $\{ \z_k \}_{k \in [m_2]} \subset \D_{\S^c}$ will both simultaneously satisfy their respective RIP assumptions concerning \eqref{equ:EI_RIPmat2} and \eqref{equ:EI_RIPmat} in Lemmas~\ref{lem:SupportLem2} and~\ref{lem:EntryID_ExactInnerProd} above with probability at least $1 - p$.
\label{lem:SamplingBounds}
\end{lemma}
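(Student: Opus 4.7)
The plan is to apply the standard BOS RIP result (Theorem~\ref{thm:BOS_RIP}) separately to each of the two matrices and then combine via a union bound. The key observation is that the nonzero columns of $\Phi_{\S;\zero}$ form a random sampling matrix (in the sense of~\eqref{def:A}) for the restricted BOS $\B_{\S}$ over $\D_{\S}$ with respect to $\mu_{\S}$; the columns are indexed by $\{\q\in\mathcal{I}_{N,d}:\q_{\S^c}=\zero\}$, which corresponds to a BOS of cardinality at most $|\mathcal{I}_{N,d}|$ with BOS constant $K_{\S}$. Analogously, the nonzero columns of $\Phi_{\S^c;\zero}$ form a random sampling matrix for $\B_{\S^c}$ with BOS constant $K_{\S^c}$, again of cardinality at most $|\mathcal{I}_{N,d}|$. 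Since permuting or restricting to a subset of columns does not affect restricted isometry constants in a way that harms the argument (we need RIP, so removing all-zero columns is fine), it suffices to verify the hypotheses of Theorem~\ref{thm:BOS_RIP} for each piece.

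First I would apply Theorem~\ref{thm:BOS_RIP} to $\frac{1}{\sqrt{m_1}}\Phi_{\S;\zero}$ with sparsity level $2$ and distortion $\tilde{\delta}$. Substituting $\tilde{\delta}=\frac{1}{256\alpha^2 s'}$, so that $\tilde{\delta}^{-2}=256^2\alpha^4(s')^2$, and using $\ln^2(2)=\mathcal{O}(1)$, the sampling condition becomes
\[
m_1\geq c_1\alpha^4 K_{\S}^2(s')^2\cdot\max\!\left\{\ln(|\mathcal{I}_{N,d}|)\ln(m_1),\ \ln\!\left(p_1^{-1}\right)\right\}
\]
for some absolute $c_1$, which succeeds with probability $\geq 1-p_1$. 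Next I would apply Theorem~\ref{thm:BOS_RIP} to $\frac{1}{\sqrt{m_2}}\Phi_{\S^c;\zero}$ with sparsity level $s'$ and distortion $\delta$, giving
\[
m_2\geq c_2 K_{\S^c}^2\delta^{-2}s'\cdot\max\!\left\{\ln^2(s')\ln(|\mathcal{I}_{N,d}|)\ln(m_2),\ \ln\!\left(p_2^{-1}\right)\right\}
\]
with failure probability $\leq p_2$. Setting $p_1=p_2=p/2$ and taking a union bound yields both RIP conclusions simultaneously with probability at least $1-p$ (the constants $\ln 2$ get absorbed into $a_1,a_2$).

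Finally, to convert $\ln|\mathcal{I}_{N,d}|$ into the stated $d\ln(DN/d)$ factor, I would use the elementary bound
\[
|\mathcal{I}_{N,d}|=\sum_{k=0}^{d}\binom{D}{k}(N-1)^k\leq (d+1)\binom{D}{d}N^d\leq \left(\frac{eDN}{d}\right)^d(d+1),
\]
so that $\ln|\mathcal{I}_{N,d}|\leq d\ln(DN/d)+\mathcal{O}(d)+\ln(d+1)$, which is $\mathcal{O}\!\left(d\ln(DN/d)\right)$ since $d\geq 1$ and $N\geq 2$. Absorbing this bound into the universal constants $a_1,a_2$ then yields the sampling conditions exactly as stated in the lemma.

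There is no real obstacle here beyond bookkeeping: the lemma is essentially a routine repackaging of Theorem~\ref{thm:BOS_RIP}. The only mild subtlety is checking that restricting to the nonzero columns of $\Phi_{\S;\zero}$ and $\Phi_{\S^c;\zero}$ genuinely yields a BOS random sampling matrix of the form required by Theorem~\ref{thm:BOS_RIP} (so that $K_{\S}$ and $K_{\S^c}$ are the correct BOS constants for each), and that the cardinality of each such BOS is dominated by $|\mathcal{I}_{N,d}|$ so that the single logarithmic factor $d\ln(DN/d)$ works for both applications simultaneously.
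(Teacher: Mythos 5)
Your proposal is correct and follows essentially the same route as the paper's own proof: apply Theorem~\ref{thm:BOS_RIP} to $\frac{1}{\sqrt{m_1}}\Phi_{\S;\zero}$ with RIP order $(2,\tilde{\delta})$ and BOS constant $K_{\S}$ and to $\frac{1}{\sqrt{m_2}}\Phi_{\S^c;\zero}$ with RIP order $(s',\delta)$ and BOS constant $K_{\S^c}$, substitute the worst-case $\tilde{\delta}=\frac{1}{256\alpha^2 s'}$, bound $\ln|\mathcal{I}_{N,d}|\lesssim d\ln(DN/d)$ via \eqref{equ:IdNCardBound}, and finish with a union bound at failure probability $p/2$ for each piece. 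The only cosmetic difference is that you write the exact cardinality $\sum_{k=0}^{d}\binom{D}{k}(N-1)^k$ before bounding it, whereas the paper uses $\binom{D}{d}N^d$ directly; both yield $\mathcal{O}(d\ln(DN/d))$ once absorbed into the universal constants.
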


\begin{proof}

The bounds on both $m_1$ and $m_2$ follow from applications of Theorem~\ref{thm:BOS_RIP}.  To bound $m_1$ we note that the normalized nonzero columns of \eqref{equ:EI_RIPmat2} need to have the RIP of order $(2,\tilde{\delta})$, and have an associated BOS constant of $K_{\S}$.  Furthermore, there will never be more than 
\begin{equation}
\left|\mathcal{I}_{N,d} \right|  = {D \choose d} N^d \leq \left( \frac{\mathbbm{e}D}{d} \right)^d N^d = \left( \frac{\mathbbm{e}DN}{d} \right)^d
\label{equ:IdNCardBound}
\end{equation}
nonzero columns of \eqref{equ:EI_RIPmat2} for any choice of $\S \subseteq [D]$.  As a consequence we can see that it suffices to have
\begin{align*}
m_1 \geq&~ a'_1 K_{\S}^2 \tilde{\delta}^{-2} \cdot \max \left\{ d \ln \left( \frac{DN}{d} \right) \ln(m_1), \ln \left(p^{-1} \right) \right\}\\
\geq&~ a_1 \alpha^4 K_{\S}^2  (s')^2 \cdot \max \left\{ d \ln \left( \frac{DN}{d} \right) \ln(m_1), \ln \left(p^{-1} \right) \right\}
\end{align*}
in order to satisfy the required RIP conditions for \eqref{equ:EI_RIPmat2} with probability at least $1 - p/2$.

To bound $m_2$ we note that the normalized nonzero columns of \eqref{equ:EI_RIPmat} need to have the RIP of order $(s',\delta)$, and have an associated BOS constant of $K_{\S^c}$.  As a result, \eqref{equ:IdNCardBound} together with Theorem~\ref{thm:BOS_RIP} implies that it suffices to have
$$m_2 \geq a_2 K_{\S^c}^2 \delta^{-2} s' \cdot \max \left\{ d \ln^2(s') \ln \left( \frac{DN}{d} \right) \ln(m_2), \ln \left(p^{-1} \right) \right\}$$ 
in order to satisfy the required RIP conditions for \eqref{equ:EI_RIPmat} with probability at least $1 - p/2$.
The final desired probability of success now results from the union bound.
\end{proof}

\begin{remark}
\label{rem:SampBound}
To simplify the appearance of our bounds from Lemma~\ref{lem:SamplingBounds} we will make use of the following additional facts and mild assumptions.  First, we will assume hereafter that both $m_1$ and $m_2$ are less than $\left|\mathcal{I}_{N,d} \right|$.  We consider this a reasonable assumption given that the techniques presented herein should only be used in situations where this is the case.  Furthermore, we will use $\delta = 1/2$ above as this is its largest valid parameter setting, and will also consider $\alpha$ to be a universal constant given that it is ultimately set to a fixed value. Finally, we will also replace our probability of failure parameter $p$ by $c / 2D$ for some small constant $c < 0.01$ (for example) in anticipation of wanting to survive a union bound involving $2D-1$ applications of Lemma~\ref{lem:SamplingBounds} for $2D-1$ different sets of random samples.  This will allow us to assert that any at most $2D-1$ different set valued functions $\F^{s'}_{\S}$ will all simultaneously satisfy both Theorem~\ref{thm:pairingFunc} and Lemma~\ref{lem:errorBound} with a ``high probability'' of at least $0.99$.  Utilizing these simplifications we obtain the simplified sufficient sampling conditions
\begin{align*}
m_1 &\geq~ c'_1 K_{\S}^2  (s')^2 \cdot d^2 \ln^2 \left( \frac{DN}{d} \right) \ln \left(D \right),\\
m_2 &\geq~ c'_2 K_{\S^c}^2 s' \cdot d^2 \ln^2(s') \ln^2 \left( \frac{DN}{d} \right) \ln \left(D \right)
\end{align*}
for new absolute constants $c'_1, c'_2 \in \mathbbm{R}^+$.

Finally, and perhaps most controversially, we will make the additional assumption above that either $(i)$ the BOS constants $K_j$ are $1$ for all but at most $\tilde{d} \in \mathbbm{Z} \cap [0, D]$ BOS basis sets $\B_j$ (note that $\tilde{d}$ can be independent of $d$), or else that $(ii)$ $K_0 = 1$.  In either case we will have that both $K_{\S}$ and $K_{\S^c}$ will be bounded above by a constant that depends on only $\tilde{d}$ or $d$, respectively, for all $\S \subseteq [D]$.  In particular, in case $(i)$ we will have that $K_{\S}$ and $K_{\S^c}$ are both at most $K^{\tilde{d}}_\infty$, and in case $(ii)$ that $K_{\S}$ and $K_{\S^c}$ are both at most $K^{d}_\infty$.  Utilizing this final assumption now allows us to bound the total number of samples we need in order to compute any $2D-1$ informative $\F^{s'}_{\S}$ sets for any given $2D-1$ sets $\S$ with high probability (w.h.p.) by either
$$m_1 m_2 \geq c_1 K^{4\tilde{d}}_\infty (s')^3 d^4 \cdot  \ln^4 \left( \frac{DN}{d} \right) \ln^2 (s') \ln^2 (D)$$
in case $(i)$ (note here that letting $d = D$ still avoids exponential dependence on $D$ in this setting), and by 
$$m_1 m_2 \geq c_2 K^{4d}_\infty (s')^3 d^4 \cdot  \ln^4 \left( \frac{DN}{d} \right) \ln^2 (s') \ln^2 (D)$$
in case $(ii)$, where $c_1,c_2 \in \mathbbm{R}^+$ are absolute constants.
\end{remark}

We are now ready to demonstrate the numerical performance of our proposed method.

%%%%%%%%%%%%%%%%%%%%%%%%%%%%%%%%%
%% EMPIRICAL EVALUATION
%%%%%%%%%%%%%%%%%%%%%%%%%%%%%%%%%
\section{Empirical Evaluation}
\label{sec:Numerics}
  In this section, Algorithm~\ref{alg:main} in combination with Algorithm~\ref{alg:suppid:impl} is evaluated numerically for the exactly sparse case with noisy measurements as well as the approximately sparse case. The algorithms were implemented in MATLAB and are publicly available.\footnote{See ``SHT II: Best $s$-Term Approximation Guarantees for Bounded Orthonormal Product Bases in Sublinear-Time'' on Mark Iwen's code page \url{https://www.math.msu.edu/~markiwen/Code.html}.} For the entry identification, we use the pairing approach.
  In addition to the stopping criterion ``$\|\v_{\rm CE} \|_2^2 > \|\v_{\rm CEold}\|_2^2 $ or $k \geq \kappa$'' in line~16 in Algorithm~\ref{alg:main},
    we also stop Algorithm~\ref{alg:main} if ${\rm supp}({\a}^{k})={\rm supp}({\a}^{k-1})={\rm supp}({\a}^{k-2})$, i.e., the identified index vectors are the same for three consecutive iterations.
All time measurements were performed on a computer with 2 x 6-core Intel Xeon CPU E5-2620 v3 (2.40GHz), 64 GB RAM, using 12 threads.

  \subsection{Exactly sparse case and noisy measurements}\label{sec:Numerics:exactly_sparse}

  We start with the exactly sparse case, and we consider tensor product basis functions with different bases in $d=D$ spatial dimensions, where we choose $d$ up to 100. We set $N=200$ and use $\mathcal{I}_{N,d}=\mathcal{I}_{200,d}$ as search space of possible basis indices, where e.g. $|\mathcal{I}_{200,50}|\approx 10^{115}$ and $|\mathcal{I}_{200,100}|\approx 10^{230}$. We set the maximum number of iterations $\kappa:=20$, and we always use $m_{\rm CE}:=50\,s$ samples for the coefficient estimation where $s=|\S|$.
  For every data point in every plot below, we use 100 different randomly generated trial signals
  \begin{equation} \label{equ:trial_signal_f_sparse}
   f(\xib) = \sum_{\n\in\S} c_{\n} \, T_{\n}(\xib),
  \end{equation}
  where we draw the function's support set $\S\subset\mathcal{I}_{N,d}$ uniformly at random without repetition and the coefficients $c_{\n}\in\{-1,1\}$ uniformly at random.

  Below, a trial will always refer to the execution of Algorithm~\ref{alg:main} on a particular randomly generated trial function $f$ as defined in~\eqref{equ:trial_signal_f_sparse}. A failed trial will refer to any trial where Algorithm~\ref{alg:main} failed to recover the correct support set $\S$ for $f$.
  
  We assume that the function evaluations of $f$ are contaminated with (white) Gaussian noise, i.e., we provide Algorithm~\ref{alg:main} with noisy samples
  $$
   \boldsymbol{y'} = \y + \boldsymbol{g'} = \y + \sigma \frac{\|\y\|_2}{\|\boldsymbol{g}\|_2} \boldsymbol{g},
  $$
  where $\y$ contains noiseless samples from $f$, $\boldsymbol{g}\sim \mathcal{N}(\zero,I)$, and $\sigma\in\mathbbm{R}^+$ is used to control the Signal to Noise Ratio (SNR) defined herein by
  $$
   \mathrm{SNR}_{\mathrm{db}} := 10 \, \log_{10} \left(\frac{\|\y\|_2^2}{\|\boldsymbol{g'}\|_2^2}\right) = -10 \, \log_{10} (\sigma^2).
 $$
 
In the following subsections, we consider different types of basis functions. First, in Section~\ref{sec:Numerics:exactly_sparse:mixed1}, we use mixed bases in up to 100 spatial dimensions, which consist of Fourier, Chebyshev, and Legendre bases. Afterwards, we use bases which only consist of Fourier type in Section~\ref{sec:Numerics:exactly_sparse:fourier}, Chebyshev type in Section~\ref{sec:Numerics:exactly_sparse:chebyshev}, and Legendre type in Section~\ref{sec:Numerics:exactly_sparse:legendre}.

\subsubsection{Mixed bases}\label{sec:Numerics:exactly_sparse:mixed1}

First, we consider basis functions $T_{n_j}$ of mixed type: $T_{n_0}$, $T_{n_{\lfloor d/2 \rfloor}}$, and $T_{n_{d-2}}$ are of Chebyshev type; $T_{n_1}$, $T_{n_{\lfloor d/2 \rfloor - 1}}$, and $T_{n_{d-1}}$ are of (preconditioned) Legendre type; and the remaining $d-6$ basis functions $T_{n_j}$, $j\in [d] \setminus\{0,1,\lfloor d/2 \rfloor - 1,\lfloor d/2 \rfloor,d-2,d-1\}$ are of Fourier type.
Preconditioned Legendre type means that instead of using standard Legendre polynomials $L_n(x)=(2n-1)/n \, x \, L_{n-1}(x) - (n-1)/n \, L_{n-2}(x)$, $L_1:=x$, $L_0:=1$, with BOS constant $K=\sqrt{2n+1}$, we apply the preconditioning method from~\cite{R12}, i.e., we use the preconditioned Legendre polynomials $Q_n(x) := \sqrt{\pi/2}\,(1-x^2)^{1/4}\, L_n(x)$ with BOS constant $K=\sqrt{3}$ and choose the sampling nodes randomly with respect to the Chebyshev measure for the basis functions $T_{n_1}$, $T_{n_{\lfloor d/2 \rfloor - 1}}$, and $T_{n_{d-1}}$. Consequently, the overall BOS constant is $\sqrt{2}^3\cdot\sqrt{3}^3\cdot 1^{d-6}=\sqrt{6}^3$ independent of the spatial dimension~$d=D$.
For the entry identification and pairing steps, we set the parameter $m_2=\#\z_{j,k}:=4\,s$ for different sparsities $s=|\S|$.
The parameter $m_1=\#\w_{j,\ell}$ is chosen as $c \, s$, where the constant $c\geq 1$ does not depend on the sparsity~$s$, which is distinctly smaller than the theoretical results of $m_1 \sim s^2$ in Theorem~\ref{thm:SuppIDWorks} and Lemma~\ref{lem:SamplingBounds}. 

\begin{figure}[!h]
\subfloat[number of samples vs.\ spatial dimension $d$]{
	\label{fig:mixed1:d100:samples_runtime_iter_success_vs_d:samples}
		\begin{tikzpicture}[baseline]
		\begin{axis}[
			font=\footnotesize,
			enlarge x limits=true,
			enlarge y limits=true,
			height=0.3\textwidth,
			grid=major,
%			xmajorgrids=false,
			width=0.45\textwidth,
			xtick={6,10,25,50,100},
            ymin=0.6e4,
            xmode=log,
            ymode=log,
      xticklabel={
        \pgfkeys{/pgf/fpu=true}
        \pgfmathparse{exp(\tick)}%
        \pgfmathprintnumber[fixed relative, precision=3]{\pgfmathresult}
        \pgfkeys{/pgf/fpu=false}
      },
%      yticklabel={
%        \pgfkeys{/pgf/fpu=true}
%        \pgfmathparse{exp(\tick)}%
%        \pgfmathprintnumber[fixed relative, precision=3]{\pgfmathresult}
%        \pgfkeys{/pgf/fpu=false}
%      },
   			xlabel={$d$},
			ylabel={\#samples},
			legend style={legend cell align=left}, legend pos=south east,
			legend columns = 2,
		]
		\addplot[black,mark=square,mark size=2.5pt,mark options={solid},only marks] coordinates {
%%samples Stat_N200_d*_s10_snr10_pairsamfac8_mixed1_sub_v2p_m24s_noise_pairing_old.mat
(6,3.570e+04) (10,6.130e+04) (25,1.573e+05) (50,3.173e+05) (100,6.373e+05)
};
\addlegendentry{$s=10$}
%\addplot [black,domain=6:100, samples=100, dashed]{6.1e3*x};
%\addlegendentry{$\sim d$}
\addplot [black,domain=6:100, samples=100, dashed]{6.1e3*x};
\addlegendentry{$6100\,d$}
		\addplot[blue,mark=triangle,mark size=2.5pt,mark options={solid},only marks] coordinates {
%%samples Stat_N200_d*_s25_snr10_pairsamfac8_mixed1_sub_v2p_m24s_noise_pairing_old.mat
(6,2.212e+05) (10,3.812e+05) (25,9.812e+05) (50,1.981e+06) (100,3.981e+06)
};
\addlegendentry{$s=25$}
\addplot [blue,domain=6:100, samples=100, dashed]{4e4*x};
\addlegendentry{$4\cdot 10^4 d$}
		\end{axis}
		\end{tikzpicture}
}
\hfill
\subfloat[average runtime vs.\ spatial dimension $d$]{
	\label{fig:mixed1:d100:samples_runtime_iter_success_vs_d:runtime}
		\begin{tikzpicture}[baseline]
		\begin{axis}[
			font=\footnotesize,
			enlarge x limits=true,
			enlarge y limits=true,
			height=0.30\textwidth,
			grid=major,
%			xmajorgrids=false,
			width=0.48\textwidth,
			xtick={6,10,25,50,100},
            ymin=0.2,
            xmode=log,
            ymode=log,
            log ticks with fixed point,
			xlabel={$d$},
			ylabel={avg. runtime in seconds},
		y label style={xshift=-0.8em},
			legend style={legend cell align=left}, legend pos=south east,
			legend columns = 2,
		]
		\addplot[black,mark=square,mark size=2.5pt,mark options={solid},only marks] coordinates {
%%Stat_N200_d*_s10_snr10_pairsamfac8_mixed1_sub_v2p_m24s_noise_pairing_old.mat
(6,2.428e+00) (10,4.067e+00) (25,1.140e+01) (50,2.922e+01) (100,8.350e+01)
};
\addlegendentry{$s=10$}
%\addplot [black,domain=6:100, samples=100, dashed]{4.869933939886050 + 0.007455345019272*x*x};
%\addlegendentry{$\sim d^2$
\addplot [black,domain=6:100, samples=100, dashed]{5 + 0.008*x*x};
\addlegendentry{$0.008d^2\!+\!5$}
		\addplot[blue,mark=triangle,mark size=2.5pt,mark options={solid},only marks] coordinates {
%%Stat_N200_d*_s25_snr10_pairsamfac8_mixed1_sub_v2p_m24s_noise_pairing_old.mat
(6,1.136e+01) (10,1.797e+01) (25,5.903e+01) (50,1.561e+02) (100,4.912e+02)
};
\addlegendentry{$s=25$}
%\addplot [forget plot,blue,domain=6:100, samples=100, dashed]{1.730e+00*x + 2.909e-02*x*x};
%\addplot [forget plot,blue,domain=6:100, samples=100, dashed]{24.5946 + 0.0445*x*x};
\addplot [blue,domain=6:100, samples=100, dashed]{26 + 0.045*x*x};
\addlegendentry{$0.045d^2\!+\!26$}
		\end{axis}
		\end{tikzpicture}
}
\\
\subfloat[average iteration vs.\ spatial dimension $d$]{
	\label{fig:mixed1:d100:samples_runtime_iter_success_vs_d:iterations}
		\begin{tikzpicture}[baseline]
		\begin{axis}[
			font=\footnotesize,
			enlarge x limits=true,
			enlarge y limits=true,
			height=0.25\textwidth,
			grid=major,
%			xmajorgrids=false,
			width=0.48\textwidth,
			xtick={6,10,25,50,100},
			ytick={1,2,3,4,5,6},
			ymin=1,ymax=5.7,
            xmode=log,
%            ymode=log,
            log ticks with fixed point,
			xlabel={$d$},
			ylabel={avg. iteration},
			legend style={legend cell align=left}, legend pos=south east,
			legend columns = -1,
		]
		\addplot[black,mark=square,mark size=2.5pt,mark options={solid}] coordinates {
%%Stat_N200_d*_s10_snr10_pairsamfac8_mixed1_sub_v2p_m24s_noise_pairing_old.mat
(6,4.190e+00) (10,4.070e+00) (25,4.230e+00) (50,4.730e+00) (100,5.380e+00)
};
\addlegendentry{$s=10$}
		\addplot[blue,mark=triangle,mark size=2.5pt,mark options={solid}] coordinates {
%%Stat_N200_d*_s25_snr10_pairsamfac8_mixed1_sub_v2p_m24s_noise_pairing_old.mat
(6,4.280e+00) (10,3.630e+00) (25,3.720e+00) (50,3.650e+00) (100,3.730e+00)
};
\addlegendentry{$s=25$}
		\end{axis}
		\end{tikzpicture}
}
\hfill
\subfloat[success rate vs.\ spatial dimension~$d$]{
	\label{fig:mixed1:d100:samples_runtime_iter_success_vs_d:success_rate}
		\begin{tikzpicture}[baseline]
		\begin{axis}[
			font=\footnotesize,
			enlarge x limits=true,
			enlarge y limits=true,
			height=0.25\textwidth,
			grid=major,
%			xmajorgrids=false,
			width=0.48\textwidth,
			xtick={6,10,25,50,100},
			ytick={0,0.5,0.8,1},
			ymin=0.0,ymax=1,
            xmode=log,
%            ymode=log,
            log ticks with fixed point,
			xlabel={$d$},
			ylabel={success rate},
			legend style={legend cell align=left}, legend pos=south east,
			legend columns = -1,
		]
		\addplot[black,mark=square,mark size=2.5pt,mark options={solid}] coordinates {
%%success Stat_N200_d*_s10_snr10_pairsamfac8_mixed1_sub_v2p_m24s_noise_pairing_old.mat
(6,1.000e+00) (10,1.000e+00) (25,9.900e-01) (50,9.900e-01) (100,9.900e-01)
};
\addlegendentry{$s=10$}
%\addplot [black,domain=6:100, samples=100, dashed]{3.438e-02 + 4.565e-03*x + 7.297e-04*x*x};
%\addlegendentry{$\sim s^2$}
		\addplot[blue,mark=triangle,mark size=2.5pt,mark options={solid}] coordinates {
%%success Stat_N200_d*_s25_snr10_pairsamfac8_mixed1_sub_v2p_m24s_noise_pairing_old.mat
(6,1.000e+00) (10,1.000e+00) (25,1.000e+00) (50,1.000e+00) (100,1.000e+00)
};
\addlegendentry{$s=25$}
		\end{axis}
		\end{tikzpicture}
}
\caption{Number of samples, runtime, number of iterations, success rates
 vs.\ spatial dimension $d=D\in\{6,10,25,50,100\}$ for mixed bases ($3$~Chebyshev, $3$~Legendre, $d-6$ Fourier), $N=200$, sparsity $s\in\{10,25\}$, $\mathrm{SNR}_{\mathrm{db}}=10$, $m_1=8s$, $m_2=4s$.}
\label{fig:mixed1:d100:samples_runtime_iter_success_vs_d}
\end{figure}
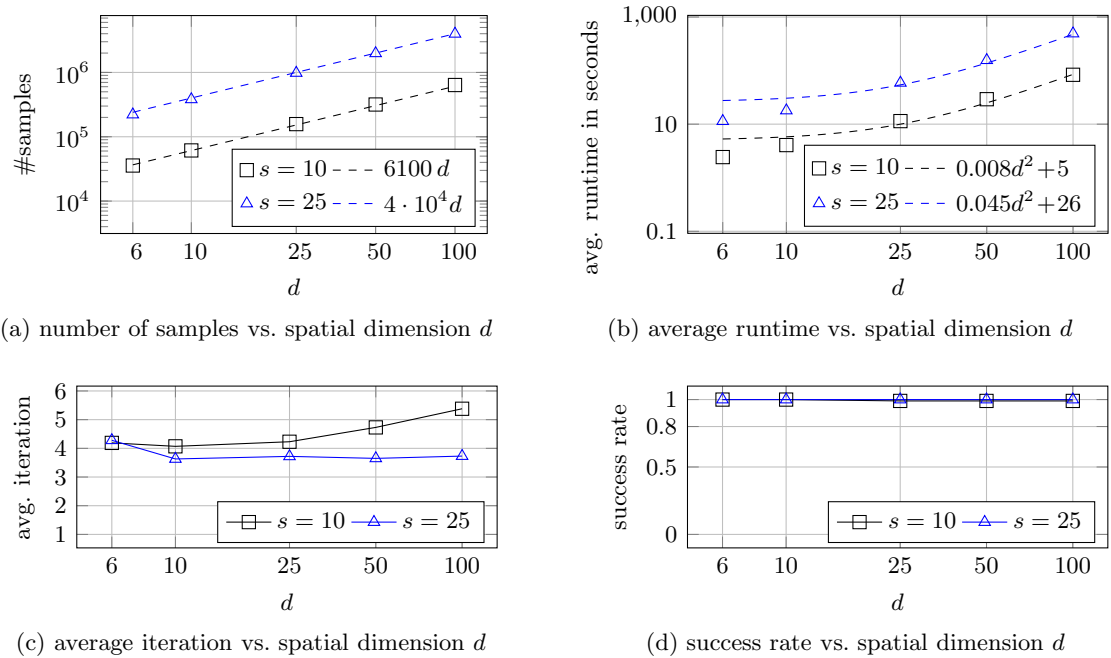

In Figure~\ref{fig:mixed1:d100:samples_runtime_iter_success_vs_d}, we visualize the obtained results in dependence of the spatial dimensions $d=D\in\{6,10,25,50,100\}$ for sparsity $s\in\{10,25\}$ and signal to noise ratio $\mathrm{SNR}_{\mathrm{db}}=10$.
In Figure~\ref{fig:mixed1:d100:samples_runtime_iter_success_vs_d:samples}, we plot the number of samples with respect to the spatial dimension $d$. We observe that the number of samples grows nearly linearly in~$d$. Additionally, we plot the average runtime of the 100 test runs with respect to $d$ in Figure~\ref{fig:mixed1:d100:samples_runtime_iter_success_vs_d:runtime}, and we observe that it grows approximately like~$\sim d^2$. When having a look at the average number of iterations in Figure~\ref{fig:mixed1:d100:samples_runtime_iter_success_vs_d:iterations}, we observe that 4.1 to 5.4 iterations were required for sparsity $s=10$ and around 4 iterations for sparsity $s=25$. For the considered test setting, the observed success rate was 100\% for sparsity $s=25$ and at least 99\% for sparsity $s=10$, cf.\ Figure~\ref{fig:mixed1:d100:samples_runtime_iter_success_vs_d:success_rate}.

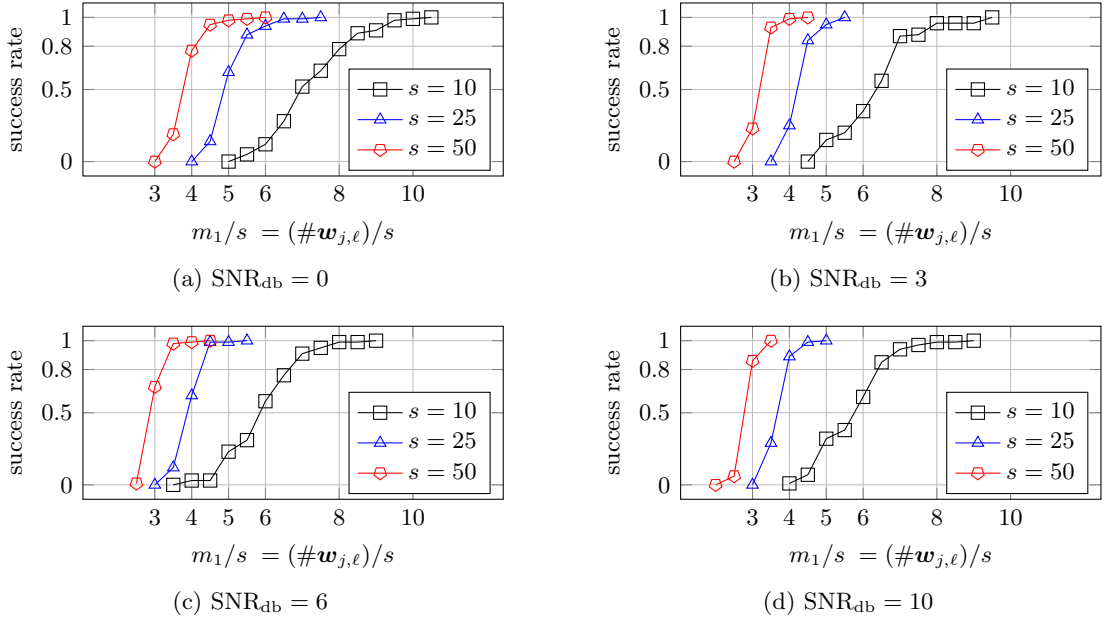
\begin{figure}[htbp]
\subfloat[$\mathrm{SNR}_{\mathrm{db}}=0$]{
	\label{fig:mixed1:d50:success_vs_m1s:snr0}
\begin{tikzpicture}[baseline]
		\begin{axis}[
			font=\footnotesize,
			enlarge x limits=true,
			enlarge y limits=true,
			height=0.26\textwidth,
			grid=major,
%			xmajorgrids=false,
			width=0.48\textwidth,
			xtick={3,4,5,6,8,10},
			ytick={0,0.5,0.8,1},
			xmin=2,xmax=11.5,
			ymin=0,ymax=1,
			xlabel={$m_1/s\;=\;$(\#$\w_{j,\ell})/s$},
			ylabel={success rate},
			legend style={legend cell align=left}, legend pos=south east,
			legend columns = 1,
		]
		\addplot[black,mark=square,mark size=2.5pt,mark options={solid}] coordinates {
%%Stat_N200_d50_s10_snr0_pairsamfac*_mixed1_sub_v2p_m24s_noise_pairing_old.mat
(5.0,0.00) (5.5,0.05) (6.0,0.12) (6.5,0.28) (7.0,0.52) (7.5,0.63) (8.0,0.78) (8.5,0.89) (9.0,0.91) (9.5,0.98) (10.0,0.99) (10.5,1.00)
};
\addlegendentry{$s=10$}
		\addplot[blue,mark=triangle,mark size=2.5pt,mark options={solid}] coordinates {
%%Stat_N200_d50_s25_snr0_pairsamfac*_mixed1_sub_v2p_m24s_noise_pairing_old.mat
(4.0,0.00) (4.5,0.14) (5.0,0.62) (5.5,0.88) (6.0,0.94) (6.5,0.99) (7.0,0.99) (7.5,1.00)
};
\addlegendentry{$s=25$}
		\addplot[red,mark=pentagon,mark size=2.5pt,mark options={solid,rotate=180}] coordinates {
%%Stat_N200_d50_s50_snr0_pairsamfac*_mixed1_sub_v2p_m24s_noise_pairing_old.mat
(3.0,0.00) (3.5,0.19) (4.0,0.77) (4.5,0.95) (5.0,0.98) (5.5,0.99) (6.0,1.00)
};
\addlegendentry{$s=50$}
		\end{axis}
		\end{tikzpicture}
}
\hfill
\subfloat[$\mathrm{SNR}_{\mathrm{db}}=3$]{
	\label{fig:mixed1:d50:success_vs_m1s:snr3}
		\begin{tikzpicture}[baseline]
		\begin{axis}[
			font=\footnotesize,
			enlarge x limits=true,
			enlarge y limits=true,
			height=0.26\textwidth,
			grid=major,
%			xmajorgrids=false,
			width=0.48\textwidth,
			xtick={3,4,5,6,8,10},
			ytick={0,0.5,0.8,1},
			xmin=2,xmax=11.5,
			ymin=0,ymax=1,
			xlabel={$m_1/s\;=\;$(\#$\w_{j,\ell})/s$},
			ylabel={success rate},
			legend style={legend cell align=left}, legend pos=south east,
			legend columns = 1,
		]
		\addplot[black,mark=square,mark size=2.5pt,mark options={solid}] coordinates {
%%Stat_N200_d50_s10_snr3_pairsamfac*_mixed1_sub_v2p_m24s_noise_pairing_old.mat
(4.5,0.00) (5.0,0.15) (5.5,0.20) (6.0,0.35) (6.5,0.56) (7.0,0.87) (7.5,0.88) (8.0,0.96) (8.5,0.96) (9.0,0.96) (9.5,1.00)
};
\addlegendentry{$s=10$}
		\addplot[blue,mark=triangle,mark size=2.5pt,mark options={solid}] coordinates {
%%Stat_N200_d50_s25_snr3_pairsamfac*_mixed1_sub_v2p_m24s_noise_pairing_old.mat
(3.5,0.00) (4.0,0.25) (4.5,0.84) (5.0,0.95) (5.5,1.00)
};
\addlegendentry{$s=25$}
		\addplot[red,mark=pentagon,mark size=2.5pt,mark options={solid,rotate=180}] coordinates {
%%Stat_N200_d50_s50_snr3_pairsamfac*_mixed1_sub_v2p_m24s_noise_pairing_old.mat
(2.5,0.00) (3.0,0.23) (3.5,0.93) (4.0,0.99) (4.5,1.00)
};
\addlegendentry{$s=50$}
		\end{axis}
		\end{tikzpicture}
}
\\
\subfloat[$\mathrm{SNR}_{\mathrm{db}}=6$]{
	\label{fig:mixed1:d50:success_vs_m1s:snr6}
		\begin{tikzpicture}[baseline]
		\begin{axis}[
			font=\footnotesize,
			enlarge x limits=true,
			enlarge y limits=true,
			height=0.26\textwidth,
			grid=major,
%			xmajorgrids=false,
			width=0.48\textwidth,
			xtick={3,4,5,6,8,10},
			ytick={0,0.5,0.8,1},
			xmin=2,xmax=11.5,
			ymin=0,ymax=1,
			xlabel={$m_1/s\;=\;$(\#$\w_{j,\ell})/s$},
			ylabel={success rate},
			legend style={legend cell align=left}, legend pos=south east,
			legend columns = 1,
		]
		\addplot[black,mark=square,mark size=2.5pt,mark options={solid}] coordinates {
%%Stat_N200_d50_s10_snr6_pairsamfac*_mixed1_sub_v2p_m24s_noise_pairing_old.mat
(3.5,0.00) (4.0,0.03) (4.5,0.03) (5.0,0.23) (5.5,0.31) (6.0,0.58) (6.5,0.76) (7.0,0.91) (7.5,0.95) (8.0,0.99) (8.5,0.99) (9.0,1.00)
};
\addlegendentry{$s=10$}
		\addplot[blue,mark=triangle,mark size=2.5pt,mark options={solid}] coordinates {
%%Stat_N200_d50_s25_snr6_pairsamfac*_mixed1_sub_v2p_m24s_noise_pairing_old.mat
(3.0,0.00) (3.5,0.12) (4.0,0.62) (4.5,0.99) (5.0,0.99) (5.5,1.00)

};
\addlegendentry{$s=25$}
		\addplot[red,mark=pentagon,mark size=2.5pt,mark options={solid,rotate=180}] coordinates {
%%Stat_N200_d50_s50_snr6_pairsamfac*_mixed1_sub_v2p_m24s_noise_pairing_old.mat
(2.5,0.01) (3.0,0.68) (3.5,0.98) (4.0,0.99) (4.5,1.00)
};
\addlegendentry{$s=50$}
		\end{axis}
		\end{tikzpicture}
}
\hfill
\subfloat[$\mathrm{SNR}_{\mathrm{db}}=10$]{
	\label{fig:mixed1:d50:success_vs_m1s:snr10}
		\begin{tikzpicture}[baseline]
		\begin{axis}[
			font=\footnotesize,
			enlarge x limits=true,
			enlarge y limits=true,
			height=0.26\textwidth,
			grid=major,
%			xmajorgrids=false,
			width=0.48\textwidth,
			xtick={3,4,5,6,8,10},
			ytick={0,0.5,0.8,1},
			xmin=2,xmax=11.5,
			ymin=0,ymax=1,
			xlabel={$m_1/s\;=\;$(\#$\w_{j,\ell})/s$},
			ylabel={success rate},
			legend style={legend cell align=left}, legend pos=south east,
			legend columns = 1,
		]
		\addplot[black,mark=square,mark size=2.5pt,mark options={solid}] coordinates {
%%Stat_N200_d50_s10_snr10_pairsamfac*_mixed1_sub_v2p_m24s_noise_pairing_old.mat
(4.0,0.01) (4.5,0.07) (5.0,0.32) (5.5,0.38) (6.0,0.61) (6.5,0.85) (7.0,0.94) (7.5,0.97) (8.0,0.99) (8.5,0.99) (9.0,1.00)
};
\addlegendentry{$s=10$}
		\addplot[blue,mark=triangle,mark size=2.5pt,mark options={solid}] coordinates {
%%Stat_N200_d50_s25_snr10_pairsamfac*_mixed1_sub_v2p_m24s_noise_pairing_old.mat
(3.0,0.00) (3.5,0.29) (4.0,0.89) (4.5,0.99) (5.0,1.00)
};
\addlegendentry{$s=25$}
		\addplot[red,mark=pentagon,mark size=2.5pt,mark options={solid,rotate=180}] coordinates {
%%Stat_N200_d50_s50_snr10_pairsamfac*_mixed1_sub_v2p_m24s_noise_pairing_old.mat
(2.0,0.00) (2.5,0.06) (3.0,0.86) (3.5,1.00)
};
\addlegendentry{$s=50$}
		\end{axis}
		\end{tikzpicture}
}
\caption{Success rate vs.\ $m_1/s$ for mixed bases, $d=D=50$, $N=200$, sparsity $s\in\{10,25,50\}$, $m_2=\#\z_{j,k}=4\,s$, and $\mathrm{SNR}_{\mathrm{db}}\in\{0,3,6,10\}$.}
\label{fig:mixed1:d50:success_vs_m1s}
\end{figure}

For different choices of the parameter $m_1=\#\w_{j,\ell}\in\{2.5s,3s,3.5s,\ldots,10s\}$, we investigate the success rate for spatial dimension $d=D=50$ and sparsities $s\in\{10,25,50\}$ in Figure~\ref{fig:mixed1:d50:success_vs_m1s}, where we set the signal to noise ratio $\mathrm{SNR}_{\mathrm{db}}$ to 0, 3, 6, and 10 in Figure~\ref{fig:mixed1:d50:success_vs_m1s:snr0}, \ref{fig:mixed1:d50:success_vs_m1s:snr3}, \ref{fig:mixed1:d50:success_vs_m1s:snr6}, and~\ref{fig:mixed1:d50:success_vs_m1s:snr10}, respectively. We observe that the success rates increase for growing parameter~$m_1$. Moreover, the transition between 0\% success rate and 99\%--100\% success rate occurs relatively fast. Additionally, the value $m_1/s$ where the success rate reaches 99\% seems to decrease for increasing sparsity~$s$ and for increasing signal to noise ratio.

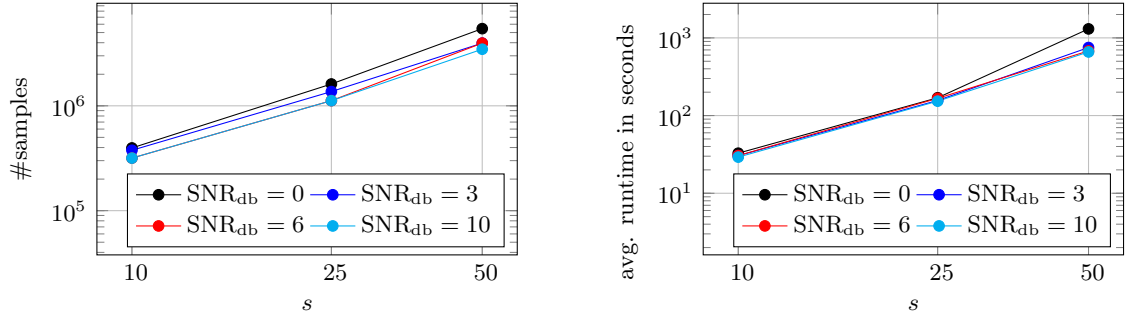
\begin{figure}[htbp]
%\subfloat[number of samples vs.\ sparsity $s$]{
\begin{tikzpicture}[baseline]
		\begin{axis}[
			font=\footnotesize,
			enlarge x limits=true,
			enlarge y limits=true,
			height=0.33\textwidth,
			grid=major,
%			xmajorgrids=false,
			width=0.48\textwidth,
			xtick={10,25,50},
            ymin=6e4,ymax=6e6,
            xmode=log,
            ymode=log,
      xticklabel={
        \pgfkeys{/pgf/fpu=true}
        \pgfmathparse{exp(\tick)}%
        \pgfmathprintnumber[fixed relative, precision=3]{\pgfmathresult}
        \pgfkeys{/pgf/fpu=false}
      },
%      yticklabel={
%        \pgfkeys{/pgf/fpu=true}
%        \pgfmathparse{exp(\tick)}%
%        \pgfmathprintnumber[fixed relative, precision=3]{\pgfmathresult}
%        \pgfkeys{/pgf/fpu=false}
%      },
			xlabel={$s$},
			ylabel={\#samples},
			legend style={legend cell align=left}, legend pos=south east,
			legend columns = 2,
		]
		\addplot[black,mark=*,mark size=2pt,mark options={solid}] coordinates {
%v2p
(10,396500)  (25,1614950)  (50,5447500)
};
\addlegendentry{$\mathrm{SNR}_{\mathrm{db}}=0$}
		\addplot[blue,mark=*,mark size=2pt,mark options={solid}] coordinates {
%v2p
(10,376700)  (25,1367450)  (50,3962500)
};
\addlegendentry{$\mathrm{SNR}_{\mathrm{db}}=3$}
		\addplot[red,mark=*,mark size=2pt,mark options={solid}] coordinates {
%v2p
(10,317300)  (25,1119950)  (50,3962500)
};
\addlegendentry{$\mathrm{SNR}_{\mathrm{db}}=6$}
		\addplot[cyan,mark=*,mark size=2pt,mark options={solid}] coordinates {
%v2p
(10,317300)  (25,1119950)  (50,3467500)
};
\addlegendentry{$\mathrm{SNR}_{\mathrm{db}}=10$}
		\end{axis}
		\end{tikzpicture}
%}
%
\hfill
%
%\subfloat[average runtime vs.\ sparsity $s$]{
\begin{tikzpicture}[baseline]
		\begin{axis}[
			font=\footnotesize,
			enlarge x limits=true,
			enlarge y limits=true,
			height=0.33\textwidth,
			grid=major,
%			xmajorgrids=false,
			width=0.48\textwidth,
			xtick={10,25,50},
			ymin=3,ymax=1.5e3,
%            ymin=1e5,ymax=6e6,
            xmode=log,
            ymode=log,
      xticklabel={
        \pgfkeys{/pgf/fpu=true}
        \pgfmathparse{exp(\tick)}%
        \pgfmathprintnumber[fixed relative, precision=3]{\pgfmathresult}
        \pgfkeys{/pgf/fpu=false}
      },
%      yticklabel={
%        \pgfkeys{/pgf/fpu=true}
%        \pgfmathparse{exp(\tick)}%
%        \pgfmathprintnumber[fixed relative, precision=3]{\pgfmathresult}
%        \pgfkeys{/pgf/fpu=false}
%      },
			xlabel={$s$},
			ylabel={avg. runtime in seconds},
		y label style={xshift=-0.8em},
			legend style={legend cell align=left}, legend pos=south east,
			legend columns = 2,
		]
		\addplot[black,mark=*,mark size=2pt,mark options={solid}] coordinates {
%v2p
%10.0 / 6.5 / 5.5
 (10,32.78) (25,169.43) (50,1.305e3)
};
\addlegendentry{$\mathrm{SNR}_{\mathrm{db}}=0$}
		\addplot[blue,mark=*,mark size=2pt,mark options={solid}] coordinates {
%v2p
%9.5 / 5.5 / 4.0
 (10,30.46) (25,156.52) (50,750.97)
};
\addlegendentry{$\mathrm{SNR}_{\mathrm{db}}=3$}
		\addplot[red,mark=*,mark size=2pt,mark options={solid}] coordinates {
%v2p
%8.0 / 4.5 / 4.0
 (10,30.56) (25,166.59) (50,680.28)
};
\addlegendentry{$\mathrm{SNR}_{\mathrm{db}}=6$}
		\addplot[cyan,mark=*,mark size=2pt,mark options={solid}] coordinates {
%v2p
%8.0 / 4.5 / 3.5
 (10,29.20) (25,153.32) (50,659.03)
};
\addlegendentry{$\mathrm{SNR}_{\mathrm{db}}=10$}
		\end{axis}
		\end{tikzpicture}
%}
\caption{Number of samples and average runtime vs.\ sparsity $s$ for mixed bases, $\geq 99\%$ success rate, $d=D=50$, $N=200$.}
\label{fig:mixed1:d50:samples_runtime_vs_s}
\end{figure}

In Figure~\ref{fig:mixed1:d50:samples_runtime_vs_s}, we plot the used number of samples and average runtime as a function of the sparsity $s\in\{10,25,50\}$ for spatial dimensoin $d=D=50$ and for each signal to noise ratio $\mathrm{SNR}_{\mathrm{db}}\in\{0,3,6,10\}$. We observe that the plots only differ slightly for the different signal to noise ratios $\mathrm{SNR}_{\mathrm{db}}\in\{3,6,10\}$, i.e.\ the numbers of samples and runtimes seem to depend only mildly on the signal to noise ratios for $\geq 99\%$ success rate. In the case $\mathrm{SNR}_{\mathrm{db}}=0$, i.e., when the energy of the signal and of the noise match, the runtimes for sparsities $s\in\{10,25\}$ are similar to the ones of $\mathrm{SNR}_{\mathrm{db}}\in\{3,6,10\}$ and approximately double for $s=50$.

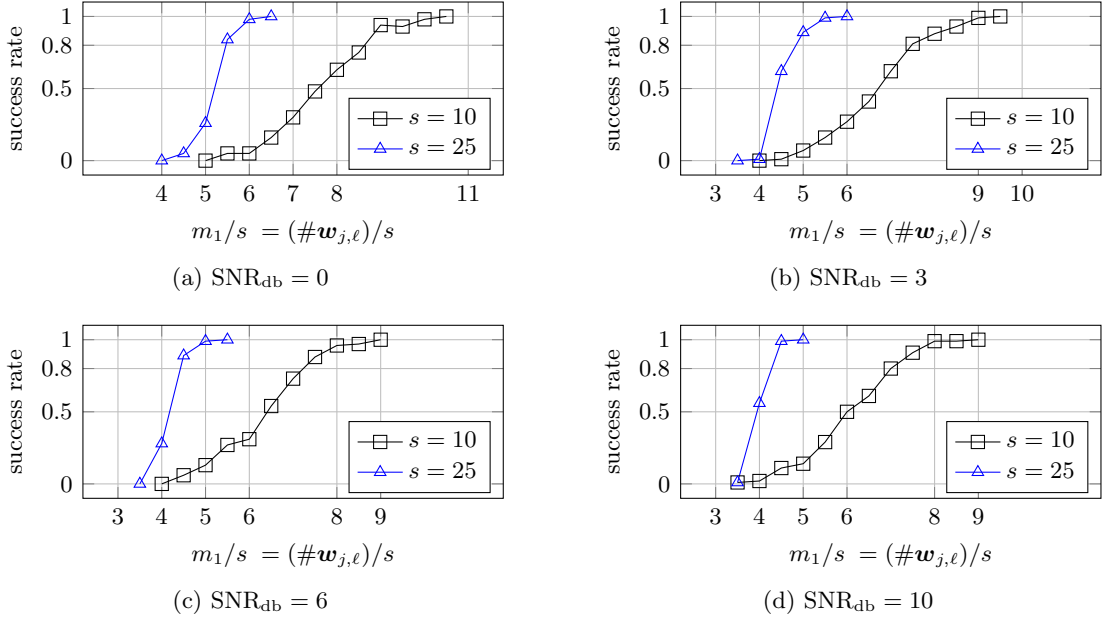
\begin{figure}[htb]
\subfloat[$\mathrm{SNR}_{\mathrm{db}}=0$]{
		\begin{tikzpicture}[baseline]
		\begin{axis}[
			font=\footnotesize,
			enlarge x limits=true,
			enlarge y limits=true,
			height=0.26\textwidth,
			grid=major,
%			xmajorgrids=false,
			width=0.48\textwidth,
			xtick={4,5,6,7,8,11},
			ytick={0,0.5,0.8,1},
			xmin=3,xmax=11,
			ymin=0,ymax=1,
			xlabel={$m_1/s\;=\;$(\#$\w_{j,\ell})/s$},
			ylabel={success rate},
			legend style={legend cell align=left}, legend pos=south east,
			legend columns = 1,
		]
		\addplot[black,mark=square,mark size=2.5pt,mark options={solid}] coordinates {
%%Stat_N200_d100_s10_snr0_pairsamfac*_mixed1_sub_v2p_m24s_noise_pairing_old.mat
(5.0,0.00) (5.5,0.05) (6.0,0.05) (6.5,0.16) (7.0,0.30) (7.5,0.48) (8.0,0.63) (8.5,0.75) (9.0,0.94) (9.5,0.93) (10.0,0.98) (10.5,1.00)
};
\addlegendentry{$s=10$}
		\addplot[blue,mark=triangle,mark size=2.5pt,mark options={solid}] coordinates {
%%Stat_N200_d100_s25_snr0_pairsamfac*_mixed1_sub_v2p_m24s_noise_pairing_old.mat
(4.0,0.00) (4.5,0.05) (5.0,0.26) (5.5,0.84) (6.0,0.98) (6.5,1.00) %(7.0,1.00)
};
\addlegendentry{$s=25$}
		\end{axis}
		\end{tikzpicture}
}
\hfill
\subfloat[$\mathrm{SNR}_{\mathrm{db}}=3$]{
		\begin{tikzpicture}[baseline]
		\begin{axis}[
			font=\footnotesize,
			enlarge x limits=true,
			enlarge y limits=true,
			height=0.26\textwidth,
			grid=major,
%			xmajorgrids=false,
			width=0.48\textwidth,
			xtick={3,4,5,6,9,10},
			ytick={0,0.5,0.8,1},
			xmin=3,xmax=11,
			ymin=0,ymax=1,
			xlabel={$m_1/s\;=\;$(\#$\w_{j,\ell})/s$},
			ylabel={success rate},
			legend style={legend cell align=left}, legend pos=south east,
			legend columns = 1,
		]
		\addplot[black,mark=square,mark size=2.5pt,mark options={solid}] coordinates {
%%Stat_N200_d100_s10_snr3_pairsamfac*_mixed1_sub_v2p_m24s_noise_pairing_old.mat
(4.0,0.00) (4.5,0.01) (5.0,0.07) (5.5,0.16) (6.0,0.27) (6.5,0.41) (7.0,0.62) (7.5,0.81) (8.0,0.88) (8.5,0.93) (9.0,0.99) (9.5,1.00)
};
\addlegendentry{$s=10$}
		\addplot[blue,mark=triangle,mark size=2.5pt,mark options={solid}] coordinates {
%%Stat_N200_d100_s25_snr3_pairsamfac*_mixed1_sub_v2p_m24s_noise_pairing_old.mat
(3.5,0.00) (4.0,0.01) (4.5,0.62) (5.0,0.89) (5.5,0.99) (6.0,1.00)
};
\addlegendentry{$s=25$}
		\end{axis}
		\end{tikzpicture}
}
\\
\subfloat[$\mathrm{SNR}_{\mathrm{db}}=6$]{
		\begin{tikzpicture}[baseline]
		\begin{axis}[
			font=\footnotesize,
			enlarge x limits=true,
			enlarge y limits=true,
			height=0.26\textwidth,
			grid=major,
%			xmajorgrids=false,
			width=0.48\textwidth,
			xtick={3,4,5,6,8,9},
			ytick={0,0.5,0.8,1},
			xmin=3,xmax=11,
			ymin=0,ymax=1,
			xlabel={$m_1/s\;=\;$(\#$\w_{j,\ell})/s$},
			ylabel={success rate},
			legend style={legend cell align=left}, legend pos=south east,
			legend columns = 1,
		]
		\addplot[black,mark=square,mark size=2.5pt,mark options={solid}] coordinates {
%%Stat_N200_d100_s10_snr6_pairsamfac*_mixed1_sub_v2p_m24s_noise_pairing_old.mat
(4.0,0.00) (4.5,0.06) (5.0,0.13) (5.5,0.27) (6.0,0.31) (6.5,0.54) (7.0,0.73) (7.5,0.88) (8.0,0.96) (8.5,0.97) (9.0,1.00)
};
\addlegendentry{$s=10$}
		\addplot[blue,mark=triangle,mark size=2.5pt,mark options={solid}] coordinates {
%%Stat_N200_d100_s25_snr6_pairsamfac*_mixed1_sub_v2p_m24s_noise_pairing_old.mat
(3.5,0.00) (4.0,0.28) (4.5,0.89) (5.0,0.99) (5.5,1.00)
};
\addlegendentry{$s=25$}
		\end{axis}
		\end{tikzpicture}
}
\hfill
\subfloat[$\mathrm{SNR}_{\mathrm{db}}=10$]{
		\begin{tikzpicture}[baseline]
		\begin{axis}[
			font=\footnotesize,
			enlarge x limits=true,
			enlarge y limits=true,
			height=0.26\textwidth,
			grid=major,
%			xmajorgrids=false,
			width=0.48\textwidth,
			xtick={3,4,5,6,8,9},
			ytick={0,0.5,0.8,1},
			xmin=3,xmax=11,
			ymin=0,ymax=1,
			xlabel={$m_1/s\;=\;$(\#$\w_{j,\ell})/s$},
			ylabel={success rate},
			legend style={legend cell align=left}, legend pos=south east,
			legend columns = 1,
		]
		\addplot[black,mark=square,mark size=2.5pt,mark options={solid}] coordinates {
%%Stat_N200_d100_s10_snr10_pairsamfac*_mixed1_sub_v2p_m24s_noise_pairing_old.mat
(3.5,0.01) (4.0,0.02) (4.5,0.11) (5.0,0.14) (5.5,0.29) (6.0,0.50) (6.5,0.61) (7.0,0.80) (7.5,0.91) (8.0,0.99) (8.5,0.99) (9.0,1.00)
};
\addlegendentry{$s=10$}
		\addplot[blue,mark=triangle,mark size=2.5pt,mark options={solid}] coordinates {
%%Stat_N200_d100_s25_snr10_pairsamfac*_mixed1_sub_v2p_m24s_noise_pairing_old.mat
(3.5,0.01) (4.0,0.56) (4.5,0.99) (5.0,1.00)
};
\addlegendentry{$s=25$}
		\end{axis}
		\end{tikzpicture}
}
\caption{Success rate vs.\ $m_1/s$ for mixed bases, $d=D=100$, $N=200$, sparsity $s\in\{10,25\}$, $m_2=\#\z_{j,k}=4\,s$, and $\mathrm{SNR}_{\mathrm{db}}\in\{0,3,6,10\}$.}
\label{fig:mixed1:d100:success_vs_m1s}
\end{figure}

Additionally, we repeat the tests of Figure~\ref{fig:mixed1:d50:success_vs_m1s} for spatial dimension $d=D=100$ and sparsities $s\in\{10,25\}$, and we visualize the corresponding results in Figure~\ref{fig:mixed1:d100:success_vs_m1s}. We obtain results analogously to the previous ones.

\FloatBarrier

\newpage  

\subsubsection{Fourier bases}\label{sec:Numerics:exactly_sparse:fourier}

As for the case of mixed bases in Section~\ref{sec:Numerics:exactly_sparse:mixed1}, we now perform the numerical tests for tensor products of Fourier bases and show the results in Figure~\ref{fig:fourier:d100:samples_runtime_iter_success_vs_d}. Here, the overall BOS constant $K$ is $1$ independent of the spatial dimension~$d$. Due to the smaller BOS constant, we can reduce the parameters $m_1$ to $5s$ and $m_2$ to $s$ while still obtaining a success rate of 100\%, cf.\ Figure~\ref{fig:fourier:d100:samples_runtime_iter_success_vs_d:success}. As in Section~\ref{sec:Numerics:exactly_sparse:mixed1}, the numbers of samples in Figure~\ref{fig:fourier:d100:samples_runtime_iter_success_vs_d:samples} grow nearly linearly in~$d$ and the average runtimes in Figure~\ref{fig:fourier:d100:samples_runtime_iter_success_vs_d:runtimes} approximately like $\sim d^2$. The average number of iterations in Figure~\ref{fig:fourier:d100:samples_runtime_iter_success_vs_d:iterations} is between $3$ and $4$.

%  \subsubsection*{Runtime vs.\ $d$ for Fourier}
%Time measurements on a computer with 2 x 6-core Intel Xeon CPU E5-2620 v3 (2.40GHz), 64 GB RAM, using 12 threads. $m_1=5s$, $m_2=s$.

\begin{figure}[!h]
\subfloat[number of samples vs.\ spatial dimension $d$]{
\label{fig:fourier:d100:samples_runtime_iter_success_vs_d:samples}
		\begin{tikzpicture}[baseline]
		\begin{axis}[
			font=\footnotesize,
			enlarge x limits=true,
			enlarge y limits=true,
			height=0.3\textwidth,
			grid=major,
%			xmajorgrids=false,
			width=0.45\textwidth,
			xtick={6,10,25,50,100},
            ymin=2e3,
            xmode=log,
            ymode=log,
      xticklabel={
        \pgfkeys{/pgf/fpu=true}
        \pgfmathparse{exp(\tick)}%
        \pgfmathprintnumber[fixed relative, precision=3]{\pgfmathresult}
        \pgfkeys{/pgf/fpu=false}
      },
%      yticklabel={
%        \pgfkeys{/pgf/fpu=true}
%        \pgfmathparse{exp(\tick)}%
%        \pgfmathprintnumber[fixed relative, precision=3]{\pgfmathresult}
%        \pgfkeys{/pgf/fpu=false}
%      },
			xlabel={$d$},
			ylabel={\#samples},
			legend style={legend cell align=left}, legend pos=south east,
			legend columns = -1,
		]
		\addplot[black,mark=square,mark size=2.5pt,mark options={solid},only marks] coordinates {
%%samples Stat_N200_d*_s10_snr10_pairsamfac5_fourier_sub_v2p_m2s_noise_pairing_old.mat
(6,6.000e+03) (10,1.000e+04) (25,2.500e+04) (50,5.000e+04) (75,7.500e+04) (100,1.000e+05)
};
\addlegendentry{$s=10$}
%\addplot [black,domain=6:100, samples=100, dashed]{1e3*x};
%\addlegendentry{$\sim d$}
\addplot [black,domain=6:100, samples=100, dashed]{1e3*x};
\addlegendentry{$\sim d$}
%\addlegendentry{$1000\, d$}
		\addplot[blue,mark=triangle,mark size=2.5pt,mark options={solid},only marks] coordinates {
%%samples Stat_N200_d*_s25_snr10_pairsamfac5_fourier_sub_v2p_m2s_noise_pairing_old.mat
(6,3.562e+04) (10,6.062e+04) (25,1.544e+05) (50,3.106e+05) (75,4.669e+05) (100,6.231e+05)
};
\addlegendentry{$s=25$}
\addplot [forget plot,blue,domain=6:100, samples=100, dashed]{6.2e3*x};
%\addlegendentry{$6200\, d$}
		\end{axis}
		\end{tikzpicture}
}
\hfill
\subfloat[average runtime vs.\ spatial dimension $d$]{
\label{fig:fourier:d100:samples_runtime_iter_success_vs_d:runtimes}
		\begin{tikzpicture}[baseline]
		\begin{axis}[
			font=\footnotesize,
			enlarge x limits=true,
			enlarge y limits=true,
			height=0.3\textwidth,
			grid=major,
%			xmajorgrids=false,
			width=0.48\textwidth,
			xtick={6,10,25,50,100},
			ymin=0.006,ymax=100,
            xmode=log,
            ymode=log,
            log ticks with fixed point,
			xlabel={$d$},
			ylabel={avg. runtime in seconds},
		y label style={xshift=-0.8em},
			legend style={legend cell align=left}, legend pos=south east,
			legend columns = -1,
		]
		\addplot[black,mark=square,mark size=2.5pt,mark options={solid},only marks] coordinates {
%%Stat_N200_d*_s10_snr10_pairsamfac5_fourier_sub_v2p_m2s_noise_pairing_old.mat
(6,7.290e-02) (10,1.499e-01) (25,6.645e-01) (50,2.167e+00) (75,4.625e+00) (100,7.923e+00)
};
\addlegendentry{$s=10$}
%\addplot [black,domain=6:100, samples=100, dashed]{0.115470143891779 + 0.000776881771505*x*x};
%\addplot [black,domain=6:100, samples=100, dashed]{0.1 + 0.0008*x*x};
\addplot [black,domain=6:100, samples=100, dashed]{8.8e-04*x*x};
\addlegendentry{$\sim d^2$}
%\addlegendentry{$0.0008\, d^2+0.1$}
		\addplot[blue,mark=triangle,mark size=2.5pt,mark options={solid},only marks] coordinates {
%%Stat_N200_d*_s25_snr10_pairsamfac5_fourier_sub_v2p_m2s_noise_pairing_old.mat
(6,4.564e-01) (10,1.078e+00) (25,5.712e+00) (50,2.043e+01) (75,4.718e+01) (100,8.028e+01)
};
\addlegendentry{$s=25$}
%\addplot [forget plot,blue,domain=6:100, samples=100, dashed]{0.486793931094715 + 0.007976031791456*x*x};
\addplot [forget plot,blue,domain=6:100, samples=100, dashed]{0.0082*x*x};
%\addplot [blue,domain=6:100, samples=100, dashed]{0.5 + 0.008*x*x};
%\addlegendentry{$0.008\,d^2+0.5$}
		\end{axis}
		\end{tikzpicture}
}
\\
\subfloat[average iteration vs.\ spatial dimension $d$]{
\label{fig:fourier:d100:samples_runtime_iter_success_vs_d:iterations}
		\begin{tikzpicture}[baseline]
		\begin{axis}[
			font=\footnotesize,
			enlarge x limits=true,
			enlarge y limits=true,
			height=0.25\textwidth,
			grid=major,
%			xmajorgrids=false,
			width=0.48\textwidth,
			xtick={6,10,25,50,100},
			ytick={1,2,3,4,5},
			ymin=1,ymax=4,
            xmode=log,
%            ymode=log,
            log ticks with fixed point,
			xlabel={$d$},
			ylabel={avg. iteration},
			legend style={legend cell align=left}, legend pos=south east,
			legend columns = -1,
		]
		\addplot[black,mark=square,mark size=2.5pt,mark options={solid}] coordinates {
%%samples Stat_N200_d*_s10_snr10_pairsamfac5_fourier_sub_v2p_m2s_noise_pairing_old.mat
(6,3.210e+00) (10,3.320e+00) (25,3.430e+00) (50,3.370e+00) (75,3.420e+00) (100,3.480e+00)
};
\addlegendentry{$s=10$}
		\addplot[blue,mark=triangle,mark size=2.5pt,mark options={solid}] coordinates {
%%samples Stat_N200_d*_s25_snr10_pairsamfac5_fourier_sub_v2p_m2s_noise_pairing_old.mat
(6,3.540e+00) (10,3.620e+00) (25,3.670e+00) (50,3.500e+00) (75,3.670e+00) (100,3.600e+00)
};
\addlegendentry{$s=25$}
		\end{axis}
		\end{tikzpicture}
}
\hfill
\subfloat[success rate vs.\ spatial dimension~$d$]{
	\label{fig:fourier:d100:samples_runtime_iter_success_vs_d:success}
		\begin{tikzpicture}[baseline]
		\begin{axis}[
			font=\footnotesize,
			enlarge x limits=true,
			enlarge y limits=true,
			height=0.25\textwidth,
			grid=major,
%			xmajorgrids=false,
			width=0.48\textwidth,
			xtick={6,10,25,50,100},
			ytick={0,0.5,0.8,1},
			ymin=0.0,ymax=1,
            xmode=log,
%            ymode=log,
            log ticks with fixed point,
			xlabel={$d$},
			ylabel={success rate},
			legend style={legend cell align=left}, legend pos=south east,
			legend columns = -1,
		]
		\addplot[black,mark=square,mark size=2.5pt,mark options={solid}] coordinates {
%%success Stat_N200_d*_s10_snr10_pairsamfac5_fourier_sub_v2p_m2s_noise_pairing_old.mat
(6,1.000) (10,1.000) (25,1.000) (50,1.000) (75,1.000) (100,1.000)
};
\addlegendentry{$s=10$}
		\addplot[blue,mark=triangle,mark size=2.5pt,mark options={solid}] coordinates {
%%success Stat_N200_d*_s25_snr10_pairsamfac5_fourier_sub_v2p_m2s_noise_pairing_old.mat
(6,1.000) (10,1.000) (25,1.000) (50,1.000) (75,1.000) (100,1.000)
};
\addlegendentry{$s=25$}
		\end{axis}
		\end{tikzpicture}
}
\caption{Number of samples, runtime, number of iterations, success rate vs.\ spatial dimension $d=D\in\{6,10,25,50,100\}$ for Fourier bases, $N=200$, sparsity $s\in\{10,25\}$, $\mathrm{SNR}_{\mathrm{db}}=10$, $m_1=5s$, $m_2=s$.}
%Time measurements were performed on a computer with 2 x 6-core Intel Xeon CPU E5-2620 v3 (2.40GHz), 64 GB RAM, using 12 threads.}
\label{fig:fourier:d100:samples_runtime_iter_success_vs_d}
\end{figure}
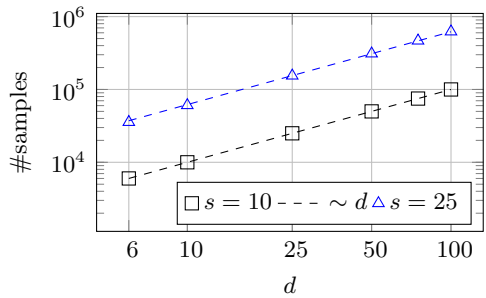
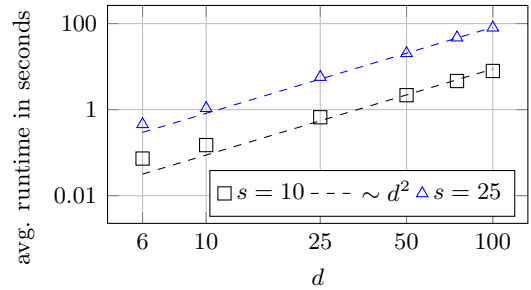
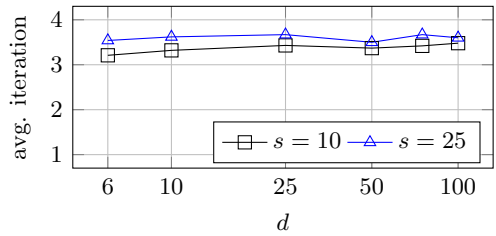
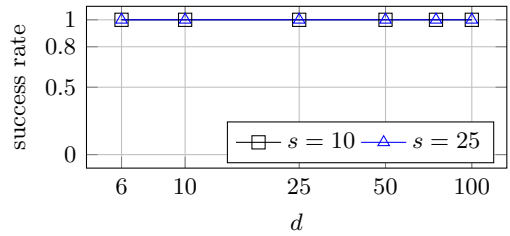

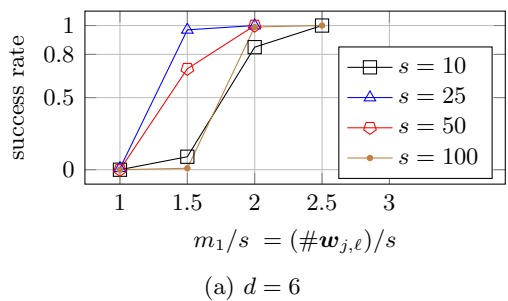
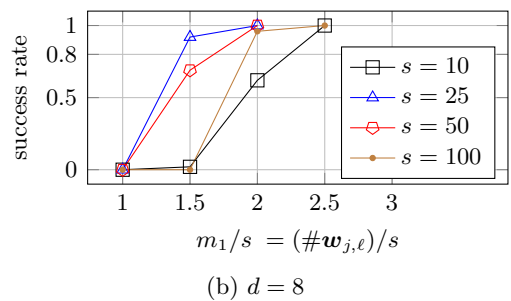
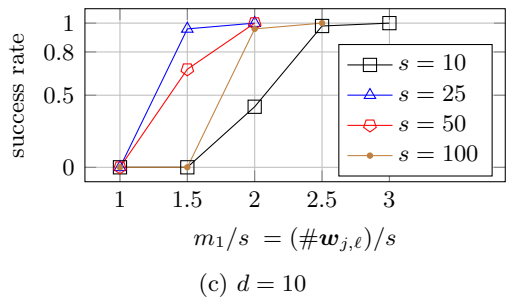
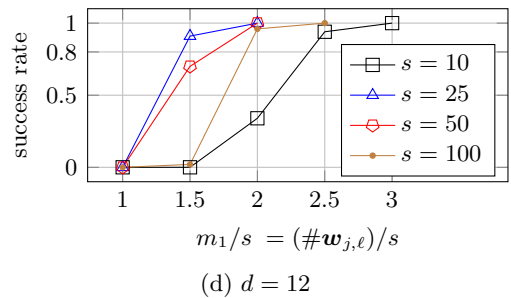
\begin{figure}[!h]
\subfloat[$d=6$]{
		\begin{tikzpicture}[baseline]
		\begin{axis}[
			font=\footnotesize,
			enlarge x limits=true,
			enlarge y limits=true,
			height=0.26\textwidth,
			grid=major,
%			xmajorgrids=false,
			width=0.48\textwidth,
			xtick={1,1.5,2,2.5,3},
			ytick={0,0.5,0.8,1},
			xmax=3.6,
			ymin=0,ymax=1,
			xlabel={$m_1/s\;=\;$(\#$\w_{j,\ell})/s$},
			ylabel={success rate},
			legend style={legend cell align=left}, legend pos=south east,
			legend columns = 1,
		]
		\addplot[black,mark=square,mark size=2.5pt,mark options={solid}] coordinates {
%%Stat_N200_d6_s10_snr10_pairsamfac*_fourier_sub_v2p_m2s_noise_pairing_old.mat
(1.0,0.00) (1.5,0.09) (2.0,0.85) (2.5,1.00) %(3.0,1.00)
};
\addlegendentry{$s=10$}
		\addplot[blue,mark=triangle,mark size=2.5pt,mark options={solid}] coordinates {
%%Stat_N200_d6_s25_snr10_pairsamfac*_fourier_sub_v2p_m2s_noise_pairing_old.mat
(1.0,0.01) (1.5,0.97) (2.0,1.00) %(2.5,1.00) (3.0,1.00)
};
\addlegendentry{$s=25$}
		\addplot[red,mark=pentagon,mark size=2.5pt,mark options={solid,rotate=180}] coordinates {
%%Stat_N200_d6_s50_snr10_pairsamfac*_fourier_sub_v2p_m2s_noise_pairing_old.mat
(1.0,0.00) (1.5,0.70) (2.0,1.00) %(2.5,1.00) (3.0,1.00)
};
\addlegendentry{$s=50$}
		\addplot[brown,mark=*,mark size=1pt,mark options={solid}] coordinates {
%%Stat_N200_d6_s100_snr10_pairsamfac*_fourier_sub_v2p_m2s_noise_pairing_old.mat
(1.0,0.00) (1.5,0.01) (2.0,0.99) (2.5,1.00) %(3.0,1.00)
};
\addlegendentry{$s=100$}
		\end{axis}
		\end{tikzpicture}
}
\hfill
\subfloat[$d=8$]{
		\begin{tikzpicture}[baseline]
		\begin{axis}[
			font=\footnotesize,
			enlarge x limits=true,
			enlarge y limits=true,
			height=0.26\textwidth,
			grid=major,
%			xmajorgrids=false,
			width=0.48\textwidth,
			xtick={1,1.5,2,2.5,3},
			ytick={0,0.5,0.8,1},
			xmax=3.6,
			ymin=0,ymax=1,
			xlabel={$m_1/s\;=\;$(\#$\w_{j,\ell})/s$},
			ylabel={success rate},
			legend style={legend cell align=left}, legend pos=south east,
			legend columns = 1,
		]
		\addplot[black,mark=square,mark size=2.5pt,mark options={solid}] coordinates {
%%Stat_N200_d8_s10_snr10_pairsamfac*_fourier_sub_v2p_m2s_noise_pairing_old.mat
(1.0,0.00) (1.5,0.02) (2.0,0.62) (2.5,1.00) %(3.0,1.00)
};
\addlegendentry{$s=10$}
		\addplot[blue,mark=triangle,mark size=2.5pt,mark options={solid}] coordinates {
%%Stat_N200_d8_s25_snr10_pairsamfac*_fourier_sub_v2p_m2s_noise_pairing_old.mat
(1.0,0.00) (1.5,0.92) (2.0,1.00) %(2.5,1.00) (3.0,1.00)
};
\addlegendentry{$s=25$}
		\addplot[red,mark=pentagon,mark size=2.5pt,mark options={solid,rotate=180}] coordinates {
%%Stat_N200_d8_s50_snr10_pairsamfac*_fourier_sub_v2p_m2s_noise_pairing_old.mat
(1.0,0.00) (1.5,0.69) (2.0,1.00) %(2.5,1.00) (3.0,1.00)
};
\addlegendentry{$s=50$}
		\addplot[brown,mark=*,mark size=1pt,mark options={solid}] coordinates {
%%Stat_N200_d8_s100_snr10_pairsamfac*_fourier_sub_v2p_m2s_noise_pairing_old.mat
(1.0,0.00) (1.5,0.00) (2.0,0.96) (2.5,1.00) %(3.0,1.00)
};
\addlegendentry{$s=100$}
		\end{axis}
		\end{tikzpicture}
}
\\
\subfloat[$d=10$]{
		\begin{tikzpicture}[baseline]
		\begin{axis}[
			font=\footnotesize,
			enlarge x limits=true,
			enlarge y limits=true,
			height=0.26\textwidth,
			grid=major,
%			xmajorgrids=false,
			width=0.48\textwidth,
			xtick={1,1.5,2,2.5,3},
			ytick={0,0.5,0.8,1},
			xmax=3.6,
			ymin=0,ymax=1,
			xlabel={$m_1/s\;=\;$(\#$\w_{j,\ell})/s$},
			ylabel={success rate},
			legend style={legend cell align=left}, legend pos=south east,
			legend columns = 1,
		]
		\addplot[black,mark=square,mark size=2.5pt,mark options={solid}] coordinates {
%%Stat_N200_d10_s10_snr10_pairsamfac*_fourier_sub_v2p_m2s_noise_pairing_old.mat
(1.0,0.00) (1.5,0.00) (2.0,0.42) (2.5,0.98) (3.0,1.00)
};
\addlegendentry{$s=10$}
		\addplot[blue,mark=triangle,mark size=2.5pt,mark options={solid}] coordinates {
%%Stat_N200_d10_s25_snr10_pairsamfac*_fourier_sub_v2p_m2s_noise_pairing_old.mat
(1.0,0.00) (1.5,0.96) (2.0,1.00) %(2.5,1.00) (3.0,1.00)
};
\addlegendentry{$s=25$}
		\addplot[red,mark=pentagon,mark size=2.5pt,mark options={solid,rotate=180}] coordinates {
%%Stat_N200_d10_s50_snr10_pairsamfac*_fourier_sub_v2p_m2s_noise_pairing_old.mat
(1.0,0.00) (1.5,0.68) (2.0,1.00) %(2.5,1.00) (3.0,1.00)
};
\addlegendentry{$s=50$}
		\addplot[brown,mark=*,mark size=1pt,mark options={solid}] coordinates {
%%Stat_N200_d10_s100_snr10_pairsamfac*_fourier_sub_v2p_m2s_noise_pairing_old.mat
(1.0,0.00) (1.5,0.00) (2.0,0.96) (2.5,1.00) %(3.0,1.00)
};
\addlegendentry{$s=100$}
		\end{axis}
		\end{tikzpicture}
}
\hfill
\subfloat[$d=12$]{
		\begin{tikzpicture}[baseline]
		\begin{axis}[
			font=\footnotesize,
			enlarge x limits=true,
			enlarge y limits=true,
			height=0.26\textwidth,
			grid=major,
%			xmajorgrids=false,
			width=0.48\textwidth,
			xtick={1,1.5,2,2.5,3},
			ytick={0,0.5,0.8,1},
			ymin=0,ymax=1,
			xmax=3.6,
			xlabel={$m_1/s\;=\;$(\#$\w_{j,\ell})/s$},
			ylabel={success rate},
			legend style={legend cell align=left}, legend pos=south east,
			legend columns = 1,
		]
		\addplot[black,mark=square,mark size=2.5pt,mark options={solid}] coordinates {
%%Stat_N200_d12_s10_snr10_pairsamfac*_fourier_sub_v2p_m2s_noise_pairing_old.mat
(1.0,0.00) (1.5,0.00) (2.0,0.34) (2.5,0.94) (3.0,1.00)
};
\addlegendentry{$s=10$}
		\addplot[blue,mark=triangle,mark size=2.5pt,mark options={solid}] coordinates {
%%Stat_N200_d12_s25_snr10_pairsamfac*_fourier_sub_v2p_m2s_noise_pairing_old.mat
(1.0,0.00) (1.5,0.91) (2.0,1.00) %(2.5,1.00) (3.0,1.00)
};
\addlegendentry{$s=25$}
		\addplot[red,mark=pentagon,mark size=2.5pt,mark options={solid,rotate=180}] coordinates {
%%Stat_N200_d12_s50_snr10_pairsamfac*_fourier_sub_v2p_m2s_noise_pairing_old.mat
(1.0,0.00) (1.5,0.70) (2.0,1.00) %(2.5,1.00) (3.0,1.00)
};
\addlegendentry{$s=50$}
		\addplot[brown,mark=*,mark size=1pt,mark options={solid}] coordinates {
%%Stat_N200_d12_s100_snr10_pairsamfac*_fourier_sub_v2p_m2s_noise_pairing_old.mat
(1.0,0.00) (1.5,0.02) (2.0,0.96) (2.5,1.00) %(3.0,1.00)
};
\addlegendentry{$s=100$}
		\end{axis}
		\end{tikzpicture}
}
\caption{Success rate vs.\ $m_1/s$ for Fourier bases, spatial dimension $d=D\in\{6,8,10,12\}$, $N=200$, sparsity $s\in\{10,25,50,100\}$, $\mathrm{SNR}_{\mathrm{db}}=10$, $m_2=s$.}
\label{fig:fourier:success_vs_m1s}
\end{figure}

In Figure~\ref{fig:fourier:success_vs_m1s}, we depict the success rate as a function of $m_1/s\in\{1,1.5,2,2.5,3\}$ for sparsities $s\in\{10,25,50,100\}$ and signal to noise ratio $\mathrm{SNR}_{\mathrm{db}}=10$ in spatial dimensions $d\in\{6,8,10,12\}$. We observe a very small dependence on the spatial dimension~$d$. For $m_1=3s$, the success rate is 100\% in each considered case. Furthermore, there is a rapid transition between full and zero success rate, i.e., the success rate is 0\% for $m_1=s$ each time.

\FloatBarrier

\subsubsection{Chebyshev bases}\label{sec:Numerics:exactly_sparse:chebyshev}

Next, we consider the tensor products of Chebyshev basis functions. Here we expect larger numbers of samples and runtimes compared to the Fourier case in Section~\ref{sec:Numerics:exactly_sparse:fourier} due to the BOS constant $K=\sqrt{2}^d$ for Chebyshev and $K=1$ for Fourier. In particular, for fixed sparsity~$s$ and fixed success rate, the numbers of samples and runtimes might grow for increasing spatial dimension~$d$.

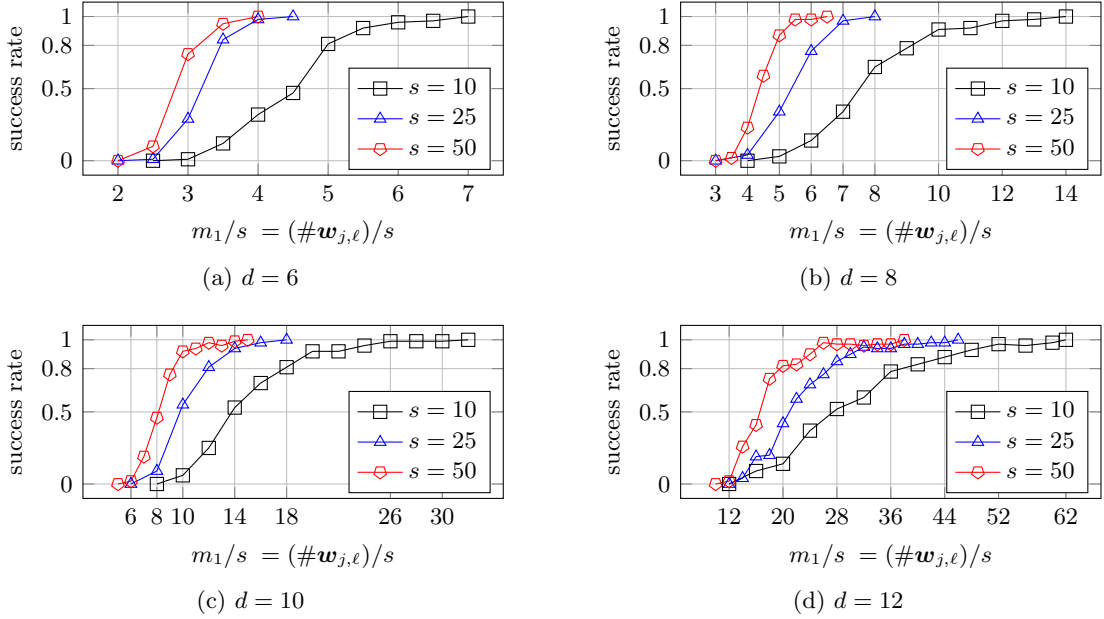
\begin{figure}[!h]
\subfloat[$d=6$]{
		\begin{tikzpicture}[baseline]
		\begin{axis}[
			font=\footnotesize,
			enlarge x limits=true,
			enlarge y limits=true,
			height=0.26\textwidth,
			grid=major,
%			xmajorgrids=false,
			width=0.48\textwidth,
			xtick={2,3,4,5,6,7},
			ytick={0,0.5,0.8,1},
			ymin=0,ymax=1,
			xlabel={$m_1/s\;=\;$(\#$\w_{j,\ell})/s$},
			ylabel={success rate},
			legend style={legend cell align=left}, legend pos=south east,
			legend columns = 1,
		]
		\addplot[black,mark=square,mark size=2.5pt,mark options={solid}] coordinates {
%%Stat_N200_d6_s10_snr10_pairsamfac*_chebyshev_sub_v2p_m24s_noise_pairing_old.mat
(2.5,0.00) (3.0,0.01) (3.5,0.12) (4.0,0.32) (4.5,0.47) (5.0,0.81) (5.5,0.92) (6.0,0.96) (6.5,0.97) (7.0,1.00)
};
\addlegendentry{$s=10$}
		\addplot[blue,mark=triangle,mark size=2.5pt,mark options={solid}] coordinates {
%%Stat_N200_d6_s25_snr10_pairsamfac*_chebyshev_sub_v2p_m24s_noise_pairing_old.mat
(2.0,0.00) (2.5,0.01) (3.0,0.29) (3.5,0.84) (4.0,0.98) (4.5,1.00)
};
\addlegendentry{$s=25$}
		\addplot[red,mark=pentagon,mark size=2.5pt,mark options={solid,rotate=180}] coordinates {
%%Stat_N200_d6_s50_snr10_pairsamfac*_chebyshev_sub_v2p_m24s_noise_pairing_old.mat
(2.0,0.00) (2.5,0.10) (3.0,0.74) (3.5,0.95) (4.0,1.00)
};
\addlegendentry{$s=50$}
		\end{axis}
		\end{tikzpicture}
}
\hfill
\subfloat[$d=8$]{
		\begin{tikzpicture}[baseline]
		\begin{axis}[
			font=\footnotesize,
			enlarge x limits=true,
			enlarge y limits=true,
			height=0.26\textwidth,
			grid=major,
%			xmajorgrids=false,
			width=0.48\textwidth,
			xtick={3,4,5,6,7,8,10,12,14},
			ytick={0,0.5,0.8,1},
			ymin=0,ymax=1,
			xlabel={$m_1/s\;=\;$(\#$\w_{j,\ell})/s$},
			ylabel={success rate},
			legend style={legend cell align=left}, legend pos=south east,
			legend columns = 1,
		]
		\addplot[black,mark=square,mark size=2.5pt,mark options={solid}] coordinates {
%%Stat_N200_d8_s10_snr10_pairsamfac*_chebyshev_sub_v2p_m24s_noise_pairing_old.mat
(4.0,0.00) (5.0,0.03) (6.0,0.14) (7.0,0.34) (8.0,0.65) (9.0,0.78) (10.0,0.91) (11.0,0.92) (12.0,0.97) (13.0,0.98) (14.0,1.00)
};
\addlegendentry{$s=10$}
		\addplot[blue,mark=triangle,mark size=2.5pt,mark options={solid}] coordinates {
%%Stat_N200_d8_s25_snr10_pairsamfac*_chebyshev_sub_v2p_m24s_noise_pairing_old.mat
(3.0,0.00) (4.0,0.04) (5.0,0.34) (6.0,0.76) (7.0,0.97) (8.0,1.00)
};
\addlegendentry{$s=25$}
		\addplot[red,mark=pentagon,mark size=2.5pt,mark options={solid,rotate=180}] coordinates {
%%Stat_N200_d8_s50_snr10_pairsamfac*_chebyshev_sub_v2p_m24s_noise_pairing_old.mat
(3.0,0.00) (3.5,0.02) (4.0,0.23) (4.5,0.59) (5.0,0.87) (5.5,0.98) (6.0,0.98) (6.5,1.00)
};
\addlegendentry{$s=50$}
		\end{axis}
		\end{tikzpicture}
}
\\
\subfloat[$d=10$]{
		\begin{tikzpicture}[baseline]
		\begin{axis}[
			font=\footnotesize,
			enlarge x limits=true,
			enlarge y limits=true,
			height=0.26\textwidth,
			grid=major,
%			xmajorgrids=false,
			width=0.48\textwidth,
			xtick={6,8,10,14,18,26,30},
			ytick={0,0.5,0.8,1},
			ymin=0,ymax=1,
			xlabel={$m_1/s\;=\;$(\#$\w_{j,\ell})/s$},
			ylabel={success rate},
			legend style={legend cell align=left}, legend pos=south east,
			legend columns = 1,
		]
		\addplot[black,mark=square,mark size=2.5pt,mark options={solid}] coordinates {
%%Stat_N200_d10_s10_snr10_pairsamfac*_chebyshev_sub_v2p_m24s_noise_pairing_old.mat
(8.0,0.00) (10.0,0.06) (12.0,0.25) (14.0,0.53) (16.0,0.70) (18.0,0.81) (20.0,0.92) (22.0,0.92) (24.0,0.96) (26.0,0.99) (28.0,0.99) (30.0,0.99) (32.0,1.00)
};
\addlegendentry{$s=10$}
		\addplot[blue,mark=triangle,mark size=2.5pt,mark options={solid}] coordinates {
%%Stat_N200_d10_s25_snr10_pairsamfac*_chebyshev_sub_v2p_m24s_noise_pairing_old.mat
(6.0,0.00) (8.0,0.09) (10.0,0.55) (12.0,0.81) (14.0,0.94) (16.0,0.98) (18.0,1.00)
};
\addlegendentry{$s=25$}
		\addplot[red,mark=pentagon,mark size=2.5pt,mark options={solid,rotate=180}] coordinates {
%%Stat_N200_d10_s50_snr10_pairsamfac*_chebyshev_sub_v2p_m24s_noise_pairing_old.mat
(5.0,0.00) (6.0,0.02) (7.0,0.19) (8.0,0.46) (9.0,0.76) (10.0,0.92) (11.0,0.94) (12.0,0.98) (13.0,0.96) (14.0,0.99) (15.0,1.00)
};
\addlegendentry{$s=50$}
		\end{axis}
		\end{tikzpicture}
}
\hfill
\subfloat[$d=12$]{
		\begin{tikzpicture}[baseline]
		\begin{axis}[
			font=\footnotesize,
			enlarge x limits=true,
			enlarge y limits=true,
			height=0.26\textwidth,
			grid=major,
%			xmajorgrids=false,
			width=0.48\textwidth,
			xtick={12,20,28,36,44,52,62},
			ytick={0,0.5,0.8,1},
			ymin=0,ymax=1,
			xlabel={$m_1/s\;=\;$(\#$\w_{j,\ell})/s$},
			ylabel={success rate},
			legend style={legend cell align=left}, legend pos=south east,
			legend columns = 1,
		]
		\addplot[black,mark=square,mark size=2.5pt,mark options={solid}] coordinates {
%%Stat_N200_d12_s10_snr10_pairsamfac*_chebyshev_sub_v2p_m24s_noise_pairing_old.mat
(12.0,0.00) (16.0,0.09) (20.0,0.14) (24.0,0.37) (28.0,0.52) (32.0,0.60) (36.0,0.78) (40.0,0.83) (44.0,0.88) (48.0,0.93) (52.0,0.97) (56.0,0.96) (60.0,0.98) (62.0,1.00)
};
\addlegendentry{$s=10$}
		\addplot[blue,mark=triangle,mark size=2.5pt,mark options={solid}] coordinates {
%%Stat_N200_d12_s25_snr10_pairsamfac*_chebyshev_sub_v2p_m24s_noise_pairing_old.mat
(12.0,0.00) (14.0,0.04) (16.0,0.19) (18.0,0.20) (20.0,0.42) (22.0,0.59) (24.0,0.69) (26.0,0.76) (28.0,0.85) (30.0,0.90) (32.0,0.95) (34.0,0.94) (36.0,0.94) (38.0,0.97) (40.0,0.97) (42.0,0.98) (44.0,0.98) (46.0,1.00) 
};
\addlegendentry{$s=25$}
		\addplot[red,mark=pentagon,mark size=2.5pt,mark options={solid,rotate=180}] coordinates {
%%Stat_N200_d12_s50_snr10_pairsamfac*_chebyshev_sub_v2p_m24s_noise_pairing_old.mat
(10.0,0.00) (12.0,0.02) (14.0,0.26) (16.0,0.41) (18.0,0.73) (20.0,0.82) (22.0,0.83) (24.0,0.90) (26.0,0.98) (28.0,0.97) (30.0,0.97) (32.0,0.96) (34.0,0.97) (36.0,0.97) (38.0,1.00) %(40.0,0.99) (42.0,1.00)
};
\addlegendentry{$s=50$}
		\end{axis}
		\end{tikzpicture}
}
\caption{Success rate vs.\ $m_1/s$ for Chebyshev bases, spatial dimension $d=D\in\{6,8,10,12\}$, $N=200$, sparsity $s\in\{10,25,50\}$, $\mathrm{SNR}_{\mathrm{db}}=10$, $m_2=4s$.}
\label{fig:chebyshev:success_vs_m1s}
\end{figure}

In Figure~\ref{fig:chebyshev:success_vs_m1s}, we depict the success rate as a function of $m_1/s$ for sparsities $s\in\{10,25,50\}$ and signal to noise ratio $\mathrm{SNR}_{\mathrm{db}}=10$ in spatial dimensions $d\in\{6,8,10,12\}$. As predicted, we observe that we have to increase $m_1=\#\w_{j,\ell}$ distinctly for growing spatial dimension~$d$ and fixed sparsity~$s$. For instance, for $s=25$, we observe a 98\% success rate for $m_1=4s$ and $d=6$, but obtain a success rate of only 4\% for $d=8$. For $d=12$, we had to choose $m_1=42s$ to achieve a success rate of 98\%.

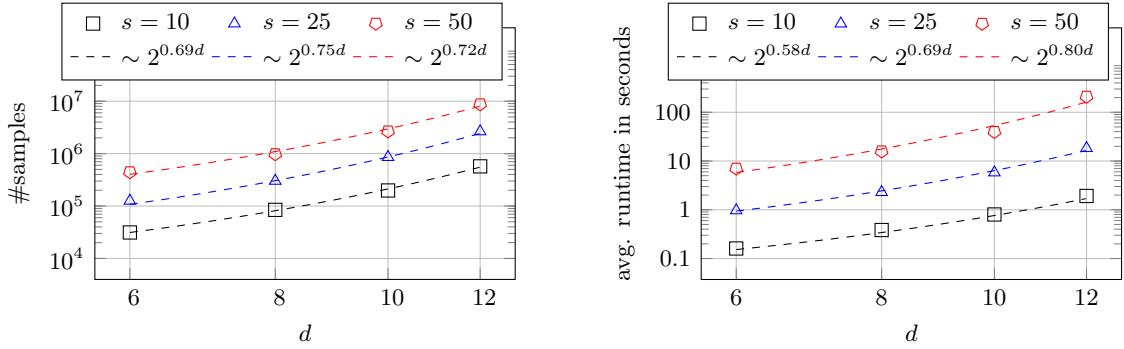
\begin{figure}[htbp]
		\begin{tikzpicture}[baseline]
		\begin{axis}[
			font=\footnotesize,
			enlarge x limits=true,
			enlarge y limits=true,
			height=0.33\textwidth,
			grid=major,
%			xmajorgrids=false,
			width=0.48\textwidth,
			xtick={6,8,10,12},
			ytick={1e4,1e5,1e6,1e7},
            xmode=log,
            ymode=log,
      xticklabel={
        \pgfkeys{/pgf/fpu=true}
        \pgfmathparse{exp(\tick)}%
        \pgfmathprintnumber[fixed relative, precision=3]{\pgfmathresult}
        \pgfkeys{/pgf/fpu=false}
      },
			ymin=1e4,ymax=1e8,
			xlabel={$d$},
			ylabel={\#samples},
			legend style={legend cell align=left,at={(0.97,1.1)}},
			legend columns = 3,
		]
		\addplot[black,mark=square,mark size=2.5pt,mark options={solid},only marks] coordinates {
%v2p
 (6,3.130e+04) (8,8.450e+04) (10,1.981e+05) (12,570900)
};
\addlegendentry{$s=10$}
		\addplot[blue,mark=triangle,mark size=2.5pt,mark options={solid},only marks] coordinates {
%v2p
 (6,125550) (8,3.012e+05) (10,8.562e+05) (12,2646250)
};
\addlegendentry{$s=25$}
		\addplot[red,mark=pentagon,mark size=2.5pt,mark options={solid,rotate=180},only marks] coordinates {
%v2p
 (6,4.425e+05) (8,977500) (10,2.662e+06) (12,8742500)
};
\addlegendentry{$s=50$}
\addplot [black,domain=6:12, samples=100, dashed]{2^10.7965 * 2^(0.69*x)};
\addlegendentry{$\sim 2^{0.69d}$}
\addplot [blue,domain=6:12, samples=100, dashed]{2^12.220807844665455 * 2^(0.75*x)};
\addlegendentry{$\sim 2^{0.75d}$}
\addplot [red,domain=6:12, samples=100, dashed]{2^14.3032 * 2^(0.72*x)};
\addlegendentry{$\sim 2^{0.72d}$}
		\end{axis}
		\end{tikzpicture}
\hfill
		\begin{tikzpicture}[baseline]
		\begin{axis}[
			font=\footnotesize,
			enlarge x limits=true,
			enlarge y limits=true,
			height=0.33\textwidth,
			grid=major,
%			xmajorgrids=false,
			width=0.48\textwidth,
			ymode=log,
			xmode=log,
			log ticks with fixed point,
			xtick={4,6,8,10,12},
			ytick={0.1,1,10,100},
			ymin=1e-1,ymax=2e3,
			xlabel={$d$},
			ylabel={avg. runtime in seconds},
			legend style={legend cell align=left,at={(0.97,1.1)}},
			legend columns = 3,
		]
		\addplot[black,mark=square,mark size=2.5pt,mark options={solid},only marks] coordinates {
%v2p
		 (6,0.1606) (8,0.3843) (10,0.8007) (12,1.9203)
};
\addlegendentry{$s=10$}
		\addplot[blue,mark=triangle,mark size=2.5pt,mark options={solid},only marks] coordinates {
%v2p
		 (6,0.9689) (8,2.2925) (10,5.7969) (12,18.3088)
};
\addlegendentry{$s=25$}
		\addplot[red,mark=pentagon,mark size=2.5pt,mark options={solid,rotate=180},only marks] coordinates {
%v2p
		 (6,7.0094) (8,15.8067) (10,39.7017) (12,209.2183)
};
\addlegendentry{$s=50$}
\addplot [black,domain=6:12, samples=100, dashed]{2^(-6.191117657342546) * 2^(0.58*x)};
\addlegendentry{$\sim 2^{0.58d}$}
\addplot [blue,domain=6:12, samples=100, dashed]{2^(-4.230602954903033) * 2^(0.69*x)};
\addlegendentry{$\sim 2^{0.69d}$}
\addplot [red,domain=6:12, samples=100, dashed]{2^(-2.2594) * 2^(0.80*x)};
\addlegendentry{$\sim 2^{0.80d}$}
		\end{axis}
		\end{tikzpicture}
\caption{Number of samples and average runtime vs.\ spatial dimension $d\in\{6,8,10,12\}$ for Chebyshev bases, $N=200$, sparsity $s\in\{10,25,50\}$, $\mathrm{SNR}_{\mathrm{db}}=10$, $m_2=4s$, success rate $\geq 99\%$.}
\label{fig:chebyshev:samples_runtime_vs_d}
\end{figure}

In Figure~\ref{fig:chebyshev:samples_runtime_vs_d}, we investigate the dependence of the spatial dimension~$d$ on the number of samples and average runtime for the case of $\geq 99\%$ success rate in more detail. For our test cases, we observe that the numbers of samples grow approximately like between $\sim 2^{0.69d}$ and $\sim 2^{0.75d}$ as well as the runtimes approximately like between $\sim 2^{0.58d}$ and $\sim 2^{0.80d}$. In each case, this is distinctly less than the worst case upper bounds in Theorem~\ref{thm:NewCoSaMP} suggest.

\FloatBarrier

\subsubsection{Preconditioned Legendre bases}\label{sec:Numerics:exactly_sparse:legendre}

Here, we consider the tensor products of preconditioned Legendre basis functions $Q_n$ with BOS constant $K=\sqrt{3}^d$, cf.\ Section~\ref{sec:Numerics:exactly_sparse:mixed1}.
In Figure~\ref{fig:legendre:success_vs_m1s}, we show the success rates as a function of $m_1/s$ for sparsities $s\in\{10,25,50\}$ and signal to noise ratio $\mathrm{SNR}_{\mathrm{db}}=10$ in spatial dimensions $d\in\{6,8,10,12\}$.
As in the case of Chebyshev bases, we observe that we have to increase $m_1$ distinctly for growing spatial dimension~$d$ and fixed sparsity~$s$. For instance, for $s=25$, we observe a 97\% success rate for $m_1=4s$ and $d=6$ as well as 100\% for $m_1=4.5s$ and $d=6$, but obtained a success rate of only 2\% for $m_1=4s$ and $d=8$. Moreover, we had to choose $m_1=18s$ to have a success rate of 100\% for $d=10$ and $m_1=56s$ for $d=12$.

When comparing the obtained results with the ones for the Chebyshev bases, we do not numerically observe the higher BOS constant $K=\sqrt{3}^d$ here for $d=6,8,10$. The plots in Figure~\ref{fig:legendre:success_vs_m1s} look very similar to the ones in Figure~\ref{fig:chebyshev:success_vs_m1s}. For $d=12$, the values of $m_1$ where a success rate of $\geq 99\%$ is reached are slightly larger than the ones in the Chebyshev case.

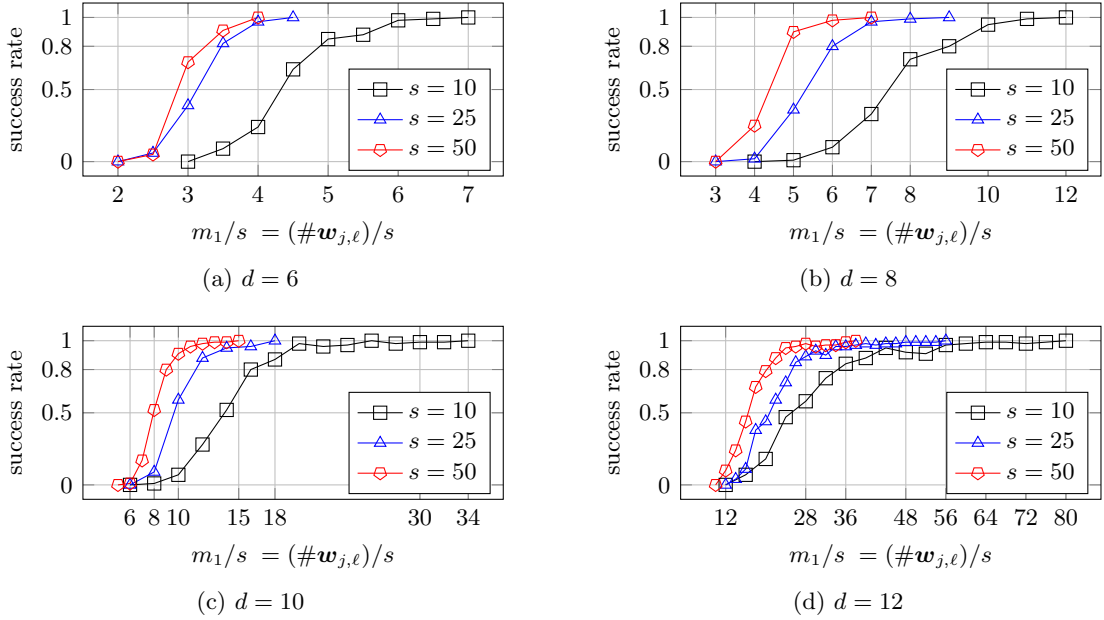
\begin{figure}[!h]
	\subfloat[$d=6$]{
		\begin{tikzpicture}[baseline]
		\begin{axis}[
		font=\footnotesize,
		enlarge x limits=true,
		enlarge y limits=true,
		height=0.26\textwidth,
		grid=major,
		%			xmajorgrids=false,
		width=0.48\textwidth,
		xtick={2,3,4,5,6,7},
		ytick={0,0.5,0.8,1},
		ymin=0,ymax=1,
		xlabel={$m_1/s\;=\;$(\#$\w_{j,\ell})/s$},
		ylabel={success rate},
		legend style={legend cell align=left}, legend pos=south east,
		legend columns = 1,
		]
		\addplot[black,mark=square,mark size=2.5pt,mark options={solid}] coordinates {
%%Stat_N200_d6_s10_snr10_pairsamfac*_legendre_preconditioned_sub_v2p_m24s_noise_pairing_old.mat
(3.0,0.00) (3.5,0.09) (4.0,0.24) (4.5,0.64) (5.0,0.85) (5.5,0.88) (6.0,0.98) (6.5,0.99) (7.0,1.00)
		};
		\addlegendentry{$s=10$}
		\addplot[blue,mark=triangle,mark size=2.5pt,mark options={solid}] coordinates {
%%Stat_N200_d6_s25_snr10_pairsamfac*_legendre_preconditioned_sub_v2p_m24s_noise_pairing_old.mat
(2.0,0.00) (2.5,0.06) (3.0,0.39) (3.5,0.82) (4.0,0.97) (4.5,1.00)
		};
		\addlegendentry{$s=25$}
		\addplot[red,mark=pentagon,mark size=2.5pt,mark options={solid,rotate=180}] coordinates {
%%Stat_N200_d6_s50_snr10_pairsamfac*_legendre_preconditioned_sub_v2p_m24s_noise_pairing_old.mat
(2.0,0.00) (2.5,0.05) (3.0,0.69) (3.5,0.91) (4.0,1.00)
		};
		\addlegendentry{$s=50$}
		\end{axis}
		\end{tikzpicture}
	}
	\hfill
	\subfloat[$d=8$]{
		\begin{tikzpicture}[baseline]
		\begin{axis}[
		font=\footnotesize,
		enlarge x limits=true,
		enlarge y limits=true,
		height=0.26\textwidth,
		grid=major,
		%			xmajorgrids=false,
		width=0.48\textwidth,
		xtick={3,4,5,6,7,8,10,12,14},
		ytick={0,0.5,0.8,1},
		ymin=0,ymax=1,
		xlabel={$m_1/s\;=\;$(\#$\w_{j,\ell})/s$},
		ylabel={success rate},
		legend style={legend cell align=left}, legend pos=south east,
		legend columns = 1,
		]
		\addplot[black,mark=square,mark size=2.5pt,mark options={solid}] coordinates {
%%Stat_N200_d8_s10_snr10_pairsamfac*_legendre_preconditioned_sub_v2p_m24s_noise_pairing_old.mat
(4.0,0.00) (5.0,0.01) (6.0,0.10) (7.0,0.33) (8.0,0.71) (9.0,0.80) (10.0,0.95) (11.0,0.99) (12.0,1.00)
		};
		\addlegendentry{$s=10$}
		\addplot[blue,mark=triangle,mark size=2.5pt,mark options={solid}] coordinates {
%%Stat_N200_d8_s25_snr10_pairsamfac*_legendre_preconditioned_sub_v2p_m24s_noise_pairing_old.mat
(3.0,0.00) (4.0,0.02) (5.0,0.36) (6.0,0.80) (7.0,0.97) (8.0,0.99) (9.0,1.00)
		};
		\addlegendentry{$s=25$}
		\addplot[red,mark=pentagon,mark size=2.5pt,mark options={solid,rotate=180}] coordinates {
%%Stat_N200_d8_s50_snr10_pairsamfac*_legendre_preconditioned_sub_v2p_m24s_noise_pairing_old.mat
(3.0,0.00) (4.0,0.25) (5.0,0.90) (6.0,0.98) (7.0,1.00)
		};
		\addlegendentry{$s=50$}
		\end{axis}
		\end{tikzpicture}
	}
	\\
	\subfloat[$d=10$]{
		\begin{tikzpicture}[baseline]
		\begin{axis}[
		font=\footnotesize,
		enlarge x limits=true,
		enlarge y limits=true,
		height=0.26\textwidth,
		grid=major,
		%			xmajorgrids=false,
		width=0.48\textwidth,
		xtick={6,8,10,15,18,30,34},
		ytick={0,0.5,0.8,1},
		ymin=0,ymax=1,
		xlabel={$m_1/s\;=\;$(\#$\w_{j,\ell})/s$},
		ylabel={success rate},
		legend style={legend cell align=left}, legend pos=south east,
		legend columns = 1,
		]
		\addplot[black,mark=square,mark size=2.5pt,mark options={solid}] coordinates {
%%Stat_N200_d10_s10_snr10_pairsamfac*_legendre_preconditioned_sub_v2p_m24s_noise_pairing_old.mat
(6.0,0.00) (8.0,0.01) (10.0,0.07) (12.0,0.28) (14.0,0.52) (16.0,0.80) (18.0,0.87) (20.0,0.98) (22.0,0.96) (24.0,0.97) (26.0,1.00) (28.0,0.98) (30.0,0.99) (32.0,0.99) (34.0,1.00)
		};
		\addlegendentry{$s=10$}
		\addplot[blue,mark=triangle,mark size=2.5pt,mark options={solid}] coordinates {
%%Stat_N200_d10_s25_snr10_pairsamfac*_legendre_preconditioned_sub_v2p_m24s_noise_pairing_old.mat
(6.0,0.00) (8.0,0.09) (10.0,0.59) (12.0,0.88) (14.0,0.95) (16.0,0.96) (18.0,1.00)
		};
		\addlegendentry{$s=25$}
		\addplot[red,mark=pentagon,mark size=2.5pt,mark options={solid,rotate=180}] coordinates {
%%Stat_N200_d10_s50_snr10_pairsamfac*_legendre_preconditioned_sub_v2p_m24s_noise_pairing_old.mat
(5.0,0.00) (6.0,0.01) (7.0,0.17) (8.0,0.52) (9.0,0.80) (10.0,0.91) (11.0,0.96) (12.0,0.98) (13.0,0.99) (14.0,0.99) (15.0,1.00)
		};
		\addlegendentry{$s=50$}
		\end{axis}
		\end{tikzpicture}
	}
	\hfill
	\subfloat[$d=12$]{
		\begin{tikzpicture}[baseline]
		\begin{axis}[
		font=\footnotesize,
		enlarge x limits=true,
		enlarge y limits=true,
		height=0.26\textwidth,
		grid=major,
		%			xmajorgrids=false,
		width=0.48\textwidth,
		xtick={12,28,36,48,56,64,72,80},
		ytick={0,0.5,0.8,1},
		ymin=0,ymax=1,
		xlabel={$m_1/s\;=\;$(\#$\w_{j,\ell})/s$},
		ylabel={success rate},
		legend style={legend cell align=left}, legend pos=south east,
		legend columns = 1,
		]
		\addplot[black,mark=square,mark size=2.5pt,mark options={solid}] coordinates {
%%Stat_N200_d12_s10_snr10_pairsamfac*_legendre_preconditioned_sub_v2p_m24s_noise_pairing_old.mat
(12.0,0.00) (16.0,0.07) (20.0,0.18) (24.0,0.47) (28.0,0.58) (32.0,0.74) (36.0,0.84) (40.0,0.88) (44.0,0.95) (48.0,0.92) (52.0,0.91) (56.0,0.97) (60.0,0.98) (64.0,0.99) (68.0,0.99) (72.0,0.98) (76.0,0.99) (80.0,1.00)
		};
		\addlegendentry{$s=10$}
		\addplot[blue,mark=triangle,mark size=2.5pt,mark options={solid}] coordinates {
%%Stat_N200_d12_s25_snr10_pairsamfac*_legendre_preconditioned_sub_v2p_m24s_noise_pairing_old.mat
(12.0,0.00) (14.0,0.04) (16.0,0.11) (18.0,0.38) (20.0,0.44) (22.0,0.59) (24.0,0.71) (26.0,0.85) (28.0,0.89) (30.0,0.93) (32.0,0.90) (34.0,0.96) (36.0,0.96) (38.0,0.97) (40.0,0.98) (42.0,0.97) (44.0,0.98) (46.0,0.98) (48.0,0.99) (50.0,0.99) (52.0,0.99) (54.0,0.99) (56.0,1.00)
		};
		\addlegendentry{$s=25$}
		\addplot[red,mark=pentagon,mark size=2.5pt,mark options={solid,rotate=180}] coordinates {
%%Stat_N200_d12_s50_snr10_pairsamfac*_legendre_preconditioned_sub_v2p_m24s_noise_pairing_old.mat
(10.0,0.00) (12.0,0.10) (14.0,0.24) (16.0,0.44) (18.0,0.68) (20.0,0.79) (22.0,0.88) (24.0,0.95) (26.0,0.96) (28.0,0.98) (30.0,0.96) (32.0,0.97) (34.0,0.97) (36.0,0.99) (38.0,1.00)
		};
		\addlegendentry{$s=50$}
		\end{axis}
		\end{tikzpicture}
	}
	\caption{Success rate vs.\ $m_1/s$ for preconditioned Legendre bases, spatial dimension $d=D\in\{6,8,10,12\}$, $N=200$, sparsity $s\in\{10,25,50\}$, $\mathrm{SNR}_{\mathrm{db}}=10$, $m_2=4s$.}
	\label{fig:legendre:success_vs_m1s}
\end{figure}

\FloatBarrier

\subsection{Approximately sparse case}
After considering exactly sparse test functions~$f$ in Section~\ref{sec:Numerics:exactly_sparse}, we continue with examples for the approximately sparse case, i.e., our test functions under consideration will have infinitely many non-zero basis coefficients~$c_\n$.

\subsubsection{Fourier type with $D=10$}
We use the 10-variate periodic test function $f\colon\mathbbm{T}^{10}\rightarrow\mathbbm{R}$,
\begin{equation} \label{equ:f:10}
f(\xib):=\prod_{t\in\{0,2,7\}}N_2(\xi_t) + \prod_{t\in\{1,4,5,9\}}N_4(\xi_t) + \prod_{t\in\{3,6,8\}}N_6(\xi_t),
\end{equation}
from \cite[Section~3.3]{potts2016sparse} and \cite[Section~5.3]{kammerer2017high} with infinitely many non-zero Fourier coefficients~$c_\n$,
where $\mathbbm{T}\simeq[0,1)$ is the torus and $N_m:\mathbbm{T}\rightarrow\mathbbm{R}$ is the B-Spline of order $m\in\mathbbm{N}$,
$$N_m(x) := C_m \sum_{n\in\mathbbm{Z}} \operatorname{sinc}\left(\frac{\pi}{m}n\right)^m (-1)^n \,\mathrm{e}^{2\pi\mathrm{i}nx},$$
with a constant $C_m>0$ such that $\Vert N_m \Vert_{L^2(\mathbbm{T})}=1$.
We remark that each B-Spline $N_m$ of order $m\in\mathbbm{N}$ is a piece-wise polynomial of degree $m-1$.
%We remark that we have $D=10$ spatial dimensions.
We approximate the function $f$ by multivariate trigonometric polynomials $a$ using Algorithm~\ref{alg:main}. 
The obtained basis index sets $\tilde{\Omega}$ should ``consist of'' the union of three lower dimensional manifolds,
a three-dimensional hyperbolic cross in the dimensions $1,3,8$;
a four-dimensional hyperbolic cross in the dimensions $2,5,6,10$;
and a three-dimensional hyperbolic cross in the dimensions $4,7,9$.
All tests are performed 10 times and the relative $L^2(\mathbbm{T}^{10})$  approximation error
$$
\frac{ \Vert f-a\Vert_{L^2(\mathbbm{T}^{10})} }{ \Vert f\Vert_{L^2(\mathbbm{T}^{10})} }
=
\frac{
\sqrt{\Vert f\Vert_{L^2(\mathbbm{T}^{10})}^2 - \sum_{\boldsymbol{n}\in \tilde{\Omega}}\vert\hat{f}_{\boldsymbol{n}}\vert^2 + \sum_{\boldsymbol{n}\in \tilde{\Omega}}\vert a_{\boldsymbol{n}}-\hat{f}_{\boldsymbol{n}}\vert^2} }{ \Vert f\Vert_{L^2(\mathbbm{T}^{10})} }
$$
is computed each time,
where the approximant $a:=\sum_{\boldsymbol{n}\in \tilde{\Omega}} a_{\boldsymbol{n}} \,\mathrm{e}^{2\pi\mathrm{i}\boldsymbol{n}\cdot\circ}$.

%We perform entry identification via the pairing approach and 
We set the parameters $N=64$, $d=D=10$, $m_2=s$, and we always use $m_{\rm CE}:=50\,s$ samples for the coefficient estimation where $s=|\S|$.
For our tests, we consider two different parameter combinations: $m_1=3s$ and $\kappa=20$, as well as $m_1=8s$ and $\kappa=10$. The obtained results, i.e., the numbers of samples, average runtimes, average iterations, and relative $L^2(\mathbbm{T}^{10})$ errors are plotted as a function of the sparsity~$s\in\{100,200,500,1000\}$ in Figure~\ref{fig:approx_sparse:fourier_d10:samples_runtime_iter_success_vs_s}. Due to the parameter choices for $m_1$ and $m_2$, we observe that the numbers of samples grow quadratically for increasing sparsity~$s$. The average runtimes grow approximately like $\sim s^3\min\{s,N\}$ and this means $\sim s^3$ for fixed $N$.
Moreover, the average numbers of iterations are much smaller than its imposed maximum $\kappa$ in most cases. The relative $L^2(\mathbbm{T}^{10})$ errors decrease for increasing sparsity $s$ having a value of approximately $10^{-2}$ for sparsity $s=1000$. Again, we emphasize the extremely high power of Algorithm~\ref{alg:main}, which is able to determine the $s=1000$ approximately largest bases coefficients and the corresponding indices for our test function out of $| \mathcal{I}_{N,d}| = N^d = 64^{10} \approx 10^{18}$ allowed indices.

\begin{figure}[!h]
	\subfloat[numbers of samples vs.\ sparsity $s$]{
		\begin{tikzpicture}[baseline]
		\begin{axis}[
		font=\footnotesize,
		enlarge x limits=true,
		enlarge y limits=true,
		height=0.3\textwidth,
		grid=major,
		%			xmajorgrids=false,
		width=0.47\textwidth,
		xtick={100,200,500,1000},
		xmode=log,
		ymode=log,
		xticklabel={
			\pgfkeys{/pgf/fpu=true}
			\pgfmathparse{exp(\tick)}%
			\pgfmathprintnumber[fixed relative, precision=3]{\pgfmathresult}
			\pgfkeys{/pgf/fpu=false}
		},
		xlabel={$s$},
		ylabel={\#samples},
		legend style={legend cell align=left}, legend pos=south east,
		legend columns = 1,
		]
		\addplot[black,mark=square,mark size=2.5pt,mark options={solid}] coordinates {
%%samples %%samples Bspline10_v2p_N64_d10_s*_psf3_fourier_t10_it20_sub_v2_m2s_pairing_old_large.mat
(100,5.750e+05) (200,2.290e+06) (500,1.428e+07) (1000,5.705e+07)
		};
	    \addlegendentry{$m_1=3s$}
		\addplot[blue,mark=triangle,mark size=2.5pt,mark options={solid}] coordinates {
%%samples Bspline10_v2p_N64_d10_s*_psf8_fourier_t10_it10_sub_v2_m2s_pairing_old_large.mat
(100,1.525e+06) (200,6.090e+06) (500,3.802e+07) (1000,1.520e+08)
        };
        \addlegendentry{$m_1=8s$}
		\end{axis}
		\end{tikzpicture}
	}
	\hfill
	\subfloat[average runtimes vs.\ sparsity $s$]{
		\begin{tikzpicture}[baseline]
		\begin{axis}[
		font=\footnotesize,
		enlarge x limits=true,
		enlarge y limits=true,
		height=0.3\textwidth,
		grid=major,
		%			xmajorgrids=false,
		width=0.48\textwidth,
		xtick={100,200,500,1000},
		ytick={1e2,1e3,1e4},
        ymax=8e4,
		xmode=log,
		ymode=log,
		ytick={85,430,5400,42000},
		log ticks with fixed point,
		xlabel={$s$},
		ylabel={avg. runtime in seconds},
		y label style={xshift=-0.8em},
		legend style={legend cell align=left,at={(1,1.3)}},
		legend columns = 2,
		]
		\addplot[black,mark=square,mark size=2.5pt,mark options={solid},only marks] coordinates {
%%runtimes Bspline10_v2p_N64_d10_s*_psf3_fourier_t10_it20_sub_v2_m2s_pairing_old_large.mat
(100,1.102e+02) (200,3.762e+02) (500,3.934e+03) (1000,3.260e+04)
		};
	    \addlegendentry{$m_1=3s$, $\kappa=20$}
		\addplot [black,domain=100:1000, samples=100, dashed]{29+3e-5*x*x*x};
		\addlegendentry{$3\cdot 10^{-5}s^3+29$}
%		\addplot [black,domain=6:100, samples=100, dashed]{4.869933939886050 + 0.007455345019272*x*x};
%		\addlegendentry{$\sim d^2$}
		\addplot[blue,mark=triangle,mark size=2.5pt,mark options={solid},only marks] coordinates {
%%runtimes Bspline10_v2p_N64_d10_s*_psf8_fourier_t10_it10_sub_v2_m2s_pairing_old_large.mat
(100,1.139e+02) (200,3.930e+02) (500,5.946e+03) (1000,5.023e+04)
		};
	    \addlegendentry{$m_1=8s$, $\kappa=10$}
		\addplot [black,domain=100:1000, samples=100, dashed]{5e-5*x*x*x};
\addlegendentry{$5\cdot 10^{-5}s^3$}
%		%\addplot [forget plot,blue,domain=6:100, samples=100, dashed]{1.730e+00*x + 2.909e-02*x*x};
%		\addplot [forget plot,blue,domain=6:100, samples=100, dashed]{24.5946 + 0.0445*x*x};
		\end{axis}
		\end{tikzpicture}
	}
	\\
	\subfloat[average iteration vs.\ sparsity $s$]{
		\begin{tikzpicture}[baseline]
		\begin{axis}[
		font=\footnotesize,
		enlarge x limits=true,
		enlarge y limits=true,
		height=0.3\textwidth,
		grid=major,
		%			xmajorgrids=false,
		width=0.48\textwidth,
		xtick={100,200,500,1000},
		ytick={3,5,7,18},
		ymin=3,ymax=20,
		ytick={3,5,8,10,18},
		xmode=log,
		%            ymode=log,
		log ticks with fixed point,
		xlabel={$s$},
		ylabel={avg. iteration},
		legend style={legend cell align=left}, legend pos=north east,
		legend columns = 1,
		]
		\addplot[black,mark=square,mark size=2.5pt,mark options={solid}] coordinates {
%%iterations Bspline10_v2p_N64_d10_s*_psf3_fourier_t10_it20_sub_v2_m2s_pairing_old_large.mat
(100,1.770e+01) (200,9.200e+00) (500,7.500e+00) (1000,8.200e+00)
		};
	    \addlegendentry{$m_1=3s$, $\kappa=20$}
		\addplot[blue,mark=triangle,mark size=2.5pt,mark options={solid}] coordinates {
%%iterations Bspline10_v2p_N64_d10_s*_psf8_fourier_t10_it10_sub_v2_m2s_pairing_old_large.mat
(100,4.600e+00) (200,4.100e+00) (500,4.600e+00) (1000,5.000e+00)
		};
	    \addlegendentry{$m_1=8s$, $\kappa=10$}
		\end{axis}
		\end{tikzpicture}
	}
	\hfill
	\subfloat[$L^2(\mathbbm{T}^{10})$ error vs.\ sparsity $s$]{
		\begin{tikzpicture}[baseline]
		\begin{axis}[
		font=\footnotesize,
		enlarge x limits=true,
		enlarge y limits=true,
		height=0.3\textwidth,
		grid=major,
		%			xmajorgrids=false,
		width=0.48\textwidth,
		xtick={100,200,500,1000},
		ymin=1e-2,ymax=1,
		xmode=log,
		ymode=log,
		%            ymode=log,
%		log ticks with fixed point,
		xticklabel={
	\pgfkeys{/pgf/fpu=true}
	\pgfmathparse{exp(\tick)}%
	\pgfmathprintnumber[fixed relative, precision=3]{\pgfmathresult}
	\pgfkeys{/pgf/fpu=false}
},
		xlabel={$s$},
		ylabel={relative error},
		legend style={legend cell align=left}, legend pos=south west,
		legend columns = 1,
		]
		\addplot[black,mark=square,mark size=2.5pt,mark options={solid}] coordinates {
%%errors Bspline10_v2p_N64_d10_s*_psf3_fourier_t10_it20_sub_v2_m2s_pairing_old_large.mat
(100,3.969e-01) (200,1.870e-01) (500,5.690e-02) (1000,1.240e-02)
		};
	    \addlegendentry{$m_1=3s$, $\kappa=20$}
		\addplot[blue,mark=triangle,mark size=2.5pt,mark options={solid}] coordinates {
%%errors Bspline10_v2p_N64_d10_s*_psf8_fourier_t10_it10_sub_v2_m2s_pairing_old_large.mat
(100,3.504e-01) (200,1.856e-01) (500,5.683e-02) (1000,1.238e-02)
		};
	    \addlegendentry{$m_1=8s$, $\kappa=10$}
		\end{axis}
		\end{tikzpicture}
	}
	\caption{Number of samples, runtime, number of iterations, $L^2(\mathbbm{T}^{10})$ error
		vs.\ sparsity $s\in\{100,200,500,1000\}$ for Fourier basis and test function~\eqref{equ:f:10}.}
	\label{fig:approx_sparse:fourier_d10:samples_runtime_iter_success_vs_s}
\end{figure}
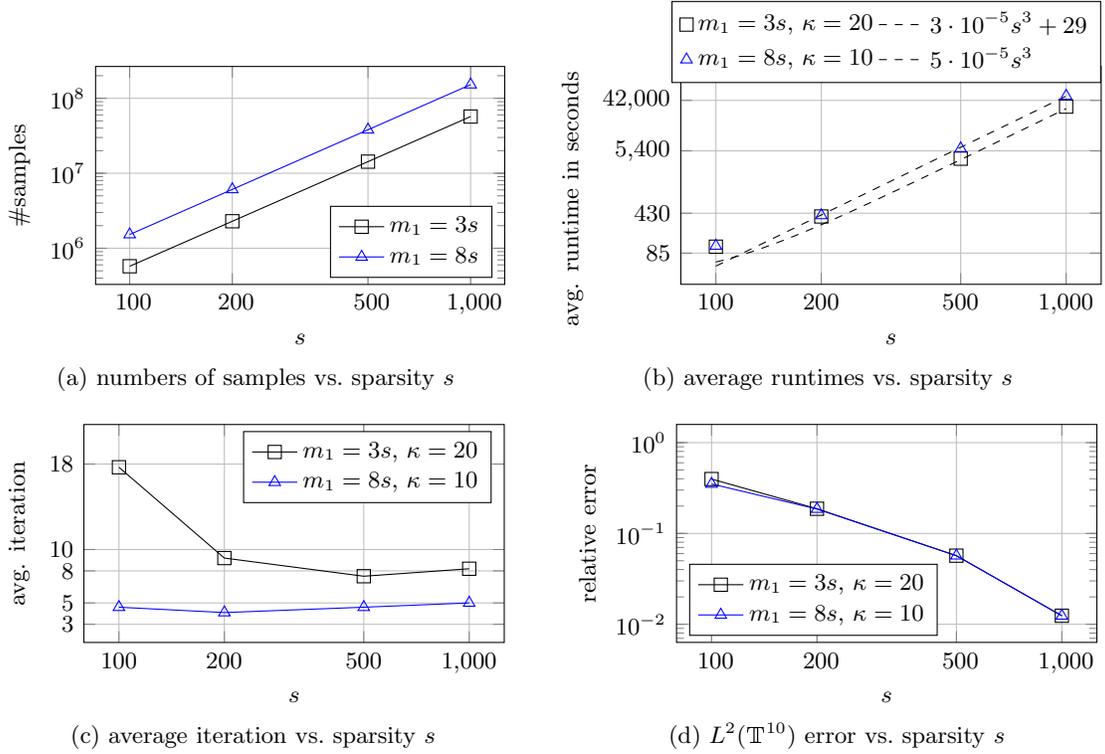

\FloatBarrier

\subsubsection{Chebyshev and Legendre type with $D=7$}

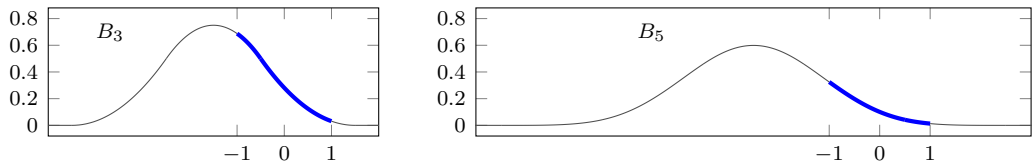
\begin{figure}[!h]
	\centering
	\begin{tikzpicture}[font=\scriptsize]
	\begin{axis}[width=0.4\textwidth,height=0.22\textwidth,enlarge x limits=false,enlarge y limits=true,xmin=-5,xmax=2,ymin=0,ymax=0.8,xtick={-1,0,1} %,xlabel={$B_3$}
	]
	\addplot[samples=100,black!70,domain=-5:-4.5] {0};
	\addplot[samples=100,black!70,domain=-4.5:-2.5] {x*x/8+9*x/8+81/32};
	\addplot[samples=100,black!70,domain=-2.5:-1] {-x*x/4-3*x/4+3/16};
	\addplot[samples=100,blue,domain=-1:-0.5,ultra thick] {-x*x/4-3*x/4+3/16};
	\addplot[samples=100,blue,domain=-0.5:1,ultra thick] {x*x/8-3*x/8+9/32};
	\addplot[samples=100,black!70,domain=1:1.5] {x*x/8-3*x/8+9/32};
	\addplot[samples=100,black!70,domain=1.5:10] {0};
	\node [right] at (axis cs:-4.2,0.7) {$B_3$};
	\end{axis}
	\end{tikzpicture}
	\hspace{1em}
	\begin{tikzpicture}[font=\scriptsize]
	\begin{axis}[width=0.6\textwidth,height=0.22\textwidth,enlarge x limits=false,enlarge y limits=true,xmin=-8,xmax=3,ymin=0,ymax=0.8,xtick={-1,0,1} %,xlabel={$B_5$}
	]
	\addplot[samples=100,black!70,domain=-10:-7.5] {0};
	\addplot[samples=100,black!70,domain=-7.5:-5.5] {(2*x+15)^4/6144};
	\addplot[samples=100,black!70,domain=-5.5:-3.5] {-5645/1536-205/48*x-95/64*x*x-5/24*x^3-1/96*x^4};
	\addplot[samples=100,black!70,domain=-3.5:-1.5] {715/3072+25/128*x+55/128*x*x+5/32*x^3+1/64*x^4};
	\addplot[samples=100,black!70,domain=-1.5:-1] {155/1536-5/32*x+5/64*x*x-1/96*x^4};
	\addplot[samples=100,blue,domain=-1:0.5,ultra thick] {155/1536-5/32*x+5/64*x*x-1/96*x^4};
	\addplot[samples=100,blue,domain=0.5:1,ultra thick] {(2*x-5)^4/6144};
	\addplot[samples=100,black!70,domain=1:2.5] {(2*x-5)^4/6144};
	\addplot[samples=100,black!70,domain=2.5:10] {0};
	\node [right] at (axis cs:-5,0.7) {$B_5$};
	\end{axis}
	\end{tikzpicture}
	\vspace{-0.6em}
	\caption{B-splines $B_3$ and $B_5$ considered in interval $[-1,1]$.}
	\label{fig:Bspline:nonper}
\end{figure}

Next, we apply Algorithm~\ref{alg:main} on the 7-variate test function $f\colon [-1,1]^{7}\rightarrow\mathbbm{R}$,
\begin{equation} \label{equ:f_cheb:7}
f(\xib):=\prod_{t\in\{0,2,5\}}B_3(\xi_t) + \prod_{t\in\{1,3,4,6\}}B_5(\xi_t)
\vspace{-0.5em}
\end{equation}
similar as in \cite{potts2017multivariate},
where
$B_3\colon\mathbbm{R}\rightarrow\mathbbm{R}$ is a shifted, scaled and dilated B-spline of order~3
and
$B_5\colon\mathbbm{R}\rightarrow\mathbbm{R}$ is a shifted, scaled and dilated B-spline of order~5, see Figure~\ref{fig:Bspline:nonper} for illustration.
We remark that the absolute values of the Chebyshev coefficients $c_n$, $n\in\mathbbm{N}_0$, %~$\hat{f}_\boldk$
of $B_3$ and $B_5$ decay like $\sim n^{-3}$ and $\sim n^{-5}$, respectively.
The obtained basis index sets $\tilde{\Omega}$ should ``consist of'' the union of two lower dimensional manifolds,
a three-dimensional hyperbolic cross in the dimensions $0,2,5$; and 
a four-dimensional hyperbolic cross in the dimensions $1,3,4,6$. 
All tests are performed 10 times and the relative $L^2([-1,1]^7,\mu_\mathrm{C})$ approximation error
$\Vert f-a\Vert_{L^2([-1,1]^7,\mu_\mathrm{C})} / \Vert f\Vert_{L^2([-1,1]^7,\mu_\mathrm{C})}$
is computed each time,
where the approximant $a:=\sum_{\n\in \tilde{\Omega}} a_{\n} \,T_\n$, $T_\n$ is the Chebyshev product basis, and $\mu_\mathrm{C}(\xib):=\pi^{-D} \prod_{j\in [D]} (1-\xi_j^2)^{-1/2}$ is the Chebyshev product measure.

%Again, we perform entry identification via the pairing approach and 
We set the parameters $N=64$, $d=D=7$, $m_2=4s$, and we always use $m_{\rm CE}:=50\,s$ samples for the coefficient estimation where $s=|\S|$.
We consider two different parameter combinations: $m_1=4s$ and $\kappa=20$, as well as $m_1=8s$ and $\kappa=10$. The obtained results, i.e., the numbers of samples, average runtimes, average iterations, and relative $L^2([-1,1]^7,\mu_\mathrm{C})$ errors are plotted as a function of the sparsity~$s\in\{25,50,100,200,500\}$ in Figure~\ref{fig:approx_sparse:chebyshev_d7:samples_runtime_iter_success_vs_s}. Due to the parameter choices for $m_1$ and $m_2$, we observe that the numbers of samples grow quadratically for increasing sparsity~$s$. The average runtimes grow approximately like $\sim s^3\min\{s,N\}$ and this means $\sim s^3$ for fixed $N$.
Moreover, the average numbers of iterations are well below its imposed maximum $\kappa$ for $m_1=4s$ and $\kappa=20$ as well as close to $\kappa$ for $m_1=8s$ and $\kappa=10$. The relative $L^2([-1,1]^7,\mu_\mathrm{C})$ errors decrease for increasing sparsity $s$ having a value of approximately $2.3\cdot 10^{-4}$ for sparsity $s=500$.
We emphasize that Algorithm~\ref{alg:main} is able to easily determine the $s=500$ approximately largest basis coefficients and the corresponding basis indices for our test function out of $| \mathcal{I}_{N,d}| = N^d = 64^{7} \approx 4.4\cdot 10^{12}$ allowed indices.

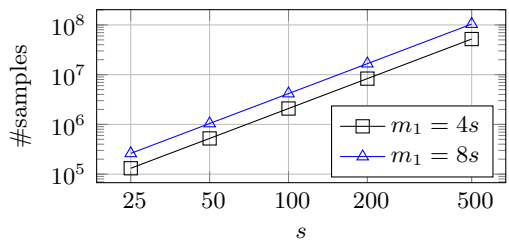
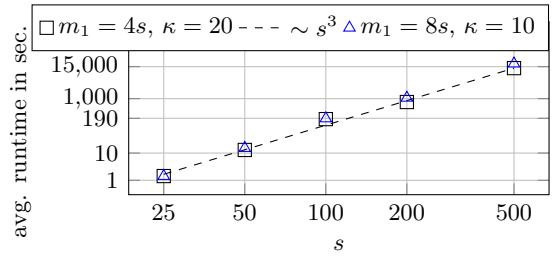
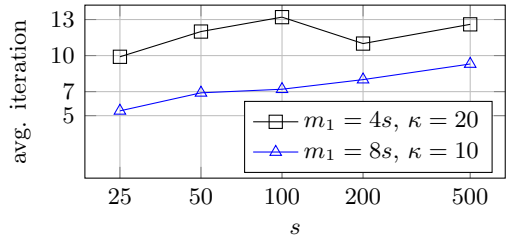
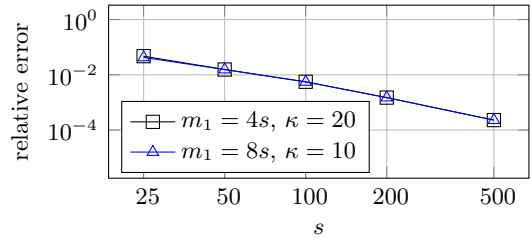
\begin{figure}[!h]
	\subfloat[number of samples vs.\ sparsity $s$]{
		\begin{tikzpicture}[baseline]
		\begin{axis}[
		font=\footnotesize,
		enlarge x limits=true,
		enlarge y limits=true,
		height=0.26\textwidth,
		grid=major,
		%			xmajorgrids=false,
		width=0.47\textwidth,
		xtick={25,50,100,200,500},
		xmode=log,
		ymode=log,
		xticklabel={
			\pgfkeys{/pgf/fpu=true}
			\pgfmathparse{exp(\tick)}%
			\pgfmathprintnumber[fixed relative, precision=3]{\pgfmathresult}
			\pgfkeys{/pgf/fpu=false}
		},
		xlabel={$s$},
		ylabel={\#samples},
		legend style={legend cell align=left}, legend pos=south east,
		legend columns = 1,
		]
		\addplot[black,mark=square,mark size=2.5pt,mark options={solid}] coordinates {
        	(25,131250) (50,522500) (100,2085000) (200,8330000) (500,52025000)
        };
        \addlegendentry{$m_1=4s$}
		\addplot[blue,mark=triangle,mark size=2.5pt,mark options={solid}] coordinates {
			(25,261250) (50,1042500) (100,4165000) (200,16650000) (500,104025000)
		};
		\addlegendentry{$m_1=8s$}
		\end{axis}
		\end{tikzpicture}
	}
	\hfill
	\subfloat[average runtime vs.\ sparsity $s$]{
		\begin{tikzpicture}[baseline]
		\begin{axis}[
		font=\footnotesize,
		enlarge x limits=true,
		enlarge y limits=true,
		height=0.26\textwidth,
		grid=major,
		%			xmajorgrids=false,
		width=0.48\textwidth,
		xtick={25,50,100,200,500},
		ytick={1,100,20000},
		ymin=1,
		ymax=2e5,
		ytick={1,10,190,1000,15000},
		xmode=log,
		ymode=log,
		log ticks with fixed point,
		xlabel={$s$},
		ylabel={avg. runtime in sec.},
		y label style={xshift=-0.8em},
		legend style={legend cell align=left,at={(1,1.1)}},
		legend columns = -1,
		]
		\addplot[black,mark=square,mark size=2.5pt,mark options={solid},only marks] coordinates {
%%runtimes Bsplinec7_v2p_N64_d7_s*_psf4_chebyshev_t10_it20_sub_v2_m24s_pairing_old_large.mat
(25,1.428e+00) (50,1.301e+01) (100,1.772e+02) (200,7.521e+02) (500,1.327e+04)
        };
        \addlegendentry{$m_1=4s$, $\kappa=20$}
		\addplot [black,domain=25:500, samples=100, dashed]{1.06e-4*x*x*x};
\addlegendentry{$\sim s^3$}
		\addplot[blue,mark=triangle,mark size=2.5pt,mark options={solid},only marks] coordinates {
%%runtimes Bsplinec7_v2p_N64_d7_s*_psf8_chebyshev_t10_it10_sub_v2_m24s_pairing_old_large.mat
(25,1.422e+00) (50,1.523e+01) (100,1.867e+02) (200,1.072e+03) (500,1.935e+04)
		};
		\addlegendentry{$m_1=8s$, $\kappa=10$}
%		\addplot [forget plot,blue,domain=25:500, samples=100, dashed]{1.55e-4*x*x*x};
%\addlegendentry{$1.55\cdot 10^{-4}s^3$}
		\end{axis}
		\end{tikzpicture}
	}
	\\
	\subfloat[average iteration vs.\ sparsity $s$]{
		\begin{tikzpicture}[baseline]
		\begin{axis}[
		font=\footnotesize,
		enlarge x limits=true,
		enlarge y limits=true,
		height=0.26\textwidth,
		grid=major,
		%			xmajorgrids=false,
		width=0.48\textwidth,
		xtick={25,50,100,200,500},
		ytick={5,7,10,13},
		ymin=1,ymax=13,
		xmode=log,
		%            ymode=log,
		log ticks with fixed point,
		xlabel={$s$},
		ylabel={avg. iteration},
		legend style={legend cell align=left}, legend pos=south east,
		legend columns = 1,
		]
		\addplot[black,mark=square,mark size=2.5pt,mark options={solid}] coordinates {
%%iterations Bsplinec7_v2p_N64_d7_s*_psf4_chebyshev_t10_it20_sub_v2_m24s_pairing_old_large.mat
(25,9.900e+00) (50,1.200e+01) (100,1.320e+01) (200,1.100e+01) (500,1.260e+01)
        };
        \addlegendentry{$m_1=4s$, $\kappa=20$}
		\addplot[blue,mark=triangle,mark size=2.5pt,mark options={solid}] coordinates {
%%iterations Bsplinec7_v2p_N64_d7_s*_psf8_chebyshev_t10_it10_sub_v2_m24s_pairing_old_large.mat
(25,5.400e+00) (50,6.900e+00) (100,7.200e+00) (200,8.000e+00) (500,9.300e+00)
		};
		\addlegendentry{$m_1=8s$, $\kappa=10$}
		\end{axis}
		\end{tikzpicture}
	}
	\hfill
	\subfloat[relative {$L^2([-1,1]^7,\mu_\mathrm{C})$} error vs.\ sparsity $s$]{
		\begin{tikzpicture}[baseline]
		\begin{axis}[
		font=\footnotesize,
		enlarge x limits=true,
		enlarge y limits=true,
		height=0.26\textwidth,
		grid=major,
		%			xmajorgrids=false,
		width=0.48\textwidth,
		xtick={25,50,100,200,500},
		ymin=6e-6,ymax=1,
		xmode=log,
		ymode=log,
		%            ymode=log,
		%		log ticks with fixed point,
		xticklabel={
			\pgfkeys{/pgf/fpu=true}
			\pgfmathparse{exp(\tick)}%
			\pgfmathprintnumber[fixed relative, precision=3]{\pgfmathresult}
			\pgfkeys{/pgf/fpu=false}
		},
		xlabel={$s$},
		ylabel={relative error},
		legend style={legend cell align=left}, legend pos=south west,
		legend columns = 1,
		]
		\addplot[black,mark=square,mark size=2.5pt,mark options={solid}] coordinates {
%%errors Bsplinec7_v2p_N64_d7_s*_psf4_chebyshev_t10_it20_sub_v2_m24s_pairing_old_large.mat
(25,4.726e-02) (50,1.556e-02) (100,5.592e-03) (200,1.495e-03) (500,2.287e-04)
        };
        \addlegendentry{$m_1=4s$, $\kappa=20$}
		\addplot[blue,mark=triangle,mark size=2.5pt,mark options={solid}] coordinates {
%%errors Bsplinec7_v2p_N64_d7_s*_psf8_chebyshev_t10_it10_sub_v2_m24s_pairing_old_large.mat
(25,4.242e-02) (50,1.549e-02) (100,5.514e-03) (200,1.493e-03) (500,2.279e-04)
		};
		\addlegendentry{$m_1=8s$, $\kappa=10$}
		\end{axis}
		\end{tikzpicture}
	}
	\caption{Number of samples, runtime, number of iterations, relative $L^2([-1,1]^7,\mu_\mathrm{C})$ error
		vs.\ sparsity $s\in\{25,50,100,200,500\}$ for Chebyshev basis and test function~\eqref{equ:f_cheb:7}.}
	\label{fig:approx_sparse:chebyshev_d7:samples_runtime_iter_success_vs_s}
\end{figure}

In addition, we use the preconditioned Legendre polynomials $Q_n$ from Section~\ref{sec:Numerics:exactly_sparse:mixed1} as basis functions, i.e.\ $T_\n$ is now the Legendre product basis in the approximant $a:=\sum_{\n\in \tilde{\Omega}} a_{\n} \,T_\n$. Besides that, we keep all parameters identical but determine the relative $L^2([-1,1]^7,\mu_\mathrm{L})$ approximation error
$\Vert f-a\Vert_{L^2([-1,1]^7,\mu_\mathrm{L})} / \Vert f\Vert_{L^2([-1,1]^7,\mu_\mathrm{L})}$, which corresponds to the Legendre basis and uses the probability measure $\mu_\mathrm{L} \equiv 2^{-D}$. The results are shown in Figure~\ref{fig:approx_sparse:legendre_d7:samples_runtime_iter_success_vs_s}. Here, we observe that the numbers of iterations are higher by up to $\approx$ 50\% compared to the Chebyshev case in Figure~\ref{fig:approx_sparse:chebyshev_d7:samples_runtime_iter_success_vs_s}, and that they reach the imposed maximum of $\kappa:=20$ for $m_1=4s$ and $\kappa:=10$ for $m_2=8s$ in several cases. Correspondingly, the runtimes are also higher by up to $\approx$ 50\%. The obtained relative $L^2([-1,1]^7,\mu_\mathrm{L})$ errors are similar, but we also remark that we cannot compare these errors directly to the relative $L^2([-1,1]^7,\mu_\mathrm{C})$ errors of the Chebyshev basis since they are measured with respect to different probability measures, $\mu_\mathrm{C}(\xib):= \pi^{-D} \prod_{j\in [D]} (1-\xi_j^2)^{-1/2}$ for Chebyshev and $\mu_\mathrm{L} \equiv 2^{-D}$ for Legendre.

\begin{figure}[!h]
	\subfloat[number of samples vs.\ sparsity $s$]{
		\begin{tikzpicture}[baseline]
		\begin{axis}[
		font=\footnotesize,
		enlarge x limits=true,
		enlarge y limits=true,
		height=0.26\textwidth,
		grid=major,
		%			xmajorgrids=false,
		width=0.47\textwidth,
		xtick={25,50,100,200,500},
		xmode=log,
		ymode=log,
		xticklabel={
			\pgfkeys{/pgf/fpu=true}
			\pgfmathparse{exp(\tick)}%
			\pgfmathprintnumber[fixed relative, precision=3]{\pgfmathresult}
			\pgfkeys{/pgf/fpu=false}
		},
		xlabel={$s$},
		ylabel={\#samples},
		legend style={legend cell align=left}, legend pos=south east,
		legend columns = 1,
		]
		\addplot[black,mark=square,mark size=2.5pt,mark options={solid}] coordinates {
	(25,131250) (50,522500) (100,2085000) (200,8330000) (500,52025000)
};
\addlegendentry{$m_1=4s$}
\addplot[blue,mark=triangle,mark size=2.5pt,mark options={solid}] coordinates {
	(25,261250) (50,1042500) (100,4165000) (200,16650000) (500,104025000)
};
\addlegendentry{$m_1=8s$}
		\end{axis}
		\end{tikzpicture}
	}
	\hfill
	\subfloat[average runtime vs.\ sparsity $s$]{
		\begin{tikzpicture}[baseline]
		\begin{axis}[
		font=\footnotesize,
		enlarge x limits=true,
		enlarge y limits=true,
		height=0.26\textwidth,
		grid=major,
		%			xmajorgrids=false,
		width=0.48\textwidth,
		xtick={25,50,100,200,500},
		ytick={10,1000,30000},
		ymax=2e5,
		xmode=log,
		ymode=log,
		log ticks with fixed point,
		xlabel={$s$},
		ylabel={avg. runtime in sec.},
		y label style={xshift=-0.8em},
		legend style={legend cell align=left}, legend pos=north west,
		legend columns = 1,
		]
		\addplot[black,mark=square,mark size=2.5pt,mark options={solid}] coordinates {
%%runtimes Bsplinel7_v2p_N64_d7_s*_psf4_legendre_preconditioned_t10_it20_sub_v2_m24s_pairing_old_large.mat
(25,7.210e+00) (50,5.395e+01) (100,3.555e+02) (200,1.931e+03) (500,2.612e+04)
		};
		\addlegendentry{$m_1=4s$, $\kappa=20$}
		\addplot[blue,mark=triangle,mark size=2.5pt,mark options={solid}] coordinates {
%%runtimes Bsplinel7_v2p_N64_d7_s*_psf8_legendre_preconditioned_t10_it10_sub_v2_m24s_pairing_old_large.mat
(25,7.130e+00) (50,5.524e+01) (100,3.171e+02) (200,1.971e+03) (500,2.727e+04)
		};
		\addlegendentry{$m_1=8s$, $\kappa=10$}
		\end{axis}
		\end{tikzpicture}
	}
	\\
	\subfloat[average iteration vs.\ sparsity $s$]{
		\begin{tikzpicture}[baseline]
		\begin{axis}[
		font=\footnotesize,
		enlarge x limits=true,
		enlarge y limits=true,
		height=0.26\textwidth,
		grid=major,
		%			xmajorgrids=false,
		width=0.48\textwidth,
		xtick={25,50,100,200,500},
		ytick={5,10,15,20},
		ymin=-1,ymax=20,
		xmode=log,
		%            ymode=log,
		log ticks with fixed point,
		xlabel={$s$},
		ylabel={avg. iteration},
		legend style={legend cell align=left}, legend pos=south east,
		legend columns = 1,
		]
		\addplot[black,mark=square,mark size=2.5pt,mark options={solid}] coordinates {
%%iterations Bsplinel7_v2p_N64_d7_s*_psf4_legendre_preconditioned_t10_it20_sub_v2_m24s_pairing_old_large.mat
(25,1.020e+01) (50,1.610e+01) (100,1.840e+01) (200,1.820e+01) (500,1.940e+01)
		};
		\addlegendentry{$m_1=4s$, $\kappa=20$}
		\addplot[blue,mark=triangle,mark size=2.5pt,mark options={solid}] coordinates {
%%iterations Bsplinel7_v2p_N64_d7_s*_psf8_legendre_preconditioned_t10_it10_sub_v2_m24s_pairing_old_large.mat
(25,8.300e+00) (50,9.300e+00) (100,9.400e+00) (200,9.200e+00) (500,1.000e+01)
		};
		\addlegendentry{$m_1=8s$, $\kappa=10$}
		\end{axis}
		\end{tikzpicture}
	}
	\hfill
	\subfloat[relative {$L^2([-1,1]^7,\mu_\mathrm{L})$} error vs.\ sparsity $s$]{
		\begin{tikzpicture}[baseline]
		\begin{axis}[
		font=\footnotesize,
		enlarge x limits=true,
		enlarge y limits=true,
		height=0.26\textwidth,
		grid=major,
		%			xmajorgrids=false,
		width=0.48\textwidth,
		xtick={25,50,100,200,500},
		ytick={1,1e-2,1e-4},
		ymin=6e-6,ymax=1,
		xmode=log,
		ymode=log,
		%            ymode=log,
		%		log ticks with fixed point,
		xticklabel={
			\pgfkeys{/pgf/fpu=true}
			\pgfmathparse{exp(\tick)}%
			\pgfmathprintnumber[fixed relative, precision=3]{\pgfmathresult}
			\pgfkeys{/pgf/fpu=false}
		},
		xlabel={$s$},
		ylabel={relative error},
		legend style={legend cell align=left}, legend pos=south west,
		legend columns = 1,
		]
		\addplot[black,mark=square,mark size=2.5pt,mark options={solid}] coordinates {
%%errors Bsplinel7_v2p_N64_d7_s*_psf4_legendre_preconditioned_t10_it20_sub_v2_m24s_pairing_old_large.mat
(25,7.240e-02) (50,2.124e-02) (100,5.603e-03) (200,1.460e-03) (500,2.178e-04)
		};
		\addlegendentry{$m_1=4s$, $\kappa=20$}
		\addplot[blue,mark=triangle,mark size=2.5pt,mark options={solid}] coordinates {
%%errors Bsplinel7_v2p_N64_d7_s*_psf8_legendre_preconditioned_t10_it10_sub_v2_m24s_pairing_old_large.mat
(25,3.757e-02) (50,1.394e-02) (100,4.879e-03) (200,1.421e-03) (500,2.109e-04)
		};
		\addlegendentry{$m_1=8s$, $\kappa=10$}
		\end{axis}
		\end{tikzpicture}
	}
	\caption{
		Number of samples, runtime, number of iterations, relative $L^2([-1,1]^7,\mu_\mathrm{L})$ error
		vs.\ sparsity $s\in\{25,50,100,200,500\}$ for Legendre basis and test function~\eqref{equ:f_cheb:7}.
		%The \#samples and runtimes behave similarly as for the Chebyshev case in Figure~\ref{fig:approx_sparse:chebyshev_d7:samples_runtime_iter_success_vs_s} but are higher by up to $\approx$ 50\% for Legendre.
	}
	\label{fig:approx_sparse:legendre_d7:samples_runtime_iter_success_vs_s}
\end{figure}

\FloatBarrier

\subsubsection{Mixed type with $D=10$}

\begin{sloppypar}
Finally, we combine parts of the test functions from the previous two subsections. We consider the 10-variate test function
$f\colon \tilde{\D}\rightarrow\mathbbm{R}$, $\tilde{\D}:=[-1,1]\times\mathbbm{T}\times[-1,1]^3\times\mathbbm{T}^2\times[-1,1]\times\mathbbm{T}^2$,
\begin{equation} \label{equ:f_mixed:10}
%bspline2t_nd(x(:,[1 3])) .* bspline_o4(x(:,9)) ...
%+ bspline4t_nd(x(:,[4 5])) .* bspline_o2(x(:,[2 7])) ...
%+ bspline2t_nd(x(:,[8])) .* bspline_o2(x(:,[6 10]));
f(\xib):=B_3(\xi_0) B_3(\xi_2) N_4(\xi_8) + B_5(\xi_3) B_5(\xi_4) N_2(\xi_1) N_2(\xi_6) + B_3(\xi_7) N_2(\xi_5) N_2(\xi_9)
\vspace{-0.5em}
\end{equation}
In spatial dimensions $j=0,2,3,4,7$, we use Chebyshev basis functions as well as Fourier basis functions in the remaining spatial dimensions $j=1,5,6,8,9$.
All tests are performed 10 times and the relative $L^2(\tilde{\D},\mu_\mathrm{F,C})$ approximation error
$\Vert f-a\Vert_{L^2(\tilde{\D},\mu_\mathrm{F,C})} / \Vert f\Vert_{L^2(\tilde{\D},\mu_\mathrm{F,C})}$
is computed each time, 
where the approximant $a:=\sum_{\n\in \tilde{\Omega}} a_{\n} \,T_\n$, $$T_\n(\xib):=\left(\prod_{j\in\{0,2,3,4,7\}} \cos(n_j\arccos{\xi_j})\right)\,\left(\prod_{j\in\{1,5,6,8,9\}} \mathrm{e}^{2\pi\mathrm{i}n_j\xi_j}\right)$$ is the mixed product basis, and $\mu_\mathrm{F,C}(\xib):= \left(\frac{2}{\pi}\right)^5 \prod_{j\in \{0,2,3,4,7\}} (1-\xi_j^2)^{-1/2}$ is the corresponding probability measure.
\end{sloppypar}

%As in the Fourier and Chebyshev cases, we perform entry identification via the pairing approach and 
Here we set the parameters $N=64$, $d=D=10$, $m_2=4s$, and we always use $m_{\rm CE}:=50\,s$ samples for the coefficient estimation where $s=|\S|$. 
We consider two different parameter combinations from the previous subsection: $m_1=4s$ and $\kappa=20$, as well as $m_1=8s$ and $\kappa=10$. The obtained results, i.e., the numbers of samples, average runtimes, average iterations, and relative $L^2(\tilde{\D},\mu_\mathrm{F,C})$ errors are plotted as a function of the sparsity~$s\in\{25,50,100,200\}$ in Figure~\ref{fig:approx_sparse:mixed_d10:samples_runtime_iter_success_vs_s}. As before, the numbers of samples grow quadratically for increasing sparsity~$s$. The average runtimes grow approximately like $\sim s^3\min\{s,N\}$ and this means $\sim s^3$ for fixed $N$.
Moreover, the average numbers of iterations are well below its imposed maximum $\kappa$. The relative $L^2(\tilde{\D},\mu_\mathrm{F,C})$ errors decrease for increasing sparsity $s$ having a value of approximately $4.9\cdot 10^{-3}$ for sparsity $s=500$.
We emphasize that Algorithm~\ref{alg:main} is able to easily determine the $s=500$ approximately largest basis coefficients and the corresponding indices for our test function out of $| \mathcal{I}_{N,d}| = N^d = 64^{10} \approx 10^{18}$ possible indices.

\begin{figure}[!h]
	\subfloat[number of samples vs.\ sparsity $s$]{
		\begin{tikzpicture}[baseline]
		\begin{axis}[
		font=\footnotesize,
		enlarge x limits=true,
		enlarge y limits=true,
		height=0.26\textwidth,
		grid=major,
		%			xmajorgrids=false,
		width=0.47\textwidth,
		xtick={25,50,100,200,500},
		xmode=log,
		ymode=log,
		xticklabel={
			\pgfkeys{/pgf/fpu=true}
			\pgfmathparse{exp(\tick)}%
			\pgfmathprintnumber[fixed relative, precision=3]{\pgfmathresult}
			\pgfkeys{/pgf/fpu=false}
		},
		xlabel={$s$},
		ylabel={\#samples},
		legend style={legend cell align=left}, legend pos=south east,
		legend columns = 1,
		]
		\addplot[black,mark=square,mark size=2.5pt,mark options={solid}] coordinates {
%%samples Bspline_mixed10_v2p_N64_d10_s*_psf4_mixed_t10_it20_sub_v2_m24s_pairing_old_large.mat
(25,1.912e+05) (50,7.625e+05) (100,3.045e+06) (200,1.217e+07) (500,7.602e+07)
		};
		\addlegendentry{$m_1=4s$}
		\addplot[blue,mark=triangle,mark size=2.5pt,mark options={solid}] coordinates {
%%samples Bspline_mixed10_v2p_N64_d10_s*_psf8_mixed_t10_it10_sub_v2_m24s_pairing_old_large.mat
(25,3.812e+05) (50,1.522e+06) (100,6.085e+06) (200,2.433e+07) (500,1.520e+08)
		};
		\addlegendentry{$m_1=8s$}
		\end{axis}
		\end{tikzpicture}
	}
	\hfill
	\subfloat[average runtime vs.\ sparsity $s$]{
		\begin{tikzpicture}[baseline]
		\begin{axis}[
		font=\footnotesize,
		enlarge x limits=true,
		enlarge y limits=true,
		height=0.26\textwidth,
		grid=major,
		%			xmajorgrids=false,
		width=0.48\textwidth,
		xtick={25,50,100,200,500},
		ytick={1,100,10000},
		ymin=1,
		ymax=9e5,
		ytick={2,25,400,2000,25000},
		xmode=log,
		ymode=log,
		log ticks with fixed point,
%		xticklabel={
%		\pgfkeys{/pgf/fpu=true}
%		\pgfmathparse{exp(\tick)}%
%		\pgfmathprintnumber[fixed relative, precision=3]{\pgfmathresult}
%		\pgfkeys{/pgf/fpu=false}
%		},
		xlabel={$s$},
		ylabel={avg. runtime in sec.},
		y label style={xshift=-0.8em},
		legend style={legend cell align=left}, legend pos=north west,
		legend columns = 1,
		]
		\addplot[black,mark=square,mark size=2.5pt,mark options={solid}] coordinates {
%%runtimes Bspline_mixed10_v2p_N64_d10_s*_psf4_mixed_t10_it20_sub_v2_m24s_pairing_old_large.mat
(25,2.132e+00) (50,2.788e+01) (100,3.105e+02) (200,1.449e+03) (500,2.258e+04)
		};
		\addlegendentry{$m_1=4s$, $\kappa=20$}
		\addplot[blue,mark=triangle,mark size=2.5pt,mark options={solid}] coordinates {
%%runtimes Bspline_mixed10_v2p_N64_d10_s*_psf8_mixed_t10_it10_sub_v2_m24s_pairing_old_large.mat
(25,3.173e+00) (50,4.525e+01) (100,4.079e+02) (200,1.874e+03) (500,3.065e+04)
		};
		\addlegendentry{$m_1=8s$, $\kappa=10$}
		\end{axis}
		\end{tikzpicture}
	}
	\\
	\subfloat[average iteration vs.\ sparsity $s$]{
		\begin{tikzpicture}[baseline]
		\begin{axis}[
		font=\footnotesize,
		enlarge x limits=true,
		enlarge y limits=true,
		height=0.26\textwidth,
		grid=major,
		%			xmajorgrids=false,
		width=0.48\textwidth,
		xtick={25,50,100,200,500},
		ytick={3,5,7,9},
		ymin=2,ymax=9,
		xmode=log,
		%            ymode=log,
		log ticks with fixed point,
		xlabel={$s$},
		ylabel={avg. iteration},
		legend style={legend cell align=left}, legend pos=south east,
		legend columns = 1,
		]
		\addplot[black,mark=square,mark size=2.5pt,mark options={solid}] coordinates {
%%iterations Bspline_mixed10_v2p_N64_d10_s*_psf4_mixed_t10_it20_sub_v2_m24s_pairing_old_large.mat
(25,5.800e+00) (50,8.100e+00) (100,8.200e+00) (200,8.400e+00) (500,9.200e+00)
		};
		\addlegendentry{$m_1=4s$, $\kappa=20$}
		\addplot[blue,mark=triangle,mark size=2.5pt,mark options={solid}] coordinates {
%%iterations Bspline_mixed10_v2p_N64_d10_s*_psf8_mixed_t10_it10_sub_v2_m24s_pairing_old_large.mat
(25,4.200e+00) (50,5.200e+00) (100,5.600e+00) (200,5.600e+00) (500,6.200e+00)
		};
		\addlegendentry{$m_1=8s$, $\kappa=10$}
		\end{axis}
		\end{tikzpicture}
	}
	\hfill
	\subfloat[relative $L^2(\tilde{\D},\mu_\mathrm{F,C})$ error vs.\ sparsity $s$]{
		\begin{tikzpicture}[baseline]
		\begin{axis}[
		font=\footnotesize,
		enlarge x limits=true,
		enlarge y limits=true,
		height=0.26\textwidth,
		grid=major,
		%			xmajorgrids=false,
		width=0.48\textwidth,
		xtick={25,50,100,200,500},
		ymin=5e-4,ymax=1,
		xmode=log,
		ymode=log,
		%            ymode=log,
		%		log ticks with fixed point,
		xticklabel={
			\pgfkeys{/pgf/fpu=true}
			\pgfmathparse{exp(\tick)}%
			\pgfmathprintnumber[fixed relative, precision=3]{\pgfmathresult}
			\pgfkeys{/pgf/fpu=false}
		},
		xlabel={$s$},
		ylabel={relative error},
		legend style={legend cell align=left}, legend pos=south west,
		legend columns = 1,
		]
		\addplot[black,mark=square,mark size=2.5pt,mark options={solid}] coordinates {
%%errors Bspline_mixed10_v2p_N64_d10_s*_psf4_mixed_t10_it20_sub_v2_m24s_pairing_old_large.mat
(25,1.871e-01) (50,8.152e-02) (100,4.288e-02) (200,1.689e-02) (500,4.923e-03)
		};
		\addlegendentry{$m_1=4s$, $\kappa=20$}
		\addplot[blue,mark=triangle,mark size=2.5pt,mark options={solid}] coordinates {
%%errors Bspline_mixed10_v2p_N64_d10_s*_psf8_mixed_t10_it10_sub_v2_m24s_pairing_old_large.mat
(25,1.750e-01) (50,8.123e-02) (100,4.275e-02) (200,1.683e-02) (500,4.914e-03)
		};
		\addlegendentry{$m_1=8s$, $\kappa=10$}
		\end{axis}
		\end{tikzpicture}
	}
	\caption{Number of samples, runtime, number of iterations, relative $L^2(\tilde{\D},\mu_\mathrm{F,C})$ error
		vs.\ sparsity $s\in\{25,50,100,200\}$ for mixed Fourier+Chebyshev basis and test function~\eqref{equ:f_mixed:10}.}
	\label{fig:approx_sparse:mixed_d10:samples_runtime_iter_success_vs_s}
\end{figure}
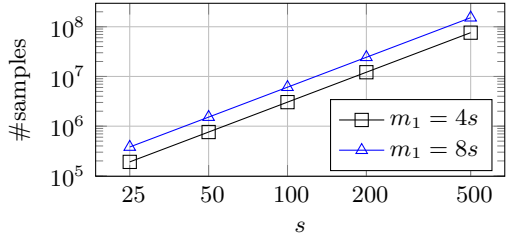
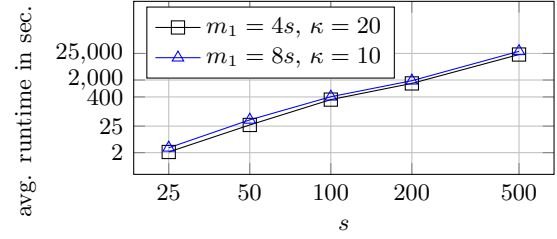
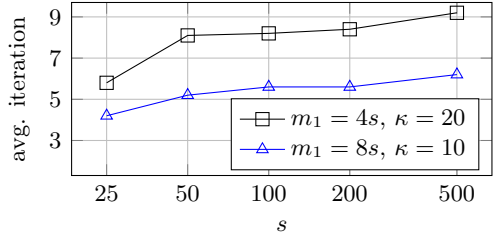
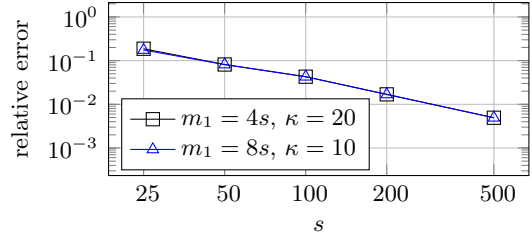

Additionally, we use the preconditioned Legendre polynomials $Q_n$ from Section~\ref{sec:Numerics:exactly_sparse:mixed1} as basis functions in the spatial dimensions $j=0,2,3,4,7$ instead of the Chebyshev polynomials. Besides that, we keep all parameters identical but determine now the relative $L^2(\tilde{\D},\mu_\mathrm{F,L})$ approximation error with respect to the probability measure $\mu_{F,L}(\xib)\equiv 2^{-5}$ which corresponds to the current choice of bases. The results are presented in Figure~\ref{fig:approx_sparse:mixed_l_d10:samples_runtime_iter_success_vs_s}. As before, we observe that the numbers of iterations are higher, now by up to $\approx$ 100\% compared to using Chebyshev polynomials in Figure~\ref{fig:approx_sparse:mixed_l_d10:samples_runtime_iter_success_vs_s}. Correspondingly, the runtimes also double in some cases. The obtained relative errors are similar, but we again remark that we cannot compare these errors directly since they are measured with respect to different probability measures.

\begin{figure}[!h]
	\subfloat[number of samples vs.\ sparsity $s$]{
		\begin{tikzpicture}[baseline]
		\begin{axis}[
		font=\footnotesize,
		enlarge x limits=true,
		enlarge y limits=true,
		height=0.26\textwidth,
		grid=major,
		%			xmajorgrids=false,
		width=0.47\textwidth,
		xtick={25,50,100,200,500},
		xmode=log,
		ymode=log,
		xticklabel={
			\pgfkeys{/pgf/fpu=true}
			\pgfmathparse{exp(\tick)}%
			\pgfmathprintnumber[fixed relative, precision=3]{\pgfmathresult}
			\pgfkeys{/pgf/fpu=false}
		},
		xlabel={$s$},
		ylabel={\#samples},
		legend style={legend cell align=left}, legend pos=south east,
		legend columns = 1,
		]
		\addplot[black,mark=square,mark size=2.5pt,mark options={solid}] coordinates {
%%samples Bspline_mixedl10_v2p_N64_d10_s*_psf4_mixed_t10_it20_sub_v2_m24s_pairing_old_large.mat
(25,1.912e+05) (50,7.625e+05) (100,3.045e+06) (200,1.217e+07) (500,7.602e+07)
		};
		\addlegendentry{$m_1=4s$}
		\addplot[blue,mark=triangle,mark size=2.5pt,mark options={solid}] coordinates {
%%samples Bspline_mixedl10_v2p_N64_d10_s*_psf8_mixed_t10_it10_sub_v2_m24s_pairing_old_large.mat
(25,3.812e+05) (50,1.522e+06) (100,6.085e+06) (200,2.433e+07) (500,1.520e+08)
		};
		\addlegendentry{$m_1=8s$}
		\end{axis}
		\end{tikzpicture}
	}
	\hfill
	\subfloat[average runtime vs.\ sparsity $s$]{
		\begin{tikzpicture}[baseline]
		\begin{axis}[
		font=\footnotesize,
		enlarge x limits=true,
		enlarge y limits=true,
		height=0.26\textwidth,
		grid=major,
		%			xmajorgrids=false,
		width=0.48\textwidth,
		xtick={25,50,100,200,500},
		xmax=500,
		ymin=4,
		ymax=8e5,
		ytick={4,40,500,2500,4e4},
		xmode=log,
		ymode=log,
		log ticks with fixed point,
		%		xticklabel={
		%		\pgfkeys{/pgf/fpu=true}
		%		\pgfmathparse{exp(\tick)}%
		%		\pgfmathprintnumber[fixed relative, precision=3]{\pgfmathresult}
		%		\pgfkeys{/pgf/fpu=false}
		%		},
		xlabel={$s$},
		ylabel={avg. runtime in sec.},
		y label style={xshift=-0.8em},
		legend style={legend cell align=left}, legend pos=north west,
		legend columns = 1,
		]
		\addplot[black,mark=square,mark size=2.5pt,mark options={solid}] coordinates {
%%runtimes Bspline_mixedl10_v2p_N64_d10_s*_psf4_mixed_t10_it20_sub_v2_m24s_pairing_old_large.mat
(25,4.505e+00) (50,3.624e+01) (100,5.236e+02) (200,2.404e+03) (500,3.490e+04)
		};
		\addlegendentry{$m_1=4s$, $\kappa=20$}
		\addplot[blue,mark=triangle,mark size=2.5pt,mark options={solid}] coordinates {
%%runtimes Bspline_mixedl10_v2p_N64_d10_s*_psf8_mixed_t10_it10_sub_v2_m24s_pairing_old_large.mat
(25,4.725e+00) (50,5.789e+01) (100,5.122e+02) (200,2.905e+03) (500,4.061e+04)
		};
		\addlegendentry{$m_1=8s$, $\kappa=10$}
		\end{axis}
		\end{tikzpicture}
	}
	\\
	\subfloat[average iteration vs.\ sparsity $s$]{
		\begin{tikzpicture}[baseline]
		\begin{axis}[
		font=\footnotesize,
		enlarge x limits=true,
		enlarge y limits=true,
		height=0.26\textwidth,
		grid=major,
		%			xmajorgrids=false,
		width=0.48\textwidth,
		xtick={25,50,100,200,500},
		ytick={5,10,15,20},
		ymin=-1,ymax=15,
		xmode=log,
		%            ymode=log,
		log ticks with fixed point,
		xlabel={$s$},
		ylabel={avg. iteration},
		legend style={legend cell align=left}, legend pos=south east,
		legend columns = 1,
		]
		\addplot[black,mark=square,mark size=2.5pt,mark options={solid}] coordinates {
%%iterations Bspline_mixedl10_v2p_N64_d10_s*_psf4_mixed_t10_it20_sub_v2_m24s_pairing_old_large.mat
(25,8.700e+00) (50,1.010e+01) (100,1.450e+01) (200,1.320e+01) (500,1.380e+01)
		};
		\addlegendentry{$m_1=4s$, $\kappa=20$}
        \addplot[blue,mark=triangle,mark size=2.5pt,mark options={solid}] coordinates {
%%iterations Bspline_mixedl10_v2p_N64_d10_s*_psf8_mixed_t10_it10_sub_v2_m24s_pairing_old_large.mat
(25,5.300e+00) (50,6.700e+00) (100,7.400e+00) (200,8.200e+00) (500,8.100e+00)
        };
        \addlegendentry{$m_1=8s$, $\kappa=10$}
		\end{axis}
		\end{tikzpicture}
	}
	\hfill
	\subfloat[relative $L^2(\tilde{\D},\mu_\mathrm{F,L})$ error vs.\ sparsity $s$]{
		\begin{tikzpicture}[baseline]
		\begin{axis}[
		font=\footnotesize,
		enlarge x limits=true,
		enlarge y limits=true,
		height=0.26\textwidth,
		grid=major,
		%			xmajorgrids=false,
		width=0.48\textwidth,
		xtick={25,50,100,200,500},
		ymin=5e-4,ymax=1,
		xmode=log,
		ymode=log,
		%            ymode=log,
		%		log ticks with fixed point,
		xticklabel={
			\pgfkeys{/pgf/fpu=true}
			\pgfmathparse{exp(\tick)}%
			\pgfmathprintnumber[fixed relative, precision=3]{\pgfmathresult}
			\pgfkeys{/pgf/fpu=false}
		},
		xlabel={$s$},
		ylabel={relative error},
		legend style={legend cell align=left}, legend pos=south west,
		legend columns = 1,
		]
		\addplot[black,mark=square,mark size=2.5pt,mark options={solid}] coordinates {
%%errors Bspline_mixedl10_v2p_N64_d10_s*_psf4_mixed_t10_it20_sub_v2_m24s_pairing_old_large.mat
(25,1.639e-01) (50,8.973e-02) (100,4.164e-02) (200,1.619e-02) (500,4.675e-03)
		};
		\addlegendentry{$m_1=4s$, $\kappa=20$}
		\addplot[blue,mark=triangle,mark size=2.5pt,mark options={solid}] coordinates {
%%errors Bspline_mixedl10_v2p_N64_d10_s*_psf8_mixed_t10_it10_sub_v2_m24s_pairing_old_large.mat
(25,1.608e-01) (50,7.551e-02) (100,3.873e-02) (200,1.599e-02) (500,4.613e-03)
		};
		\addlegendentry{$m_1=8s$, $\kappa=10$}
		\end{axis}
		\end{tikzpicture}
	}
	\caption{Number of samples, runtime, number of iterations, relative $L^2(\tilde{\D},\mu_\mathrm{F,L})$ error
		vs.\ sparsity $s\in\{25,50,100,200\}$ for mixed Fourier+Legendre basis and test function~\eqref{equ:f_mixed:10}.}
	\label{fig:approx_sparse:mixed_l_d10:samples_runtime_iter_success_vs_s}
\end{figure}
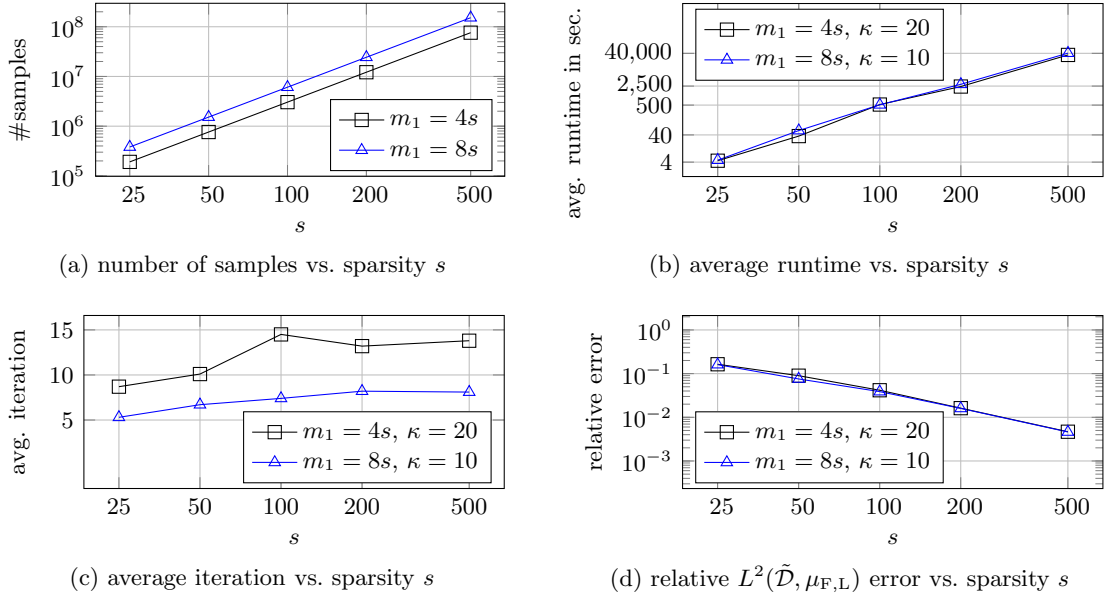

\FloatBarrier

%%%%%%%%%%%%%%%%%%%%%%%%%%%%%%%%%
%% Acknowledgements
%%%%%%%%%%%%%%%%%%%%%%%%%%%%%%%%%
\section*{Acknowledgements}
Mark Iwen was supported in part by NSF DMS-1912706, and would like to dedicate this paper to his ever bright, hard working, and spirited wife Tsveta, and to the prosperity of their newborn daughter Evgenia.  Evgenia -- I am anxious to know you are healthy, eager to see you are happy, and already sad at the distant prospect your moving out.  May you be more like your mother than like me for your own sake!

%%%%%%%%%%%%%%%%%%%%%%%%%%%%%%%%%
%% REFERENCES
%%%%%%%%%%%%%%%%%%%%%%%%%%%%%%%%%
\bibliographystyle{abbrv}
\bibliography{BosuEDs}
  
\end{document}